\declaretheorem[sibling=basecase]{theorem, proposition, lemma, corollary, claim, conjecture, observation}
\declaretheorem[style=definition, sibling=basecase]{definition, convention, notation, example, remark, remarks, question, assumption}
\declaretheorem[name=Theorem, sibling=basecase, numbered=no]{theorem*}
\newlist{thmlist}{enumerate}{1}  
\setlist[thmlist]{label=\roman{thmlisti}., ref=\thetheorem.(\roman{thmlisti}),noitemsep} 
\Crefname{theorem}{Theorem}{Theorems}
\Crefname{lemma}{Lemma}{Lemmas}
\Crefname{proposition}{Proposition}{Propositions}
\newcommand{\tf}[1]{\textnormal{\textsc{#1}}\xspace}
\newcommand{\ff}[1]{\textnormal{\textrm{#1}}\xspace}
\DeclareMathOperator{\ZFepsilon}{\tf{ZF}_{\varepsilon}}
\newcommand\cnot[1]{\mathrel{\ooalign{\hfil$#1$\hfil\cr\hfil$/$\hfil\cr}}}
\newcommand{\rlzin}{\mathop{\varepsilon}}
\newcommand{\notrlzin}{\cnot{\varepsilon}}
\newcommand{\falsity}[1]{\lVert #1 \rVert}
\newcommand{\verity}[1]{\lvert #1 \rvert}
\newcommand{\Perp}{\mathbin{\text{$\bot\mkern-11mu\bot$}}}
\newcommand{\point}{\boldsymbol{.}}
\newcommand{\stackapp}{\point}
\newcommand{\lambdaapp}{\point}
\DeclareMathOperator{\divline}{\hspace{0.05cm}\mid\hspace{0.05cm}}
\newcommand{\uminus}{\raisebox{.15\height}{\scalebox{0.75}{\ensuremath{-}}}}
\newcommand{\rlzfont}[1]{\textnormal{\textbf{#1}}}
\newcommand{\saverlz}[1]{\ff{k}_{#1}}
\newcommand{\rlzstr}{\tf{N}}
\newcommand{\rlzmodel}{\mathcal{N}}
\newcommand{\rlzset}{\mathcal{R}}
\newcommand{\fullname}[1]{\text{\Large \cjRL{r}}({#1})} 
\newcommand{\cjgimel}{\text{\Large \cjRL{g}}}
\newcommand{\identity}{\rlzfont{I}}
\newcommand{\cc}{\textnormal{\ff{cc}}}
\newcommand{\pointwise}{\textnormal{``}}
\newcommand{\inclusion}{\xhookrightarrow{}}
\newcommand{\sing}{\textnormal{\texttt{sng}}}
\newcommand{\up}{\textnormal{\texttt{up}}}
\newcommand{\op}{\textnormal{\texttt{op}}}
\newcommand{\app}[2]{#1 #2}
\newcommand{\fapp}[2]{(#1)#2}
\newcommand{\inapp}[2]{#1(#2)}
\newcommand{\twoapp}[2]{(#1)(#2)}
\newcommand{\bigfapp}[2]{\big(#1\big)\big(#2\big)}
\newcommand{\Biginapp}[2]{#1\Big(#2\Big)}
\newcommand{\Bigfapp}[2]{\Big(#1\Big)\Big(#2\Big)}
\newcommand{\superfapp}[2]{\bigg(#1\bigg)\Big(#2\Big)}
\newcommand{\lift}[1]{\tilde{#1}}
\newcommand{\imp}{\rightarrow}
\DeclareMathOperator{\downwards}{\downarrow}
\newcommand{\Booleanforce}[1]{\llbracket #1 \rrbracket}
\newcommand{\Booleanand}{\land}
\newcommand{\Booleanor}{\lor}
\newcommand{\Booleanproduct}{\bigwedge}
\newcommand{\Booleansum}{\bigvee}
\providecommand*{\intcup}{
  \mathbin{
    \mathpalette\@intcup{}
  }
}
\newcommand*{\@intcup}[2]{
  \ooalign{
    $\m@th#1\cup$\cr
    \hidewidth$\m@th#1{\raisebox{0.5\height}{\scalebox{0.4}{\ensuremath \rlzmodel}}}$\hidewidth
  }
}
\title{A Guide to Krivine Realizability for Set Theory}
\date{}
\author{\vspace{-30pt} Richard Matthews}
\begin{document}
\maketitle

\vspace{-40pt}
\tableofcontents

\section{Introduction}

Realizability is a technique in mathematical logic which attempts to extract computational content from mathematical proofs. Originally invented by Kleene, \cite{Kleene1945}, to establish a correspondence between intuitionistic logic and Turing computable programs, it was then extended by Kleene to produce models of Heyting arithmetic and later by Friedman, \cite{Friedman1973}, to produce models of intuitionistic set theory. Realizability is seen as an extension of the \emph{proofs-as-programs} correspondence, which is also known as the \emph{Curry-Howard isomorphism}. Within intuitionistic logic, the analogous specification is known as the Brouwer-Heyting-Kolmogorov interpretation. The idea is that one can consider propositions as ``\emph{problems}'' which can be solved by being broken down into simpler problems. For example, the problem $A \rightarrow B$ represents the problem of producing a method which, when given a solution to the problem $A$, solves the problem $B$. From this, it follows that $p$ is a ``\emph{proof}'' of $A \rightarrow B$ if and only if whenever $q$ is a ``\emph{proof}'' of $A$ then the ``\emph{application}'' of $p$ to $q$ is a ``\emph{proof}'' of $B$. 

Intuitionistic realizability models can be built using $\lambda$-calculus which is a general method constructed by Church to create a formalisation of mathematics using a notion of computability in terms of application of functions. The notion of $\lambda$-calculus forms the basis for many functionally programming languages, for example Lisp, the Wolfram Language and Haskell. A dialect of Lisp is the programming language Scheme which extends $\lambda$-calculus with the additional control \emph{call-with-current-continuation}. Using this extended calculus, in \cite{Griffin1989}  Griffin showed how the Curry-Howard isomorphism could be extended to classical logic in a ``\emph{computationally interesting way}'' by producing a correspondence between control instructions and Peirce's law, a statement which is equivalent to the principle of Excluded Middle.

Building upon this idea, in a series of papers (notably \cite{Krivine2001, Krivine2009, Krivine2012, Krivine2015, Krivine2018}) Krivine used this extended calculus to extend the notion of realizability models to models of classical logic and then to models of \tf{ZF}. By adding the standard additional instruction \emph{quote} it was then showed how to produce realizability models of \tf{ZF +DC}. This idea was further generalised in \cite{FontanellaGeoffroy2020} to give realizability models of $\tf{DC}_\kappa$ for any fixed regular cardinal $\kappa$.

Furthermore, in \cite{Krivine2021}, Krivine went one step further to show that one can build programs which realize the Axiom of Choice and in fact any axiom which can be produced by the method of forcing. \\

\noindent The framework of Krivine's method of realizability gives a new method to produce independence results in set theory. For example, \cite{Krivine2012, Krivine2018} construct interesting new models of \tf{ZF + DC} with complicated algebraic structures on subsets of reals that are incompatible with the Axiom of Choice. It is worthwhile remarking that every known independence result using Krivine's method of realizability can also be produced using the more well-studied method of symmetric extensions (see \cite{Karagila2019} for details). However, what this machinery provides is a new technique where the ``simpler'' realizability algebras can already produce complicated sets which would involve technical results on symmetric forcing to otherwise produce. There is also the added bonus that, because we are using realizability, alongside the set theoretic structures we produce we also have interesting computational content. 

\subsection{Outline of the paper}

The main purpose of these notes is to explain what Krivine's method of realizability is and how it relates to classic constructions, in particular \emph{intuitionistic realizability} and \emph{double negation translations}. As such, most of the results we give are not new and most of the proof structures are heavily influenced by the original proofs by Krivine, notably in \cite{Krivine2012}. We will also not explicitly reproduce any of Krivine's models  for independence results over \tf{ZF}. Instead, these notes will focus on the theory of realizability itself and the general framework in which it can be used.

The principle new material is a modified formalisation of how to construct realizability models in \Cref{sec: construction of realizability models}. Instead of taking the entire ground model universe for the domain of our realizability model, we will instead recursively produce a class of names in a similar way to the class of forcing names or McCarty's construction of realizability models for intuitionistic set theory. The benefit of this construction is it allows for much cleaner and explicit proof of various concepts in realizability. We also give a reformulation of Krivine's \emph{adequacy lemma}, which is that the realizability structure is closed under classical deductions, using a formulation involving closure under rules of natural deduction without the need for types. Following the aim to give a more rigorous formalisation of realizability structures, has also led to a reformulation of the gimel operator, which we have call the \emph{reish}, or \emph{recursive}, operator. This is a general method to recursively construct a name in the realizability model from any given set in the ground model.

The second significant contribution in this work is an explicit construction of a name for the ordered pair of two sets in the realizability structure in \Cref{section:Pairing}. This method allows us to work with names for functions in a realizability model rather than having to enrich the language with function symbols. This more concrete notion gives us the ability to ``lift'' functions in the ground model to names for functions in the realizability model, from which it is possible to explicitly prove the preservation of characteristic properties of such functions. As a concrete example, we give a precise description of the Boolean algebra on the set of truth values.

The final notable piece of original work is a significant expansion of how to view any forcing model as a realizability algebra. Building on the work of Krivine and the presentation by Fontanella and Geoffroy in \cite{FontanellaGeoffroy2020} we explain a way to translate any Boolean valued model into a realizability model. We then give a precise characterisation of the relationship between the two resulting structures, allowing us to conclude that they give rise to the same theory. \\

The outline to these notes is as follows:
\begin{itemize}
    \item In \Cref{{section:IntuitionisticRealizability},{section:DoubleNegation},{section:LambdaTerms}} we give an overview of the main ingredients that combine to produce Krivine's method of realizability: intuitionistic realizability, double negation translations and lambda calculus.
    \item In \Cref{section:realizabilityintro} we discuss the intuition behind realizability models.
    \item \Cref{section:RealizabilityAlgebras} describes what a realizability algebra for Krivine's method is.
    \item \Cref{{section:ZFepsilon},{section:ZFepsilonToZF}} concerns the non-extensional theory $\ZFepsilon$ which is the underlying theory of the realizability models. We give the axioms of this theory before showing that it is a conservative extension of \tf{ZF}, and that one can restrict any model of $\ZFepsilon$ to one of \tf{ZF} in a precise way.
    \item In \Cref{sec: construction of realizability models} we give a modified formalisation of how to construct realizability models using a class of names.
    \item \Cref{{section:RealizersAndPropositionLogic},{section:Equality}} gives some initial realizers for various properties of predicate logic. We also introduce the additional symbol of \emph{non-extensional equality} which can be used to simplify many later constructions. In \Cref{section:Adequacy} we give a new formulation of Krivine's \emph{adequacy lemma} to show that any realizability model is a model of classical logic.
    \item In \Cref{section:RealzingZFepsilon} we prove that if the ground model is a model of \tf{ZF} then the realizability model satisfies every axiom of $\ZFepsilon$. Following on from this, in \Cref{section:ReishandOrdinals} we give a general discussion of ordinals in the realizability model. Using the generalisation of the gimel operator, we show how to form a proper class of ordinals in any given realizability structure. We also discuss an alternative construction of ordinals which appears in \cite{FontanellaGeoffroy2020} that is a more restrictive, but fruitful concept.
    \item \Cref{section:Pairing} concerns the concept of pairing in realizability models. Given names $a$ and $b$ in a realizability model we show how to produce names for sets which are extensionally equal to the singleton of $a$, the unordered pair of $a$ and $b$ and the ordered pair of $a$ and $b$. Using this, we show how to define the Cartesian Products of certain sets.
    \item Building upon the notion of ordered pairs, we define the concept of functions in models of $\ZFepsilon$. \Cref{section:Functions} gives the basic definitions of some of the different ways to formulate functions in non-extensional theories. In \Cref{section:LiftingFunctions} we then give a general methodology to lift certain functions from the ground model to names for functions in the realizability model and show that many of the desired properties can be conserved in this lift.
    \item \Cref{section:NEACandDC} discusses the \emph{Non-extensional Axiom of Choice}. This is an additional axiom which naturally holds in many realizability models, in particular any countable realizability model which contains the additional instruction \emph{quote}. It is shown how to derive the Axiom of Dependent Choices from this axiom.
    \item The final sections treat multiple interesting additional topics in realizability. Firstly, in \Cref{section:Reish2Size4}, we show that it is consistent for $\fullname{2}$, which is the class of truth values, to have size $4$. Then, in \Cref{section:ForcingAsRealizability}, we give an explicit translation of any Boolean valued model into a realizability model and show that the resulting model satisfies the same underlying theory. Finally, in \Cref{section:Fullname2Trivial} we give a partial converse to the previous result, which is to give a sufficient condition for a countable realizability algebra to be equivalent to a Boolean valued model; this is that the class of truth values is trivial, that is, it has size $2$.   
\end{itemize}

\subsection*{Acknowledgements}

The author is grateful to Laura Fontanella and Guillaume Geoffroy for multiple conversations on realizability and clarifying many important points on the subject and in the papers by Krivine. The work is funded by the grant DIM RFSI 21R03101S and the author is grateful for their support.

\newpage
\section{Intuitionistic Realizability} \label{section:IntuitionisticRealizability}

Krivine's method of realizability can be seen as the combination of two classical techniques from intuitionistic mathematics; intuitionistic realizability and double negation translations. Therefore, in order to better understand how the machinery works, we will briefly explain these two techniques. 

The study of intuitionistic logic was developed by Brouwer at the beginning of the twentieth century and, in its simplest description, can be considered as classical logic without the law of Excluded Middle. This statement can either be read as the claim that Excluded Middle fails (and thus the logic is incompatible with classical logic) or just that Excluded Middle is not included as an axiom of the logic (in which case classical logic is intuitionistic logic plus Excluded Middle). This then leads to the question of what it means to suggest ``Excluded Middle is incompatible with intuitionistic logic''. In the first scenario the addition of Excluded Middle would lead to a contradiction whereas in the second scenario it merely leads to classical logic which was what we were trying to avoid assuming. For ease of presentation, we will take the second of these approaches and therefore the aim will be to avoid assumptions that would lead to full classical logic.

As such, we wish to avoid logical principles such as double negation implication ($\neg \neg \varphi \rightarrow \varphi$) or equivalences between some logical symbols (for example $(\varphi \rightarrow \psi) \rightarrow (\neg \varphi \lor \psi)$). Furthermore, it is well known that the scheme of Foundation implies the law of Excluded Middle and, by Diaconescu's Theorem \cite{Diaconescu1975}, as does the Axiom of Choice (in either the form that there exist choice functions or that every set can be well-ordered). On the other hand, it is known that Foundation's classically equivalent couterpart of the $\in$-Induction Scheme is intuitionistically compatible and is often included in the formulation of intuitionistic theories. Similarly, Grayson (\cite{Grayson1975, Grayson1978}) proves that, starting from a model of \tf{ZFC}, Zorn's lemma holds in every Heyting-valued model and is thus consistent with \tf{IZF}. We refer the interested reader to the last section of \cite{Bell2011} for a proof of this. It is worth observing here that when we formulate the theory $\ZFepsilon$ we shall do so with Induction. Moreover, for any cardinal $\kappa$ it is possible to realize a version of Zorn's lemma of length $\kappa$ in Krivine's realizability models, see \cite{FontanellaGeoffroy2020} for more details. \\

\noindent A general guiding principle for intuitionistic logic is the \emph{Brouwer-Heyting-Kolmogorov Interpretation} which is an analogous statement to the Curry-Howard correspondence. This is an informal idea as to how one should interpret logical connectives in terms of what they can prove. The interpretation is purposely stated in a vague way without a formal definition of any of the concepts such as ``proof'' or ``program''. 

\begin{definition}[The Brouwer-Heyting-Kolmogorov Interpretation] \,
    \begin{itemize}
        \item There is no proof of $\perp$.
        \item $p$ is a proof of $\varphi \land \psi$ iff $p$ is a pair $( q, r )$ where $q$ proves $\varphi$ and $r$ proves $\psi$.
        \item $p$ is a proof of $\varphi \lor \psi$ iff $p$ is a pair $( n, q )$ where $n = 0$ and $q$ proves $\varphi$ or $n = 1$ and $q$ proves $\psi$.
        \item $p$ proves $\varphi \rightarrow \psi$ iff $p$ is a program which transforms any proof of $\varphi$ into a proof for $\psi$.
        \item $p$ proves $\neg \varphi$ iff $p$ proves $\varphi \rightarrow \perp$.
        \item $p$ proves $\exists x \varphi(x)$ iff $p$ is a pair $( a, q )$ where $q$ is a proof of $\varphi(a)$.
        \item $p$ proves $\forall x \varphi(x)$ iff $p$ is a program such that for any set $a$, $p(a)$ is a proof of $\varphi(a)$. 
    \end{itemize}
\end{definition}

\noindent Kleene's original method of realizability is now seen as a realisation of this interpretation\footnote{However it is worth pointing out that while Kleene was aware of the BHK interpretation he considered it more of a hindrance than help in formalising realizability \cite{Kleene1971}.} and gives an explicit connection between intuitionistic logic and computable functions. In the original paper, \cite{Kleene1945}, S.C. Kleene developed the idea of realizability in order to show that Church's thesis\footnote{which is the claim that if $\forall n \in \mathbb{N} \exists m \in \mathbb{N} \varphi(x, y)$ then there exists a recursive function $f$ such that $\forall n \in \mathbb{N} \varphi(n, f(n))$} can be consistently added to Heyting arithmetic. This was done by recursively defining what it means for a natural number $n$ to realize a formula $\varphi$ in arithmetic, written $n \Vdash \varphi$, as follows:
\begin{align*}
    n & \Vdash t = s && \quad \text{ iff } \qquad  t = s, \\
    n & \Vdash \varphi \land \psi && \quad \text{ iff } \qquad \mathbf{1^{st}}(n) \Vdash \varphi \text{ and } \mathbf{2^{nd}}(n) \Vdash \psi, \\
    n & \Vdash \varphi \rightarrow \psi && \quad \text{ iff } \qquad \text{for every } m \in \mathbb{N}, \text{ if } m \Vdash \varphi \text{ then } \{n\}(m) \downwards \text{ and } \{n\}(m) \Vdash \psi, \\
    n & \Vdash \varphi \lor \psi && \quad \text{ iff } \qquad \mathbf{1^{st}}(n) = 0 \text{ and } \mathbf{2^{nd}}(n) \Vdash \varphi \text{ or } \mathbf{1^{st}}(n) \neq 0 \text{ and } \mathbf{2^{nd}}(n) \Vdash \psi, \\
    n & \Vdash \forall x \varphi(x) && \quad \text{ iff } \qquad \text{for all } m \in \mathbb{N}, \, \{n\}(m) \downwards \text{ and } \{n\}(m) \Vdash \varphi(m), \\
    n & \Vdash \exists x \varphi && \quad \text{ iff } \qquad \mathbf{2^{nd}}(n) \Vdash \varphi(\mathbf{1^{st}}(n)),
\end{align*}
where $( \cdot ) \colon \mathbb{N} \rightarrow \mathbb{N} \times \mathbb{N}$ is some primitive recursive bijection whose projections are $\mathbf{1^{st}}$ and $\mathbf{2^{nd}}$, $\{n\}$ is the $n^{th}$ Turing machine (according to some fixed enumeration) and $\{n\}(m) \downwards$ is the assertion that the $n^{th}$ Turing machine halts on input $m$.

Kleene extended this idea to replace the natural numbers with other realizers such as recursive functions or more general algebras. The notion of realizability was further generalised from Heyting arithmetic to all Intuitionistic set theory by Myhill and Friedman \cite{Myhill1971} and McCarty \cite{McCartyPhD} and then to Constructive set theory by Aczel and Rathjen \cite{Rathjen2003}. For further details on the history of realizability we refer the reader to the survey article \cite{Oosten2002}. \\

\noindent It is worthwhile to finish this section with a few more details on McCarty's definition of the realizability structure because this will influence the presentation of Krivine realizability that we shall give. To fit with Kleene's original number realizability we will continue to use natural numbers, noting that $\omega$ can be replaced with any appropriately defined algebra. For simplicity, we will also use \tf{ZFC} as our background universe even though this is not necessary. 

We begin by defining a hierarchy $\tf{V}(Kl)_\alpha$ by recursion on the ordinals as: 
\[
\tf{V}(Kl)_\alpha \coloneqq \bigcup_{\beta \in \alpha} \mathcal{P}(\omega \times \tf{V}(Kl)_\beta)
\]
and set the universe of realizable sets to be $\tf{V}(Kl) \coloneqq \bigcup_{\alpha \in \tf{Ord}} \tf{V}(Kl)_\alpha$. It then follows that each element, $a$, of the realizable universe is a collection of pairs $( n, b )$ where $n \in \omega$ can be seen as a witness to the claim that $b$ is in $a$. For $a, b \in \tf{V}(Kl)$ one then defines what it means for $n \in \omega$ to realize the atomic formulas $a \in b$ and $a = b$ by transfinite recursion as
\begin{align*}
    n & \Vdash a \in b && \quad \text{ iff } \qquad  \text{there exists } c \in \tf{V}(Kl) \text{ such that } ( \mathbf{1^{st}}(n), c ) \in b \text{ and } \mathbf{2^{nd}}(n) \Vdash a = c, \\
    n & \Vdash a = b && \quad \text{ iff } \qquad \text{for all } m \in \omega \text{ and } c \in \tf{V}(Kl), \text{ if } ( m, c ) \in a \text{ then }  \{\mathbf{1^{st}}(n)\}(m) \Vdash c \in b, \\
    & && \phantom{ \quad \text{ iff } \qquad} \text{ and if } ( m, c ) \in b \text{ then } \{\mathbf{2^{nd}}(n)\}(m) \Vdash c \in a. 
\end{align*}
\pagebreak
Finally, one extends Kleene's notion of realizability to all first-order formulas as
\begin{align*}
    n & \Vdash \varphi \land \psi && \quad \text{ iff } \qquad \mathbf{1^{st}}(n) \Vdash \varphi \text{ and } \mathbf{2^{nd}}(n) \Vdash \psi, \\
    n & \Vdash \varphi \rightarrow \psi && \quad \text{ iff } \qquad \text{for every } m \in \omega, \text{ if } m \Vdash \varphi \text{ then } \{n\}(m) \Vdash \psi, \\
    n & \Vdash \varphi \lor \psi && \quad \text{ iff } \qquad \mathbf{1^{st}}(n) = 0 \text{ and } \mathbf{2^{nd}}(n) \Vdash \varphi \text{ or } \mathbf{1^{st}}(n) \neq 0 \text{ and } \mathbf{2^{nd}}(n) \Vdash \psi, \\
    n & \Vdash \forall x \varphi(x) && \quad \text{ iff } \qquad \text{for all } a \in \tf{V}(Kl), \, n \Vdash \varphi(a), \\
    n & \Vdash \exists x \varphi && \quad \text{ iff } \qquad \text{there exists } a \in \tf{V}(Kl) \text{ such that } n \Vdash \varphi(a).
\end{align*}
McCarty proves that in this way one can build a theory which is closed under deductions in intuitionistic logic and satisfies every axiom of \tf{IZF}.

\section{Double Negation Translations} \label{section:DoubleNegation}

The second ingredient that influenced Krivine's method of realizability is the \emph{double negation translation}. This is a technique developed by Kleene \cite{Kleene1952} and further extended to set theory by Friedman \cite{Friedman1973} in order to interpret classical mathematics in intuitionistic mathematics. 

The essential idea is, for given first-order formulas $\varphi$ and $\psi$, to produce two translations $\varphi^\star$ and $\psi^{\uminus}$ such that
\begin{itemize}
    \item If $\tf{IZF} \vdash \varphi$ (respectively $\tf{ZF} \vdash \varphi$) then $\tf{IZF} \setminus \ff{Ext.} \vdash \varphi^\star$ (respectively $\tf{ZF} \setminus \ff{Ext.} \vdash \varphi^\star$),
    \item If $\tf{ZF} \setminus \ff{Ext.} \vdash \psi$ then $\tf{IZF} \setminus \ff{Ext.} \vdash \psi^{\uminus}$.
\end{itemize}

From this, one can conclude that the four theories above are all equiconsistent. This is because if \tf{ZF} were to derive a contradiction then, by the first interpretation, so would $\tf{ZF} \setminus \ff{Ext.}$ and so, by the second interpretation, $\tf{IZF} \setminus \ff{Ext.}$ would also derive a contradiction. Finally, since $\tf{IZF} \setminus \ff{Ext.}$ is a subtheory of \tf{IZF}, we must also be able to derive a contradiction in \tf{IZF}. 

\begin{remark}
    In the axiomatisation of $\tf{ZF} \setminus \ff{Ext.}$, Friedman replaces the Replacement Scheme with the Collection Scheme and the Power Set axiom with the Axiom of Weak Power Set and this will also be how we formulate the theory $\ZFepsilon$ in \Cref{section:ZFepsilon}. The reason for replacing Power Set is because the double negation of this axiom does not appear to be provable in the theory \tf{IZF}, however the intuitionistically weaker (but under \tf{ZF} without Power Set equivalent) notion of weak power set is provable. 

    The reason for replacing the Replacement Scheme with the Collection Scheme is much clearer. This is because, by work of Scott \cite{Scott1961}, it is known that \tf{ZF} without Extensionality formulated with only the Replacement Scheme is equiconsistent with the much weaker theory of Zermelo Set Theory.
\end{remark}

\noindent The first interpretation, $^{\star}$, works by ``\emph{simulating}'' extensionality through a new definable relation, $\sim$. Essentially, $a \sim b$ indicates that there is some equivalence relation $E$ with $( a, b ) \in E$ and for any $x \in \ff{trcl}(a)$ there is some $y \in b$ such that $( x, y ) \in E$ and vice versa. Therefore, two sets are related if there is a single equivalence relation witnessing that; $a$ is related to $b$, every element of $\ff{trcl}(a)$ is related to an element of $\ff{trcl}(b)$ and every element of $\ff{trcl}(b)$ is related to an element of $\ff{trcl}(a)$.

We can then define a new relation $\in^\star$ by
\[
a \in^\star b \quad \Longleftrightarrow \quad \exists x \in b (x \sim a).
\]
Using this, the $^\star$ interpretation is defined by:

\begin{definition}
    For $\varphi$ a first-order formula, let $\varphi^\star$ be the formula which is abbreviated by the result of replacing each instance of $\in$ in $\varphi$ with $\in^\star$ and $=$ by $\sim$.
\end{definition}

\noindent For the second interpretation, we take a $\neg \neg$\emph{-translation}, the details of which can be found in Section 81 of \cite{Kleene1952}. The idea is to define a translation that transforms classically valid propositions into ones that are classically equivalent but still intuitionistically valid. In particular, we will have that if classical logic deduces $\varphi$ then intuitionistic logic deduces $\varphi^{\uminus}$ and classical logic deduces $\varphi$ if and only if it deduces $\varphi^{\uminus}$.

\begin{definition}
    The translation $\varphi^{\uminus}$ is defined recursively for $\varphi$ a first-order formula by the following specifications:
    
    \noindent \begin{tabular}{l l l l} \setlength\itemsep{0pt}
    a. & $(a \in b)^{\uminus}$ & \quad $\equiv$ \qquad & $\neg \neg (a \in b)$, \\
    ~b. & $(\varphi \land \psi)^{\uminus}$ & \quad $\equiv$ \qquad & $\varphi^{\uminus} \land \psi^{\uminus}$, \\
    ~c. & $(\varphi \lor \psi)^{\uminus}$ & \quad $\equiv$ \qquad & $\neg \neg (\varphi^{\uminus} \lor \psi^{\uminus})$, \\
    ~d. & $(\varphi \rightarrow \psi)^{\uminus}$ & \quad $\equiv$ \qquad & $\varphi^{\uminus} \rightarrow \psi^{\uminus}$, \\
    ~e. & $(\neg \varphi)^{\uminus}$ & \quad $\equiv$ \qquad & $\neg (\varphi^{\uminus})$, \\
    ~f. & $\forall x ~ \varphi(x, u)$ & \quad $\equiv$ \qquad & $\forall x ~ \varphi^{\uminus}(x, u)$, \\
    ~g. & $\exists x ~ \varphi(x, u)$ & \quad $\equiv$ \qquad & $\neg \neg \exists x ~ \varphi^{\uminus}(x, u)$.
\end{tabular}
\end{definition}

\noindent In \Cref{section:realizabilityintro} we will discuss how intuitionistic realizability and the double negation translation naturally led to Krivine's notion of realizability. However, before that we will briefly introduce the notion of \emph{lambda terms} which are an important part of the theory of realizability.

\section{Lambda terms} \label{section:LambdaTerms}

The final preliminary information we will need is the idea of lambda terms. We will only explain the basic concepts of this extensive area of research that are needed for the rest of this paper. We then refer the reader to \cite{Barendregt1985} for a more comprehensive study of $\lambda$ calculus.

$\lambda$-calculus was devised by Church in the 1930s as a simple semantics for defining computations, with the idea being to produce a formalisation of mathematics using functions as the basic concepts. One main principle of $\lambda$-calculus is that it treats functions ``\emph{anonymously}''. By this we mean that instead of defining the function $f$ such that $f(x) = x^2 + 1$, we write this as $\lambda u \lambdaapp u^2 + 1$. The class of terms will be defined recursively using two rules: \emph{application} and \emph{abstraction}.

\begin{definition}
    The class of $\lambda$-terms is defined to be the smallest class $X$ such that
    \begin{itemize} \setlength \itemsep{0pt}
        \item Every variable is in $X$,
        \item If both $s$ and $t$ are in $X$ then so is $\app{s}{t}$, \hfill (\emph{application})
        \item If $s$ is in $X$ and $u$ is a variable, then $\lambda u \lambdaapp s$ is in $X$. \hfill (\emph{abstraction})
    \end{itemize}
\end{definition}

\begin{remark} \,
    \begin{itemize} \setlength \itemsep{0pt}
        \item Application will be a left associative operation. So $tsr$ is $\fapp{\app{t}{s}}{r}$,
        \item Abstraction will have higher priority than application. So $\lambda u \lambdaapp \app{t}{s}$ is $\lambda u \lambdaapp (\app{t}{s})$.
    \end{itemize}
\end{remark}

\begin{definition} \label{definition:closedterms}
    The free variables of a term $t$, $\ff{FV}(t)$, are defined inductively as follows:
    \begin{itemize}
        \item If $u$ is a variable, then $\ff{FV}(u) = \{ u\}$,
        \item $\ff{FV}(\app{t}{s}) = \ff{FV}(t) \cup \ff{FV}(s)$,
        \item $\ff{FV}(\lambda u \lambdaapp t) = \ff{FV}(t) \setminus \{ u \}$.
    \end{itemize}
    A term $t$ is said to be \emph{closed} if $\ff{FV}(t) = \emptyset$ and \emph{open} otherwise. 
\end{definition}

\noindent Two terms are said to be $\alpha$\emph{-equivalent}, written $t \equiv_\alpha s$, if they are equal up-to a change in variables. For example, $\lambda u \lambdaapp \app{u}{w} \equiv_{\alpha} \lambda v \lambdaapp \app{v}{w}$, however $\lambda u \lambdaapp \app{u}{w}$ it is not equivalent to $\lambda w \lambdaapp \app{w}{w}$ because $w$ occurs freely in the first expression but becomes bound in the second one. We will consider terms to be equal up-to $\alpha$-equivalence and freely change between such equivalences when it aids readability. 

The main way we shall compare two different terms is through $\beta$\emph{-reduction}:
\[
(\lambda u \lambdaapp t)s \rightarrow_{\beta} t[u \coloneqq s],
\]
where $t[u \coloneqq s]$ is the substitution of every free occurrence of $u$ in the term $t$ with the term $s$.

We end this section by noting a few $\lambda$-terms that will be used frequently in this work: the \emph{identity term} and the \emph{Church numerals}.

\begin{definition} \label{ChurchNumerals} \,
    \begin{itemize}
        \item $\identity$ denotes the $\lambda$-term $\lambda u \lambdaapp u$,
        \item $\underline{0}$ denotes the $\lambda$-term $\lambda u \lambdaapp \lambda v \lambdaapp v$,
        \item For $n \in \omega$, $\underline{n+1}$ denotes the $\lambda$-term $\lambda u \lambdaapp \lambda v \lambdaapp \twoapp{\app{\underline{n}}{u}}{\app{u}{v}}$. In particular, $\underline{1}$ denotes the $\lambda$-term $\lambda u \lambdaapp \lambda v \lambdaapp \app{u}{v}$.
    \end{itemize}
\end{definition}

\begin{remark} \label{ChruchNumeralSuccessor}
Let $\rlzfont{s} = \lambda n \lambdaapp \lambda u \lambdaapp \lambda v \lambdaapp \twoapp{\app{n}{u}}{\app{u}{v}}$. Then, for every $n \in \omega$ we have $\app{\rlzfont{s}}{\underline{n}} \rightarrow_\beta \underline{n+1}$.
\end{remark}

\section{Introduction to Realizability} \label{section:realizabilityintro}

Bringing all of the threads together, we now discuss the intuition behind the realizability models before formally constructing them in the next sections. Since Kleene's original method results in intuitionistic models and we wish to produce classical models, the idea is to combine realizability with a double negation translation. Therefore, instead of working with realizers providing evidence for the ``truth'' of an assertion we will want to provide evidence that an assertion is not false.

Our algebra will consist of two sets: the \emph{terms}, $\Lambda$, and the \emph{stacks}, $\Pi$. There will also be a special subset of the terms, $\mathcal{R} \subseteq \Lambda$, called the \emph{realizers}, which will be used to indicate the formulas that are true in our resulting model.

To each formula, $\varphi$ we will then assign two values: its ``\emph{truth}'' value, $\verity{\varphi} \subseteq \Lambda$, which can be seen as providing evidence for the formula and its ``\emph{falsity}'' value, $\falsity{\varphi} \subseteq \Pi$, which can be seen as providing evidence against the formula.

Finally, a \emph{process} will consist of a pair $( t, \pi )$ where $t \in \Lambda$ and $\pi \in \Pi$. We will specify some set of processes $\Perp \subseteq \Lambda \times \Pi$, called the \emph{pole}, which can be thought of as a specification of when a truth value is incompatible with a falsity value. \\

\noindent Now, to say that $t$ provides evidence that a statement is not false, or alternative $t$ \emph{realizes} a statement, is to say that $t$ is incompatible with anything that falsifies the statement. In other words, $t \in \verity{\varphi}$ if and only if $\forall \pi \in \falsity{\varphi}$, $( t, \pi ) \in \Perp$. This leads to the idea that a formula $\varphi$ is ``\emph{true}'' if there is nothing falsifying it, i.e. $\falsity{\varphi} = \emptyset$ and ``\emph{false}'' if everything falsifies it, i.e. $\falsity{\varphi} = \Pi$. Thus if we let $\top$ and $\perp$ symbolise true and false this leads to the definitions $\falsity{\top} = \emptyset$ and $\falsity{\perp} = \Pi$.

We next consider what it means to realize an implication. Firstly, $t \in \verity{\varphi \rightarrow \psi}$ if for any $\pi \in \falsity{\varphi \rightarrow \psi}$, $( t, \pi ) \in \Perp$. So what does $\pi$ look like? Well evidence against $\varphi \rightarrow \psi$ would be a combination of evidence for $\varphi$, say $s \in \verity{\varphi}$, and evidence against $\psi$, say $\sigma \in \falsity{\psi}$. This is written $\pi = s \stackapp \sigma$, which leads to the following ``definition'',
\[
\falsity{\varphi \rightarrow \psi} = \{ s \stackapp \sigma \divline s \in \verity{\varphi}, \sigma \in \falsity{\psi} \}.
\]
This leads to the conclusion that it may in general be very difficult for an implication to be ``\emph{true}'' in our realizability model. This is because we have no control over what $t$ is. As an example, consider the sentence $\perp \rightarrow \perp$, i.e. the sentence false implies false (which should be ``true'' in the realizability model). Now suppose we had picked $t$ such that for any $\pi \in \Pi$, $( t, \pi ) \in \Perp$.\footnote{This is indeed possible in any realizability models whose pole in non-empty. For example, using the formalised notation we will introduce in the next section, suppose that $\Perp \neq \emptyset$ and fix $t_0 \star \pi_0 \in \Perp$. Then if $s \coloneqq \app{\saverlz{t_0 \stackapp \pi_0}}{\identity}$, for any $\pi \in \Perp$ we can prove that $s \star \pi \succ \saverlz{t_0 \stackapp \pi_0} \star \identity \stackapp \pi \succ \identity \star t_0 \stackapp \pi_0 \succ t_0 \star \pi_0 \in \Perp$.} Then, in particular, for any $\pi \in \falsity{\perp} = \Pi$, $( t, \pi ) \in \Perp$ so $t \in \verity{\perp}$. Thus $t \stackapp \pi \in \falsity{\perp \rightarrow \perp}$ for any $\pi \in \Pi$ and so the implication is not fully ``true'' in the model. Instead we will need to restrict our witnesses to the truth of a formula to the ``\emph{well-behaved}'' terms, which is the set of \emph{realizers}, $\mathcal{R}$. \\

\noindent The other logical symbol we consider is the universal quantifier. Since all other logical symbols can be deduced from $\rightarrow$ and $\forall$ (alongside $\top$, $\perp$ and the atomic formulas) it will suffice to only have explicit realizers for these two things. Again to do this we consider the falsity value of a universal quantifier, $\falsity{\forall x \varphi(x)}$. Evidence against $\forall x \varphi(x)$ would be some witness $a$ and evidence against $\varphi(a)$. This leads us to the following ``definition'' where the union should be taken over the ``\emph{universe of discourse}'' which we shall define later
\[
\falsity{\forall x \varphi(x)} = \bigcup \falsity{\varphi(a)}.
\]

\noindent It only remains to discuss the ``universe of discourse'' and atomic formulas, and for this we will follow McCarty's construction of realizability structures. Recall that we defined a hierarchy $\tf{V}(Kl)_\alpha$ recursively using $\tf{V}(Kl)_{\alpha + 1} = \mathcal{P}(\omega \times \tf{V}(Kl)_\alpha)$ where $\omega$ was the set of realizers. Since in our construction the falsity values are more fundamental than the truth values we will instead use $\Pi$ and take $\tf{N}_{\alpha + 1} \coloneqq \mathcal{P}(\tf{N}_\alpha \times \Pi)$\footnote{we note the unfortunate feature that the ordering of the pairs switches between the two different notions of realizability, so for Kleene's method the realizers come first and for Krivine's method the stacks come second} and also take unions at limit stages. Then the universe of discourse will be $\tf{N} \coloneqq \bigcup_{\alpha \in \tf{Ord}} \tf{N}_\alpha$. It will be shown, using a combination of \Cref{{theorem:rlzmodelModelsZFepsilon},{theorem:ZFepsilonToZF}}, that we can find binary relations $\in$ and $\simeq$ for which $(\tf{N}, \in, \simeq) \models \tf{ZF}$.

We are now left with the most technical task, which is to define the atomic formulas. In the double negation translation, one worked in a model of $\tf{ZF} \setminus \ff{Ext.}$ which contained the original elementhood relation $\in$ and used this to simulate a new elementhood relation $\in^\star$ which was compatible with extensionality. An important thing to note about this simulated relation is that it extends the real relation in that if $a \in b$ then $a \in^\star b$. We will do the same here, however for notational reasons the ``original'' non-extensional relation will be denoted by $\rlzin$ while the ``simulated'' relation will be denoted by $\in$. Moreover, since we are working primarily with negations it will turn out that it is much simpler to realize $\notrlzin$ and $\not\in$. 

For $\rlzin$, evidence that a set $b$ is not in a set $a$ is simply some $\pi \in \Pi$ for which $(b, \pi) \in a$. Therefore, $\falsity{b \notrlzin a} = \{ \pi \in \Pi \divline (b, \pi) \in a \}$. However, to see why this is not an extensional relation, let $\theta$ be any fixed realizer and consider the names $1_A = \{ (\emptyset, \pi) \divline \pi \in \Pi \}$ and $1_B = \{ (\emptyset, \theta \stackapp \pi) \divline \pi \in \Pi \}$. It is possible to realize that $0 \rlzin 1_A$ and $0 \rlzin 1_B$ and $0$ is the only set in either $1_A$ or $1_B$. However, these two sets are not equal (this is similar to how, in forcing, we can construct two names for the same object). In fact, it will turn out that the realizability model does not satisfy the statement $\forall x (1_A \notrlzin x \leftrightarrow 1_B \notrlzin x)$. Instead we shall use $\rlzin$ to simulate a relation which is compatible with extensionality. \\

\noindent The extensional relation $\in$ will be defined simultaneously alongside the relation $\subseteq$ which will give the extensional subset relation. Extensional equality, $\simeq$, can then be defined as a conjunction of $a \subseteq b$ and $b \subseteq a$. We observe here that the constructions of these atomic formulas are influenced by their intuitionistic counterparts that were defined in \Cref{section:IntuitionisticRealizability}. There we defined $n \Vdash a \in b$ whenever there was some $c$ such that $( \mathbf{1^{st}}(n), c ) \in b$ and $\mathbf{2^{nd}}(n) \Vdash a = c$ and $n \Vdash a \subseteq b$ whenever it was the case that for any realizer $m \in \omega$ and set $c$, if $( m, c ) \in a$ then $\{\mathbf{1^{st}}(n)\}(m) \Vdash c \in b$. Mirroring this, $a \not\in b$ can be written as $\forall c (a \simeq c \rightarrow c \notrlzin b)$. Thus, using the falsity values for implications, evidence against the assertion $a \not\in b$ would be to find some name $c$, stack $\pi \in \Pi$ and terms $t, t' \in \Lambda$ for which $(c, \pi) \in b$, $t \Vdash c \subseteq a$ and $t' \Vdash a \subseteq c$. This leads to the definition,
\[
\falsity{a \not\in b} = \bigcup_{c \in \ff{dom}(b)} \{ t \stackapp t' \stackapp \pi \divline (c, \pi) \in b \land t \Vdash c \subseteq a \land t' \Vdash a \subseteq c \}. 
\]
Similarly, evidence against the assertion $a \subseteq b$ would be some name $c$, stack $\pi \in \Pi$ and realizer $s$ for which $(c, \pi) \in a$ and $t \Vdash c \not\in b$. This leads to the final definition,
\[
\falsity{a \subseteq b} = \bigcup_{c \in \ff{dom}(a)} \{ (c, \pi) \in a \land t \Vdash c \not\in b \}. 
\]

\noindent We end this section by remarking on how much of the Brouwer-Heyting-Kolmogorov interpretation we should expect to get in a model which satisfies classical logic. Firstly, both $\forall$ and $\rightarrow$ will follow the interpretation. For example, it will be shown in \Cref{theorem:ImplicationandApplication} that if $t \Vdash \varphi \rightarrow \psi$ and $s \Vdash \varphi$ then $\app{t}{s} \Vdash \psi$ where $\app{t}{s} \in \Lambda$ is a term that represents the \emph{application} of $t$ and $s$. Secondly, while we will not do it in these notes, it is possible to explicitly realize conjunctions in such a way that a realizer for $\varphi \land \psi$ is a term of two parts, the first of which realizes $\varphi$ while the second realizes $\psi$. However, this is as far as we should expect the interpretation to go. Being able to realize a disjunction gives a theory the \emph{disjunction property}. This is a very useful tool in intuitionistic mathematics because it tells us that if our theory can deduce $\varphi \lor \psi$ then either it can deduce $\varphi$ or it can deduce $\psi$. However, in classical mathematics this is not possible. For example, by Excluded Middle we can always prove $\ff{Con}(T) \lor \neg \ff{Con}(T)$ for any theory $T$. On the other hand, we cannot in general say which of the two is actually true. 

In a similar way, the specification for existential statements leads to the \emph{existence property} which essentially says that if $\exists x \varphi(x)$ is deducible in a theory then we can explicitly find a witness $a$ for which $\varphi(a)$ holds. As before, in classical logic this cannot be possible. For example, let $A$ be equal to $\{1\}$ is $\varphi$ holds and $\{0\}$ otherwise. Then it is clear that we have $\exists x (x \in A)$ but to explicitly determine the value of $x$ would be to determine $\varphi \lor \neg \varphi$. 

\section{Realizability Algebras} \label{section:RealizabilityAlgebras}

In order to construct a realizability algebra that can handle classical logic it will be necessary to extend the $\lambda$-calculus to also include what is known as $\lambda_c$-terms. This is an expanded calculus which also includes the term \cc, which is known as \emph{call-with-current-continuation}. We refer the reader to \cite{FelleisenFriedmanKohlbeckerDuba1987} for a presentation of this calculus. In order to state realizability algebras in full generality, we will also include two sets; the set of \emph{special instructions} $A$, and the set of \emph{stack bottoms} $B$, both of which may be empty.

We will define three sets; $\Lambda^{\ff{open}}_{(A, B)}$, which consists of all (possibly open) $\lambda_c$-terms; $\Lambda_{(A, B)}$, which consists of all \emph{closed} $\lambda_c$-terms; and $\Pi_{(A, B)}$, which consists of all stacks. Recall that a closed term was defined in \Cref{definition:closedterms} to be a term in which every variable occurs within the scope of a $\lambda$-abstraction and we restrict our attention to the closed terms to have a well-defined notion of computation. To ease notation, we shall refer to elements of $\Lambda_{(A, B)}$ simply as \emph{terms} and therefore assume that all terms are closed unless explicitly stated. 

\begin{definition}
    The classes $\Lambda^{\ff{open}}_{(A, B)}$, $\Lambda_{(A, B)}$ and $\Pi_{(A, B)}$ are defined simultaneously by induction as the smallest classes closed under the following rules:
    
    \noindent $\Lambda^{\ff{open}}_{(A, B)}$:
    \begin{itemize}
        \item Any variable is in $\Lambda^{\ff{open}}_{(A, B)}$,
        \item If $t$ and $s$ are in $\Lambda^{\ff{open}}_{(A, B)}$ then so is $\app{t}{s}$, \hfill (\emph{application})
        \item If $u$ is a variable and $t$ is in $\Lambda^{\ff{open}}_{(A, B)}$ then $\lambda u \lambdaapp t$ is in $\Lambda^{\ff{open}}_{(A, B)}$, \hfill ($\lambda$\emph{-abstraction}),
        \item $\cc$ is in $\Lambda^{\ff{open}}_{(A, B)}$ \hfill (\emph{call-with-current-continuation}),
        \item If $\pi$ is in $\Pi_{(A, B)}$ then $\saverlz{\pi}$ is in $\Lambda^{\ff{open}}_{(A, B)}$, \hfill (\emph{continuation constant}),
        \item $t_a$ is in $\Lambda^{\ff{open}}_{(A, B)}$ for $a < A$. \hfill (\emph{special instructions})
    \end{itemize} 
    \noindent $\Lambda_{(A, B)}$:
    \begin{itemize}
        \item Any element of $\Lambda^{\ff{open}}_{(A, B)}$ which is \emph{closed} is in $\Lambda_{(A, B)}$.
    \end{itemize}
    \noindent $\Pi_{(A, B)}$:
    \begin{itemize}
        \item If $t$ is in $\Lambda_{(A, B)}$\footnote{so, in particular, $t$ is \emph{closed}} and $\pi$ is in $\Pi_{(A, B)}$ then $t \stackapp \pi$ is in $\Pi_{(A, B)}$, \hfill (\emph{push}),
        \item $\omega_b$ is in $\Pi_{(A, B)}$ for $b \in B$. \hfill (\emph{stack bottoms})
    \end{itemize}
\end{definition}

\begin{definition}
    A term $t$ is called a \emph{realizer} if it contains no occurrence of a continuation constant. We denote by $\rlzset_{(A, B)}$ the collection of all realizers.
\end{definition}

\begin{definition}
    A \emph{process} is an ordered pair $(t, \pi)$ where $t \in \Lambda_{(A, B)}$ and $\pi \in \Pi_{(A, B)}$ and will be denoted $t \star \pi$. We denote by $\Lambda_{(A, B)} \star \Pi_{(A, B)}$ the set of all processes. 
    
    Given such a process, $t$ will be called the \emph{head} and $\pi$ the \emph{tail} of the process.
\end{definition}

\noindent We now give a pre-order relation, $\succ$, on the collection of process which is defined as the smallest ordering satisfying four simple rules. Often the ordering is further extended by additional specifications, possibly using the special instructions or stack bottoms. 

\begin{definition}
A \emph{one-step evaluation} of the set of processes $\Lambda_{(A, B)} \star \Pi_{(A, B)}$ is a relation $\succ_1$ which satisfies the four rules
\begin{align*}
    \app{t}{s} \star \pi & \quad \succ_1 \quad t \star s \stackapp \pi && \text{(push)}, \\
    \lambda u \lambdaapp t \star s \stackapp \pi & \quad \succ_1 \quad t[u \coloneqq s] \star \pi && \text{(grab)}, \\
    \cc \star t \stackapp \pi & \quad \succ_1 \quad t \star \saverlz{\pi} \stackapp \pi && \text{(save)},\\
    \saverlz{\sigma} \star t \stackapp \pi & \quad \succ_1 \quad t \star \sigma && \text{(restore)}.
\end{align*}
The relation $\succ$ denotes the reflexive-transitive closure of $\succ_1$.
\end{definition}

\begin{remark}
    An example of an additional instruction that is regularly added to a countable realizability algebra is the instruction \emph{quote}, $\rlzfont{q}$, which allows one to compute the ``\emph{code}'' of a given stack. The instruction comes with the additional rule 
    \[
    \rlzfont{q} \star t \stackapp s \stackapp \pi \succ_1 t \star \underline{\eta}_s \stackapp \pi,
    \]
    where $s \mapsto \eta_s$ is some fixed enumeration of $\Lambda$ in order-type $\omega$.  
\end{remark}

\noindent Finally we fix a final segment of the processes, known as the \emph{pole} and denoted $\Perp$. The pole will be used to determine which terms realize a given formula. Namely, given a formula $\varphi$ we shall define its falsity value $\falsity{\varphi} \subseteq \Pi_{(A, B)}$ and then say that a term $t$ realizes $\varphi$ if and only if for any $\pi \in \falsity{\varphi}$, $t \star \pi \in \Perp$.

\begin{definition}
    A \emph{pole} of the relation $\succ$ over the set of processes $\Lambda_{(A, B)} \star \Pi_{(A, B)}$ is a set $\Perp \subseteq \Lambda_{(A, B)} \star \Pi_{(A, B)}$ such that
    \[
    ((s \star \sigma \succ t \star \pi) \; \land \; (t \star \pi \in \Perp)) \; \rightarrow \; s \star \sigma \in \Perp.
    \]
\end{definition}

\begin{definition}
    A \emph{realizability algebra} is a tuple $\mathcal{A} \coloneqq (\Lambda_{(A, B)}, \Pi_{(A, B)}, \prec, \Perp)$.
\end{definition}

\begin{notation}
    Brackets around terms will mainly be used to assist in comprehension and application will be a left associative operation. 

    To see an example, we will use the intuition behind realizability from \Cref{section:realizabilityintro}. So, suppose that $t_i \Vdash \varphi_i$ and $\pi \in \Pi$. Then $t_1 \stackapp \pi$ would be a falsifier for the statement $\varphi_1 \rightarrow \perp$. Next, a realizer for the statement $\varphi_2 \rightarrow (\varphi_1 \rightarrow \perp)$ would be the process $t_2 \star t_1 \stackapp \pi$ and $\app{t_2}{t_1} \star \pi \succ t_2 \star t_1 \stackapp \pi$. Taking another step, a realizer for $\varphi_3 \rightarrow (\varphi_2 \rightarrow (\varphi_1 \rightarrow \perp))$ would correspond to $t_3 \star t_2 \stackapp t_1 \stackapp \pi$ and $\fapp{\app{t_3}{t_2}}{t_1} \star \pi \succ \app{t_3}{t_2} \star t_1 \stackapp \pi \succ t_3 \star t_2 \stackapp t_1 \stackapp \pi$.
\end{notation}

\begin{remark}
    When it is either clear from the context or not important to the discussion, we will drop mention of $A$ and $B$ and instead refer to the realizability algebra as $(\Lambda, \Pi, \prec, \Perp)$.
\end{remark}

\section{The Theory \texorpdfstring{$\ZFepsilon$}{ZFepsilon}} \label{section:ZFepsilon}

Throughout these notes, we will work in first-order logic \emph{without equality}. In addition, the only primitive logical symbols we will include in are language are $\rightarrow, \top, \perp$ and $\forall$.

\begin{definition}    
    Let $\mathcal{L}_{\in}$ be the first-order language with non-logical symbols the binary relations $\in$ and $\simeq$ (which will be interpreted as the usual equality symbol, $=$). 

    Let $\mathcal{L}_{\rlzin}$ be the first-order language with non-logical symbols the binary relations $\notrlzin$, $\not\in$ and $\subseteq$. By abusing notation, this will also be written as the language with non-logical symbols the binary relations $\rlzin$, $\in$ and $\subseteq$ for readability.

    Let $Fml_\in$ and $Fml_{\rlzin}$ denote the collection of all $\mathcal{L}_\in$ and $\mathcal{L}_{\rlzin}$ formulas respectively.
\end{definition}

\noindent Using the three binary relation symbols, $\notrlzin$, $\not\in$ and $\subseteq$, in $\mathcal{L}_{\rlzin}$, we then have the following abbreviations: \\

\begin{tabular}{@{$\bullet$ \;}lll}
    $a \in b$ & \quad should be read as \quad & $a \not\in b \rightarrow \perp$, \\[2pt]
    $a \rlzin b$ & \quad should be read as \quad & $a \notrlzin b \rightarrow \perp$, \\[2pt]
    $\varphi \land \psi$ & \quad should be read as \quad & $(\varphi \rightarrow (\psi \rightarrow \perp)) \rightarrow \perp$, \\[2pt]
    $\varphi \lor \psi$ & \quad should be read as \quad & $(\varphi \rightarrow \perp) \rightarrow ((\psi \rightarrow \perp) \rightarrow \perp)$, \\[2pt]
    $\forall x \rlzin a \, \varphi(x)$ & \quad should be read as \quad & $\forall x (\neg \varphi(x) \rightarrow x \notrlzin a)$, \\[2pt]
    $\exists x \rlzin a \, \varphi(x)$ & \quad should be read as \quad & $\neg \forall x (\varphi(x) \rightarrow x \rlzin a)$, \\[2pt]
    $\exists x \varphi(x)$ & \quad should be read as \quad & $\forall x (\varphi(x) \rightarrow \perp) \rightarrow \perp$, \\[2pt]
    $a \simeq b$ & \quad should be read as \quad & $(a \subseteq b) \land (b \subseteq a)$,\\[2pt]
    $a \simeq b \rightarrow \varphi$ & \quad should be read as \quad & $(a \subseteq b) \rightarrow ((b \subseteq a) \rightarrow \varphi)$.
\end{tabular}

\begin{remark}
    Technically there is a clash in notation for $a \simeq b \rightarrow \varphi$ because $(a \subseteq b) \land (b \subseteq a) \rightarrow \varphi$ is not the same as $(a \subseteq b) \rightarrow ((b \subseteq a) \rightarrow \varphi)$. However, it can easily be seen that $(\varphi \rightarrow (\psi \rightarrow \theta))$ and $(((\varphi \rightarrow (\psi \rightarrow \perp)) \rightarrow \perp) \rightarrow \theta)$ are logically equivalent statements. Using the facts that $\ZFepsilon$ is built using classical logic and that the realizability interpretation will be closed under natural deductions (\Cref{theorem:adequacy}) will allow us to free move between logically equivalent statements and therefore we can take the abbreviation to be the ``simplest'' logically equivalent form of what we want.
\end{remark}

\noindent We now give the axioms of $\ZFepsilon$. The first axiom defines $\subseteq$ and gives the relationship between $\rlzin$ and $\in$, which intuitively says that $\in$ is obtained as the ``extensional collapse'' of $\rlzin$. The other axioms essentially state that every axiom of Zermelo-Fraenkel set theory holds with respect to the relation $\rlzin$. In the rest of these notes, we will differentiate between the standard definition of a concept and the version using $\rlzin$ using the prefixes $\in$ and $\rlzin$-. However, when it is clear from the context which theory we are working in, these prefixes will be dropped.

\pagebreak
\begin{enumerate}
    \item $\rlzin$-Extensionality axioms.
    \[
    \forall x \forall y \Big( x \in y \leftrightarrow \exists z \rlzin y (x \simeq z) \Big); \quad \forall x \forall y \Big( x \subseteq y \leftrightarrow \forall z \rlzin x (z \in y) \Big).
    \]
    \item $\rlzin$-Induction Scheme.
    \[
    \forall u_1 \dots \forall u_n \Big( \forall x \big( \forall y \rlzin x \, \varphi(y, u_1, \dots, u_n) \rightarrow \varphi(x, u_1, \dots, u_n) \big) \rightarrow \forall x \varphi(x, u_1, \dots, u_n) \Big)
    \]
    for every formula $\varphi(u, u_1, \dots, u_n)$ in $Fml_{\rlzin}$.
    \item $\rlzin$-Separation Scheme.
    \[
    \forall u_1 \dots \forall u_n \forall a \exists b \forall x ( x \rlzin b \leftrightarrow (x \rlzin a \land \varphi(x, u_1, \dots, u_n)))
    \]
    for every formula $\varphi(u, u_1, \dots, u_n)$ in $Fml_{\rlzin}$.
    \item Axiom of $\rlzin$-Pairing.
    \[ 
    \forall a \forall b \exists c ( a \rlzin c \land b \rlzin c).
    \] 
    \item Axiom of $\rlzin$-Unions.
    \[
    \forall a \exists b \, \forall x \rlzin a \, \forall y \rlzin x (y \rlzin b).
    \]
    \item Axiom of $\rlzin$-Weak Power Sets.
    \[
    \forall a \exists b \forall x \, \exists y \rlzin b \, \forall z (z \rlzin y \leftrightarrow (z \rlzin a \land z \rlzin x)).
    \]
    \item $\rlzin$-Collection Scheme.
    \[
    \forall u_1 \dots \forall u_n \forall a \exists b \, \forall x \rlzin a \Big( \exists y \varphi(x, y, u_1, \dots, u_n) \rightarrow \exists y \rlzin b \, \varphi(x, y, u_1, \dots, u_n) \Big)
    \]
    for every formula $\varphi(u, v, u_1, \dots, u_n)$ in $Fml_{\rlzin}$.
    \item $\rlzin$-Infinity Axiom.
    \[
    \forall a \, \exists b \, \Big( a \rlzin b \land \forall x (x \rlzin b \rightarrow \exists y (y \rlzin b \land x \rlzin y) \Big).
    \]
\end{enumerate}

\begin{remark}
    Note that in the formulation of the axioms of $\ZFepsilon$ we are taking the Axiom of Weak Power Sets instead of the standard Power Set Axiom. The same is also done by Friedman in his proof that \tf{ZF} is equiconsistent with \tf{IZF}, \cite{Friedman1973}. The reason for this is that when one does the translation to set theories without extensionality, this is a much easier axiom to prove, primarily because proving that one set is a subset of another is normally very difficult to see. In fact, it is unclear if the $\rlzin$ version of the Power Set Axiom can in general be realized. 

    However, in a theory with extensionality it is very easy to see that the above formulation is equivalent to the standard axiomatisation of Power Set. The Axiom of Weak Power Sets essentially says that for any set $a$ there is a set $b$ such that for all $x$, $x \cap a \in b$. In particular, if $x \subseteq a$ then $x = x \cap a \in b$. On the other hand, if the Power Set Axiom holds, then $\mathcal{P}(a)$ trivially satisfies this condition.
\end{remark}

\section{Going from \texorpdfstring{$\ZFepsilon$}{ZFepsilon} to \texorpdfstring{\tf{ZF}}{ZF} } \label{section:ZFepsilonToZF}

As proven in \cite{Krivine2001} and \cite{Krivine2012}, every axiom of \tf{ZF} is provable from the axioms of $\ZFepsilon$. By reproducing this proof, in this section we shall make something explicit which has been implicitly assumed in the literature. Namely, given a realizability model $\rlzmodel = (\rlzstr, \rlzin, \in, \subseteq)$, we obtain a model of \tf{ZF} by ``dropping'' $\rlzin$.

\begin{theorem} \label{theorem:ZFepsilonToZF}
    Suppose that $\rlzmodel = (\rlzstr, \rlzin, \in, \subseteq)$ is a model of $\ZFepsilon$. Then $(\rlzstr, \in, \simeq) \models \tf{ZF}$, where $\simeq$ interprets equality of sets.
\end{theorem}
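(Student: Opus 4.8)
The plan is to verify each axiom of \tf{ZF} (in the language $\mathcal{L}_\in$ with $\in$ and $\simeq$) directly from the axioms of $\ZFepsilon$, using the $\rlzin$-Extensionality axioms as the bridge. The key structural observation is that the first $\ZFepsilon$ axiom identifies $\in$ as the ``extensional version'' of $\rlzin$: $x \in y \leftrightarrow \exists z \rlzin y\,(x \simeq z)$, and $x \subseteq y \leftrightarrow \forall z \rlzin x\,(z \in y)$, with $\simeq$ defined as mutual $\subseteq$. So first I would establish the basic properties of $\simeq$ and $\in$ as a genuine extensional equality: reflexivity, symmetry, transitivity of $\simeq$ (symmetry is immediate from the definition; transitivity and reflexivity need a short argument, the latter presumably by $\rlzin$-Induction), and the congruence property that $x \simeq x'$ and $x \in y$ imply $x' \in y$, and similarly on the right. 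The right-hand congruence ($y \simeq y'$, $x \in y \Rightarrow x \in y'$) is where one unwinds the definitions: $x \in y$ gives $z \rlzin y$ with $x \simeq z$; then $y \subseteq y'$ gives $z \in y'$, i.e. $z' \rlzin y'$ with $z \simeq z'$, and transitivity of $\simeq$ finishes it. Crucially, Extensionality for $\in,\simeq$ holds essentially by definition: $\forall z(z \in x \leftrightarrow z \in y)$ unpacks, via the subset axiom, to $x \subseteq y \land y \subseteq x$, which is $x \simeq y$.

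Next I would handle the remaining axioms one at a time, in each case taking the witness provided by the corresponding $\rlzin$-axiom and checking it works extensionally. For $\in$-Pairing: given $a,b$, the $\rlzin$-Pairing axiom gives $c$ with $a \rlzin c$ and $b \rlzin c$; applying $\rlzin$-Separation to $c$ with the formula ``$x \simeq a \lor x \simeq b$'' yields the extensional pair. Similarly $\in$-Union uses $\rlzin$-Union followed by $\rlzin$-Separation to cut down to the extensionally correct elements, and $\in$-Power Set uses $\rlzin$-Weak Power Sets followed by $\rlzin$-Separation (keeping the $y \rlzin b$ with $y \simeq$ some subset of $a$) — here the remark in the excerpt already explains why weak power set plus extensionality gives full power set. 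For $\in$-Separation and $\in$-Replacement/$\in$-Collection, the point is that an $\mathcal{L}_\in$-formula $\varphi$ can be relativised to an $\mathcal{L}_{\rlzin}$-formula by replacing $\in$ with its definition and $\simeq$ with its definition; then one applies the $\rlzin$-version of the scheme to the relativised formula and uses Separation again to correct for the difference between $\rlzin$ and $\in$. For $\in$-Infinity, the $\rlzin$-Infinity axiom gives a set closed under $\rlzin$ in the stated sense; one then passes to an extensional $\omega$ by Separation and checks the usual closure properties hold up to $\simeq$. For $\in$-Foundation, I would use the $\rlzin$-Induction Scheme: since every $\in$-formula translates to an $\rlzin$-formula, $\rlzin$-Induction yields $\in$-Induction, and $\in$-Induction is classically equivalent to Foundation (the background logic is classical).

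The main obstacle — really the only place that needs care rather than bookkeeping — is making the translation of formulas precise and checking it commutes correctly with the extensional semantics: one must argue by induction on $\mathcal{L}_\in$-formulas that $\varphi$ holds in $(\rlzstr, \in, \simeq)$ iff its $\mathcal{L}_{\rlzin}$-translation $\varphi^{\rlzin}$ (replace $t \in s$ by $\exists z \rlzin s\,(t \simeq z)$, keep quantifiers ranging over all of $\rlzstr$) is provable from $\ZFepsilon$, and that this translation respects substitution and the congruence of $\simeq$. Once that lemma is in hand, the schemes (Separation, Collection, Induction/Foundation) follow uniformly, and the finite axioms are the case-by-case witness constructions sketched above. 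I expect the congruence/substitution lemma for the translation, together with getting the quantifier cases of Collection exactly right (the $\rlzin$-Collection scheme has its own bounded-quantifier shape, so one must check the extensional Collection really does follow), to be where the bulk of the genuine work lies.
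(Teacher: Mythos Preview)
Your proposal is correct and follows essentially the same architecture as the paper: establish the basic congruence properties of $\simeq$ and then verify each \tf{ZF} axiom by pushing the corresponding $\rlzin$-witness through. Two simplifications are worth noting, however. First, the ``translation'' apparatus you outline is unnecessary: $\in$ and $\subseteq$ are already primitive symbols of $\mathcal{L}_{\rlzin}$, so every $\mathcal{L}_\in$-formula is literally an $\mathcal{L}_{\rlzin}$-formula and the $\rlzin$-schemes apply to it directly; the paper's key lemma is therefore not a translation lemma but simply the congruence statement $a \simeq b \rightarrow (\varphi(a) \leftrightarrow \varphi(b))$ for $\varphi \in Fml_\in$, proved by a short induction on formula complexity. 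Second, your Separation ``cleanup'' steps for Pairing, Union, Power Set and Infinity are superfluous, since these axioms are stated in superset form and the paper shows that the raw $\rlzin$-witness already works for $\in$ once one has $x \rlzin a \rightarrow x \in a$ and the congruence properties (for Weak Power Set one does need a single preparatory Separation on the test set $x$, but not on the witness $b$). The paper also proves $\in$-Induction directly rather than detouring through Foundation, using the auxiliary formula $\psi(x) \equiv \forall z(z \simeq x \rightarrow \varphi(z))$ so that $\rlzin$-Induction on $\psi$ yields $\in$-Induction on $\varphi$.
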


\begin{notation}
    To ease notation, whenever $\rlzmodel = (\rlzstr, \rlzin, \in, \subseteq)$ is a model of $\ZFepsilon$, we will denote by $\rlzmodel_\in$ the reduced structure $(\rlzstr, \in, \simeq)$.
\end{notation}

\noindent Since $\ZFepsilon$ is a classical theory which is closed under natural deductions, the proof will consist of two parts; first we show that $\rlzmodel_\in$ satisfies every axiom of \tf{ZF}, then we show that $\rlzmodel$ and $\rlzmodel_\in$ prove the same sentences in the langauge $\mathcal{L}_\in$. Before proving the theorem, we will give a list of some of the many simple statements provable in $\ZFepsilon$. All of the proofs can be found in Section 3 of \cite{Krivine2012}. Note that, by \Cref{ZFepsilonProperty:SubsetDefinition}, we do not need to consider $\subseteq$ as a primitive symbol in the structure $\rlzmodel_\in$.

\begin{theorem} \label{theorem:ZFepsilonStatements} \, 
    \begin{thmlist}
        \item \label{ZFepsilonProperty:SubsetIdentity} $\ZFepsilon \vdash \forall a (a \subseteq a)$;
        \item \label{ZFepsilonProperty:RlzinImpliesIn} $\ZFepsilon \vdash \forall a \forall x (x \rlzin a \rightarrow x \in a)$;
        \item \label{ZFepsilonProperty:SimeqIdentity} $\ZFepsilon \vdash \forall a (a \simeq a)$;
        \item \label{ZFepsilonProperty:SimeqReflective} $\ZFepsilon \vdash \forall a \forall b (a \simeq b \rightarrow b \simeq a)$;
        \item \label{ZFepsilonProperty:ExtensionalityA} $\ZFepsilon \vdash \forall a \forall x \forall y (x \simeq a \land a \in y \rightarrow x \in y)$;
        \item \label{ZFepsilonProperty:SimeqTransitive} $\ZFepsilon \vdash \forall a \forall b \forall c (a \simeq b \land a \simeq c \rightarrow b \simeq c)$;
        \item \label{ZFepsilonProperty:ExtensionalityB} $\ZFepsilon \vdash \forall a \forall x \forall y (a \subseteq y \land x \in a \rightarrow x \in y)$;
        \item \label{ZFepsilonProperty:SubsetDefinition} $\ZFepsilon \vdash \forall a \forall b \big( a \subseteq b \leftrightarrow \forall x (x \in a \rightarrow x \in b) \big)$.
    \end{thmlist}
\end{theorem}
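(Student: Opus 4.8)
The plan is to derive all eight items from the two $\rlzin$-Extensionality axioms together with the $\rlzin$-Induction Scheme, reasoning in classical logic and freely using the abbreviations of \Cref{section:ZFepsilon}; since $\subseteq$ is a primitive symbol of $\mathcal{L}_{\rlzin}$, all the formulas involved lie in $Fml_{\rlzin}$ and so $\rlzin$-induction applies to them. The one preliminary manipulation is to unfold the bounded quantifiers in the two extensionality axioms into their classical forms
\[
a \subseteq b \;\leftrightarrow\; \forall z\,(z \rlzin a \to z \in b), \qquad a \in b \;\leftrightarrow\; \exists z\,(z \rlzin b \land a \simeq z);
\]
these, together with the definition $a \simeq b :\equiv (a \subseteq b) \land (b \subseteq a)$, are the only facts about the atomic relations that will be used.

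For the simpler items: \Cref{ZFepsilonProperty:SubsetIdentity} is proved by $\rlzin$-induction on $a$ applied to the formula $a \subseteq a$ --- assuming $y \subseteq y$ for every $y \rlzin a$, one gets $a \subseteq a$ because, for each $z \rlzin a$, the set $z$ itself witnesses $\exists w\,(w \rlzin a \land z \simeq w)$, i.e.\ $z \in a$, the clause $z \simeq z$ being just $z \subseteq z$ supplied by the induction hypothesis. Then \Cref{ZFepsilonProperty:SimeqIdentity} is immediate, since $a \simeq a$ \emph{is} $(a \subseteq a) \land (a \subseteq a)$; and \Cref{ZFepsilonProperty:RlzinImpliesIn} follows as well, since if $x \rlzin a$ then $x$ witnesses the existential defining $x \in a$ by way of $x \simeq x$. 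Finally \Cref{ZFepsilonProperty:SimeqReflective} is pure propositional logic, as $a \simeq b$ and $b \simeq a$ are $(a \subseteq b) \land (b \subseteq a)$ and $(b \subseteq a) \land (a \subseteq b)$.

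The crux is transitivity of $\subseteq$, that is, $\forall a\,\forall c\,(a \subseteq b \land b \subseteq c \to a \subseteq c)$, which I would prove by $\rlzin$-induction on the \emph{middle} variable $b$. Granting the statement for every $w \rlzin b$, and assuming $a \subseteq b$ and $b \subseteq c$: to show $a \subseteq c$, fix $z \rlzin a$; from $a \subseteq b$ we get $z \in b$, so there is $w \rlzin b$ with $z \simeq w$, and from $b \subseteq c$ together with $w \rlzin b$ we get $w \in c$, so there is $v \rlzin c$ with $w \simeq v$. Then $v$ witnesses $z \in c$ provided $z \simeq v$; but $z \subseteq w,\ w \subseteq v$ and $v \subseteq w,\ w \subseteq z$ are all in hand from $z \simeq w$ and $w \simeq v$, and since $w \rlzin b$ the induction hypothesis at $w$ yields both $z \subseteq v$ and $v \subseteq z$, hence $z \simeq v$. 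The reason the induction must be carried on $b$ is precisely that the two ``middle'' inclusions being composed both pass through $w$, which is $\rlzin b$; this entanglement of transitivity with itself is the main obstacle, and it is dissolved by founding transitivity of $\subseteq$ on $\rlzin$-induction in this way, after which nothing else requires induction.

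With $\subseteq$-transitivity available, the remaining items are routine compositions. For \Cref{ZFepsilonProperty:ExtensionalityA}: from $x \simeq a$ and $a \in y$, pick $z \rlzin y$ with $a \simeq z$; transitivity turns $x \subseteq a,\ a \subseteq z$ into $x \subseteq z$ and $z \subseteq a,\ a \subseteq x$ into $z \subseteq x$, so $x \simeq z$ and hence $x \in y$. Item \Cref{ZFepsilonProperty:SimeqTransitive} is the analogous pair of applications of transitivity to the four inclusions coming from $a \simeq b$ and $a \simeq c$. For \Cref{ZFepsilonProperty:ExtensionalityB}: from $a \subseteq y$ and $x \in a$, pick $z \rlzin a$ with $x \simeq z$; then $z \in y$ (since $a \subseteq y$ and $z \rlzin a$), so $x \in y$ by \Cref{ZFepsilonProperty:ExtensionalityA}. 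Finally \Cref{ZFepsilonProperty:SubsetDefinition}: its left-to-right direction is \Cref{ZFepsilonProperty:ExtensionalityB} with $y$ renamed to $b$, while its right-to-left direction uses \Cref{ZFepsilonProperty:RlzinImpliesIn} --- if every $\in$-member of $a$ is an $\in$-member of $b$, then by \Cref{ZFepsilonProperty:RlzinImpliesIn} so is every $z \rlzin a$, whence $a \subseteq b$ by the reformulated axiom. Throughout, the only bureaucratic care needed is with the double negations hidden in the bounded-quantifier abbreviations, which classicality of $\ZFepsilon$ renders harmless --- in particular it legitimises reading ``$\exists z \rlzin a\,\varphi$'' as ``$\exists z\,(z \rlzin a \land \varphi)$'' and ``$\forall z \rlzin a\,\varphi$'' as ``$\forall z\,(z \rlzin a \to \varphi)$''.
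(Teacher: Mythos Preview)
Your proof is correct. The overall strategy matches the paper's: items (i)--(iv) are handled identically, and the remaining items all reduce to a single inductive lemma proved by $\rlzin$-Induction. The difference is only in the choice of that lemma. You isolate \emph{transitivity of $\subseteq$} and run the induction on the middle set $b$; the paper instead performs a joint induction establishing $\varphi_1(a)\equiv$ item (v) and $\varphi_2(a)\equiv$ item (vi) simultaneously (showing that $\varphi_1$ below $a$ gives $\varphi_2(a)$, which then gives $\varphi_1(a)$). Your decomposition is arguably cleaner, since a single self-contained inductive statement suffices and items (v)--(viii) then become one-line corollaries; the paper's version, on the other hand, yields (v) and (vi) directly without passing through an auxiliary $\subseteq$-transitivity lemma. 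Either way the inductive core is the same: the ``middle'' witness lies strictly $\rlzin$-below the set being inducted on, which is exactly what makes the recursion go through.
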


\begin{proof}
    $(i)$. This is proved by $\rlzin$-Induction. So suppose that the claim holds for all $x \rlzin a$ and fix $x \rlzin a$. Then, since $x \simeq x$, we have that $\exists y \rlzin a (y \simeq x)$ which, by the $\rlzin$-Extensionality is equivalent to saying that $x \in a$. 
    
    $(ii)$ and $(iii)$ then follow immediately from $(i)$, while $(iv)$ is true by definition. \\

    \noindent Next, let $\varphi_1(a) \equiv \forall x \forall y (x \simeq a \land a \in y \rightarrow x \in y)$ and $\varphi_2(a) \equiv \forall x \forall y (a \simeq x \land a \simeq y \rightarrow x \simeq y)$. We shall prove that $\forall b \rlzin a \, \varphi_1(b)$ implies $\varphi_2(a)$ and $\varphi_1(a)$. Then $(v)$ and $(vi)$ will both follow by $\rlzin$-Induction. So suppose that $\forall b \rlzin a \, \varphi(b)$.

    To prove $\varphi_2(a)$, fix $x$ and $y$ and suppose that $a \simeq x$ and $a \simeq y$. We shall prove that $x \subseteq y$, with the argument for $y \subseteq x$ being symmetric. To do this, fix $z \rlzin x$. Since $x \subseteq a$ we have that $z \in a$ which means that we can fix some $b \rlzin a$ for which $z \simeq b$. Next, since $a \subseteq y$, $b \in y$. Now, by the inductive hypothesis, $\varphi_1(b)$ holds, from which it follows that $z \in y$. 

    To prove $\varphi_1(a)$, fix $x$ and $y$ and suppose that $x \simeq a$ and $a \in y$. Next, take $z \rlzin y$ such that $a \simeq z$. By $\varphi_3(a)$, $x \simeq z$ from which it follows that $x \in y$. \\

    \noindent $(vii)$. Fix $x$ and $y$ and suppose that $a \subseteq y$ and $x \in a$. Since $x \in a$, we can fix $b \rlzin a$ such that $b \simeq x$. Then, since $a \subseteq y$ and $\varphi_1(b)$ holds by $(v)$, $b \in y$. Thus $x \in y$, as desired.

    $(viii)$. For the first direction, suppose that $\forall x (x \in a \rightarrow x \in b)$ and fix $x \rlzin a$. Then, by $(ii)$, $x \in a$ and therefore $x \in b$ by the assumption. Thus $a \subseteq b$. For the reverse direction, suppose that $a \subseteq b$ and $x \in a$. Then we can fix $y \rlzin a$ such that $y \simeq x$. Now, since $a \subseteq b$, $y \in b$ and therefore, by $(v)$, we obtain that $x \in b$.

\end{proof}

\begin{theorem} \label{theorem:EquivalentSetsProveSameThings}
    Let $\varphi(u)$ be a formula in the language of $\{ \in, \simeq \}$. If $a \simeq b$, then $\ZFepsilon \vdash \varphi(a) \leftrightarrow \varphi(b)$.
\end{theorem}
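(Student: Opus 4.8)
The plan is to prove the statement by induction on the structure of $\varphi$, reducing all of the content to the atomic case and then invoking the congruence facts collected in \Cref{theorem:ZFepsilonStatements}. Since $\ZFepsilon$ is a classical theory and, as noted above, we may freely replace a formula by a logically equivalent one, it is enough to treat formulas built from atomic formulas using only $\top$, $\perp$, $\rightarrow$ and $\forall$.

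For the base case, recall that $\mathcal{L}_\in$ has no function symbols, so every term is a variable and an atomic formula is either $s \in t$ or $s \simeq t$ with $s, t$ variables. If $u$ occurs in neither then $\varphi(a)$ and $\varphi(b)$ are syntactically identical and there is nothing to prove. Otherwise, assuming $a \simeq b$, I would verify the one-variable congruences: $a \in w \leftrightarrow b \in w$ follows from \Cref{ZFepsilonProperty:SimeqReflective} and \Cref{ZFepsilonProperty:ExtensionalityA}; $w \in a \leftrightarrow w \in b$ follows by unpacking $a \simeq b$ into $a \subseteq b$ and $b \subseteq a$ and applying \Cref{ZFepsilonProperty:ExtensionalityB} in each direction; and $a \simeq w \leftrightarrow b \simeq w$ (as well as $w \simeq a \leftrightarrow w \simeq b$) follows from \Cref{ZFepsilonProperty:SimeqTransitive} together with \Cref{ZFepsilonProperty:SimeqReflective}. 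The atomic formulas in which $u$ appears on both sides, namely $u \in u$ and $u \simeq u$, are then handled by composing these one-sided implications — for instance $a \in a \rightarrow a \in b \rightarrow b \in b$ using \Cref{ZFepsilonProperty:ExtensionalityB} and then \Cref{ZFepsilonProperty:ExtensionalityA} — while $u \simeq u$ holds on both sides outright by \Cref{ZFepsilonProperty:SimeqIdentity}.

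For the inductive step, when $\varphi$ is $\psi \rightarrow \chi$ the induction hypothesis gives $\ZFepsilon \vdash \psi(a) \leftrightarrow \psi(b)$ and $\ZFepsilon \vdash \chi(a) \leftrightarrow \chi(b)$, whence $\ZFepsilon \vdash \varphi(a) \leftrightarrow \varphi(b)$ by propositional logic; the cases $\varphi \equiv \top$ and $\varphi \equiv \perp$ are immediate. When $\varphi$ is $\forall x\, \psi(x, u)$, I would first rename $x$ so that it is distinct from (and does not occur in) $a$ and $b$, then apply the induction hypothesis to $\psi$ to obtain $\forall x\,(\psi(x,a) \leftrightarrow \psi(x,b))$, and finally conclude $\forall x\, \psi(x,a) \leftrightarrow \forall x\, \psi(x,b)$. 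This closes the induction.

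The argument is routine and there is no serious obstacle; the only points that need a little care are the bookkeeping of the atomic cases according to where the variable $u$ occurs and the (harmless) renaming of bound variables in the universal-quantifier step to avoid variable capture when substituting $a$ or $b$ for $u$.
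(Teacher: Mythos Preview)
Your proof is correct and follows essentially the same approach as the paper: induction on formula complexity, with all the work concentrated in the atomic case and discharged via the congruence facts of \Cref{theorem:ZFepsilonStatements}. If anything, your treatment is slightly more thorough than the paper's terse sketch --- you explicitly handle the $u \in u$ case and note the bound-variable renaming, whereas the paper simply lists the atomic cases and the relevant items from \Cref{theorem:ZFepsilonStatements}.
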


\begin{proof}
    Formally, this is proven by induction on the complexity of formulas. However, it is clear that the only non-trivial case is for atomic formulas, as the proof for connectives and quantifiers is immediate. For the atomic case there are five formulas to check: $a \in x$; $x \in a$; $a \simeq x$; $x \simeq a$ and $a \simeq a$. The proof of the first follows from \ref{ZFepsilonProperty:ExtensionalityA}, the second from \ref{ZFepsilonProperty:ExtensionalityB} and the others from a combination of \ref{ZFepsilonProperty:SimeqIdentity}, \ref{ZFepsilonProperty:SimeqReflective} and \ref{ZFepsilonProperty:SimeqTransitive}.
\end{proof}

\noindent We are now in a position to prove that every axiom of $\tf{ZF}$ is derivable from $\ZFepsilon$.

\medskip

\noindent \textbf{Extensionality} $(\forall a, b \, \forall x (x \in a \leftrightarrow x \in b) \rightarrow a \simeq b)$

Fix $a$ and $b$ in $\rlzstr$ and suppose that $\forall x (x \in a \leftrightarrow x \in b)$. By Property \ref{ZFepsilonProperty:SubsetDefinition}, $a \subseteq b$ and $b \subseteq a$. Therefore, $a \simeq b$.

\bigskip

\noindent \textbf{Induction} $\forall \overrightarrow{u} \big( \forall x \big( \forall y \in x \, \varphi(y, \overrightarrow{u}) \rightarrow \varphi(x, \overrightarrow{u}) \big) \rightarrow \forall z \varphi(z, \overrightarrow{u}) \big)$

Let $\varphi(x, \overrightarrow{u})$ be a formula in the language of $\{ \in, \simeq \}$. For notational simplicity, we will drop mention of $\overrightarrow{u}$. Let $\psi(x)$ be the formula $\forall z (z \simeq x \rightarrow \varphi(z))$. Observe that for any $b$ in $\rlzstr$, we have $\ZFepsilon \vdash \varphi(b) \leftrightarrow \psi(b)$. To see this, suppose that $\varphi(b)$ holds and take $z \simeq b$. By \Cref{theorem:EquivalentSetsProveSameThings} we have that $\varphi(z)$ holds and thus $\psi(b)$ is true. On the other hand, if $\psi(b)$ holds then, since $b \simeq b$, $\varphi(b)$ must also hold.

Next, we shall prove that
\[
\forall x (\forall y \in x \, \varphi(y) \rightarrow \varphi(x)) \longrightarrow \forall v (\forall w \rlzin v \, \psi(w) \rightarrow \psi(v)).
\]
To do this, suppose that this premise holds, fix $v$ and suppose that $\forall w \rlzin v \, \psi(w)$. Take $y \in v$. Then we can find some $w \rlzin v$ such that $w \simeq y$. Since $\psi(w)$ holds, $\varphi(y)$ also holds. Thus, $\forall y \in v \, \varphi(y)$ so, by the assumed premise, $\varphi(v)$ holds which means that $\psi(v)$ holds. 

Now, by Induction in $\ZFepsilon$, we know that
\[
\forall v (\forall w \rlzin v \, \psi(w) \rightarrow \psi(v)) \rightarrow \forall z \psi(z)
\]
and therefore $\forall x (\forall y \in x \, \varphi(y) \rightarrow \varphi(x)) \rightarrow \forall z \psi(z) \rightarrow \forall z \varphi(z)$
as required.

\bigskip

\noindent \textbf{Separation} $(\forall \overrightarrow{u} \forall a \exists b \forall x (x \in b \leftrightarrow (x \in a \land \varphi(x)))$

Let $\varphi(x, \overrightarrow{u})$ be a formula in the language of $\{ \in, \simeq \}$ and let $a$ and $\overrightarrow{u}$ be in $\rlzstr$. By Separation in $\ZFepsilon$, there is some $b$ in $\rlzstr$ such that $\forall z (z \rlzin b \leftrightarrow (z \rlzin a \land \varphi(z, \overrightarrow{u}))$. Now, if $x \in b$ then we can fix some $z \rlzin b$ such that $x \simeq z$. Therefore, $z \rlzin a$ and $\varphi(z, \overrightarrow{u})$. Next, since $x \simeq z$ and $z \rlzin a$, Properties \ref{ZFepsilonProperty:RlzinImpliesIn} and \ref{ZFepsilonProperty:ExtensionalityA} give us that $z \in a$ and $x \in a$. Finally, by \Cref{theorem:EquivalentSetsProveSameThings}, $\varphi(x, \overrightarrow{u})$.

For the reverse direction, suppose that $\varphi(x, \overrightarrow{u})$ and $x \in a$. Then we can fix some $z \rlzin a$ such that $x \simeq z$. Again by \Cref{theorem:EquivalentSetsProveSameThings}, $\varphi(z, \overrightarrow{u})$, and thus $z \rlzin b$ from which we can deduce that $x \in b$.

\bigskip
\pagebreak
\noindent \textbf{Pairing} $(\forall a \forall b \exists c (a \in c \land b \in c))$

Fix $a$ and $b$ in $\rlzstr$. By Pairing in $\ZFepsilon$, there exists some $c$ in $\rlzstr$ such that $a \rlzin c$ and $b \rlzin c$. Therefore, by Property \ref{ZFepsilonProperty:RlzinImpliesIn}, $a \in c$ and $b \in c$.

\bigskip

\noindent \textbf{Unions} $(\forall a \exists b \forall x \in a \forall y \in x (y \in b))$

Fix $a$ in $\rlzstr$. By Unions in $\ZFepsilon$, there exists some $b$ in $\rlzstr$ such that $\forall x \rlzin a \forall y \rlzin x (y \rlzin b)$. Take $x \in a$ and $y \in x$. First, we can fix $u \rlzin a$ such that $x \simeq u$. Next, since $y \in x$, by Property \ref{ZFepsilonProperty:ExtensionalityB}, $y \in u$ and therefore we can fix some $v \rlzin u$ such that $y \simeq v$. By definition, $v \rlzin b$ so, by Property \ref{ZFepsilonProperty:RlzinImpliesIn}, $v \in b$ from which Property \ref{ZFepsilonProperty:ExtensionalityA} gives us that $y \in b$.

\bigskip

\noindent \textbf{Weak Power Set} $(\forall a \exists b \forall x \exists y \in b \forall z (z \in y \leftrightarrow (z \in a \land z \in x)))$

Fix $a$ in $\rlzstr$. By Weak Power Set in $\ZFepsilon$, there exists some $b$ in $\rlzstr$ witnessing this axiom. Given $x$ we define $x'$ by the specification
\[
\forall z (z \rlzin x' \leftrightarrow (z \rlzin a \land z \in x)),
\]
noting that such as $x'$ exists by the Separation Scheme in $\ZFepsilon$. By definition of $b$, we can find some $y \rlzin b$ such that $\forall z (z \rlzin y \leftrightarrow (z \rlzin a \land z \rlzin x'))$. Then, by construction, $\forall z (z \rlzin y \leftrightarrow (z \rlzin a \land z \in x))$. We shall conclude by showing that $\forall z (z \in y \leftrightarrow (z \in a \land z \in x))$.

Firstly, suppose that $z \in y$. Then we can fix some $u \rlzin y$ such that $z \simeq u$. By definition of $y$, this means that $u \rlzin a \land u \in x$, from which Property \ref{ZFepsilonProperty:RlzinImpliesIn} gives us that $u \in a$. Finally, two uses of Property \ref{ZFepsilonProperty:ExtensionalityA} gives us that $z \in a \land z \in x$.

For the reverse direction, suppose that $z \in a \land z \in x$. Since $z \in a$ we can fix some $u \rlzin a$ such that $u \simeq z$. By Property \ref{ZFepsilonProperty:ExtensionalityA}, $u \in x$ from which we can conclude that $u \rlzin y$. By the Extensionality axiom of $\ZFepsilon$ this means that $z \in y$.
\bigskip

\noindent \textbf{Collection} $(\forall a \exists b \forall x \in a (\exists y \varphi(x, y, \overrightarrow{u}) \rightarrow \exists y \in b \varphi(x, y, \overrightarrow{u})))$

Let $\varphi(x, \overrightarrow{u})$ be a formula in the language of $\{ \in, \simeq \}$ and let $a$ be in $\rlzstr$. For notational simplicity, we will drop mention of $\overrightarrow{u}$. By the Collection Scheme in $\ZFepsilon$ we can find some $b$ in $\rlzstr$ such that $\forall z \rlzin a (\exists y \varphi(z, y) \rightarrow \exists y \rlzin b \varphi(z, y))$.

Fix $x \in a$ and suppose that $\exists y \varphi(x, y)$. Then we can find some $z \rlzin a$ such that $z \simeq a$. Since $\varphi$ is a formula in the language of $\{ \in, \simeq \}$, by \Cref{theorem:EquivalentSetsProveSameThings}, $\exists y \varphi(z, y)$. Therefore, by definition of $b$, $\exists y \rlzin b \varphi(z, y)$. Hence, by Property \ref{ZFepsilonProperty:RlzinImpliesIn} and \Cref{theorem:EquivalentSetsProveSameThings}, $\exists y \in b \varphi(x, y)$.

\bigskip

\noindent \textbf{Infinity} $(\forall a \exists b (a \in b \land \forall x (x \in b \rightarrow \exists y (y \in b \land x \in y))))$

Fix $a$ in $\rlzstr$. By the axiom of Infinity in $\ZFepsilon$, there exists some $b$ in $\rlzstr$ such that $a \rlzin b$ and $\forall x (x \rlzin b \rightarrow \exists y (y \rlzin b \land x \rlzin y))$. Firstly, by Property \ref{ZFepsilonProperty:RlzinImpliesIn}, $a \in b$. So, suppose $x \in b$. Then we can find some $u \rlzin b$ such that $x \simeq u$. Then by the definition of $b$, there is some $y$ such that $y \rlzin b \land u \rlzin y$. Using Property \ref{ZFepsilonProperty:RlzinImpliesIn} again, we get that $y \in b \land u \in y$ and, finally, Property \ref{ZFepsilonProperty:ExtensionalityA} gives us that $y \in b \land x \in y$. 

\bigskip

\noindent To finish the argument, we show that any model $(\rlzstr, \rlzin, \in, \subseteq)$ of $\ZFepsilon$ is in fact a \emph{conservative} extension of the model $(\rlzstr, \in, \simeq)$. Namely, if the $\ZFepsilon$ model proves a statement in the language $\mathcal{L}_\in$ then the statement is already true in the restricted \tf{ZF} model.

\begin{theorem}
    Suppose that $\rlzmodel = (\rlzstr, \rlzin, \in, \subseteq)$ is a model of $\ZFepsilon$ and $\varphi(\overrightarrow{x})$ is a formula in $Fml_\in$. Then, for any $\overrightarrow{a} \subset \rlzstr$, $\rlzmodel \models \varphi(\overrightarrow{a}) \Longleftrightarrow (\rlzstr, \in, \simeq) \models \varphi(\overrightarrow{a})$.
\end{theorem}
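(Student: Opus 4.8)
The plan is to prove the biconditional by an external induction on the syntactic complexity of the $\mathcal{L}_\in$-formula $\varphi$, built up from the atomic formulas using the primitive logical symbols $\to,\top,\perp,\forall$ of our first-order logic. The guiding observation is that $\rlzmodel$ and $\rlzmodel_\in$ have exactly the same underlying class $\rlzstr$, and they interpret $\to,\top,\perp$ and $\forall$ in exactly the same way (indeed the semantics of these symbols refers only to the domain and to the truth values of immediate subformulas), so the only place a discrepancy could enter is the interpretation of atomic formulas. Note that, strictly speaking, this argument does not use the axioms of $\ZFepsilon$ at all: the statement holds for an arbitrary $\mathcal{L}_{\rlzin}$-structure and its associated reduct $(\rlzstr,\in,\simeq)$, the hypothesis $\rlzmodel\models\ZFepsilon$ only being relevant when this theorem is combined with the derivations above to conclude $\rlzmodel_\in\models\tf{ZF}$ in \Cref{theorem:ZFepsilonToZF}.

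So the content lies entirely in the base case, i.e.\ the two atomic $\mathcal{L}_\in$-formulas $s\in t$ and $s\simeq t$ (evaluated at parameters from $\rlzstr$). When we compute $\rlzmodel\models\varphi(\overrightarrow a)$, these atoms are first unfolded via the abbreviations of \Cref{section:ZFepsilon}: $s\in t$ becomes $s\notrlzin t\to\perp$ (using the primitive $\not\in$ of $\mathcal{L}_{\rlzin}$) and $s\simeq t$ becomes $(s\subseteq t)\land(t\subseteq s)$. The reduced structure $\rlzmodel_\in$ is defined precisely so that its primitive relation symbol $\in$ is interpreted by the relation that interprets the (abbreviated) formula $a\in b$ in $\rlzmodel$, and its symbol $\simeq$ by the relation interpreting $a\subseteq b\land b\subseteq a$ in $\rlzmodel$; hence by construction the direct evaluation of an atomic formula in $\rlzmodel_\in$ agrees with its unfolded evaluation in $\rlzmodel$. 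Here one should record that a classical first-order structure respects logical equivalence of formulas, so the notational clash flagged in the remark following the abbreviation table -- and in particular whether $\land$ is expanded as $(\varphi\to(\psi\to\perp))\to\perp$ or treated more directly -- is immaterial to the truth value.

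For the inductive step, $\top$ and $\perp$ are immediate, and for $\psi\to\chi$ and $\forall x\,\psi$ the Tarskian clauses reduce the truth value in either structure to the truth values of $\psi,\chi$ (respectively of $\psi(x:=a)$ for each $a\in\rlzstr$), so the inductive hypothesis applies since the domains coincide; any other connective or quantifier occurring in $\varphi$ is an abbreviation for a formula over $\{\to,\top,\perp,\forall\}$ and the atoms, hence is covered automatically. I do not anticipate a genuine obstacle: the theorem is essentially a definitional unfolding making explicit an identification used implicitly in the literature, and the only point requiring care is the bookkeeping in the base case -- being explicit that the reduct $\rlzmodel_\in$ is set up so that its $\in$- and $\simeq$-relations literally record the $\mathcal{L}_{\rlzin}$-truth in $\rlzmodel$ of the corresponding atomic $\mathcal{L}_\in$-formulas -- after which everything else is routine.
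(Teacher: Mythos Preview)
Your proposal is correct and follows essentially the same approach as the paper: an induction on the complexity of $\mathcal{L}_\in$-formulas, with the base case handled by noting that the relations interpreting $\in$ and $\simeq$ are literally the same subclasses of $\rlzstr\times\rlzstr$ in both structures, and the inductive step for $\to$ and $\forall$ being routine since the domains coincide. One small slip: where you write that $s\in t$ unfolds to $s\notrlzin t\to\perp$, you mean $s\not\in t\to\perp$ (your own parenthetical already says this), but this does not affect the argument.
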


\begin{proof}
    We prove the theorem by induction on the complexity of the class $Fml_\in$. For the atomic cases, $\in$ is a subclasses of $\rlzstr \times \rlzstr$ which is the same class for both the models $\rlzmodel$ and $\rlzmodel_\in$. Therefore, 
    \[
    \rlzstr \models a \in b \quad \Longleftrightarrow \quad ( a, b ) \in \, ``\in" \quad \Longleftrightarrow \quad \rlzstr_\in \models a \in b
    \]
    The same argument will also hold for the relation $\subseteq$ from which we will obtain that $\rlzmodel \models a \simeq b$ if and only if $\rlzmodel_\in \models a \simeq b$ by combining this with the proof for implications.
    
    For the case $\varphi \equiv \psi \rightarrow \theta$, $\rlzmodel \models \psi \rightarrow \theta$ if and only if whenever $\rlzmodel \models \psi$, $\rlzmodel \models \theta$. Using the inductive hypothesis, 
    \[
    \rlzmodel_\in \models \psi \quad \Longrightarrow \quad \rlzmodel \models \psi \quad \Longrightarrow \quad \rlzmodel \models \theta \quad \Longrightarrow \quad \rlzmodel_\in \models \theta
    \]
    and therefore $\rlzmodel_\in \models \psi \rightarrow \theta$. The same argument also shows the reverse implication.

    For the final case, $\varphi \equiv \forall x \psi(x)$, we have that $\rlzmodel \models \forall x \psi(x)$ if and only if for every $a \in \rlzstr$, $\rlzmodel \models \psi(a)$. Then
    \[
    \rlzmodel \models \forall x \psi(x) \quad \Longrightarrow \quad \forall a \in \rlzstr \, \rlzmodel \models \psi(a) \quad \Longrightarrow \quad  \forall a \in \rlzstr \, \rlzmodel_\in \models \psi(a) \quad \Longrightarrow \quad \rlzmodel_\in \models \forall x \psi(x).
    \]
    As before, the same argument also works for the reverse direction.
\end{proof}

\section{Construction of Realizability Models}\label{sec: construction of realizability models}

Suppose that \tf{V} is a model of \tf{ZF} and $\mathcal{A}$ is a realizability algebra $(\Lambda, \Pi, \prec, \Perp)$ which is defined in \tf{V}. We shall define a realizability model, $\rlzmodel = \rlzmodel^{\mathcal{A}, \tf{V}}$, over the universe as follows:
\begin{eqnarray*}
\text{For $\alpha$ an ordinal, let } \rlzstr_\alpha \coloneqq \bigcup_{\beta < \alpha} \mathcal{P}(\rlzstr_\beta \times \Pi) \\
\rlzstr \coloneqq \bigcup_{\alpha \in \tf{Ord}} \rlzstr_\alpha.
\end{eqnarray*}
Given an element $a \in \rlzstr$ we will denote by $\ff{dom}(a)$ the set of first co-ordinates of $a$. That is, 
\[
\ff{dom}(a) \coloneqq \{ b \divline \exists \pi \in \Pi \, (b, \pi) \in a \}.
\]

\begin{definition}
    Given a closed formula $\varphi$ in $Fml_{\rlzin}$ with parameters in $\rlzstr$, we define two truth values: $\falsity{\varphi} \subseteq \Pi$ and $\verity{\varphi} \subseteq \Lambda$. $\falsity{\varphi}$ will be defined recursively, and then $\verity{\varphi}$ is defined from it as:
    \[
    t \in \verity{\varphi} \Longleftrightarrow \forall \pi \in \falsity{\varphi} (t \star \pi \in \Perp).
    \]
\end{definition}

\begin{notation}
    We shall write $t \Vdash \varphi$ and say ``$t$ realizes $\varphi$'' when $y \in \verity{\varphi}$.
\end{notation}

\begin{definition}[Definition of $\falsity{\varphi}$]
    $\falsity{\varphi}$ is defined by recursion on the complexity of $\varphi$ for $\varphi$ in $Fml_{\rlzin}$:
    \begin{itemize}
        \item If $\varphi$ is atomic, then case $\varphi$ is of one of the following forms: $\top$, $\perp$, $a \notrlzin b$, $a \not\in b$ and $a \subseteq b$, where $a, b$ are in $\rlzstr$. Then,
        \begin{itemize}
            \item $\falsity{\top} \coloneqq \emptyset$,
            \item $\falsity{\perp} \coloneqq \Pi$,
            \item $\falsity{a \notrlzin b} \coloneqq \{ \pi \in \Pi \divline (a, \pi) \in b \}$,
            \item $\falsity{a \not\in b} \coloneqq \displaystyle{\bigcup}_{c \in \ff{dom}(b)} \{ t \stackapp t' \stackapp \pi \divline t, t' \in \Lambda \land \pi \in \Pi \land (c, \pi) \in b \land t \Vdash a \subseteq c \land t' \Vdash c \subseteq a \}$,
            \item $\falsity{a \subseteq b} \coloneqq \displaystyle{\bigcup}_{c \in \ff{dom}(a)} \{ t \stackapp \pi \divline t \in \Lambda \land \pi \in \Pi \land (c, \pi) \in a \land t \Vdash c \not\in b \}$.
        \end{itemize}
        Where the definitions of $\falsity{a \not\in b}$ and $\falsity{a \subseteq b}$ are formally defined simultaneously by induction on $(\ff{rk}(a), \ff{rk}(b))$ under the lexicographical ordering.
        \item If $\varphi \equiv \psi \rightarrow \theta$, then $\falsity{\varphi} \coloneqq \{ t \stackapp \pi \divline t \Vdash \psi \land \pi \in \falsity{\theta} \}$.
        \item If $\varphi \equiv \forall x \, \varphi(x)$, then $\falsity{\varphi} \coloneqq {\displaystyle \bigcup}_{a \in \rlzstr} \falsity{\varphi[a / x]}$.
    \end{itemize}
\end{definition}

\begin{definition}
    Given a formula $\varphi \in Fml_{\rlzin}$, we say that $\rlzmodel \Vdash \varphi$ if there exists a realizer $t \in \mathcal{R}$ such that $t \Vdash \varphi$.

    Given a set of formulas $\Gamma$, we say that $\rlzmodel \Vdash \Gamma$ if for every $\varphi \in \Gamma$, $\rlzmodel \Vdash \varphi$.
\end{definition}

\begin{remark}
    We remark here that in general the theory $\tf{T}^{\mathcal{A}, \tf{V}} \coloneqq \{ \varphi \in Fml_{\rlzin} \divline \exists t \in \mathcal{R} \; t \Vdash \varphi \}$ is not complete. That is, there will exist formulas $\varphi$ of $\ZFepsilon$ such that $\rlzmodel \not\Vdash \varphi$ and $\rlzmodel \not\Vdash \neg \varphi$. In particular, it is possible for there to be formulas which are true in some model of $\tf{T}^{\mathcal{A}, \tf{V}}$ but which will not be realizable by any realizer.
    
    Therefore what we want to do is consider the theory $\tf{T}^{\mathcal{A}, \tf{V}}$. We shall show that $\tf{T}^{\mathcal{A}, \tf{V}}$ is closed under classical predicate calculus (the \emph{adequacy lemma}, \Cref{theorem:adequacy}), that is if $\varphi$ is in $\tf{T}^{\mathcal{A}, \tf{V}}$ and $\varphi$ entails $\psi$ in classical logic then $\psi$ is in $\tf{T}^{\mathcal{A}, \tf{V}}$. Moreover, under a small assumption of coherency, starting from a model of $\tf{ZF}$, $\tf{T}^{\mathcal{A}, \tf{V}}$ contains every axiom of $\ZFepsilon$ and $\perp \not\in \tf{T}^{\mathcal{A}, \tf{V}}$. Therefore $\tf{T}^{\mathcal{A}, \tf{V}}$ is a coherent theory extending $\ZFepsilon$. 

    In a similar way, we can also consider the theory $\tf{T}_{\in}^{\mathcal{A}, \tf{V}} \coloneqq \{ \varphi \in Fml_{\in} \divline \exists t \in \mathcal{R} \; t \Vdash \varphi \}$. Using \Cref{section:ZFepsilonToZF}, it will be seen that, again under the same assumptions, $\tf{T}_{\in}^{\mathcal{A}, \tf{V}}$ is a coherent theory extending $\tf{ZF}$.

    Finally, on occasion we will want to work internally within our realizability model. To do this, we will work in an arbitrary model of the theory $\tf{T}^{\mathcal{A}, \tf{V}}$. Since this theory extends $\ZFepsilon$ we will use the same notation for an arbitrary model and the structure that has been defined in this section. That is, we will also use the notation $\rlzmodel = (\tf{N}, \rlzin, \in, \subseteq)$ for an arbitrary model of the realizable theory. 

\end{remark}

\noindent The first principle that we shall realize is \emph{Peirce's Law}. This can be seen to be equivalent to the law of Excluded Middle and from this it will follow that $\rlzmodel$ satisfies a classical theory.

\begin{proposition} \label{theorem:SaveCommandandNegation}
    Suppose that $\pi \in \falsity{\varphi}$. Then for any formula $\psi$, $\saverlz{\pi} \Vdash \varphi \rightarrow \psi$. In particular, $\saverlz{\pi} \Vdash \neg \varphi$.
\end{proposition}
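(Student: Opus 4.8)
The plan is to unwind the definition of $\verity{\varphi \rightarrow \psi}$ and invoke the ``restore'' evaluation rule for continuation constants together with the defining closure property of the pole. First I would fix an arbitrary $\rho \in \falsity{\varphi \rightarrow \psi}$; by the clause defining the falsity value of an implication, $\rho = t \stackapp \sigma$ for some $t \Vdash \varphi$ and some $\sigma \in \falsity{\psi}$. It then suffices to show that $\saverlz{\pi} \star t \stackapp \sigma \in \Perp$.

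The key computation is a single application of the ``restore'' rule, which gives
\[
\saverlz{\pi} \star t \stackapp \sigma \;\succ_1\; t \star \pi .
\]
Since $t \Vdash \varphi$ and $\pi \in \falsity{\varphi}$ by hypothesis, $t \star \pi \in \Perp$; as $\Perp$ is a pole and $\saverlz{\pi} \star t \stackapp \sigma \succ t \star \pi$, this yields $\saverlz{\pi} \star t \stackapp \sigma \in \Perp$. Since $\rho$ was arbitrary, $\saverlz{\pi} \Vdash \varphi \rightarrow \psi$. The ``in particular'' clause is then the instance $\psi \equiv \perp$, recalling that $\neg \varphi$ abbreviates $\varphi \rightarrow \perp$.

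I do not expect a genuine obstacle: the argument is a direct unfolding of the definitions. The one point meriting a moment's attention is that every element of $\falsity{\varphi \rightarrow \psi}$ is genuinely of the form $t \stackapp \sigma$ with $t$ a realizer of $\varphi$ and $\sigma \in \falsity{\psi}$, so that the ``restore'' rule always applies and lands on precisely the process $t \star \pi$ that the hypothesis $\pi \in \falsity{\varphi}$ controls. It is also worth noting that $\saverlz{\pi}$ is not a realizer (it contains a continuation constant), so this proposition does not on its own give $\rlzmodel \Vdash \neg \varphi$.
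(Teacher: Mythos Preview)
Your proof is correct and follows exactly the same approach as the paper: unfold $\falsity{\varphi \rightarrow \psi}$, apply the restore rule $\saverlz{\pi} \star t \stackapp \sigma \succ t \star \pi$, and use closure of $\Perp$ under $\succ$. Your write-up is in fact slightly cleaner than the paper's, which reuses the letter $\pi$ for both the fixed element of $\falsity{\varphi}$ and the arbitrary element of $\falsity{\psi}$; your choice of a separate $\sigma$ avoids that ambiguity.
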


\begin{proof}
    Suppose that $t \Vdash \varphi$ and $\pi \in \falsity{\psi}$. Then $\saverlz{\pi} \star t \stackapp \pi \succ t \star \pi$ so, since $t \star \pi \in \Perp$ by the assumption that $t \Vdash \varphi$, $\saverlz{\pi} \star t \stackapp \pi \in \Perp$.
\end{proof}

\begin{proposition}[Peirce's Law] \label{theorem:Peirce}
    For any formulas $\varphi$ and $\psi \in Fml_{\rlzin}$, $\cc \Vdash ((\varphi \rightarrow \psi) \rightarrow \varphi) \rightarrow \varphi$.
\end{proposition}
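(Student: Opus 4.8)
The plan is to unwind the definition of the falsity value of $((\varphi \rightarrow \psi) \rightarrow \varphi) \rightarrow \varphi$ and then push a generic process through the evaluation rules, using the \emph{save} rule for $\cc$ followed by the \emph{restore} rule for the captured continuation constant. First I would take an arbitrary $\rho \in \falsity{((\varphi \rightarrow \psi) \rightarrow \varphi) \rightarrow \varphi}$; by the clause for implication, $\rho$ has the form $t \stackapp \pi$ where $t \Vdash (\varphi \rightarrow \psi) \rightarrow \varphi$ and $\pi \in \falsity{\varphi}$. The goal is to show $\cc \star t \stackapp \pi \in \Perp$, and since the pole is closed under $\succ$ (it is a final segment), it suffices to find a process below $\cc \star t \stackapp \pi$ in the preorder which lies in $\Perp$.

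The key computation is: $\cc \star t \stackapp \pi \;\succ_1\; t \star \saverlz{\pi} \stackapp \pi$ by the \emph{save} rule. Now I would like to conclude that $t \star \saverlz{\pi} \stackapp \pi \in \Perp$. Since $t \Vdash (\varphi \rightarrow \psi) \rightarrow \varphi$, it suffices to show that $\saverlz{\pi} \stackapp \pi \in \falsity{(\varphi \rightarrow \psi) \rightarrow \varphi}$, i.e. that $\saverlz{\pi} \Vdash \varphi \rightarrow \psi$ and $\pi \in \falsity{\varphi}$. The second conjunct is exactly our hypothesis on $\pi$, and the first conjunct is precisely the content of \Cref{theorem:SaveCommandandNegation}: since $\pi \in \falsity{\varphi}$, we have $\saverlz{\pi} \Vdash \varphi \rightarrow \psi$ for any formula $\psi$. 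Chaining these, $t \star \saverlz{\pi} \stackapp \pi \in \Perp$, hence $\cc \star t \stackapp \pi \in \Perp$ by closure of the pole under $\succ$, which is what we wanted.

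There is no real obstacle here — the proof is a two-line evaluation plus an appeal to the previous proposition — so the only thing to be careful about is bookkeeping: making sure the falsity-value clause for nested implications is applied in the right order (the outer implication strips off $t \stackapp \pi$ with $t$ realizing the \emph{antecedent} $(\varphi\rightarrow\psi)\rightarrow\varphi$ and $\pi$ falsifying the \emph{consequent} $\varphi$), and making sure we invoke the fact that $\Perp$ is closed \emph{downward} along $\succ_1$ in the correct direction (if $P \succ Q$ and $Q \in \Perp$ then $P \in \Perp$). If one wanted, one could also note explicitly that $\cc$ is a realizer, so this genuinely witnesses $\rlzmodel \Vdash ((\varphi\rightarrow\psi)\rightarrow\varphi)\rightarrow\varphi$, though the statement as phrased only asserts $\cc \Vdash \cdots$.
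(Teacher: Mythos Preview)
Your proof is correct and follows exactly the same approach as the paper: unwind the falsity value to get $t \Vdash (\varphi\rightarrow\psi)\rightarrow\varphi$ and $\pi \in \falsity{\varphi}$, invoke \Cref{theorem:SaveCommandandNegation} to obtain $\saverlz{\pi} \Vdash \varphi\rightarrow\psi$, and then use the save rule $\cc \star t \stackapp \pi \succ t \star \saverlz{\pi} \stackapp \pi \in \Perp$. The only difference is that you spell out the bookkeeping more explicitly than the paper's two-line version.
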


\begin{proof}
    Suppose that $t \Vdash (\varphi \rightarrow \psi) \rightarrow \varphi$ and $\pi \in \falsity{\varphi}$. By \Cref{theorem:SaveCommandandNegation}, $\saverlz{\pi} \Vdash \varphi \rightarrow \psi$ from which it follows that
    \[
    \cc \star t \stackapp \pi \succ t \star \saverlz{\pi} \stackapp \pi \in \Perp.
    \]
\end{proof}

\begin{definition}
    A realizability algebra is said to be coherent (consistent) if for every realizer $t \in \mathcal{R}$ there is a stack $\pi$ such that $t \star \pi \not\in \Perp$.
\end{definition}

\begin{theorem}
    A realizability algebra is consistent if and only if there is no realizer $t \in \mathcal{R}$ which realizes the formula $\perp$.
\end{theorem}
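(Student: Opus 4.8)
The plan is to unwind the definitions on both sides; the whole content of the statement is the single observation that $\falsity{\perp} = \Pi$, so that for a realizer $t$ we have $t \Vdash \perp$ if and only if $t \star \pi \in \Perp$ for \emph{every} stack $\pi \in \Pi$.

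For the left-to-right implication, I would assume the algebra is consistent and fix an arbitrary realizer $t \in \mathcal{R}$. Consistency hands us a stack $\pi$ with $t \star \pi \notin \Perp$; since $\pi \in \Pi = \falsity{\perp}$, this is exactly a witness that $t$ fails to realize $\perp$. As $t$ was arbitrary, no realizer realizes $\perp$.

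For the converse, I would assume that no realizer realizes $\perp$ and again fix an arbitrary realizer $t \in \mathcal{R}$. Because $t$ does not realize $\perp$, the condition ``$t \star \pi \in \Perp$ for all $\pi \in \falsity{\perp}$'' must fail; using $\falsity{\perp} = \Pi$, this produces a stack $\pi \in \Pi$ with $t \star \pi \notin \Perp$. Since $t$ was arbitrary, the algebra is consistent.

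There is essentially no real obstacle here beyond bookkeeping: the only point requiring a moment's care is that the notion of consistency quantifies over realizers $t \in \mathcal{R}$ rather than over all closed terms of $\Lambda$, and the phrase ``realizer which realizes $\perp$'' in the statement carries the same restriction, so the two ranges of quantification coincide and no strengthening or weakening occurs in passing between them.
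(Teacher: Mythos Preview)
Your proposal is correct; the argument is precisely the definitional unwinding that the statement calls for, hinging only on $\falsity{\perp}=\Pi$. The paper itself states this theorem without proof, treating it as immediate from the definition of coherence, so your write-up supplies exactly the routine verification the paper omits.
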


\subsection{Adding Defined Functions} \label{section:AddingDefinedFunctions}

Often when working directly in the theory $\ZFepsilon$ we want to use predefined notions in order to simplify the definitions we want to realize. For example, that a function consists of ordered pairs. However, without Extensionality, \emph{the} ordered pair of $a$ and $b$ is not a well-defined notion. That is, we can be in the situation where both $z$ and $z'$ satisfy ``\emph{I am an ordered pair of $a$ and $b$}'' but $z$ and $z'$ are not equal. 

In the realizability model we can regularly circumvent issues such as these by defining a \emph{canonical} definition of the function we want. For example, in \Cref{section:Pairing} we will introduce canonical interpretations of singletons, unordered pairs and ordered pairs. In order to use such objects in definitions directly in $\ZFepsilon$, it is useful to extend the language of $\ZFepsilon$ to include those operations which are uniformly realized.

\begin{definition}
    We say that a formula $\varphi(u_1, \dots, u_n, v) \in Fml_{\rlzin}$ is $\mathcal{A}$\emph{-definable by} $f \colon \tf{N}^n \rightarrow \tf{N}$ if there exists a \emph{realizer} $t \in \mathcal{R}$ such that for any $a_1, \dots, a_n, b \in \rlzstr$,
    \[
    b = f(a_1, \dots, a_n) \qquad \Longrightarrow \qquad t \Vdash \varphi(a_1, \dots, a_n, b).
    \]
$\varphi$ is said to be $\mathcal{A}$\emph{-definable} if there exists such a function $f$ and in this case we say $f$ $\mathcal{A}$\emph{-defines} $\varphi$.
\end{definition}

For example, we shall see that the function $\up \colon \rlzstr^2 \rightarrow \rlzstr$, 
\[
(u_1, u_2) \mapsto \{ (u_1, \underline{0} \stackapp \pi ) \divline \pi \in \Pi \} \cup \{ (u_2, \underline{1} \stackapp \pi) \divline \pi \in \Pi\}
\]
$\mathcal{A}$\emph{-defines} an unordered pair. That is, there exists some realizer $t$ such that for any $a_1, a_2 \in \rlzstr$
\[
t \Vdash \up(a_1, a_2) \text{ is (extensionally) equal to the unordered pair of } a_1 \text{ and } a_2.
\]
We note here that for the construction to go through in the realizability model, we need \emph{uniformity}: there is a \emph{single} realizer $t$ which realizes the formula for \emph{any} selection of parameters. \\

\noindent Given $\mathcal{A}$, we can then expand the language $\mathcal{L}_{\rlzin}$ by adding a $n$-ary function symbol $\lift{f}$ whenever $f$ $\mathcal{A}$-defines some formula $\varphi \in Fml_{\rlzin}$. Then in the realizability model we will interpret $\lift{f}$ as a ``\emph{universal lift}'' (this will formally be defined in \Cref{section:LiftingClassFunctions} where we also prove it satisfies the desired properties, but this is essentially just a way to internalise the construction within the realizability model). It will then be the case that the universal lift of a function will extend the structural property that $f$ satisfied. For example, we will have that
\[
\rlzmodel \Vdash \forall a_1 \forall a_2 (\up(a_1, a_2) \text{ is (extensionally) equal to the unordered pair of } a_1 \text{ and } a_2).
\]

From this it follows that we can add $\mathcal{A}$-definable functions to $\mathcal{L}_{\rlzin}$ and reason with them as ``canonical'' witnesses to the properties we are trying to prove. In order to simplify notation, we will still refer to this extension as $\ZFepsilon$ and implicitly assume that whenever $f$ $\mathcal{A}$-defines some formula then we have added a symbol to the language $\mathcal{L}_{\rlzin}$ which will be interpreted as $\lift{f}$.

\section{Realizers and Predicate Logic} \label{section:RealizersAndPropositionLogic}

\subsection{Realizers for Logical Connectives}

\noindent We note here some useful propositions which we shall frequently use in later analysis. \Cref{theorem:falsitysubsets} gives a  
sufficient criterion to prove that the identity term, $\lambda u \lambdaapp u$, witnesses an implication. \Cref{{theorem:ImplicationandApplication},{theorem:realizinguniversals}} prove that the realizability model satisfies the BHK interpretations for implications and universal quantifiers. \Cref{theorem:realizingexistentials} gives a sufficient condition to realizer an existential quantifier. \Cref{theorem:RealizingConjunction} gives a realizer for the conjunction of two formulas. \Cref{theorem:RealizingNotImplication} tells us that if we can realize $\varphi$ and $\neg \psi$ then we can realize $\neg (\varphi \rightarrow \psi)$. \Cref{theorem:negatingimplications} gives us information on the relationship between realizers for $\varphi \rightarrow \psi$ and $\neg \psi \rightarrow \neg \varphi$. \Cref{theorem:RealizingBoundedUniversals} gives an alternative way to express bounded universal quantifiers.

\begin{proposition} \label{theorem:falsitysubsets}
    If $\falsity{\varphi} \supseteq \falsity{\psi}$ then $\identity \Vdash \varphi \rightarrow \psi$.
\end{proposition}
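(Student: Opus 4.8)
The plan is to unwind the definition of the realizer relation $\Vdash$ directly. Recall that $\identity = \lambda u \lambdaapp u$, and that by definition $\identity \Vdash \varphi \rightarrow \psi$ means that for every $\pi \in \falsity{\varphi \rightarrow \psi}$ we have $\identity \star \pi \in \Perp$. Since $\falsity{\varphi \rightarrow \psi} = \{ t \stackapp \sigma \divline t \Vdash \varphi \text{ and } \sigma \in \falsity{\psi}\}$, it suffices to take an arbitrary $t \Vdash \varphi$ and $\sigma \in \falsity{\psi}$ and show $\identity \star t \stackapp \sigma \in \Perp$.

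First I would apply the \emph{grab} rule of one-step evaluation: $\identity \star t \stackapp \sigma = \lambda u \lambdaapp u \star t \stackapp \sigma \;\succ_1\; u[u \coloneqq t] \star \sigma = t \star \sigma$. So $\identity \star t \stackapp \sigma \succ t \star \sigma$. Next I would use the hypothesis $\falsity{\varphi} \supseteq \falsity{\psi}$: since $\sigma \in \falsity{\psi}$, we have $\sigma \in \falsity{\varphi}$, and therefore, since $t \Vdash \varphi$ (i.e. $t \in \verity{\varphi}$), by definition $t \star \sigma \in \Perp$. Finally, since $\Perp$ is a pole — closed downward under $\succ$ — the fact that $\identity \star t \stackapp \sigma \succ t \star \sigma$ and $t \star \sigma \in \Perp$ gives $\identity \star t \stackapp \sigma \in \Perp$, as required.

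There is essentially no obstacle here: the statement is a one-line computation once the definitions are in place. The only point to be slightly careful about is that the argument requires $t$ to range over \emph{all} terms realizing $\varphi$ (not just realizers), but this is exactly how $\falsity{\varphi \rightarrow \psi}$ is defined in the excerpt, so no issue arises. One could also note for completeness that $\identity$ is itself a realizer (it contains no continuation constant), so the conclusion $\rlzmodel \Vdash \varphi \rightarrow \psi$ would follow whenever the hypothesis holds, though the proposition as stated only asserts $\identity \Vdash \varphi \rightarrow \psi$.
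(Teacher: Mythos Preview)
Your proof is correct and follows exactly the same approach as the paper's: unwind $\falsity{\varphi \rightarrow \psi}$, use the inclusion hypothesis to get $\sigma \in \falsity{\varphi}$ so that $t \star \sigma \in \Perp$, then reduce $\identity \star t \stackapp \sigma \succ t \star \sigma$ and invoke closure of the pole. The paper's version is just a slightly terser rendition of the same computation.
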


\begin{proof}
    Take $t \stackapp \pi \in \falsity{\varphi \rightarrow \psi} = \{ s \stackapp \sigma \divline s \Vdash \varphi, \sigma \in \falsity{\psi} \}$. Since $\pi \in \falsity{\psi} \subseteq \falsity{\varphi}$, $t \star \pi \in \Perp$. Therefore, since $\identity \star t \stackapp \pi \succ t \star \pi$ and the latter is in $\Perp$, $\identity \star t \stackapp \pi \in \Perp$. Hence, by definition, $ \identity \Vdash \varphi \rightarrow \psi$.
\end{proof}

\begin{proposition} \label{theorem:ImplicationandApplication}
    If $t \Vdash \varphi \rightarrow \psi$ and $s \Vdash \varphi$, then $\app{t}{s} \Vdash \psi$.
\end{proposition}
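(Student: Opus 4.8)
The plan is to unfold the definition of the falsity value of an implication and use the fact that $\app{t}{s}$ reduces, via the \emph{push} rule, to $t \star s \stackapp \pi$. Concretely, to show $\app{t}{s} \Vdash \psi$ we must check that for every $\pi \in \falsity{\psi}$ we have $\app{t}{s} \star \pi \in \Perp$. So I would fix such a $\pi$.

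Since $s \Vdash \varphi$ by hypothesis and $\pi \in \falsity{\psi}$, the term $s \stackapp \pi$ lies in $\falsity{\varphi \rightarrow \psi} = \{ r \stackapp \sigma \divline r \Vdash \varphi \land \sigma \in \falsity{\psi} \}$. Therefore, since $t \Vdash \varphi \rightarrow \psi$, we get $t \star s \stackapp \pi \in \Perp$. Now by the \emph{push} rule of one-step evaluation, $\app{t}{s} \star \pi \succ_1 t \star s \stackapp \pi$, hence $\app{t}{s} \star \pi \succ t \star s \stackapp \pi$. Since $\Perp$ is closed under the reverse of $\succ$ (this is exactly the defining property of a pole), from $t \star s \stackapp \pi \in \Perp$ we conclude $\app{t}{s} \star \pi \in \Perp$. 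As $\pi \in \falsity{\psi}$ was arbitrary, $\app{t}{s} \in \verity{\psi}$, i.e. $\app{t}{s} \Vdash \psi$.

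There is essentially no obstacle here: the proof is a direct unfolding, and the only thing to be careful about is the direction of the pole-closure property — we need that if $s \star \sigma \succ t \star \pi$ and $t \star \pi \in \Perp$ then $s \star \sigma \in \Perp$, applied with $s \star \sigma = \app{t}{s} \star \pi$ and $t \star \pi$ (in that definition) equal to $t \star s \stackapp \pi$. This is precisely how poles were defined, so the argument goes through cleanly. (One might also remark that $\app{t}{s}$ is a realizer whenever $t$ and $s$ are, since application of realizers introduces no continuation constant, but this is not needed for the statement as given.)
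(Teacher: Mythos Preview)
Your proof is correct and follows essentially the same approach as the paper's: fix $\pi \in \falsity{\psi}$, observe that $s \stackapp \pi \in \falsity{\varphi \rightarrow \psi}$ so that $t \star s \stackapp \pi \in \Perp$, and conclude via the push reduction $\app{t}{s} \star \pi \succ t \star s \stackapp \pi$ and the closure property of the pole.
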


\begin{proof}
    We need to prove that for every $\pi \in \falsity{\psi}$, $\app{t}{s} \star \pi \in \Perp$. So suppose that $t$ and $s$ satisfy the assumptions of the proposition and fix $\pi \in \falsity{\psi}$. Observe that $\falsity{\varphi \rightarrow \psi} = \{ r \stackapp \sigma \divline r \Vdash \varphi, \sigma \in \falsity{\psi} \}$ and therefore $s \stackapp \pi \in \falsity{\varphi \rightarrow \psi}$. This means that $t \star s \stackapp \pi \in \Perp$. Therefore, the claim follows from the fact that $\app{t}{s} \star \pi \succ t \star s \stackapp \pi$.
\end{proof}

\begin{proposition} \label{theorem:realizinguniversals}
    $t \Vdash \forall x \varphi(x) \Longleftrightarrow \forall a \in \rlzstr \, t \Vdash \varphi(a).$
\end{proposition}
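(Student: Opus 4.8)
The statement is an unfolding of the definition of $\falsity{\forall x\,\varphi(x)}$. Recall that by definition $\falsity{\forall x\,\varphi(x)} = \bigcup_{a\in\rlzstr}\falsity{\varphi[a/x]}$, and that $t\Vdash\psi$ means $t\star\pi\in\Perp$ for every $\pi\in\falsity{\psi}$. So the plan is simply to chase these two definitions in both directions.

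For the forward direction, suppose $t\Vdash\forall x\,\varphi(x)$ and fix $a\in\rlzstr$; I must show $t\Vdash\varphi(a)$, i.e. that $t\star\pi\in\Perp$ for every $\pi\in\falsity{\varphi(a)}$. But any such $\pi$ lies in $\falsity{\varphi[a/x]}\subseteq\bigcup_{b\in\rlzstr}\falsity{\varphi[b/x]} = \falsity{\forall x\,\varphi(x)}$, so $t\star\pi\in\Perp$ by hypothesis. For the reverse direction, suppose $t\Vdash\varphi(a)$ for every $a\in\rlzstr$, and take $\pi\in\falsity{\forall x\,\varphi(x)}$. By the definition of the falsity value of a universal, there is some $a\in\rlzstr$ with $\pi\in\falsity{\varphi[a/x]}$; since $t\Vdash\varphi(a)$, we get $t\star\pi\in\Perp$. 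As this holds for all such $\pi$, we conclude $t\Vdash\forall x\,\varphi(x)$.

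There is no real obstacle here — the proposition is essentially immediate from the recursive clause for $\forall$ in the definition of $\falsity{\cdot}$ together with the definition of the realizability relation. The only point requiring the tiniest care is the bookkeeping that $\falsity{\varphi[a/x]}$ is, by definition, one of the sets in the union defining $\falsity{\forall x\,\varphi(x)}$, so membership in the former implies membership in the latter, and conversely every element of the union belongs to some such $\falsity{\varphi[a/x]}$.
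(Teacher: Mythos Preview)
Your proof is correct and follows essentially the same approach as the paper's own proof: both directions are handled by directly unfolding the definition $\falsity{\forall x\,\varphi(x)} = \bigcup_{a\in\rlzstr}\falsity{\varphi(a)}$ together with the definition of $t\Vdash\psi$, using the inclusion $\falsity{\varphi(a)}\subseteq\falsity{\forall x\,\varphi(x)}$ for the forward direction and picking a witness $a$ from the union for the reverse.
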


\begin{proof}
    First, assume that $t \Vdash \forall x \, \varphi(x)$. Fix $a \in \rlzstr$ and $\pi \in \falsity{\varphi(a)} \subseteq \bigcup_{b \in \rlzstr} \falsity{\varphi(b)} = \falsity{\forall x \varphi(x)}$. Then, $t \star \pi \in \Perp$ and therefore $t \Vdash \varphi(a)$.

    For the other direction, suppose that for every $a \in \rlzstr$, $t \Vdash \varphi(a)$. Fix $\pi \in \falsity{\forall x \varphi(x)} = \bigcup_{b \in \rlzstr} \falsity{\varphi(b)}$. Then, by definition, we can fix some $b \in \rlzstr$ such that $\pi \in \falsity{\varphi(b)}$. Since, $t \Vdash \varphi(b)$ by assumption, $t \star \pi \in \Perp$ and therefore $t \Vdash \forall x \varphi(x)$.
\end{proof}

\begin{proposition} \label{theorem:realizingexistentials}
    If $t \Vdash \varphi(a)$ for some $a \in \rlzstr$ then $\lambda u \lambdaapp \app{u}{t} \Vdash \exists x \, \varphi(x)$.
\end{proposition}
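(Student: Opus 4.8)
The plan is to unfold the abbreviation $\exists x\,\varphi(x) \equiv \forall x(\varphi(x) \to \perp) \to \perp$ and compute the associated falsity value directly from the recursive definition of $\falsity{\cdot}$. Using the clause for implication together with $\falsity{\perp} = \Pi$, one obtains
\[
\falsity{\exists x\,\varphi(x)} = \{\, s \stackapp \pi \divline s \Vdash \forall x(\varphi(x) \to \perp) \,\land\, \pi \in \Pi \,\}.
\]
So to show $\lambda u \lambdaapp \app{u}{t} \Vdash \exists x\,\varphi(x)$ it suffices to fix an arbitrary $s \Vdash \forall x(\varphi(x) \to \perp)$ and $\pi \in \Pi$ and prove that $(\lambda u \lambdaapp \app{u}{t}) \star s \stackapp \pi \in \Perp$.

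The second step is the evaluation. By the grab rule, $(\lambda u \lambdaapp \app{u}{t}) \star s \stackapp \pi \succ_1 \app{s}{t} \star \pi$, and by the push rule $\app{s}{t} \star \pi \succ_1 s \star t \stackapp \pi$, so $(\lambda u \lambdaapp \app{u}{t}) \star s \stackapp \pi \succ s \star t \stackapp \pi$. Since a pole is closed under $\succ$ in the sense that $s' \star \sigma \succ s'' \star \pi'$ and $s'' \star \pi' \in \Perp$ imply $s' \star \sigma \in \Perp$, it is enough to show $s \star t \stackapp \pi \in \Perp$.

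The third step uses the hypotheses on $s$ and $t$. By \Cref{theorem:realizinguniversals}, $s \Vdash \forall x(\varphi(x) \to \perp)$ gives in particular $s \Vdash \varphi(a) \to \perp$ for the specific $a$ with $t \Vdash \varphi(a)$. Now $\falsity{\varphi(a) \to \perp} = \{\, r \stackapp \sigma \divline r \Vdash \varphi(a) \,\land\, \sigma \in \Pi \,\}$, and since $t \Vdash \varphi(a)$ and $\pi \in \Pi$ we get $t \stackapp \pi \in \falsity{\varphi(a) \to \perp}$; hence $s \star t \stackapp \pi \in \Perp$, completing the argument. (One could equivalently package the last step via \Cref{theorem:ImplicationandApplication}, observing $\app{s}{t} \Vdash \perp$ and then unfolding $\falsity{\perp} = \Pi$.)

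I do not anticipate a genuine obstacle here: the only place requiring care is getting the falsity value of the (abbreviated) existential quantifier right, i.e. correctly reading $\exists$ through the negation-of-universal encoding and applying the implication clause twice. Everything else is a two-step $\beta$-reduction plus downward closure of the pole.
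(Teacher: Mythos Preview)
Your proposal is correct and follows essentially the same approach as the paper's proof: unfold the existential abbreviation, fix $s \Vdash \forall x(\varphi(x)\to\perp)$ and $\pi\in\Pi$, reduce $(\lambda u\lambdaapp\app{u}{t})\star s\stackapp\pi\succ s\star t\stackapp\pi$, and conclude using that $s$ applied to any realizer of $\varphi(a)$ yields a process in $\Perp$. The paper argues the last step directly rather than citing \Cref{theorem:realizinguniversals}, but the content is identical.
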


\begin{proof}
    Note that $\exists x \, \varphi(x)$ is an abbreviation for $(\forall x \, (\varphi(x) \rightarrow \perp)) \rightarrow \perp$. So suppose that $t \Vdash \varphi(a)$ for some $a \in \rlzstr$ and suppose that $s \Vdash \forall x \, (\varphi(x) \rightarrow \perp)$ while $\pi \in \Pi$. This means that for any $\sigma \in \Pi$, $b \in \rlzstr$ and term $r$, if $r \Vdash \varphi(b)$ then $s \star r \stackapp \sigma \in \Perp$. In particular, $s \star t \stackapp \pi \in \Perp$. Thus, since $\lambda u \lambdaapp \app{u}{t} \star s \stackapp \pi \succ \app{s}{t} \star \pi \succ s \star t \stackapp \pi$, $\lambda u \lambdaapp \app{u}{t} \star s \stackapp \pi \in \Perp$, proving the claim.
\end{proof}

\begin{proposition} \label{theorem:RealizingConjunction}
    If $t \Vdash \varphi$ and $s \Vdash \psi$, then $\lambda u \lambdaapp \fapp{\app{u}{t}}{s} \Vdash \varphi \land \psi$.
\end{proposition}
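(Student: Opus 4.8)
The plan is to unfold the abbreviation $\varphi \land \psi \equiv (\varphi \rightarrow (\psi \rightarrow \perp)) \rightarrow \perp$ and check directly that $\lambda u \lambdaapp \fapp{\app{u}{t}}{s}$ lies in $\verity{\varphi \land \psi}$. Since $\falsity{\perp} = \Pi$, unwinding the falsity value of an implication shows that every element of $\falsity{\varphi \land \psi}$ has the form $w \stackapp \pi$ where $w \Vdash \varphi \rightarrow (\psi \rightarrow \perp)$ and $\pi \in \Pi$ is arbitrary. So the task reduces to showing that $\lambda u \lambdaapp \fapp{\app{u}{t}}{s} \star w \stackapp \pi \in \Perp$ for every such $w$ and $\pi$.

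First I would evaluate this process. The \emph{grab} rule substitutes $u \coloneqq w$, giving $\fapp{\app{w}{t}}{s} \star \pi$, and two applications of \emph{push} then yield $w \star t \stackapp s \stackapp \pi$; hence $\lambda u \lambdaapp \fapp{\app{u}{t}}{s} \star w \stackapp \pi \succ w \star t \stackapp s \stackapp \pi$. Next I would observe that $t \stackapp s \stackapp \pi \in \falsity{\varphi \rightarrow (\psi \rightarrow \perp)}$: by the definition of the falsity value of an implication this holds because $t \Vdash \varphi$ and $s \stackapp \pi \in \falsity{\psi \rightarrow \perp}$, the latter in turn holding because $s \Vdash \psi$ and $\pi \in \falsity{\perp} = \Pi$. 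From $w \Vdash \varphi \rightarrow (\psi \rightarrow \perp)$ we then get $w \star t \stackapp s \stackapp \pi \in \Perp$, and since $\Perp$ is closed under anti-evaluation (the defining property of a pole) we conclude $\lambda u \lambdaapp \fapp{\app{u}{t}}{s} \star w \stackapp \pi \in \Perp$, as required.

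This is a routine computation entirely in the spirit of \Cref{theorem:realizingexistentials}, and I do not expect any genuine obstacle. The only point requiring a little care is the left-associativity of application, so that the two \emph{push} steps deposit $t$ and then $s$ onto the stack in the order needed to form $t \stackapp s \stackapp \pi$. As an alternative to the explicit evaluation, one could instead note $\lambda u \lambdaapp \fapp{\app{u}{t}}{s} \star w \stackapp \pi \succ \fapp{\app{w}{t}}{s} \star \pi$ and then apply \Cref{theorem:ImplicationandApplication} twice to see $\fapp{\app{w}{t}}{s} \Vdash \perp$, so that the process lies in $\Perp$ by definition.
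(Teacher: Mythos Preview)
Your proof is correct and follows essentially the same approach as the paper: unfold the abbreviation, take $w \Vdash \varphi \rightarrow (\psi \rightarrow \perp)$ and $\pi \in \Pi$, observe that $t \stackapp s \stackapp \pi \in \falsity{\varphi \rightarrow (\psi \rightarrow \perp)}$, and evaluate $\lambda u \lambdaapp \fapp{\app{u}{t}}{s} \star w \stackapp \pi \succ w \star t \stackapp s \stackapp \pi \in \Perp$. The paper's proof is identical up to the choice of variable name ($r$ instead of $w$), and your alternative via \Cref{theorem:ImplicationandApplication} is also valid.
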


\begin{proof}
    First, note that $\varphi \land \psi \equiv (\varphi \rightarrow (\psi \rightarrow \perp)) \rightarrow \perp$. Now suppose that $r \Vdash \varphi \rightarrow (\psi \rightarrow \perp)$ and $\pi \in \Pi$. By construction we then have that $s \stackapp \pi \in \falsity{\psi \rightarrow \perp}$ and $t \stackapp s \stackapp \pi \in \falsity{\varphi \rightarrow (\psi \rightarrow \perp)}$. Therefore,
    \[
    \lambda u \lambdaapp \fapp{\app{u}{t}}{s} \star r \stackapp \pi \succ \fapp{\app{r}{t}}{s} \star \pi \succ r \star t \stackapp s \stackapp \pi \in \Perp.
    \]
\end{proof}

\begin{proposition} \label{theorem:RealizingNotImplication}
    If $t \Vdash \varphi$ and $s \Vdash \psi \rightarrow \perp$ then $\lambda u \lambdaapp \inapp{s}{\app{u}{t}} \Vdash (\varphi \rightarrow \psi) \rightarrow \perp$.
\end{proposition}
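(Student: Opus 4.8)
The plan is to unfold the abbreviation $(\varphi \rightarrow \psi) \rightarrow \perp$ and then chase a short evaluation. First I would compute the relevant falsity value: since $\falsity{\perp} = \Pi$, we have
\[
\falsity{(\varphi \rightarrow \psi) \rightarrow \perp} = \{ r \stackapp \pi \divline r \Vdash \varphi \rightarrow \psi \text{ and } \pi \in \Pi \}.
\]
So fix $r \Vdash \varphi \rightarrow \psi$ and $\pi \in \Pi$; the goal is to show that $\lambda u \lambdaapp \inapp{s}{\app{u}{t}} \star r \stackapp \pi \in \Perp$.

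Next I would run the one-step evaluations. By the grab rule, $\lambda u \lambdaapp \inapp{s}{\app{u}{t}} \star r \stackapp \pi \succ_1 \inapp{s}{\app{r}{t}} \star \pi$, and then by the push rule $\inapp{s}{\app{r}{t}} \star \pi \succ_1 s \star \app{r}{t} \stackapp \pi$. The key step is then an appeal to \Cref{theorem:ImplicationandApplication}: since $r \Vdash \varphi \rightarrow \psi$ and $t \Vdash \varphi$, we obtain $\app{r}{t} \Vdash \psi$. Because $\falsity{\psi \rightarrow \perp} = \{ q \stackapp \sigma \divline q \Vdash \psi \text{ and } \sigma \in \Pi \}$, this means $\app{r}{t} \stackapp \pi \in \falsity{\psi \rightarrow \perp}$, and hence $s \star \app{r}{t} \stackapp \pi \in \Perp$ by the hypothesis $s \Vdash \psi \rightarrow \perp$. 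Since $\Perp$ is a pole and $\lambda u \lambdaapp \inapp{s}{\app{u}{t}} \star r \stackapp \pi \succ s \star \app{r}{t} \stackapp \pi$, the original process lies in $\Perp$, as required.

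There is no genuinely hard part here; the argument is a routine evaluation chase in exactly the style of \Cref{theorem:RealizingConjunction} and \Cref{theorem:realizingexistentials}. The only points needing care are unfolding the abbreviation for $\perp$ so that the tail $\pi$ stays an arbitrary stack (coming from $\falsity{\perp} = \Pi$), and applying the grab and push reductions in the correct order before invoking \Cref{theorem:ImplicationandApplication}.
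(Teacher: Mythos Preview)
Your proof is correct and follows essentially the same approach as the paper. The only cosmetic difference is that the paper invokes \Cref{theorem:ImplicationandApplication} twice (once to get $\app{r}{t} \Vdash \psi$ and once more to get $\inapp{s}{\app{r}{t}} \Vdash \perp$) and stops the evaluation at $\inapp{s}{\app{r}{t}} \star \pi$, whereas you push one step further to $s \star \app{r}{t} \stackapp \pi$ and unfold the falsity value of $\psi \rightarrow \perp$ directly; these are the same argument.
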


\begin{proof}
    Suppose that $r \Vdash \varphi \rightarrow \psi$ and $\pi \in \Pi$. Then, by multiple uses of \Cref{theorem:ImplicationandApplication}, $\app{r}{t} \Vdash \psi$ and thus $\inapp{s}{\app{r}{t}} \Vdash \perp$, which means that $\inapp{s}{\app{r}{t}} \star \pi \in \Perp$. Thus, since $\lambda u \lambdaapp \inapp{s}{\app{u}{t}} \star r \stackapp \pi \succ \inapp{s}{\app{r}{t}} \star \pi$, $\lambda u \lambdaapp \inapp{s}{\app{u}{t}} \star r \stackapp \pi \in \Perp$, proving the claim.
\end{proof}

\begin{proposition} \label{theorem:negatingimplications}
    If $t \Vdash \varphi \rightarrow \psi$ then $\lambda u \lambdaapp \lambda v \lambdaapp \inapp{u}{\app{t}{v}} \Vdash ((\psi \rightarrow \perp) \rightarrow (\varphi \rightarrow \perp))$. Moreover, if \hbox{$t \Vdash ((\psi \rightarrow \perp) \rightarrow (\varphi \rightarrow \perp))$} then $\lambda u \lambdaapp \inapp{\cc}{\lambda k \lambdaapp \fapp{\app{t}{k}}{u}} \Vdash \varphi \rightarrow \psi$.
\end{proposition}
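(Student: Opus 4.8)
The plan is to handle the two implications separately; in each case the argument reduces to unfolding the relevant falsity value, performing a short $\succ$-reduction, and then invoking closure of the pole $\Perp$ under anti-evaluation together with \Cref{theorem:ImplicationandApplication} (for the first part) or \Cref{theorem:SaveCommandandNegation} (for the second).

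For the first claim I would assume $t \Vdash \varphi \rightarrow \psi$, fix an arbitrary $s \Vdash \psi \rightarrow \perp$ and $\pi \in \falsity{\varphi \rightarrow \perp}$, and recall that, by the definition of the falsity value of an implication, $\pi$ has the form $r \stackapp \rho$ with $r \Vdash \varphi$ and $\rho \in \falsity{\perp} = \Pi$. Two applications of the \textbf{grab} rule then yield $\lambda u \lambdaapp \lambda v \lambdaapp \inapp{u}{\app{t}{v}} \star s \stackapp r \stackapp \rho \succ \inapp{s}{\app{t}{r}} \star \rho$. Now $\app{t}{r} \Vdash \psi$ by \Cref{theorem:ImplicationandApplication}, and a second use of that proposition gives $\inapp{s}{\app{t}{r}} \Vdash \perp$, so $\inapp{s}{\app{t}{r}} \star \rho \in \Perp$ because $\rho \in \Pi$. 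Anti-evaluation then carries this back up the reduction, giving $\lambda u \lambdaapp \lambda v \lambdaapp \inapp{u}{\app{t}{v}} \star s \stackapp \pi \in \Perp$, and since $s$ and $\pi$ were arbitrary this is exactly the desired realizability statement.

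For the second claim I would assume $t \Vdash (\psi \rightarrow \perp) \rightarrow (\varphi \rightarrow \perp)$, fix $a \Vdash \varphi$ and $\pi \in \falsity{\psi}$, and reduce the process $\lambda u \lambdaapp \inapp{\cc}{\lambda k \lambdaapp \fapp{\app{t}{k}}{u}} \star a \stackapp \pi$. A \textbf{grab} step, a \textbf{push}, and then the \textbf{save} rule for $\cc$ (which captures the current stack as $\saverlz{\pi}$), followed by a further \textbf{grab} and two \textbf{push} steps, bring the process down to $t \star \saverlz{\pi} \stackapp a \stackapp \pi$. It therefore suffices to check that $\saverlz{\pi} \stackapp a \stackapp \pi \in \falsity{(\psi \rightarrow \perp) \rightarrow (\varphi \rightarrow \perp)}$. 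For this I would observe that $\saverlz{\pi} \Vdash \psi \rightarrow \perp$ by \Cref{theorem:SaveCommandandNegation} (since $\pi \in \falsity{\psi}$), and that $a \stackapp \pi \in \falsity{\varphi \rightarrow \perp}$ because $a \Vdash \varphi$ and $\pi \in \Pi = \falsity{\perp}$; unfolding the falsity value of the outer implication, this is precisely what membership requires. Hence $t \star \saverlz{\pi} \stackapp a \stackapp \pi \in \Perp$, and anti-evaluation propagates this to the original process.

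The $\succ$-chains and the membership unfoldings are routine bookkeeping. The one genuinely non-mechanical point is in the second part: recognizing that the continuation $\saverlz{\pi}$ captured by $\cc$ can be re-used as a realizer of $\psi \rightarrow \perp$ via \Cref{theorem:SaveCommandandNegation}, and correctly matching $\saverlz{\pi} \stackapp a \stackapp \pi$ against the falsity value of $(\psi \rightarrow \perp) \rightarrow (\varphi \rightarrow \perp)$. That is the characteristic ``classical'' manoeuvre, and it is the step I would write out most carefully; everything else is just following the reduction and appealing to the pole's closure under anti-evaluation.
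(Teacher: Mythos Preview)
Your proposal is correct and follows essentially the same approach as the paper's proof: both parts unfold the relevant falsity values, perform the obvious $\succ$-reductions, and invoke \Cref{theorem:ImplicationandApplication} for the first claim and \Cref{theorem:SaveCommandandNegation} for the second. The only cosmetic difference is that in the second part the paper stops the reduction one push earlier at $\app{t}{\saverlz{\pi}} \star a \stackapp \pi$ and phrases the conclusion via \Cref{theorem:ImplicationandApplication}, whereas you reduce one step further to $t \star \saverlz{\pi} \stackapp a \stackapp \pi$ and match the falsity value directly; these are equivalent.
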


\begin{proof}
    Firstly, suppose that $t \Vdash \varphi \rightarrow \psi$, $s \Vdash \psi \rightarrow \perp$, $r \Vdash \varphi$ and $\pi \in \Pi$. Then, by repeated use of \Cref{theorem:ImplicationandApplication}, $\app{t}{r} \Vdash \psi$ and $\inapp{s}{\app{t}{r}} \Vdash \perp$. Thus $\lambda u \lambdaapp \lambda v \lambdaapp \inapp{u}{\app{t}{v}} \star s \stackapp r \stackapp \pi \succ \inapp{s}{\app{t}{r}} \star \pi \in \Perp$.

    For the second claim, suppose that $t \Vdash (\psi \rightarrow \perp) \rightarrow (\varphi \rightarrow \perp)$, $s \Vdash \varphi$ and $\pi \in \falsity{\psi}$. Then, by \Cref{theorem:SaveCommandandNegation}, $\saverlz{\pi} \Vdash \psi \rightarrow \perp$. Thus $\app{t}{\saverlz{\pi}} \Vdash \varphi \rightarrow \perp$ and hence $\app{t}{\saverlz{\pi}} \star s \stackapp \pi \in \Perp$. Finally,
    \begin{align*}
    \lambda u \lambdaapp \inapp{\cc}{\lambda k \lambdaapp \fapp{\app{t}{k}}{u}} \star s \stackapp \pi & \succ \inapp{\cc}{\lambda k \stackapp \fapp{\app{t}{k}}{s}} \star \pi \succ \cc \star (\lambda k \lambdaapp \fapp{\app{t}{k}}{s}) \stackapp \pi \\ 
    & \succ \lambda k \lambdaapp \fapp{\app{t}{k}}{s} \star \saverlz{\pi} \stackapp \pi \succ \fapp{\app{t}{\saverlz{\pi}}}{s} \star \pi \succ \app{t}{\saverlz{\pi}} \star s \stackapp \pi \in \Perp.
    \end{align*}
\end{proof}

\noindent Recall that we use $\forall x \rlzin a \, \varphi(x)$ as an abbreviation for $\forall x (\neg \varphi(x) \rightarrow x \notrlzin a)$. The next proposition gives an easier, alternative way to view realizers of bounded universal quantification. We observe here that the reverse direction requires an essential instance of $\cc$ along with an additional assumption on the falsity values.

\begin{definition} \label{definition:RestrictedQuantifier}
    For a set $a \in \rlzstr$, we define the restricted quantifier $\forall x^a$ to have the following meaning:
    \[
    \falsity{\forall x^a \varphi(x)} = \{ \pi \divline \exists b \in \ff{dom}(a) \: \pi \in \falsity{\varphi(b)} \} = \bigcup_{b \in \ff{dom}(a)} \falsity{\varphi(b)}.
    \]
\end{definition}

\begin{proposition} \label{theorem:RealizingBoundedUniversals}\,
    \begin{thmlist}
        \item \label{item:RealizingBoundedUniversals1} $\lambda u \lambdaapp \lambda v \lambdaapp \app{v}{u} \Vdash \forall x^a \, \varphi(x) \rightarrow \forall x (\neg \varphi(x) \rightarrow x \notrlzin a)$;
        \item \label{item:RealizingBoundedUniversals2} If $\falsity{\varphi(b)} \subseteq \falsity{b \notrlzin a}$ for every $b \in \ff{dom}(a)$, then 
        
        $\lambda u \lambdaapp \inapp{\cc}{\lambda k \lambdaapp \app{u}{k}} \Vdash \forall x (\neg \varphi(x) \rightarrow x \notrlzin a) \rightarrow \forall x^a \, \varphi(x)$.
    \end{thmlist}
\end{proposition}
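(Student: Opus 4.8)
The plan is to establish each part directly from the definitions of the falsity values involved: I would take an arbitrary stack in the falsity value of the implication to be realized, decompose it according to the clauses for $\rightarrow$ and $\forall$, and then reduce the displayed $\lambda_c$-term against it using the one-step evaluation rules.

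For part (i), a stack in $\falsity{\forall x^a\varphi(x)\rightarrow\forall x(\neg\varphi(x)\rightarrow x\notrlzin a)}$ has the shape $t\stackapp\sigma$ with $t\Vdash\forall x^a\varphi(x)$ and $\sigma\in\falsity{\forall x(\neg\varphi(x)\rightarrow x\notrlzin a)}$. Unfolding the universal quantifier and then the implication, $\sigma=s\stackapp\rho$ for some $b\in\rlzstr$ with $s\Vdash\neg\varphi(b)$ and $\rho\in\falsity{b\notrlzin a}$; since $\falsity{b\notrlzin a}=\{\rho\in\Pi\mid(b,\rho)\in a\}$, this forces $(b,\rho)\in a$ and hence $b\in\ff{dom}(a)$. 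A short computation gives $\lambda u\lambdaapp\lambda v\lambdaapp\app{v}{u}\star t\stackapp s\stackapp\rho\succ\app{s}{t}\star\rho\succ s\star t\stackapp\rho$. Because $b\in\ff{dom}(a)$, \Cref{definition:RestrictedQuantifier} gives $\falsity{\varphi(b)}\subseteq\bigcup_{c\in\ff{dom}(a)}\falsity{\varphi(c)}=\falsity{\forall x^a\varphi(x)}$, so from $t\Vdash\forall x^a\varphi(x)$ we get $t\Vdash\varphi(b)$; thus $t\stackapp\rho\in\falsity{\varphi(b)\rightarrow\perp}$, and since $s\Vdash\neg\varphi(b)$ we conclude $s\star t\stackapp\rho\in\Perp$. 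No control operator is needed here: the sole purpose of $\lambda u\lambdaapp\lambda v\lambdaapp\app{v}{u}$ is to permute its two arguments so that the realizer of $\neg\varphi(b)$ is applied to the realizer of $\varphi(b)$ that membership in $\ff{dom}(a)$ provides for free.

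For part (ii), I would take $t\stackapp\sigma\in\falsity{\forall x(\neg\varphi(x)\rightarrow x\notrlzin a)\rightarrow\forall x^a\varphi(x)}$, so $t\Vdash\forall x(\neg\varphi(x)\rightarrow x\notrlzin a)$ and $\sigma\in\falsity{\varphi(b)}$ for some $b\in\ff{dom}(a)$. Running the term and applying in turn the push, save, grab and push rules, $\lambda u\lambdaapp\inapp{\cc}{\lambda k\lambdaapp\app{u}{k}}\star t\stackapp\sigma\succ\cc\star(\lambda k\lambdaapp\app{t}{k})\stackapp\sigma\succ(\lambda k\lambdaapp\app{t}{k})\star\saverlz{\sigma}\stackapp\sigma\succ t\star\saverlz{\sigma}\stackapp\sigma$. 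By \Cref{theorem:realizinguniversals}, $t\Vdash\neg\varphi(b)\rightarrow b\notrlzin a$; by \Cref{theorem:SaveCommandandNegation}, since $\sigma\in\falsity{\varphi(b)}$, $\saverlz{\sigma}\Vdash\neg\varphi(b)$; and the hypothesis $\falsity{\varphi(b)}\subseteq\falsity{b\notrlzin a}$, applicable because $b\in\ff{dom}(a)$, gives $\sigma\in\falsity{b\notrlzin a}$. Hence $\saverlz{\sigma}\stackapp\sigma\in\falsity{(\varphi(b)\rightarrow\perp)\rightarrow b\notrlzin a}$ and therefore $t\star\saverlz{\sigma}\stackapp\sigma\in\Perp$.

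The main obstacle is part (ii), specifically making transparent why $\cc$ is indispensable and why the falsity-value hypothesis cannot be dropped. Passing from ``$\neg\varphi(x)$ information'' back to ``$x\notrlzin a$ information'' requires capturing the current stack $\sigma$ and reusing it in two roles at once: directly, as a stack witnessing $(b,\sigma)\in a$ through $\falsity{b\notrlzin a}$, and, wrapped as the continuation constant $\saverlz{\sigma}$, as a realizer of $\neg\varphi(b)$ to be handed to $t$. By construction the captured $\sigma$ lies in $\falsity{\varphi(b)}$, but it has no reason to lie in $\falsity{b\notrlzin a}$ unless precisely the stated inclusion holds; this is the one step that is not pure formal manipulation of $\lambda_c$-terms, and it is what forces the extra assumption into the hypothesis of the second clause.
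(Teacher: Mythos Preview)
Your proof is correct and follows essentially the same approach as the paper: in both parts you unfold the falsity values, reduce the given $\lambda_c$-term to the required process, and use \Cref{theorem:SaveCommandandNegation} together with the hypothesis $\falsity{\varphi(b)}\subseteq\falsity{b\notrlzin a}$ in part (ii) exactly as the paper does. Your added discussion of why $\cc$ and the inclusion hypothesis are needed is a nice gloss but does not change the argument.
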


\begin{proof}
    $(i)$. Suppose that $t \Vdash \forall x^a \, \varphi(x)$, $s \Vdash \neg \varphi(b)$ for some $b \in \rlzstr$ and $\pi \in \falsity{b \notrlzin a}$. Since $\falsity{b \notrlzin a} = \{ \sigma \divline (b, \sigma) \in a\}$ and this is non-empty by assumption, we must have that $b \in \ff{dom}(a)$. We need to prove that $\lambda u \lambdaapp \lambda v \lambdaapp \app{v}{u} \star t \stackapp s \stackapp \pi \in \Perp$ or, in other words, that $s \star t \stackapp \pi \in \Perp$. By the hypothesis on $t$, since $b \in \ff{dom}(a)$, $t \star \sigma \in \Perp$ for any $\sigma \in \falsity{\varphi(b)}$. Therefore, this gives us that $t \Vdash \varphi(b)$. Finally, since $s \Vdash \varphi(b) \rightarrow \perp$, $s \star t \stackapp \pi \in \Perp$ as desired.

    $(ii)$. Suppose that $t \Vdash \forall x (\neg \varphi(x) \rightarrow x \notrlzin a)$ and $\pi \in \falsity{\forall x^a \, \varphi(x)}$. First, fix $b \in \ff{dom}(a)$ such that $\pi \in \falsity{\varphi(b)}$. Now 
    \[
    \falsity{\forall x (\neg \varphi(x) \rightarrow x \notrlzin a)} = \bigcup_{c \in \rlzstr} \falsity{\neg \varphi(c) \rightarrow c \notrlzin a} = \bigcup_{c \in \rlzstr} \{ s \stackapp \sigma \divline s \Vdash \neg \varphi(c), \sigma \in \falsity{c \notrlzin a} \}.
    \]
    Therefore, for any $s \stackapp \sigma$ in the above set, $t \star s \stackapp \sigma \in \Perp$. Next, since $\pi \in \falsity{\varphi(b)}$, $\saverlz{\pi}\Vdash \neg \varphi(b)$ by \Cref{theorem:SaveCommandandNegation}. Also, by our initial assumptions, $\pi \in \falsity{b \notrlzin a}$. Thus, $\saverlz{\pi} \stackapp \pi \in \falsity{\forall x (\neg \varphi(x) \rightarrow x \notrlzin a)}$ and $t \star \saverlz{\pi} \stackapp \pi \in \Perp$. Finally, the result follows by evaluating the realizer in the proposition:
    \[
    \lambda u \lambdaapp \inapp{\cc}{\lambda k \lambdaapp \app{u}{k}} \star t \stackapp \pi \succ \inapp{\cc}{\lambda k \lambdaapp \app{t}{k}} \star \pi \succ \cc \star (\lambda k \lambdaapp \app{t}{k}) \stackapp \pi \succ \lambda k \lambdaapp \app{t}{k} \star \saverlz{\pi} \stackapp \pi \succ \app{t}{\saverlz{\pi}} \star \pi \succ t \star \saverlz{\pi} \stackapp \pi. 
    \]
\end{proof}

\noindent Using the same proof as in \Cref{theorem:realizinguniversals}, we immediately get the corollary that the quantifier $\forall x^a \varphi(x)$ behaves as we would expect a bounded universal quantifier would.

\begin{corollary} \label{theorem:RealizingBoundedQuantifier}
    For any term $t$, $t \Vdash \forall x^a \varphi(x) \Longleftrightarrow \forall b \in \ff{dom}(a) \: t \Vdash \varphi(b)$. Therefore, if $\falsity{\varphi(b)} \subseteq \falsity{b \notrlzin a}$ for every $b \in \ff{dom}(a)$ then $t \Vdash \forall x \rlzin a \, \varphi(x) \Longleftrightarrow \forall b \in \ff{dom}(a) \: t \Vdash \varphi(b)$.
\end{corollary}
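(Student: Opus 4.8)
The plan is to prove the two equivalences separately; the first is essentially \Cref{theorem:realizinguniversals} and the second reduces to it. For the first, unwind \Cref{definition:RestrictedQuantifier}: we have $\falsity{\forall x^a\varphi(x)}=\bigcup_{b\in\ff{dom}(a)}\falsity{\varphi(b)}$, so the argument is word-for-word that of \Cref{theorem:realizinguniversals} with $\rlzstr$ replaced by $\ff{dom}(a)$. If $t\Vdash\forall x^a\varphi(x)$ and $b\in\ff{dom}(a)$ then $\falsity{\varphi(b)}\subseteq\falsity{\forall x^a\varphi(x)}$, so $t\star\pi\in\Perp$ for every $\pi\in\falsity{\varphi(b)}$, i.e. $t\Vdash\varphi(b)$; conversely, if $t\Vdash\varphi(b)$ for all $b\in\ff{dom}(a)$, then any $\pi\in\falsity{\forall x^a\varphi(x)}$ lies in $\falsity{\varphi(b)}$ for some $b\in\ff{dom}(a)$, whence $t\star\pi\in\Perp$ and $t\Vdash\forall x^a\varphi(x)$.

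For the second equivalence, by the first it suffices to show that, under the hypothesis $\falsity{\varphi(b)}\subseteq\falsity{b\notrlzin a}$ for all $b\in\ff{dom}(a)$, the formula $\forall x\rlzin a\,\varphi(x)$ --- which abbreviates $\forall x(\neg\varphi(x)\to x\notrlzin a)$ --- has exactly the same realizers as $\forall x^a\varphi(x)$. First unwind the $\forall$- and $\to$-clauses: $\falsity{\forall x\rlzin a\,\varphi(x)}=\bigcup_{c\in\ff{dom}(a)}\{\,s\stackapp\sigma:s\Vdash\neg\varphi(c)\text{ and }(c,\sigma)\in a\,\}$, with the indices $c\notin\ff{dom}(a)$ contributing nothing since $\falsity{c\notrlzin a}=\emptyset$ in that case. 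The hypothesis provides the link to $\falsity{\forall x^a\varphi(x)}=\bigcup_{b\in\ff{dom}(a)}\falsity{\varphi(b)}$: for $b\in\ff{dom}(a)$ and $\pi\in\falsity{\varphi(b)}$ one has $(b,\pi)\in a$, and $\saverlz{\pi}\Vdash\neg\varphi(b)$ by \Cref{theorem:SaveCommandandNegation}, so $\saverlz{\pi}\stackapp\pi\in\falsity{\forall x\rlzin a\,\varphi(x)}$. I would then run the two inclusions through the save/restore computations already used in \Cref{theorem:RealizingBoundedUniversals}, but keeping the head $t$ fixed: for $\verity{\forall x^a\varphi(x)}\subseteq\verity{\forall x\rlzin a\,\varphi(x)}$, from $t\Vdash\varphi(b)$ for all $b\in\ff{dom}(a)$ and a falsifier $s\stackapp\sigma$ of $\forall x\rlzin a\,\varphi(x)$ with $s\Vdash\neg\varphi(c)$, $(c,\sigma)\in a$, one gets $t\Vdash\varphi(c)$ and hence $\app{s}{t}\Vdash\perp$ by \Cref{theorem:ImplicationandApplication}; for the reverse inclusion, $t\Vdash\forall x\rlzin a\,\varphi(x)$ is applied to the continuation $\saverlz{\pi}$ created by $\cc$, exactly as in the proof of \Cref{theorem:RealizingBoundedUniversals}(ii), so that the relevant process reduces to $t\star\saverlz{\pi}\stackapp\pi\in\Perp$.

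The step I expect to be the main obstacle is making a \emph{single} term $t$ serve on both sides of the second equivalence. In \Cref{theorem:RealizingBoundedUniversals} the two implications between $\forall x^a\varphi(x)$ and $\forall x\rlzin a\,\varphi(x)$ are witnessed by genuinely different realizers, $\lambda u \lambdaapp \lambda v \lambdaapp \app{v}{u}$ and $\lambda u \lambdaapp \inapp{\cc}{\lambda k \lambdaapp \app{u}{k}}$; the substance here is to check that the falsity-value hypothesis makes those conversions transparent for a fixed head, i.e. that ``$t\star s\stackapp\sigma\in\Perp$ for all $s\Vdash\neg\varphi(c)$ and $(c,\sigma)\in a$'' and ``$t\star\pi\in\Perp$ for all $\pi\in\falsity{\varphi(c)}$'' impose the same demand on $t$ --- equivalently, that $t\star\saverlz{\pi}\stackapp\pi\in\Perp$ forces $t\star\pi\in\Perp$ and back. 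This is exactly where $\falsity{\varphi(c)}\subseteq\falsity{c\notrlzin a}$ is needed, together with the rules governing $\cc$ and continuation constants, and one has to work at the level of realizer sets rather than falsity sets, since $\falsity{\forall x\rlzin a\,\varphi(x)}$ consists solely of push-stacks $s\stackapp\sigma$ (with $\saverlz{\pi}\stackapp\pi$ among them) while $\falsity{\forall x^a\varphi(x)}$ in general does not.
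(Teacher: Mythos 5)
Your first equivalence is correct and is exactly the paper's (one-line) argument: since $\falsity{\forall x^a\varphi(x)}=\bigcup_{b\in\ff{dom}(a)}\falsity{\varphi(b)}$ by \Cref{definition:RestrictedQuantifier}, the proof of \Cref{theorem:realizinguniversals} goes through verbatim with $\rlzstr$ replaced by $\ff{dom}(a)$. That part needs nothing more.

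The ``main obstacle'' you flag in the second equivalence is not a detail left to check but an essential failure, so your third paragraph's programme cannot be carried out. The right-hand side constrains $t$ only against stacks in $\bigcup_{b\in\ff{dom}(a)}\falsity{\varphi(b)}$, while the left-hand side constrains it against push-stacks $s\stackapp\sigma$ with $s\Vdash\neg\varphi(c)$ and $(c,\sigma)\in a$; these are in general unrelated demands on a \emph{fixed head} $t$. The pole is closed only under $\succ$-predecessors, so $t\star\saverlz{\pi}\stackapp\pi\in\Perp$ does not force $t\star\pi\in\Perp$ (there is no reduction between these processes), and in the other direction $\app{s}{t}\Vdash\perp$ tells you about processes with head $s$, not about $t\star s\stackapp\sigma$. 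Concretely, take $\varphi\equiv\top$: the hypothesis $\falsity{\varphi(b)}\subseteq\falsity{b\notrlzin a}$ and the condition $\forall b\in\ff{dom}(a)\;t\Vdash\varphi(b)$ are then vacuous for every term $t$, yet $t\Vdash\forall x(\neg\top\to x\notrlzin a)$ requires $t\star s\stackapp\sigma\in\Perp$ for every $s\Vdash\neg\top$ and every $\sigma$ with $(c,\sigma)\in a$, which is a nontrivial condition whenever $\Perp$ and $a$ are non-empty (the footnote in \Cref{section:realizabilityintro} produces such an $s$). The second sentence of the corollary must therefore be read as the paper uses it: the two sides are interderivable via the fixed but \emph{different} realizers of \Cref{theorem:RealizingBoundedUniversals} --- from $t\Vdash\varphi(b)$ for all $b\in\ff{dom}(a)$ one gets $t\Vdash\forall x^a\varphi(x)$ and hence $\fapp{\lambda u\lambdaapp\lambda v\lambdaapp\app{v}{u}}{t}\Vdash\forall x\rlzin a\,\varphi(x)$ by \Cref{item:RealizingBoundedUniversals1}, and from $t\Vdash\forall x\rlzin a\,\varphi(x)$ one gets $\fapp{\lambda u\lambdaapp\inapp{\cc}{\lambda k\lambdaapp\app{u}{k}}}{t}\Vdash\forall x^a\varphi(x)$ by \Cref{item:RealizingBoundedUniversals2}, hence a realizer of every $\varphi(b)$. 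Under that reading your first paragraph together with \Cref{theorem:RealizingBoundedUniversals} already constitutes the whole proof, and the attempt to keep the head fixed should be dropped.
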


\subsection{Adequacy} \label{section:Adequacy}

We shall conclude this section by showing that any realizability model is closed under classical deductions, which is called the \emph{adequacy lemma} by Krivine. From this and \Cref{section:RealzingZFepsilon} it will follow that any realizability model satisfies $\ZFepsilon$ and thus the extensional part satsifes \tf{ZF}. This will be done by proving that the structure is closed under a certain collection of logical axioms and inference rules, for more details on this we refer the reader to Chapter V of \cite{Kleene1952} or Section 1.4 and 2.3 of \cite{MendelsonIntro}.

In \cite{MendelsonIntro}, Mendelson gives a definition of First Order Logic using only the symbols $\rightarrow$, $\neg$ and $\forall$ and proves that this is sufficient to build a system of natural deduction for first order logic. It is then shown that this is a semantically complete calculus in which first order classical logic can be expressed. The system is built from the following axioms, where we have replaced $\neg \varphi$ with $\varphi \rightarrow \perp$, which hold for any well-founded formulas $\varphi$, $\psi$ and $\theta$ built from the symbols $\rightarrow$, $\perp$, variables and $\forall$. \\

\noindent \textbf{Propositional Axioms}
\begin{itemize}
    \item $\varphi \rightarrow (\psi \rightarrow \varphi)$,
    \item $(\varphi \rightarrow (\psi \rightarrow \theta)) \rightarrow ((\varphi \rightarrow \psi) \rightarrow (\varphi \rightarrow \theta))$,
    \item $((\varphi \rightarrow \perp) \rightarrow (\psi \rightarrow \perp)) \rightarrow (((\varphi \rightarrow \perp) \rightarrow \psi) \rightarrow \varphi)$.
\end{itemize}
\textbf{First-order Axioms}
\begin{itemize}
    \item $\forall x \varphi \rightarrow \varphi[y / x]$ where $y$ is any term which may be substituted for $x$ in $\varphi$,
    \item $\forall x (\varphi \rightarrow \psi(x)) \rightarrow (\varphi \rightarrow \forall x \psi)$ where $x$ does not occur freely in $\varphi$.
\end{itemize}
\textbf{Rules of Inference}
\begin{itemize}
    \item Modus Ponens: $\psi$ follows from $\varphi \rightarrow \psi$ and $\varphi$,
    \item Universal Generalisation: $\forall x \varphi(x)$ follows from $\varphi[y / x]$, where $x$ appears free in $\varphi$ and $y$ is any term which may be substituted for $x$ in $\varphi$.
\end{itemize}

\begin{lemma} \label{theorem:adequacy}
    Let $\varphi$ be a formula in $Fml_{\rlzin}$. If $\tf{CPC} \vdash \varphi$, where \tf{CPC} denotes classical predicate calculus, then there exists a realizer $t$ such that $t \Vdash \varphi$.
\end{lemma}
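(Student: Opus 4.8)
The plan is to proceed by induction on the length of a Hilbert-style derivation in the system recalled above, using the fact that all the defined connectives $\land$, $\lor$, $\exists$ (and the abbreviations for $\rlzin$, $\in$) unfold into the primitive fragment $\{\rightarrow, \perp, \forall\}$, so that it suffices to treat proofs built only from the listed axioms and the two inference rules. Since formulas occurring in a derivation may carry free variables, I would actually establish the slightly stronger statement: for every formula $\psi(\overrightarrow{x})$ derivable in $\tf{CPC}$ there is a realizer $t \in \rlzset$ with $t \Vdash \psi(\overrightarrow{a})$ for every tuple $\overrightarrow{a}$ of parameters from $\rlzstr$. The conclusion of the lemma is then the special case with no free variables.

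For the base case I would exhibit an explicit realizer for each axiom scheme and verify it using only the one-step evaluation rules. The two intuitionistic propositional axioms are handled by the standard combinators: $\lambda u \lambdaapp \lambda v \lambdaapp u$ realizes $\varphi \rightarrow (\psi \rightarrow \varphi)$, and $\lambda u \lambdaapp \lambda v \lambdaapp \lambda w \lambdaapp \twoapp{\app{u}{w}}{\app{v}{w}}$ realizes the second axiom, in each case by pushing the hypotheses onto the stack and reducing. For the first-order axioms, after substituting parameters for the remaining free variables, one has $\falsity{\psi[a/x]} \subseteq \bigcup_{b \in \rlzstr}\falsity{\psi[b/x]} = \falsity{\forall x \, \psi}$, so $\identity \Vdash \forall x\, \psi \rightarrow \psi[a/x]$ by \Cref{theorem:falsitysubsets}; and a direct computation of falsity values shows $\falsity{\forall x(\varphi \rightarrow \psi(x))} = \falsity{\varphi \rightarrow \forall x\, \psi(x)}$ when $x$ is not free in $\varphi$, so $\identity$ realizes that axiom as well. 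The inductive step is immediate from results already proved: closure under modus ponens is \Cref{theorem:ImplicationandApplication} (and if $t,s \in \rlzset$ then $\app{t}{s} \in \rlzset$), while closure under universal generalisation is \Cref{theorem:realizinguniversals}, with the realizer left unchanged; in both cases no continuation constant is introduced, so realizer-hood is preserved.

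The one delicate case — and the point where classical logic genuinely enters — is the third propositional axiom $((\varphi \rightarrow \perp) \rightarrow (\psi \rightarrow \perp)) \rightarrow (((\varphi \rightarrow \perp) \rightarrow \psi) \rightarrow \varphi)$, which requires the control operator $\cc$. I would show that $\lambda u \lambdaapp \lambda v \lambdaapp \inapp{\cc}{\lambda k \lambdaapp \twoapp{\app{u}{k}}{\app{v}{k}}}$ realizes it. Given $t \Vdash \varphi \rightarrow \perp \rightarrow (\psi \rightarrow \perp)$, $s \Vdash (\varphi \rightarrow \perp) \rightarrow \psi$ and $\pi \in \falsity{\varphi}$, the process on stack $t \stackapp s \stackapp \pi$ reduces through the grab and save rules to $\twoapp{\app{t}{\saverlz{\pi}}}{\app{s}{\saverlz{\pi}}} \star \pi$; since $\saverlz{\pi} \Vdash \varphi \rightarrow \perp$ by \Cref{theorem:SaveCommandandNegation}, \Cref{theorem:ImplicationandApplication} gives $\app{t}{\saverlz{\pi}} \Vdash \psi \rightarrow \perp$ and $\app{s}{\saverlz{\pi}} \Vdash \psi$, and because $\falsity{\perp} = \Pi$ the process lands in $\Perp$; closure of the pole under $\succ$ then finishes the case. (Alternatively this axiom can be derived from \Cref{theorem:Peirce}.) Note that $\cc$ contains no continuation constant, so the witness is a genuine realizer. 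I expect this axiom — matching the capture-the-continuation behaviour of $\cc$ against the falsity-value bookkeeping — to be the only real obstacle; everything else is routine combinatory reduction or a direct appeal to the propositions already established.
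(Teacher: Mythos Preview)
Your proposal is correct and follows essentially the same route as the paper: induction on a Hilbert-style derivation, with the same combinators $\lambda u\lambdaapp\lambda v\lambdaapp u$, $\lambda u\lambdaapp\lambda v\lambdaapp\lambda w\lambdaapp\twoapp{\app{u}{w}}{\app{v}{w}}$, and $\lambda u\lambdaapp\lambda v\lambdaapp\inapp{\cc}{\lambda k\lambdaapp\twoapp{\app{u}{k}}{\app{v}{k}}}$ for the propositional axioms, $\identity$ for the quantifier axioms, and \Cref{theorem:ImplicationandApplication} and \Cref{theorem:realizinguniversals} for the inference rules. Your explicit remarks that the construction stays within $\rlzset$ and that free variables are handled by uniform realizability over all parameter tuples are points the paper leaves implicit, but otherwise the arguments coincide.
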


\begin{proof}
    Since the above set of axioms forms a semantically complete calculus for first order logic, it suffices to prove that each of the axioms can be realized for any formulas $\varphi$, $\psi$ and $\theta$ in $Fml_{\rlzin}$.

    Firstly, we see that $\lambda u \lambdaapp \lambda v \lambdaapp u \Vdash \varphi \rightarrow (\psi \rightarrow \varphi)$. This is because any element of $\falsity{\psi \rightarrow \varphi}$ is of the form $s \stackapp \pi$ where $s \Vdash \psi$ and $\pi \in \falsity{\varphi}$. Therefore, if $t \Vdash \varphi$ then $\lambda u \lambdaapp \lambda v \lambdaapp u \star t \stackapp s \stackapp \pi \succ t \star \pi \in \Perp$.

    For the second propositional axiom, a realizer for $(\varphi \rightarrow (\psi \rightarrow \theta)) \rightarrow ((\varphi \rightarrow \psi) \rightarrow (\varphi \rightarrow \theta))$ is $\lambda u \lambdaapp \lambda v \lambdaapp \lambda w \lambdaapp \twoapp{\app{u}{w}}{\app{v}{w}}$. Suppose that $t \Vdash \varphi \rightarrow (\psi \rightarrow \theta)$, $s \Vdash \varphi \rightarrow \psi$, $r \Vdash \varphi$ and $\pi \in \falsity{\theta}$. Then, by \Cref{theorem:ImplicationandApplication}, $\app{s}{r} \Vdash \psi$ from which it follows that $r \stackapp \app{s}{r} \stackapp \pi \in \falsity{\varphi \rightarrow (\psi \rightarrow \theta)}$ and $\lambda u \lambdaapp \lambda v \lambdaapp \lambda w \lambdaapp \twoapp{\app{u}{w}}{\app{v}{w}} \star t \stackapp s \stackapp r \stackapp \pi \succ \twoapp{\app{t}{r}}{\app{s}{r}} \star \pi \succ t \star r \stackapp \app{s}{r} \stackapp \pi \in \Perp$. 
    
    For the final propositional axiom, a realizer for $((\varphi \rightarrow \perp) \rightarrow (\psi \rightarrow \perp)) \rightarrow (((\varphi \rightarrow \perp) \rightarrow \psi) \rightarrow \varphi)$ will be $\lambda u \lambdaapp \lambda v \lambdaapp \inapp{\cc}{\lambda k \lambdaapp \twoapp{\app{u}{k}}{\app{v}{k}}}$. Suppose that $t \Vdash (\varphi \rightarrow \perp) \rightarrow (\psi \rightarrow \perp)$, $s \Vdash (\varphi \rightarrow \perp) \rightarrow \psi$ and $\pi \in \falsity{\varphi}$. Then, by \Cref{theorem:SaveCommandandNegation}, $\saverlz{\pi} \Vdash \varphi \rightarrow \perp$ and thus, by \Cref{theorem:ImplicationandApplication}, $\app{t}{\saverlz{\pi}} \Vdash \psi \rightarrow \perp$ and $\app{s}{\saverlz{\pi}} \Vdash \psi$. From this it follows that $\lambda u \lambdaapp \lambda v \lambdaapp \inapp{\cc}{\lambda k \lambdaapp \twoapp{\app{u}{k}}{\app{v}{k}}} \star t \stackapp s \stackapp \pi \succ \inapp{\cc}{\lambda k \lambdaapp \twoapp{\app{t}{k}}{\app{s}{k}}} \star \pi \succ \cc \star (\lambda k \lambdaapp \twoapp{\app{t}{k}}{\app{s}{k}}) \stackapp \pi \succ \lambda k \lambdaapp \twoapp{\app{t}{k}}{\app{s}{k}} \star \saverlz{\pi} \stackapp \pi \succ \twoapp{\app{t}{\saverlz{\pi}}}{\app{s}{\saverlz{\pi}}} \star \pi \succ \app{t}{\saverlz{\pi}} \star \app{s}{\saverlz{\pi}} \stackapp \pi \in \Perp$. \\

    \noindent Moving on to the axioms which discuss universal quantifiers, it is easy to see that the identity, $\identity$, is a realizer for both of the statements. For the second axiom, a realizer for $(\forall x \varphi \rightarrow \psi(x)) \rightarrow (\varphi \rightarrow \forall x \psi)$ is $\lambda u \lambdaapp \lambda v \lambdaapp \app{u}{v}$. To see this, suppose that $t \Vdash \forall x (\varphi \rightarrow \psi(x))$, $s \Vdash \varphi$ and $\pi \in \falsity{\forall x \psi(x)}$. Then we can fix $a \in \rlzstr$ such that $\pi \in \falsity{\psi(a)}$. By definition of $t$ and \Cref{theorem:realizinguniversals}, $t \Vdash \varphi \rightarrow \psi(a)$ and thus $\identity \star t \stackapp s \stackapp \pi \succ t \star s \stackapp \pi \in \Perp$. \\

    \noindent Finally, for the rules of inference, to prove Modus Ponens, one shows that whenever $t \Vdash \varphi$ and $s \Vdash \varphi \rightarrow \psi$ then there exists a term realizing $\psi$, which is the term $\app{t}{s}$ by \Cref{theorem:ImplicationandApplication}. For Universal Generalizations, suppose that $t \Vdash \varphi[y / x]$ for some $y$ which does not appear freely in $\varphi$. Then, for any $a \in \rlzstr$ we must have that $t \Vdash \varphi(a)$ and so, by \Cref{theorem:realizinguniversals}, $t \Vdash \forall x \varphi(x)$. 
\end{proof}

\section{Equality and Elementhood} \label{section:Equality}

\subsection{The relations \texorpdfstring{$\subseteq$, $\rlzin$ and $\in$}{subset, epsilon and elementhood}}

Here we show how to realize a few of the simple statements from \Cref{theorem:ZFepsilonStatements} which can be derived in $\ZFepsilon$.

\begin{proposition} \label{theorem:RealizerofSubseteq}
    Let $\theta = \lambda u \lambdaapp \lambda v \lambdaapp \bigfapp{\inapp{v}{\app{u}{u}}}{\app{u}{u}}$. Then $\app{\theta}{\theta} \Vdash \forall x \, (x \subseteq x)$. 
\end{proposition}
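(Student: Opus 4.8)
The plan is to reproduce the $\rlzin$-induction argument behind \Cref{ZFepsilonProperty:SubsetIdentity} at the level of realizers, exploiting the fact that $\app{\theta}{\theta}$ is a fixed-point-style combinator that hands copies of itself to its argument. First I would record the one computation on which everything rests: unfolding the two $\lambda$-abstractions inside $\theta$ (two \emph{grab} steps) and then performing the two \emph{push} steps coming from the application $\bigfapp{\inapp{v}{\app{u}{u}}}{\app{u}{u}}$ yields, for every term $t$ and stack $\pi$,
\[
\app{\theta}{\theta} \star t \stackapp \pi \;\succ\; t \star \app{\theta}{\theta} \stackapp \app{\theta}{\theta} \stackapp \pi .
\]
By \Cref{theorem:realizinguniversals} it suffices to prove $\app{\theta}{\theta} \Vdash a \subseteq a$ for every $a \in \rlzstr$, and I would establish this by induction on $\ff{rk}(a)$.

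For the inductive step, fix $a$ and take an arbitrary $\tau \in \falsity{a \subseteq a}$. By the definition of $\falsity{a \subseteq a}$, $\tau = t \stackapp \pi$ where $(c,\pi) \in a$ for some $c \in \ff{dom}(a)$ and $t \Vdash c \not\in a$. Using the computation above together with the fact that $\Perp$ is a pole, it is enough to show $\app{\theta}{\theta} \stackapp \app{\theta}{\theta} \stackapp \pi \in \falsity{c \not\in a}$. Inspecting
\[
\falsity{c \not\in a} = \bigcup_{d \in \ff{dom}(a)} \{ s \stackapp s' \stackapp \rho \divline (d,\rho) \in a \,\land\, s \Vdash c \subseteq d \,\land\, s' \Vdash d \subseteq c \},
\]
I would choose the witness $d := c$ and $\rho := \pi$, which is legitimate since $(c,\pi)\in a$; it then only remains to check $\app{\theta}{\theta} \Vdash c \subseteq c$ (used once for $c \subseteq d$ and once for $d \subseteq c$). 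Since $c \in \ff{dom}(a)$ we have $\ff{rk}(c) < \ff{rk}(a)$ by construction of the hierarchy $\rlzstr_\alpha$, so this is exactly the induction hypothesis. The base case is the situation $\ff{dom}(a) = \emptyset$, i.e. $\falsity{a \subseteq a} = \emptyset$, where the claim is vacuous; hence the induction goes through and \Cref{theorem:realizinguniversals} delivers $\app{\theta}{\theta} \Vdash \forall x\,(x \subseteq x)$.

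The only points requiring care are the $\beta$-reduction bookkeeping for $\app{\theta}{\theta} \star t \stackapp \pi$ and the observation that the recursion is well-founded: $\falsity{a \subseteq a}$ depends only on $\falsity{c \not\in a}$ for $c \in \ff{dom}(a)$, which in turn depends only on $\falsity{c \subseteq c}$ with $\ff{rk}(c) < \ff{rk}(a)$ — precisely the lexicographic recursion on $(\ff{rk}(a),\ff{rk}(b))$ used to define the atomic falsity values. I do not expect a genuine obstacle here: $\app{\theta}{\theta}$ is simply the realizability avatar of the inductive call in the $\ZFepsilon$ proof of $\forall a\,(a\subseteq a)$, and no instance of $\cc$ is needed (so $\app{\theta}{\theta}$ is indeed a realizer).
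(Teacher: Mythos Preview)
Your proposal is correct and follows essentially the same approach as the paper: induction on rank, using the inductive hypothesis $\app{\theta}{\theta} \Vdash c \subseteq c$ for $c \in \ff{dom}(a)$ to produce the witness $\app{\theta}{\theta} \stackapp \app{\theta}{\theta} \stackapp \pi \in \falsity{c \not\in a}$, and then invoking the reduction $\app{\theta}{\theta} \star t \stackapp \pi \succ t \star \app{\theta}{\theta} \stackapp \app{\theta}{\theta} \stackapp \pi$. Your write-up is in fact slightly more explicit than the paper's about the well-foundedness and the base case.
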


\begin{proof}
    We shall prove this by induction on the rank of elements (in $\tf{V}$) of $\rlzstr$. So fix $a \in \rlzstr$ and assume that the claim holds for every set of rank less than that of $a$. Since $\falsity{z \not\in a \rightarrow z \notrlzin a} = \{ t \stackapp \pi \divline t \Vdash z \not\in a, (z, \pi) \in a \}$, we only need to consider the $z$ in $\ff{dom}(a)$. So fix $b \in \ff{dom}(a)$ and $\pi$ such that $(b, \pi) \in a$. By the inductive hypothesis, $\app{\theta}{\theta} \Vdash b \subseteq b$. Now suppose that $t \Vdash b \not\in a$. Then, by definition of $\falsity{b \not\in a}$, $t \star \app{\theta}{\theta} \stackapp \app{\theta}{\theta} \stackapp \pi \in \Perp$. Finally, 
    \[
    \app{\theta}{\theta}[t] = \theta[u \coloneqq X, v \coloneqq t] = \bigfapp{\inapp{t}{\app{\theta}{\theta}}}{\app{\theta}{\theta}}
    \]
    and therefore, 
    \[
    \app{\theta}{\theta} \star t \stackapp \pi \succ \bigfapp{\inapp{t}{\app{\theta}{\theta}}}{\app{\theta}{\theta}} \star \pi \succ t \star (\app{\theta}{\theta}) \stackapp (\app{\theta}{\theta}) \stackapp \pi \in \Perp.
    \] 
\end{proof}

Using \Cref{theorem:RealizingConjunction} and the fact that $a \simeq b$ is an abbreviation for $a \subseteq b \land b \subseteq a$, we can now immediately produce a realizer for $a \simeq a$,

\begin{corollary} \label{theorem:RealizerofSimeq}
    Suppose that $t \Vdash \forall x (x \subseteq x)$, then $\lambda u \lambdaapp \fapp{\app{u}{t}}{t} \Vdash \forall x (x \simeq x)$.
\end{corollary}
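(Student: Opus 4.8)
The plan is to deduce this directly from \Cref{theorem:RealizingConjunction} and \Cref{theorem:realizinguniversals}, since $a \simeq a$ is by definition the conjunction $(a \subseteq a) \land (a \subseteq a)$ and we already have a uniform realizer for $\forall x (x \subseteq x)$ in hand.

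First I would fix an arbitrary $a \in \rlzstr$. Since $t \Vdash \forall x (x \subseteq x)$, \Cref{theorem:realizinguniversals} gives $t \Vdash a \subseteq a$. Then I would apply \Cref{theorem:RealizingConjunction} with $\varphi \equiv \psi \equiv (a \subseteq a)$ and both realizers taken to be $t$; this yields $\lambda u \lambdaapp \fapp{\app{u}{t}}{t} \Vdash (a \subseteq a) \land (a \subseteq a)$. By the chosen abbreviation for $\simeq$, this is precisely $\lambda u \lambdaapp \fapp{\app{u}{t}}{t} \Vdash a \simeq a$.

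Finally, since the realizer $\lambda u \lambdaapp \fapp{\app{u}{t}}{t}$ does not depend on the choice of $a$, a second application of \Cref{theorem:realizinguniversals} (the direction going from $\forall a \in \rlzstr$ to the universal statement) gives $\lambda u \lambdaapp \fapp{\app{u}{t}}{t} \Vdash \forall x (x \simeq x)$, as desired.

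There is no real obstacle here; the only point worth keeping an eye on is \emph{uniformity} — that the same term works for every parameter $a$ — which is automatic because neither \Cref{theorem:RealizingConjunction} nor the construction of the realizer refers to $a$. One could also note that the parenthesisation produced by \Cref{theorem:RealizingConjunction} matches the agreed reading of $a \simeq a$, so no logical-equivalence shuffling is needed.
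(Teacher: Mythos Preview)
Your proof is correct and follows exactly the approach the paper indicates: the corollary is stated immediately after \Cref{theorem:RealizerofSubseteq} with the remark that it follows from \Cref{theorem:RealizingConjunction} together with the abbreviation $a \simeq b \equiv (a \subseteq b) \land (b \subseteq a)$. Your added observation about uniformity via \Cref{theorem:realizinguniversals} just makes explicit what the paper leaves implicit.
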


\begin{proposition} \label{theorem:RealizerNotinImpliesNotrlzin}
    Suppose that $t \Vdash \forall x (x \subseteq x)$, then $\lambda u \lambdaapp \fapp{\app{u}{t}}{t} \Vdash \forall x \forall y \, (x \not\in y \rightarrow x \notrlzin y)$.
\end{proposition}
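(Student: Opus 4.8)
The plan is to first strip the two outer universal quantifiers using \Cref{theorem:realizinguniversals}, reducing the goal to showing that for all $a, b \in \rlzstr$ we have $\lambda u \lambdaapp \fapp{\app{u}{t}}{t} \Vdash a \not\in b \rightarrow a \notrlzin b$. Unpacking the falsity value of an implication, I would fix $r \Vdash a \not\in b$ and $\pi \in \falsity{a \notrlzin b}$, and aim to show $\lambda u \lambdaapp \fapp{\app{u}{t}}{t} \star r \stackapp \pi \in \Perp$.

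The key observation is that $\pi \in \falsity{a \notrlzin b}$ means precisely $(a, \pi) \in b$, so in particular $a \in \ff{dom}(b)$. This lets me take $c = a$ in the defining union for $\falsity{a \not\in b}$: by the hypothesis on $t$ together with \Cref{theorem:realizinguniversals} we have $t \Vdash a \subseteq a$, hence $t \stackapp t \stackapp \pi \in \falsity{a \not\in b}$. Since $r \Vdash a \not\in b$, this yields $r \star t \stackapp t \stackapp \pi \in \Perp$.

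It then remains to evaluate the realizer and invoke the fact that $\Perp$ is a pole (closed under anti-reduction along $\succ$):
\[
\lambda u \lambdaapp \fapp{\app{u}{t}}{t} \star r \stackapp \pi \;\succ\; \fapp{\app{r}{t}}{t} \star \pi \;\succ\; \app{r}{t} \star t \stackapp \pi \;\succ\; r \star t \stackapp t \stackapp \pi \in \Perp,
\]
so the head process also lies in $\Perp$, which is what we need.

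There is no real obstacle here; the only points requiring care are that the nonemptiness of $\falsity{a \notrlzin b}$ is exactly what delivers $a \in \ff{dom}(b)$, so that $c = a$ is a legitimate index in the union defining $\falsity{a \not\in b}$, and that the push reductions are bookkept correctly — the term $\fapp{\app{u}{t}}{t}$ is $(ut)t$, which pushes $t$ onto the stack twice before $r$ is confronted with the stack $t \stackapp t \stackapp \pi$.
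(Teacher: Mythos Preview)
Your proof is correct and follows essentially the same approach as the paper's own proof: fix $a,b$, take $r \Vdash a \not\in b$ and $\pi \in \falsity{a \notrlzin b}$, use $(a,\pi)\in b$ together with $t \Vdash a \subseteq a$ to get $t \stackapp t \stackapp \pi \in \falsity{a \not\in b}$, and then reduce. Your write-up is in fact slightly more explicit than the paper's about the reduction chain and the role of $a \in \ff{dom}(b)$.
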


\begin{proof}
    Fix $a, b \in \rlzstr$ and suppose that $s \Vdash a \not\in b$ and $\pi \in \falsity{a \notrlzin b}$. Then $(a, \pi) \in b$ while $t \Vdash a \subseteq a$. By definition, this means that $t \stackapp t \stackapp \pi \in \falsity{a \not\in b}$ and thus $\lambda u \lambdaapp \fapp{\app{u}{t}}{t} \star s \stackapp \pi \succ s \star t \stackapp t \stackapp \pi \in \Perp$.
\end{proof}

\subsection{Non-extensional Equality}

We next introduce two further symbols, $=$ and $\inclusion$, whose additions will simplify many of the later arguments. The symbol $=$ will be used to denote non-extensional \emph{Leibniz} equality. Informally, we will have $a = b$ if and only if for any property $P$, $P(a) \Leftrightarrow P(b)$, which is stronger than asserting that $a$ and $b$ have the same $\rlzin$ elements. The symbol $\inclusion$ can be seen as an equational version of implication which will give a much simpler way to realize implications when the premise involves the symbol $=$.

\begin{definition}
    Suppose that $a, b$ are closed terms and $\varphi$ is a formula in $Fml_{\rlzin}$. Then
    \begin{equation*}
        \falsity{a \neq b} = \begin{cases}
            \falsity{\perp} \; ( = \Pi) & \text{if } a = b \\
            \falsity{\top} \; ( = \emptyset) & \text{otherwise}
        \end{cases}
    \end{equation*}
    and
    \begin{equation*}
        \falsity{a = b \inclusion \varphi} = \begin{cases}
            \falsity{\varphi} & \text{if } a = b \\
            \falsity{\top} & \text{otherwise}
        \end{cases}
    \end{equation*}  
\end{definition}

\begin{remark}
    As per usual, $a = b$ should be read as an abbreviation for $a \neq b \rightarrow \perp$.
\end{remark}

\noindent We shall now see that both of these symbols have an equivalent expression in the realizability model using the language $\mathcal{L}_{\rlzin}$.

\begin{proposition} \label{theorem:InclusionImplicationEquivalence} \,
    \begin{thmlist}
        \item $\lambda u \lambdaapp \app{u}{\identity} \Vdash (a = b \rightarrow \varphi) \rightarrow (a = b \inclusion \varphi)$;
        \item $ \lambda u \lambdaapp \lambda v \lambdaapp \inapp{\cc}{\lambda k \lambdaapp \inapp{v}{\app{k}{u}}} \Vdash (a = b \inclusion \varphi) \rightarrow (a = b \rightarrow \varphi)$. 
    \end{thmlist}
\end{proposition}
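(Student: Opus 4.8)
The plan is to first record two trivializing consequences of the definitions. If $a$ and $b$ are the same closed term, then $\falsity{a \neq b} = \Pi = \falsity{\perp}$, so by the instance $\falsity{a \neq b} \supseteq \falsity{\perp}$ of \Cref{theorem:falsitysubsets} we get $\identity \Vdash (a \neq b) \rightarrow \perp$, i.e. $\identity \Vdash a = b$; and moreover $\falsity{a = b \inclusion \varphi} = \falsity{\varphi}$. If instead $a$ and $b$ are distinct closed terms, then $\falsity{a \neq b} = \emptyset$, so every term realizes $a \neq b$, whence $\falsity{a = b} = \{ r \stackapp \sigma \mid r \in \Lambda,\ \sigma \in \Pi \}$, while $\falsity{a = b \inclusion \varphi} = \emptyset$. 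Both parts then reduce to a short computation split on this dichotomy.

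For (i), I would take $t \Vdash a = b \rightarrow \varphi$ and $\pi \in \falsity{a = b \inclusion \varphi}$. If $a$ and $b$ are distinct terms the latter set is empty and there is nothing to prove; otherwise $\pi \in \falsity{\varphi}$ and $\identity \Vdash a = b$, so $\app{t}{\identity} \Vdash \varphi$ by \Cref{theorem:ImplicationandApplication}, and hence $\lambda u \lambdaapp \app{u}{\identity} \star t \stackapp \pi \succ \app{t}{\identity} \star \pi \in \Perp$, as required.

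For (ii), I would take $t \Vdash a = b \inclusion \varphi$, $s \Vdash a = b$, and $\pi \in \falsity{\varphi}$, and evaluate $\lambda u \lambdaapp \lambda v \lambdaapp \inapp{\cc}{\lambda k \lambdaapp \inapp{v}{\app{k}{u}}} \star t \stackapp s \stackapp \pi$. The reduction trace — two grabs, a push, a save (which captures $\pi$ into the continuation constant $\saverlz{\pi}$), a grab, and a push — brings the process to $s \star (\app{\saverlz{\pi}}{t}) \stackapp \pi$. Since $s \Vdash a = b$ and $\pi \in \Pi = \falsity{\perp}$, it therefore suffices to show $\app{\saverlz{\pi}}{t} \Vdash a \neq b$. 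If $a$ and $b$ are distinct terms this is automatic, as $\falsity{a \neq b} = \emptyset$. If $a = b$ as terms, then for every $\tau \in \Pi = \falsity{a \neq b}$ we have $\app{\saverlz{\pi}}{t} \star \tau \succ \saverlz{\pi} \star t \stackapp \tau \succ t \star \pi$, and since in this case $\pi \in \falsity{\varphi} = \falsity{a = b \inclusion \varphi}$ we get $t \star \pi \in \Perp$; hence $\app{\saverlz{\pi}}{t} \Vdash a \neq b$, and the argument closes.

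The calculations are routine once the reduction sequences are pinned down; the only real content is recognizing why the classical control operator appears, namely to capture the stack $\pi$ witnessing $\pi \in \falsity{\varphi}$ so that it can be handed back to the realizer $t$ of $a = b \inclusion \varphi$ — the familiar save/restore idiom — with $\app{\saverlz{\pi}}{t}$ then being the natural realizer of $a \neq b$ to feed to $s$. I expect the main point requiring care to be getting the exact reduction trace for (ii) right, including the $\alpha$-renaming inside the nested abstractions.
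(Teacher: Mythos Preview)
Your proposal is correct and follows essentially the same route as the paper: both parts are handled by the same case split on whether $a$ and $b$ coincide as terms, and for part (ii) you arrive at the identical reduction target $s \star (\app{\saverlz{\pi}}{t}) \stackapp \pi$ and the same verification that $\app{\saverlz{\pi}}{t} \Vdash a \neq b$ via the two cases. Your presentation is in fact slightly cleaner in the case $a = b$ of part (i), where you correctly note $\falsity{a \neq b} = \Pi$ and invoke \Cref{theorem:falsitysubsets}; the paper's phrasing there contains a minor slip (it writes $\falsity{a \neq b} = \emptyset$ when $a = b$, which is the wrong way round), though its conclusion $\identity \Vdash a = b$ is of course correct.
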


\begin{proof}
    For the first claim, suppose that $t \Vdash a = b \rightarrow \varphi$ while $\pi \in \falsity{a = b \inclusion \varphi}$. Since $\falsity{\top} = \emptyset$, we must have that $a = b$ and $\pi \in \falsity{\varphi}$. Next, since $a = b$, $\falsity{a \neq b} = \emptyset$ and therefore $\identity \Vdash a = b$ (any other fixed term would also do). Thus, $t \star \identity \stackapp \pi \in \Perp$. The conclusion then follows from the observation that $\lambda u \lambdaapp \app{u}{\identity} \star t \stackapp \pi \succ \app{t}{\identity} \star \pi \succ t \star \identity \stackapp \pi$.

    For the second claim, suppose that $t \Vdash a = b \inclusion \varphi$, $s \Vdash a = b$ and $\pi \in \falsity{\varphi}$. We have that 
    \begin{align*}
    \lambda u \lambdaapp \lambda v \lambdaapp \inapp{\cc}{\lambda k \lambdaapp \inapp{v}{\app{k}{u}}} \star t \stackapp s \stackapp \pi & \succ \inapp{\cc}{\lambda k \lambdaapp \inapp{s}{\app{k}{t}}} \star \pi \succ \cc \star (\lambda k \lambdaapp \inapp{s}{\app{k}{t}}) \stackapp \pi \\ & \succ \lambda k \lambdaapp \inapp{s}{\app{k}{t}} \star \saverlz{\pi} \stackapp \pi \succ \inapp{s}{\app{\saverlz{\pi}}{t}} \star \pi \succ s \star (\app{\saverlz{\pi}}{t}) \stackapp \pi
    \end{align*}
    and therefore it suffices to prove that $s \star (\app{\saverlz{\pi}}{t}) \stackapp \pi \in \Perp$. To this end, it will suffice to prove that $\app{\saverlz{\pi}}{t} \Vdash a \neq b$. Now, if $a = b$ then $\falsity{a = b \inclusion \varphi} = \falsity{\varphi}$, so $t \Vdash \varphi$ and therefore $\app{\saverlz{\pi}}{t} \Vdash \perp$ (since for any $\sigma \in \Pi$, $\app{\saverlz{\pi}}{t} \star \sigma \succ t \star \pi$) and thus $\app{\saverlz{\pi}}{t} \Vdash a \neq b$. On the other hand, if $a \neq b$ then $\falsity{a \neq b} = \falsity{\top} = \emptyset$, for which it follows that $\app{\saverlz{\pi}}{t} \Vdash a \neq b$. 
\end{proof}

\begin{proposition}[Leibniz Equality] \,
    \begin{thmlist}
        \item $\identity \Vdash a = b \inclusion \forall x (b \notrlzin x \rightarrow a \notrlzin x)$;
        \item $\identity \Vdash \forall x (b \notrlzin x \rightarrow a \notrlzin x) \rightarrow a = b$.
    \end{thmlist}
\end{proposition}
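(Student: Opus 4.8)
The strategy is to unwind the relevant falsity values in each item and, for (ii), to exhibit a single well-chosen ``test name'' that handles both the case $a=b$ and the case $a\neq b$ at once.

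For (i), I would first dispose of the case where $a$ and $b$ are distinct names in $\rlzstr$: then $\falsity{a=b\inclusion\forall x(b\notrlzin x\rightarrow a\notrlzin x)}=\falsity{\top}=\emptyset$, so $\identity$ — indeed every term — realizes the formula vacuously. When $a=b$, the formula $a=b\inclusion\forall x(b\notrlzin x\rightarrow a\notrlzin x)$ has falsity value $\falsity{\forall x(b\notrlzin x\rightarrow b\notrlzin x)}$, so by \Cref{theorem:realizinguniversals} it suffices to check $\identity\Vdash b\notrlzin c\rightarrow b\notrlzin c$ for every $c\in\rlzstr$. This is immediate from \Cref{theorem:falsitysubsets} applied with $\varphi=\psi=(b\notrlzin c)$, or directly: any element of $\falsity{b\notrlzin c\rightarrow b\notrlzin c}$ is $r\stackapp\pi$ with $r\Vdash b\notrlzin c$ and $\pi\in\falsity{b\notrlzin c}$, whence $r\star\pi\in\Perp$ and $\identity\star r\stackapp\pi\succ r\star\pi\in\Perp$.

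For (ii), recall that $a=b$ abbreviates $a\neq b\rightarrow\perp$, that $\falsity{\perp}=\Pi$, and that $\falsity{b\notrlzin x}=\{\pi\in\Pi\divline(b,\pi)\in x\}$. Unwinding the definition of a falsity value of an implication twice, an element of $\falsity{\forall x(b\notrlzin x\rightarrow a\notrlzin x)\rightarrow(a\neq b\rightarrow\perp)}$ has the form $s\stackapp t\stackapp\sigma$ with $s\Vdash\forall x(b\notrlzin x\rightarrow a\notrlzin x)$, $t\Vdash a\neq b$, and $\sigma\in\Pi$. Since $\identity\star s\stackapp t\stackapp\sigma\succ s\star t\stackapp\sigma$, it is enough to find a name $c\in\rlzstr$ with $t\stackapp\sigma\in\falsity{b\notrlzin c\rightarrow a\notrlzin c}$, because then $s\star t\stackapp\sigma\in\Perp$ by \Cref{theorem:realizinguniversals}. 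The key step is to take $c\coloneqq\{(a,\sigma)\}$, which indeed lies in $\rlzstr$. Then $\falsity{a\notrlzin c}=\{\sigma\}$, so $\sigma\in\falsity{a\notrlzin c}$; and $\falsity{b\notrlzin c}$ equals $\{\sigma\}$ if $a=b$ and $\emptyset$ if $a\neq b$. In either case $t\Vdash b\notrlzin c$: when $a\neq b$ this holds vacuously, and when $a=b$ we have $\falsity{a\neq b}=\Pi$, so $t\Vdash a\neq b$ forces $t\star\sigma\in\Perp$, i.e. $t\Vdash b\notrlzin c$. Hence $t\stackapp\sigma\in\falsity{b\notrlzin c\rightarrow a\notrlzin c}$, which finishes the argument.

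I do not expect a real obstacle: everything reduces to bookkeeping with the definitions of $\falsity{\cdot}$. The only mildly non-obvious move is the choice of the singleton name $c=\{(a,\sigma)\}$ in (ii); it is designed precisely so that $a\rlzin c$ is witnessed by the bottom stack $\sigma$ while $b\rlzin c$ carries no falsity content when $a\neq b$, which is exactly what lets the universal hypothesis $s$ route the process $t\star\sigma$ into the pole and simultaneously absorbs the $a=b$ and $a\neq b$ cases without a case split.
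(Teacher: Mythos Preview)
Your proof is correct and follows essentially the same approach as the paper. The only difference is cosmetic: for part~(ii) the paper takes the test name $c\coloneqq\{a\}\times\Pi$ while you take the more economical $c\coloneqq\{(a,\sigma)\}$; either choice works, since in both cases $\falsity{a\notrlzin c}$ contains the needed stack and $\falsity{b\notrlzin c}$ is empty when $a\neq b$.
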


\begin{proof}
    If $a \neq b$ then $\falsity{a = b \inclusion \forall x (b \notrlzin x \rightarrow a \notrlzin x)} = \emptyset$, therefore it suffices to show that if $a = b$ then $\identity \Vdash \forall x (b \notrlzin x \rightarrow a \notrlzin x)$. But this follows immediately from the observation that for any $c \in \rlzstr$, if $t \Vdash b \notrlzin c$ then $t \Vdash a \notrlzin c$ since $a = b$.

    For the second claim, suppose that $t \Vdash \forall x (b \notrlzin x \rightarrow a \notrlzin x)$, $s \Vdash a \neq b$ and $\pi \in \Pi$. Then, for any $c \in \rlzstr$ we must have that $t \Vdash b \notrlzin c \rightarrow a \notrlzin c$. In particular, this holds for $c \coloneqq \{a\} \times \Pi$ and, for this $c$, $\pi \in \falsity{a \notrlzin c}$. If $a = b$ then $\falsity{a \neq b} = \falsity{\perp} = \Pi$ and therefore $s \Vdash \perp$. From this, it follows that $s \Vdash b \notrlzin c$. On the other hand, if $a \neq b$ then $\falsity{a \notrlzin c} = \emptyset$ and therefore we again have that $s \Vdash b \notrlzin c$. Hence, we must have that $s \Vdash b \notrlzin c$ and $\app{t}{s} \Vdash a \notrlzin c$, from which it follows that $t \star s \stackapp \pi \in \Perp$.
\end{proof}

\noindent We can also show that the usual axioms of equality are realized for $=$, noting in \Cref{theorem:NonExtensionalEquality5} that we only have one direction of the implication.

\begin{proposition} \,
    \begin{thmlist}
        \item \label{theorem:NonExtensionalEquality1} $\identity \Vdash \forall x (x = x)$;
        \item \label{theorem:NonExtensionalEquality2} $\identity \Vdash \forall x \forall y \, (x = y \inclusion y = x)$;
        \item \label{theorem:NonExtensionalEquality3} $\identity \Vdash \forall x \forall y \forall z \, (x = y \inclusion (y = z \inclusion x = z))$;
        \item \label{theorem:NonExtensionalEquality4} For any formula $\varphi(u) \in Fml_{\rlzin}$, $\identity \Vdash \forall x \forall y \, ( x = y \inclusion (\varphi(x) \rightarrow \varphi(y)))$;
        \item \label{theorem:NonExtensionalEquality5} $\identity \Vdash \forall x \forall y \, (x = y \inclusion \forall z (z \rlzin x \leftrightarrow z \rlzin y))$.
    \end{thmlist}
\end{proposition}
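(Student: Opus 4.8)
The plan is to prove all five items uniformly by reducing each to a statement about a single name. First, by \Cref{theorem:realizinguniversals} (applied once, twice, or three times according to the number of leading universal quantifiers) it suffices to show that $\identity$ realizes each instance obtained by substituting fixed names $a, b, c \in \rlzstr$ for $x, y, z$. Second, the definition of $\inclusion$ trivialises every such instance unless the relevant names literally coincide: by that definition $\falsity{a = b \inclusion \psi}$ equals $\falsity{\top} = \emptyset$ whenever $a$ and $b$ are distinct elements of $\rlzstr$, and then any term, in particular $\identity$, realizes $a = b \inclusion \psi$ vacuously. So everything reduces to the case where the relevant names coincide, in which each instance collapses to a ``reflexive'' one.

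For (i), after substitution one must realize $a = a$, that is $a \neq a \rightarrow \perp$; since $\falsity{a \neq a} = \falsity{\perp}$ (both equal $\Pi$), \Cref{theorem:falsitysubsets} gives $\identity \Vdash a = a$. Items (ii) and (iii) then follow from the reduction above together with this: when the names coincide the instances become $b = a$ (with $b$ the same name as $a$) and $a = c$ (with all three the same name), both reflexive equalities already realized by $\identity$, while when the names differ the relevant $\inclusion$ has empty falsity value. For (iv), when $a$ and $b$ are the same element of $\rlzstr$ the formulas $\varphi(a)$ and $\varphi(b)$ are literally identical, so $\falsity{\varphi(a)} \supseteq \falsity{\varphi(b)}$ and \Cref{theorem:falsitysubsets} yields $\identity \Vdash \varphi(a) \rightarrow \varphi(b)$, which is exactly what is required since $\falsity{a = b \inclusion (\varphi(a) \rightarrow \varphi(b))} = \falsity{\varphi(a) \rightarrow \varphi(b)}$ in that case. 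For (v), reduce by \Cref{theorem:realizinguniversals} to $\identity \Vdash a = b \inclusion \forall z (z \rlzin a \leftrightarrow z \rlzin b)$; the case $a \neq b$ is again vacuous, and when $a = b$ the biconditional is $\forall z (z \rlzin a \leftrightarrow z \rlzin a)$. A further application of \Cref{theorem:realizinguniversals} and an unfolding of $\leftrightarrow$ reduce this to realizing $z \rlzin a \rightarrow z \rlzin a$, which \Cref{theorem:falsitysubsets} handles, after which \Cref{theorem:RealizingConjunction} reassembles the two copies — or one simply works up to the logical equivalence of $\chi \leftrightarrow \chi$ with $\chi \rightarrow \chi$ licensed by the convention on abbreviations.

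There is no genuinely hard step here; the one thing that needs care is the bookkeeping distinction between the literal (meta-level) equality of names that governs $\falsity{a \neq b}$ and $\falsity{a = b \inclusion \psi}$ and the substitution of a name into a formula. The whole argument rests on the observation that if $a$ and $b$ are the same element of $\rlzstr$ then $\psi(a)$ and $\psi(b)$ are the same formula, so that \Cref{theorem:falsitysubsets} applies with equality — not merely inclusion — of falsity values. The only place a further minor subtlety surfaces is (v), where one should confirm that the chosen unfolding of the biconditional is realized by $\identity$ itself rather than by a compound term built from it; if one prefers to sidestep this, (v) can alternatively be deduced from the Leibniz-equality proposition above, whose first clause already gives $\identity \Vdash a = b \inclusion \forall x (b \notrlzin x \rightarrow a \notrlzin x)$.
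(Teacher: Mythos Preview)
Your proposal is correct and follows essentially the same approach as the paper: instantiate the universal quantifiers, case-split on whether the names literally coincide in the ground model, observe that the $\inclusion$ clause makes the non-coinciding case vacuous, and in the coinciding case reduce to a trivial identity handled by \Cref{theorem:falsitysubsets}. The paper's own treatment of (v) is the terse ``Follows from (iv)'', and your more careful discussion of the biconditional there is justified---your instinct that \Cref{theorem:RealizingConjunction} alone would not literally yield $\identity$ is right, and the paper's stated convention on taking the simplest logically equivalent unfolding of an abbreviation is exactly what is being invoked (so your second route is the one to take).
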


\begin{proof} \,
    \begin{enumerate}[label = $\roman*.$]
        \item Fix $a \in \rlzstr$, $t \Vdash a \neq a$ and $\pi \in \Pi$. Then $\pi \in \Pi = \falsity{a \neq a}$ and therefore $\identity \star t \stackapp \pi \succ t \star \pi \in \Perp$.
        \item If $a \neq b$ then $\falsity{a = b \inclusion b = a} = \emptyset$ and therefore $\identity \Vdash a = b \inclusion b = a$. On the other hand, if $a = b$ then $\falsity{a = b \inclusion b = a} = \falsity{b = a} = \falsity{b \neq a \rightarrow \perp} = \{ t \stackapp \pi \divline t \Vdash b \neq a, \, \pi \in \Pi\}$. Therefore, since $\falsity{b \neq a} = \Pi$, if $t \Vdash b \neq a$ and $\pi \in \Pi$, then $t \star \pi \in \Perp$ and hence $\identity \star t \stackapp \pi \in \Perp$.
        \item Fix $a, b, c \in \rlzstr$. If $a \neq b$ then $\falsity{a = b \inclusion (b = c \inclusion a = c)} = \emptyset$ and therefore $\identity$ realizes the desired statement. So, suppose that $a = b$. Then we have $\falsity{a = b \inclusion (b = c \inclusion a = c)} \linebreak[4] = \falsity{b = c \inclusion a = c}$. Again, if $b \neq c$ then $\falsity{b = c \inclusion a = c} = \emptyset$ and therefore $\identity$ realizes the desired statement. So, suppose that $a = b = c$. Then $\falsity{a = b \inclusion (b = c \inclusion a = c)} = \linebreak[4] \falsity{a = c}$ and the same argument as in $(ii)$ gives us that $\identity$ realizes the desired statement.
        \item As per usual, if $a \neq b$ then the falsity value is empty and $\identity$ realizes the statement. So, suppose that $a = b$. Then $\falsity{a = b \inclusion (\varphi(a) \rightarrow \varphi(b))} = \falsity{\varphi(a) \rightarrow \varphi(b)}$. But then if $t \Vdash \varphi(a)$ and $\pi \in \falsity{\varphi(b)} = \falsity{\varphi(a)}$ then $t \star \pi \in \Perp$ and thus $\identity \star t \stackapp \pi \in \Perp$ as required.
        \item Follows from $(iv)$.
    \end{enumerate}
\end{proof}

\section{The axioms of \texorpdfstring{$\ZFepsilon$}{ZFepsilon} are realized in \texorpdfstring{$\rlzmodel$}{N}} \label{section:RealzingZFepsilon}

We can now show that if \tf{V} is a model of \tf{ZF} and $\mathcal{A} = (\Lambda, \Pi, \prec, \Perp)$ is a realizability algebra then the realizability model $\rlzmodel^{\mathcal{A}, \tf{V}}$ is a model of $\ZFepsilon$. We shall realize each axiom in turn, following the proof structure from \cite{Krivine2012}, often showing the equivalence by using the double negation translation of each of our axioms.

\bigskip

\noindent \textbf{Extensionality}

We begin by proving the first clause. That is, we shall show that 
\begin{eqnarray*}
\identity \Vdash \forall x \forall y \Big( x \not\in y \rightarrow \forall z \Big( x \subseteq z \rightarrow ( z \subseteq x \rightarrow z \notrlzin y ) \Big) \Big), \\
\identity \Vdash \forall x \forall y \Big( \forall z \Big( x \subseteq z \rightarrow (z \subseteq x \rightarrow z \notrlzin y) \Big) \rightarrow x \not\in y \Big).
\end{eqnarray*}
For this, by \Cref{theorem:falsitysubsets} it will suffice to show that the two respective $\falsity{\cdot}$ values are equal for any $a, b \in \rlzstr$. So fix $a, b \in \rlzstr$. Then, 
\[
\falsity{\forall z ( a \subseteq z \rightarrow (z \subseteq a \rightarrow z \notrlzin b) )} = \bigcup_{c \in \rlzstr} \{ t \stackapp t' \stackapp \pi \divline t \Vdash a \subseteq c, t' \Vdash c \subseteq a, \pi \in \falsity{c \notrlzin b} \}.
\] 
Next,
\[
\falsity{ a \not\in b} = \bigcup_{c \in \ff{dom}(b)} \{ t \stackapp t' \stackapp \pi \divline (c, \pi) \in b,  t \Vdash a \subseteq c, t' \Vdash c \subseteq a \}.
\]
But, by definition, $(c, \pi) \in b$ if and only if $\pi \in \falsity{c \notrlzin b}$. This means that the only sets $c$ which can appear in the first union are those which are in $\ff{dom}(b)$ and therefore these two sets are indeed equal. Which means that $\identity$ realizes the desired implications. \\

Moving on to the second clause, this will be proved in the same way by showing that 
\begin{eqnarray*}
\identity \Vdash \forall x \forall y \Big( x \subseteq y \rightarrow \forall z (z \not\in y \rightarrow z \notrlzin x) \Big) \\
\identity \Vdash \forall x \forall y \Big( \forall z (z \not\in y \rightarrow z \notrlzin x) \rightarrow x \subseteq y \Big)
\end{eqnarray*}
For this, it will again suffice to show that the two respective $\falsity{\cdot}$ values are equal for any $a, b \in \rlzstr$. So fix $a, b \in \rlzstr$. Then, 
\[
\falsity{\forall z (z \not\in b \rightarrow z \notrlzin a)} = \bigcup_{c \in \rlzstr} \{ t \stackapp \pi \divline t \Vdash c \notin b, \pi \in \falsity{c \notrlzin a} \}
\]
and
\[
\falsity{a \subseteq b} = \bigcup_{c \in \ff{dom}(a)} \{t \stackapp \pi \divline (c, \pi) \in a, t \Vdash c \not\in b\}.
\]
From this is it again easy to see that there two sets are indeed equal which means that $\identity$ realizes the desired implications. 

\bigskip

\noindent \textbf{Induction}

Let $\psi(z, z_1, \dots, z_n)$ be a formula, it what follows we shall drop the free variables for readability. We want to show that
\[
\rlzmodel \Vdash \forall x \big( \forall y (y \rlzin x \rightarrow \psi(y)) \rightarrow \psi(x) \big) \rightarrow \forall z \psi(z).
\]
Taking $\varphi(z) \equiv \psi(z) \rightarrow \perp$, it will suffice to show that
\[
\rlzmodel \Vdash \forall x \big( \forall y (\varphi(y) \rightarrow y \notrlzin x) \rightarrow (\varphi(x) \rightarrow \perp) \big) \rightarrow \forall z (\varphi(z) \rightarrow \perp).
\]
We shall prove this by making use of a ``Turing fixed point combinator''.

\begin{proposition}
    Let $\rlzfont{A} \coloneqq \lambda u \lambdaapp \lambda v \lambdaapp \inapp{v}{\fapp{\app{u}{u}}{v}}$ and set $\rlzfont{Y} \coloneqq \rlzfont{A} \rlzfont{A}$. Then for any $s, t \in \Lambda$ and $\pi \in \Pi$, $\rlzfont{Y} \star t \stackapp s \stackapp \pi \succ t \star \app{\rlzfont{Y}}{t} \stackapp s \stackapp \pi$.
\end{proposition}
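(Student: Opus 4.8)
The plan is to unfold the definition of $\rlzfont{Y}$ and apply the one-step evaluation rules in sequence, keeping careful track of the head and tail of the process at each stage; the whole argument is a short mechanical computation. First I would note that $\rlzfont{A} = \lambda u \lambdaapp \lambda v \lambdaapp \inapp{v}{\fapp{\app{u}{u}}{v}}$ is closed (both $u$ and $v$ are bound), so $\rlzfont{Y} = \rlzfont{A}\rlzfont{A}$ is a legitimate closed term, and for closed $s,t$ every term appearing below lies in $\Lambda$; hence all the processes involved are well-formed.

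Then I would chain the following $\succ_1$-steps. Starting from $\rlzfont{Y} \star t \stackapp s \stackapp \pi = (\app{\rlzfont{A}}{\rlzfont{A}}) \star t \stackapp s \stackapp \pi$, one application of (push) gives $\rlzfont{A} \star \rlzfont{A} \stackapp t \stackapp s \stackapp \pi$. Since $\rlzfont{A} = \lambda u \lambdaapp (\lambda v \lambdaapp \inapp{v}{\fapp{\app{u}{u}}{v}})$, applying (grab) substitutes $\rlzfont{A}$ for $u$ and, using $\app{\rlzfont{A}}{\rlzfont{A}} = \rlzfont{Y}$, yields $(\lambda v \lambdaapp \inapp{v}{\fapp{\rlzfont{Y}}{v}}) \star t \stackapp s \stackapp \pi$. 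A second (grab) substitutes $t$ for $v$, giving $\inapp{t}{\fapp{\rlzfont{Y}}{t}} \star s \stackapp \pi$, that is $\app{t}{(\app{\rlzfont{Y}}{t})} \star s \stackapp \pi$. A final (push) then produces $t \star (\app{\rlzfont{Y}}{t}) \stackapp s \stackapp \pi$. Passing to the reflexive–transitive closure of these four steps gives exactly $\rlzfont{Y} \star t \stackapp s \stackapp \pi \succ t \star \app{\rlzfont{Y}}{t} \stackapp s \stackapp \pi$, as claimed.

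There is essentially no obstacle here: the only points deserving a moment's attention are checking that $\rlzfont{A}$, and hence $\rlzfont{Y}$, is closed so that the processes make sense, and being disciplined about the nesting of the two $\lambda$-abstractions in $\rlzfont{A}$ so that each (grab) step substitutes into the correct body. Everything else is a direct appeal to the (push) and (grab) clauses of the one-step evaluation relation.
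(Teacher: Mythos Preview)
Your proof is correct and takes essentially the same approach as the paper: both unfold $\rlzfont{Y}=\rlzfont{A}\rlzfont{A}$ and apply the push/grab rules to reach $\inapp{t}{\app{\rlzfont{Y}}{t}} \star s \stackapp \pi$ and then $t \star \app{\rlzfont{Y}}{t} \stackapp s \stackapp \pi$. The paper compresses the first three steps into a single substitution computation, while you spell out each one-step evaluation explicitly, but the content is identical.
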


\begin{proof} \, 
    \[
    \rlzfont{Y}[v \coloneqq t] = \rlzfont{A}[u \coloneqq \rlzfont{A}, v \coloneqq t] = \inapp{t}{\fapp{\app{A}{A}}{t}} = \inapp{t}{\app{\rlzfont{Y}}{t}}
    \]
    Therefore, $\rlzfont{Y} \star t \stackapp s \stackapp \pi \succ \rlzfont{Y}[v \coloneqq t] \star s \stackapp \pi = \inapp{t}{\app{\rlzfont{Y}}{t}} \star s \stackapp \pi \succ t \star \app{\rlzfont{Y}}{t} \stackapp s \stackapp \pi$.
\end{proof}

\begin{lemma}
    For every formula $\varphi(u, u_1, \dots, u_n)$, 
    \[
    \rlzfont{Y} \Vdash \forall x \Big( \forall y (\varphi(y) \rightarrow y \notrlzin x) \rightarrow (\varphi(x) \rightarrow \perp ) \Big) \rightarrow \forall z (\varphi(z) \rightarrow \perp).
    \]
\end{lemma}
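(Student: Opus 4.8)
The plan is to prove the lemma by (meta-level) well-founded induction on the $\tf{V}$-rank $\ff{rk}(a)$ of the names $a\in\rlzstr$, exactly as in the proof of \Cref{theorem:RealizerofSubseteq}; the ``Turing fixed point combinator'' $\rlzfont{Y}$ from the preceding proposition is precisely what internalises this recursion over $\rlzstr$. First I would fix an arbitrary realizer $t$ of the premise $\forall x\big(\forall y(\varphi(y)\rightarrow y\notrlzin x)\rightarrow(\varphi(x)\rightarrow\perp)\big)$ and observe that every stack in $\falsity{\forall z(\varphi(z)\rightarrow\perp)}$ has the form $s\stackapp\rho$ with $s\Vdash\varphi(a)$ for some $a\in\rlzstr$ and $\rho\in\Pi$ (using $\falsity{\perp}=\Pi$). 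So it suffices to show, for every such $a$, that $\rlzfont{Y}\star t\stackapp s\stackapp\rho\in\Perp$ whenever $s\Vdash\varphi(a)$ and $\rho\in\Pi$; this is the statement $P(a)$ I would prove by induction on $\ff{rk}(a)$. It is important to carry the whole process $\rlzfont{Y}\star t\stackapp s\stackapp\rho$ through the induction rather than the weaker assertion $\app{\rlzfont{Y}}{t}\Vdash\varphi(a)\rightarrow\perp$, since recovering $\rlzfont{Y}\Vdash(\dots)\rightarrow\forall z(\varphi(z)\rightarrow\perp)$ at the end requires exactly $P(a)$ after unfolding the falsity value of the outer implication.

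For the inductive step, fix $a$ and assume $P(b)$ holds for all $b$ with $\ff{rk}(b)<\ff{rk}(a)$; take $s\Vdash\varphi(a)$ and $\rho\in\Pi$. By the combinator identity, $\rlzfont{Y}\star t\stackapp s\stackapp\rho\succ t\star\app{\rlzfont{Y}}{t}\stackapp s\stackapp\rho$, so by closure of $\Perp$ under anti-reduction it is enough to put $t\star\app{\rlzfont{Y}}{t}\stackapp s\stackapp\rho$ in $\Perp$. Since $t$ realizes the premise, \Cref{theorem:realizinguniversals} gives $t\Vdash\forall y(\varphi(y)\rightarrow y\notrlzin a)\rightarrow(\varphi(a)\rightarrow\perp)$, and $s\stackapp\rho\in\falsity{\varphi(a)\rightarrow\perp}$, so it remains to check $\app{\rlzfont{Y}}{t}\Vdash\forall y(\varphi(y)\rightarrow y\notrlzin a)$, i.e.\ (again by \Cref{theorem:realizinguniversals}) that $\app{\rlzfont{Y}}{t}\Vdash\varphi(b)\rightarrow b\notrlzin a$ for every $b\in\rlzstr$. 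A stack falsifying the latter has the form $s'\stackapp\pi'$ with $s'\Vdash\varphi(b)$ and $\pi'\in\falsity{b\notrlzin a}=\{\pi\in\Pi\divline(b,\pi)\in a\}$; the crucial point is that this set is nonempty only when $b\in\ff{dom}(a)$, whence $\ff{rk}(b)<\ff{rk}(a)$ and the inductive hypothesis $P(b)$ applies, giving $\rlzfont{Y}\star t\stackapp s'\stackapp\pi'\in\Perp$ and hence, via one push step and anti-reduction, $\app{\rlzfont{Y}}{t}\star s'\stackapp\pi'\in\Perp$. Chasing these reductions back up closes the step, and unfolding the falsity value of $(\dots)\rightarrow\forall z(\varphi(z)\rightarrow\perp)$ as in the first paragraph then yields $\rlzfont{Y}\Vdash(\dots)\rightarrow\forall z(\varphi(z)\rightarrow\perp)$.

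I expect there to be no genuine obstacle, only careful bookkeeping: one must be precise that the ``$\rlzin$-induction'' being realized is discharged by honest well-founded recursion on the name hierarchy $\rlzstr$ (with $\ff{rk}$ the $\tf{V}$-rank used already in the definition of $\falsity{a\not\in b}$), that the rank strictly decreases exactly when one passes from $a$ to an element of $\ff{dom}(a)$, and that closure of $\Perp$ under anti-reduction is what licenses moving freely between assertions ``$u\star\sigma\in\Perp$'' and ``$\app{u}{s}\Vdash\psi$''. The only computational input beyond the falsity-value unfoldings for $\rightarrow$ and $\forall$ is the single reduction rule $\rlzfont{Y}\star t\stackapp s\stackapp\pi\succ t\star\app{\rlzfont{Y}}{t}\stackapp s\stackapp\pi$ recorded in the preceding proposition.
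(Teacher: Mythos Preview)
Your proposal is correct and follows essentially the same approach as the paper: fix $t$ realizing the premise, reduce the goal to showing $\rlzfont{Y}\star t\stackapp s\stackapp\pi\in\Perp$ for all $s\Vdash\varphi(b)$ and $\pi\in\Pi$ by induction on $\ff{rk}(b)$, then use the fixed-point reduction $\rlzfont{Y}\star t\stackapp s\stackapp\pi\succ t\star\app{\rlzfont{Y}}{t}\stackapp s\stackapp\pi$ together with the rank-drop coming from $\falsity{c\notrlzin b}\neq\emptyset\Rightarrow c\in\ff{dom}(b)$ to discharge the inner premise $\app{\rlzfont{Y}}{t}\Vdash\forall y(\varphi(y)\rightarrow y\notrlzin b)$. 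Your bookkeeping remarks (carrying the full process through the induction, the role of anti-reduction) are accurate and match the paper's argument exactly.
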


\begin{proof} 
    Fix a term $t$ such that $t \Vdash \forall x \Big( \forall y (\varphi(y) \rightarrow y \notrlzin x) \rightarrow (\varphi(x) \rightarrow \perp ) \Big)$. Then for any set $a \in \rlzstr$ we have 
    \[
    t \Vdash \Big( \forall y (\varphi(y) \rightarrow y \notrlzin a) \rightarrow (\varphi(a) \rightarrow \perp ) \Big).
    \]
    Since $\falsity{\forall z (\varphi(z) \rightarrow \perp)} = \bigcup_{b \in \rlzstr} \{ s \stackapp \pi \divline s \Vdash \varphi(b), \pi \in \Pi \}$, the proof will follow from the following lemma.

    \begin{lemma}
        For any set $b \in \rlzstr$, $\pi \in \Pi$ and $s \in \verity{\varphi(b)}$, $\rlzfont{Y} \star t \stackapp s \stackapp \pi \in \Perp$.
    \end{lemma}
    
    \noindent We shall prove the claim by induction on the rank (in \tf{V}) of $b$. So suppose that the claim holds for every $c \in \rlzstr$ with $\ff{rk}(c) < \ff{rk}(b)$. Since $\rlzfont{Y} \star t \stackapp s \stackapp \pi \succ t \star \app{\rlzfont{Y}}{t} \stackapp s \stackapp \pi$, it suffices to prove that $t \star \app{\rlzfont{Y}}{t} \stackapp s \stackapp \pi \in \Perp$. 
    
    Next, we observe that, by the assumptions on $t$ and $s$, the claim will follow from proving that $\app{\rlzfont{Y}}{t} \Vdash \forall y \, (\varphi(y) \rightarrow y \notrlzin b)$. So fix $c \in \rlzstr$, and assume that $r \Vdash \varphi(c)$ and $\sigma \in \falsity{c \notrlzin b}$. Then $(c, \sigma) \in b$ which gives us that $\ff{rk}(c) < \ff{rk}(b)$. Therefore, by the inductive hypothesis, $\rlzfont{Y} \star t \stackapp r \stackapp \sigma \in \Perp$, from which it follows that $\app{\rlzfont{Y}}{t} \star r \stackapp \sigma \in \Perp$ as required. 
\end{proof}

\bigskip

\noindent \textbf{Separation}

Let $a \in \rlzstr$ and $\varphi$ be a formula (with parameters). We shall prove that a set which realizes this related instance of Separation is $b \coloneqq \{ (x, t \stackapp \pi) \divline (x, \pi) \in a, t \Vdash \varphi(x) \}$. Now, 
\[
\falsity{x \notrlzin b} = \{ \sigma \in \Pi \divline (x, \sigma) \in b \} = \{ t \stackapp \pi \divline t \Vdash \varphi(x), \pi \in \Pi, (x, \pi) \in a \} = \falsity{\varphi(x) \rightarrow x \notrlzin a}.
\]
Therefore, $\identity \Vdash \forall x (x \notrlzin b \rightarrow ( \varphi(x) \rightarrow x \notrlzin a))$ and $\identity \Vdash \forall x ((\varphi(x) \rightarrow x \notrlzin a ) \rightarrow x \notrlzin b).$

\bigskip

\noindent \textbf{Pairing}

Fix $a, b \in \rlzstr$. We shall prove that a set which realizes this instance of pairing is $c \coloneqq \{a, b\} \times \Pi$. This is because $\falsity{a \notrlzin c} = \falsity{b \notrlzin c} = \Pi = \falsity{\perp}$. Therefore, $\identity \Vdash a \rlzin c$ and $\identity \Vdash b \rlzin c$ (note that $a \rlzin c$ is the formula $a \notrlzin c \rightarrow \perp$).

\bigskip

\noindent \textbf{Union}

Fix $a \in \rlzstr$. We shall prove that a set which realizes this instance of unions is $b \coloneqq \{ (c, \sigma) \divline \exists (x, \pi) \in a \, (c, \sigma) \in x \}$. By \Cref{theorem:falsitysubsets}, it will suffice to prove that for any $x,c \in \rlzstr$,
\[
\falsity{c \notrlzin b \rightarrow x \notrlzin a} \subseteq \falsity{ c \notrlzin x \rightarrow x \notrlzin a}
\]
because from this we will obtain $\identity \Vdash \forall x \forall c \Big( ( c \notrlzin x \rightarrow x \notrlzin a) \rightarrow (c \notrlzin b \rightarrow x \notrlzin a) \Big)$ which is equivalent to the axiom of unions.

So suppose that $s \Vdash c \notrlzin b$ and $(x, \pi) \in a$. We first prove that $s \Vdash c \notrlzin x$. To see this, take $\sigma \in \falsity{c \notrlzin x}$. Then, by definition, $(c, \sigma) \in x$ from which it follows that $(c, \sigma) \in b$ and therefore $s \star \sigma \in \Perp$, which implies that $s \Vdash c \notrlzin x$. Therefore, $s \stackapp \pi \in \falsity{c \notrlzin x \rightarrow x \notrlzin a}$ as required.

\bigskip

\noindent \textbf{Weak Power Set}

Fix $a \in \rlzstr$. We shall prove that a set which realizes this instance of weak power set is $b \coloneqq \mathcal{P}(\ff{dom}(a) \times \Pi) \times \Pi$. To do this, given $x \in \rlzstr$, let 
\[
y_x \coloneqq \{ (z, t \stackapp \pi) \divline t \Vdash z \rlzin x, (z, \pi) \in a \} \in \mathcal{P}(\ff{dom}(a) \times \Pi).
\]
Then $\falsity{z \notrlzin y_x} = \falsity{z \rlzin x \rightarrow z \notrlzin a}$ and $\falsity{y_x \notrlzin b} = \Pi$. So, $\identity \Vdash \forall z (z \notrlzin y_x \rightarrow (z \rlzin x \rightarrow z \notrlzin a))$ and $\identity \Vdash \forall z ((z \rlzin x \rightarrow z \notrlzin a) \rightarrow z \notrlzin y_x)$. Therefore, for any $\pi \in \Pi$, 
\[
\identity \stackapp \identity \stackapp \pi \in \falsity{\forall y \big( \forall z (z \notrlzin y \rightarrow (z \rlzin x \rightarrow z \notrlzin a))
\rightarrow \big( \forall z((z \rlzin x \rightarrow z \notrlzin a) \rightarrow z \notrlzin y) \rightarrow y \notrlzin b \big) \big)}.
\]
From which it follows that
\begin{multline*}
\lambda u \lambdaapp \fapp{\app{u}{\identity}}{\identity} \Vdash \forall x \bigg( \forall y \bigg( \forall z (z \notrlzin y \rightarrow (z \rlzin x \rightarrow z \notrlzin a)) \\
\rightarrow \Big( \forall z((z \rlzin x \rightarrow z \notrlzin a) \rightarrow z \notrlzin y) \rightarrow y \notrlzin b \Big) \bigg) \rightarrow \perp \bigg).
\end{multline*}

\bigskip

\noindent \textbf{Collection}

\noindent Let $a \in \rlzstr$ and $\varphi(u, v)$ be a formula (with parameters). Using Collection in \tf{V} we can find a set $Y$ such that 
\[
\forall (x, \pi) \in a \, \forall t \in \Lambda \, \exists y \, t \Vdash \varphi(x, y) \rightarrow \forall (x, \pi) \in a \, \forall t \in \Lambda \, \exists y \in Y \, t \Vdash \varphi(x, y).
\]
We shall prove that a set which realizes the instance of collection for $a$ and $\varphi$ in $\rlzmodel$ is $b \coloneqq \{ (y, \pi) \divline \exists t \in \Lambda, \exists x \, (x, \pi) \in a, t \Vdash \varphi(x, y), y \in Y \}.$ For this, it will suffice to prove that for any $x \in \rlzstr$
\[
\falsity{\forall y (\varphi(x, y) \rightarrow x \notrlzin a)} \subseteq \falsity{\forall y (\varphi(x, y) \rightarrow y \notrlzin b)}
\]
as this will then imply that $\identity \Vdash \forall x \Big( \forall y ( \varphi(x, y) \rightarrow y \notrlzin b) \rightarrow \forall y (\varphi(x, y) \rightarrow x \notrlzin a) \Big)$ which one can see is equivalent to Collection. So, fix $x \in \rlzstr$ and suppose that $t \stackapp \pi \in \falsity{\forall y(\varphi(x, y) \rightarrow x \notrlzin a)} = \bigcup_{c \in \rlzstr} \falsity{\varphi(x, c) \rightarrow x \notrlzin a}$. Then, for some $c \in \rlzstr$, $t \Vdash \varphi(x, c)$ and $(x, \pi) \in a$. By definition of $Y$, we can fix some $c' \in Y$ such that $t \Vdash \varphi(x, c')$. Moreover, by definition, $(c', \pi) \in b$ and therefore $\pi \in \falsity{c' \notrlzin b}$. From which we can conclude that $t \stackapp \pi \in \falsity{\varphi(x,c') \rightarrow y \notrlzin c'} \subseteq \falsity{\forall y (\varphi(x, y) \rightarrow y \notrlzin b)}$ as required.

\bigskip

\noindent \textbf{Infinity}

Note that the axiom of infinity is equivalent to 
\[
\forall a \, \exists b \, \Big( a \rlzin b \land \forall x (\forall y ( x \rlzin y \rightarrow y \notrlzin b) \rightarrow x \notrlzin b)  \Big). 
\]
Fix $a \in \rlzstr$. We shall define a set $b$ witnessing the axiom of infinity as follows: first set $a^0 \coloneqq a$ and, for $n \in \omega$, let $a^{n+1} \coloneqq \{a^n\} \times \Pi$.\footnote{this will in fact be a name for a set which is extensionally equal to $\{a\}$ by \Cref{theorem:SingularFunction}} Finally, let
\[
b \coloneqq \{ (a^n, \pi) \divline n \in \omega, \pi \in \Pi \}.
\]
Since $\{a\} \times \Pi \subseteq b$, $\falsity{a \notrlzin b} = \Pi$ and therefore $\identity \Vdash a \rlzin b$. Therefore, it will suffice to prove that 
\[
\lambda u \lambdaapp \app{u}{\identity} \Vdash \forall x (\forall y ( x \rlzin y \rightarrow y \notrlzin b) \rightarrow x \notrlzin b).
\]
So, fix $c$ and suppose that we can find some $t$ and $\pi$ such that $t \Vdash \forall y ( c \rlzin y \rightarrow y \notrlzin b)$ and $\pi \in \falsity{c \notrlzin b}$. Note that, by definition, $(c, \pi) \in b$. For our witness $y$ we shall take $d = \{c\} \times \Pi$, noting that for any $\pi' \in \Pi$, we shall have that $(d, \pi') \in b$.

We shall first prove that $\identity \Vdash c \notrlzin d \rightarrow \perp$. To do this suppose $s \Vdash c \notrlzin d$ and $\sigma \in \Pi$. By definition, $\forall \tau \in \falsity{c \notrlzin d} = \{ \pi' \divline (c, \pi') \in d \} = \Pi$, $s \star \tau \in \Perp$. Therefore, $\identity \star s \stackapp \sigma \succ s \star \sigma \in \Perp$, which completes this observation. Then, since $\identity \Vdash c \rlzin d$, $\app{t}{\identity} \Vdash d \notrlzin b$ and thus, $\lambda u \lambdaapp \app{u}{\identity} \star t \stackapp \pi \succ \app{t}{\identity} \stackapp \pi \in \Perp$, which completes the proof.

\medskip

We can now conclude that our realizability model satisfies $\ZFepsilon$.

\begin{theorem} \label{theorem:rlzmodelModelsZFepsilon}
Suppose that $\tf{V} \models \tf{ZF}$ and $\mathcal{A} = (\Lambda, \Pi, \prec, \Perp)$ is a realizability algebra. Then for every axiom $\varphi$ of $\ZFepsilon$, $\rlzmodel^{\mathcal{A}, \tf{V}} \Vdash \varphi$.
\end{theorem}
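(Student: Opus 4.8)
The plan is to assemble the axiom-by-axiom verifications carried out in this section into the single statement. For each of the eight groups of axioms of $\ZFepsilon$ --- $\rlzin$-Extensionality, $\rlzin$-Induction, $\rlzin$-Separation, $\rlzin$-Pairing, $\rlzin$-Unions, $\rlzin$-Weak Power Sets, $\rlzin$-Collection and $\rlzin$-Infinity --- the preceding text exhibits an explicit closed $\lambda_c$-term which realizes (a form of) the corresponding axiom under an arbitrary choice of set parameters. Two easy observations promote these to the theorem. First, every term exhibited ($\identity$, the various explicit $\lambda$-abstractions, the fixed-point combinator $\rlzfont{Y} = \rlzfont{A}\rlzfont{A}$, $\lambda u \lambdaapp \app{u}{\identity}$, and so on) is built solely from variables, application, $\lambda$-abstraction and $\cc$ and contains no continuation constant $\saverlz{\pi}$; hence it lies in $\mathcal{R}$, so $\rlzmodel \Vdash \varphi$ in the sense of the relevant definition. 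Second, each realizer is independent of the set parameters $u_1, \dots, u_n, a, b$ fixed in its proof, so by \Cref{theorem:realizinguniversals} it also realizes the universal closure of the matrix over those parameters; and for the three schemes the realizer ($\rlzfont{Y}$ for Induction, $\identity$ for Separation and for Collection) is moreover independent of the instance formula $\varphi \in Fml_{\rlzin}$, so a single term handles every instance.

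The only genuine point of care is to reconcile the \emph{literal} axioms of $\ZFepsilon$ from \Cref{section:ZFepsilon} --- written with the abbreviations for $\land$, $\lor$, $\exists x$, the bounded quantifiers $\forall x \rlzin a$ and $\exists x \rlzin a$, and the relations $\in$, $\simeq$ --- with the unfolded forms actually realized above (for instance, the Extensionality argument realizes the pair of implications $\forall x \forall y(x \not\in y \to \forall z(x \subseteq z \to (z \subseteq x \to z \notrlzin y)))$ and its converse, rather than the biconditional $\forall x \forall y(x \in y \leftrightarrow \exists z \rlzin y\,(x \simeq z))$ verbatim; and the Infinity argument realizes the reformulation $\forall a \exists b(a \rlzin b \land \forall x(\forall y(x \rlzin y \to y \notrlzin b) \to x \notrlzin b))$). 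I would handle this uniformly with the adequacy lemma, \Cref{theorem:adequacy}: expanding the finitely many abbreviations shows that the form $\psi$ realized by the exhibited term $t$ and the official axiom $\varphi$ are provably equivalent in classical predicate calculus --- the recombinations needed (e.g.\ that $\forall z \rlzin x\,(z \in y)$ unfolds to $\forall z(\neg(z \in y) \to z \notrlzin x)$, matching the ``$\forall z(\dots \to z \notrlzin x)$''-shaped formulas appearing throughout, and that $a \simeq b \to \varphi$ may be taken in the curried form, as recorded in the Remark following the abbreviation table) are all classical tautology-equivalences. Then $\tf{CPC} \vdash \psi \to \varphi$, so \Cref{theorem:adequacy} supplies a realizer $s$ of $\psi \to \varphi$, and $\app{s}{t} \Vdash \varphi$ by \Cref{theorem:ImplicationandApplication}; since $\app{s}{t} \in \mathcal{R}$ (the term $s$ may legitimately use $\cc$), this gives $\rlzmodel \Vdash \varphi$.

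The main obstacle is thus not computational --- all of the work with the pole $\Perp$ and the evaluation relation $\succ$ has already been done axiom by axiom --- but is precisely the bookkeeping of the previous paragraph: checking that the pairs of implications realized for the biconditional axioms (Extensionality, Separation, Weak Power Sets) do classically entail the biconditionals, and that the existential and bounded-quantifier abbreviations expand so as to match the realized formulas. I would also note, to match the exact wording of the statement, that \Cref{theorem:adequacy}, \Cref{theorem:realizinguniversals} and \Cref{theorem:ImplicationandApplication} all hold for an arbitrary realizability algebra $\mathcal{A} = (\Lambda, \Pi, \prec, \Perp)$ over an arbitrary model $\tf{V}$ of $\tf{ZF}$, so that, together with the explicit realizers above (whose constructions likewise used only $\tf{ZF}$ in $\tf{V}$, e.g.\ Collection in $\tf{V}$ for the Collection axiom), the conclusion holds uniformly in $\mathcal{A}$ and $\tf{V}$ exactly as asserted.
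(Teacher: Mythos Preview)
Your proposal is correct and matches the paper's approach exactly: the theorem is stated immediately after the axiom-by-axiom verifications and its ``proof'' is precisely those verifications, with no further argument given. You are actually more explicit than the paper about the bookkeeping --- that the exhibited terms are realizers in $\mathcal{R}$, that uniformity in the parameters yields the universal closures via \Cref{theorem:realizinguniversals}, and that the adequacy lemma bridges the gap between the unfolded forms realized and the official abbreviated axioms --- all of which the paper leaves implicit.
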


\section{Names and Ordinals} \label{section:ReishandOrdinals}

\subsection{Reish Names}

In this short subsection we introduce a way to interpret every ground model set in the realizability model, by defining the $\emph{reish of } a$, where reish stands for \emph{recursive}. In the traditional presentation where the sets of the realizability model are all sets in the ground model, one considers the gimel of $a$, $\cjgimel(a)$, defined by $\cjgimel(a) \coloneqq a \times \Pi$. The issue with this operation is that if $a$ is not a name in \rlzstr, then in the extensional model $\cjgimel(a)$ may be very impoverished. Notably, if $a$ does not contain any pair of the form $(b, \pi)$ where $\pi \in \Pi$ then one can easily see that $\falsity{\forall x (x \notrlzin a)} = \emptyset$ from which it follows that in the extensional model $\Vdash a \simeq \emptyset$. In particular, from this we can conclude that for any ordinal $\alpha$, $\Vdash \forall x^{\cjgimel(\alpha)} (x \simeq 0)$.

Instead, we shall consider $\fullname{a}$ which is constructed by applying the gimel process recursively to all elements of $a$. This is a very blunt tool which in general will be difficult to work with because for any $b \in a$, $\falsity{\fullname{b} \notrlzin \fullname{a}} = \Pi$. This will cause problems in practice because if $\pi \in \falsity{\forall x^{\fullname{a}} \varphi(x)}$ then we cannot specify the $b \in a$ for which $\pi \in \falsity{\varphi(\fullname{b})}$. Therefore, in order to have $t \Vdash \forall x^{\fullname{a}} \varphi(x)$ one needs $t$ to uniformly realize $t \Vdash \varphi(\fullname{b})$ for all $b \in a$ without reference to which witness we are working with. For example, it is not known if it is possible to prove
\[
\forall n \rlzin \fullname{\omega} \, \forall x ((\forall y (y + 1 \notrlzin x \rightarrow y \notrlzin x) \rightarrow n \notrlzin x) \rightarrow \fullname{0} \notrlzin x),
\]
which would imply that every element of $\fullname{\omega}$ was an integer and therefore that $\fullname{\omega}$ was extensionaly equal to the first limit ordinal. However, it is a good starting point to work from and we will consider an alternative approach in \ref{section:OrdinalRepresentations}.

\begin{definition}
    We define an interpretation $\fullname{x}$ recursively for $x \in \tf{V}$ as
    \[
    \fullname{x} \coloneqq \{ (\fullname{y}, \pi) \divline y \in x, \, \pi \in \Pi \}.
    \]
\end{definition}

\begin{proposition} \label{theorem:SubsetsandFullNames}
    If $a \subseteq b$ then $\rlzmodel \Vdash \fullname{a} \subseteq \fullname{b}$.
\end{proposition}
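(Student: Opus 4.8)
The plan is to produce a single realizer and verify it directly; no outer induction on rank is needed, since the only recursive ingredient is the realizer $\app{\theta}{\theta}$ of $\forall x\,(x\subseteq x)$ from \Cref{theorem:RealizerofSubseteq}, which we simply invoke. Concretely, I would take $t_0 \coloneqq \lambda u \lambdaapp \fapp{\app{u}{(\app{\theta}{\theta})}}{(\app{\theta}{\theta})}$ and show $t_0 \Vdash \fullname{a} \subseteq \fullname{b}$ whenever $a \subseteq b$ holds in $\tf{V}$; as $t_0$ contains no continuation constant it is a genuine realizer, so this yields $\rlzmodel \Vdash \fullname{a} \subseteq \fullname{b}$.

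First I would unwind the falsity value. Since $\ff{dom}(\fullname{a}) = \{\fullname{y} \divline y \in a\}$, an arbitrary element of $\falsity{\fullname{a} \subseteq \fullname{b}}$ is of the form $s \stackapp \sigma$ where $y \in a$, $\sigma \in \Pi$ (so that $(\fullname{y}, \sigma) \in \fullname{a}$) and $s \Vdash \fullname{y} \not\in \fullname{b}$. From $a \subseteq b$ we get $y \in b$, hence $(\fullname{y}, \tau) \in \fullname{b}$ for every $\tau \in \Pi$, and in particular $\fullname{y} \in \ff{dom}(\fullname{b})$. Next, I would feed $s$ a suitable stack in $\falsity{\fullname{y} \not\in \fullname{b}}$: taking $c = \fullname{y}$ in the definition of $\falsity{\fullname{y} \not\in \fullname{b}}$ and using that $\app{\theta}{\theta} \Vdash \forall x\,(x \subseteq x)$, hence $\app{\theta}{\theta} \Vdash \fullname{y} \subseteq \fullname{y}$ by \Cref{theorem:realizinguniversals}, one sees that $\app{\theta}{\theta} \stackapp \app{\theta}{\theta} \stackapp \tau \in \falsity{\fullname{y} \not\in \fullname{b}}$ for every $\tau \in \Pi$. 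Therefore $s \star \app{\theta}{\theta} \stackapp \app{\theta}{\theta} \stackapp \sigma \in \Perp$.

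Finally I would evaluate the process:
\[
t_0 \star s \stackapp \sigma \;\succ\; \fapp{\app{s}{(\app{\theta}{\theta})}}{(\app{\theta}{\theta})} \star \sigma \;\succ\; s \star \app{\theta}{\theta} \stackapp \app{\theta}{\theta} \stackapp \sigma \;\in\; \Perp,
\]
and conclude by anti-reduction that $t_0 \star (s \stackapp \sigma) \in \Perp$; since $s \stackapp \sigma$ was an arbitrary element of $\falsity{\fullname{a} \subseteq \fullname{b}}$, we get $t_0 \Vdash \fullname{a} \subseteq \fullname{b}$. The only points needing care are both minor: checking that $\fullname{y} \in \rlzstr$ (a trivial induction on the rank of $y$ in $\tf{V}$) so that \Cref{theorem:realizinguniversals} applies to $\app{\theta}{\theta} \Vdash \forall x\,(x\subseteq x)$, and tracking the stack manipulations in the evaluation above correctly. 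I do not expect a genuine obstacle here.
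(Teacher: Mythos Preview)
Your proposal is correct and essentially identical to the paper's proof: the paper fixes a realizer $t \Vdash \forall x\,(x \subseteq x)$, unwinds $\falsity{\fullname{a} \subseteq \fullname{b}}$ in the same way, uses $a \subseteq b$ to get the witness into $\ff{dom}(\fullname{b})$, and concludes with the same realizer $\lambda u \lambdaapp \fapp{\app{u}{t}}{t}$. Your write-up is slightly more explicit about side conditions (e.g.\ $\fullname{y} \in \rlzstr$ and the invocation of \Cref{theorem:realizinguniversals}), but the argument is the same.
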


\begin{proof}
    Fix $t$ to be a realizer such that $t \Vdash \forall x (x \subseteq x)$ (for example, see \Cref{theorem:RealizerofSubseteq}) and fix $s \stackapp \pi \in \falsity{\fullname{a} \subseteq \fullname{b}} = \bigcup_c \{ s \stackapp \pi \divline 	(\fullname{c}, \pi) \in \fullname{a}, s \Vdash \fullname{c} \not\in \fullname{b} \}$. Now, if $(\fullname{c}, \pi) \in \fullname{a}$, then we must have that $c \in a$ and therefore $c \in b$. Thus, $t \stackapp t \stackapp \pi \in \falsity{\fullname{c} \not\in \fullname{b} }$ which means that $s \star t \stackapp t \stackapp \pi \in \Perp$. Thus, $\lambda u \lambdaapp \fapp{\app{u}{t}}{t} \Vdash \fullname{a} \subseteq \fullname{b}$. 
\end{proof}

\begin{observation} \label{observation:reishPreservesElementhood}
    If $a \in b$ then $\identity \Vdash \fullname{a} \rlzin \fullname{b}$. Thus, if $a \in b$ then $\rlzmodel \Vdash \fullname{a} \not\simeq \fullname{b}$.
\end{observation}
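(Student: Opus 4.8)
The plan is to handle the two assertions separately: the first by a direct computation of falsity values, the second by reducing to a statement provable in $\ZFepsilon$.

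For $\identity \Vdash \fullname{a} \rlzin \fullname{b}$, recall that $\fullname{a} \rlzin \fullname{b}$ abbreviates $\fullname{a} \notrlzin \fullname{b} \rightarrow \perp$, so by \Cref{theorem:falsitysubsets} it suffices to check $\falsity{\perp} = \Pi \subseteq \falsity{\fullname{a} \notrlzin \fullname{b}}$. By definition $\falsity{\fullname{a} \notrlzin \fullname{b}} = \{ \pi \in \Pi \divline (\fullname{a}, \pi) \in \fullname{b} \}$, and since $a \in b$ the definition of $\fullname{b}$ puts $(\fullname{a}, \pi)$ into $\fullname{b}$ for every $\pi \in \Pi$; hence $\falsity{\fullname{a} \notrlzin \fullname{b}} = \Pi$ and the inclusion (in fact equality) holds.

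For $\rlzmodel \Vdash \fullname{a} \not\simeq \fullname{b}$ I would argue through the theory rather than exhibit a realizer by hand. By \Cref{theorem:rlzmodelModelsZFepsilon}, $\rlzmodel$ realizes every axiom of $\ZFepsilon$, and combining the adequacy lemma (\Cref{theorem:adequacy}) with finitely many applications of \Cref{theorem:ImplicationandApplication} to discharge the axioms used, $\rlzmodel$ realizes every sentence provable in $\ZFepsilon$. So it is enough to establish
\[
\ZFepsilon \vdash \forall x \forall y \, (x \rlzin y \rightarrow \neg(x \simeq y)),
\]
instantiate it at $\fullname{a}, \fullname{b}$, and supply the realizer $\identity$ of $\fullname{a} \rlzin \fullname{b}$ from the first part. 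To prove the displayed sentence inside $\ZFepsilon$: fix $x, y$ with $x \rlzin y$ and suppose for contradiction $x \simeq y$. Then $x \in y$ by \ref{ZFepsilonProperty:RlzinImpliesIn} and $y \simeq x$ by \ref{ZFepsilonProperty:SimeqReflective}, so \ref{ZFepsilonProperty:ExtensionalityA} (applied with $a := x$) gives $y \in y$. It then remains to note that $\ZFepsilon \vdash \forall z \, \neg(z \in z)$: this is an instance of the $\in$-Induction scheme, which is derivable in $\ZFepsilon$ as shown in \Cref{section:ZFepsilonToZF}, applied to $\varphi(z) \equiv \neg(z \in z)$ — indeed, if $\forall w \in z \, \neg(w \in w)$ and $z \in z$, then instantiating the hypothesis at $w := z$ yields $\neg(z \in z)$, a contradiction.

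I expect no genuine obstacle here; the one point needing care is the passage ``$\ZFepsilon \vdash \varphi$ implies $\rlzmodel \Vdash \varphi$'', which is the standard deduction-theorem packaging of adequacy rather than its literal statement (realize the implication from the conjunction of the finitely many $\ZFepsilon$-axioms used to $\varphi$, then feed in the realizers of those axioms by modus ponens). A reader wanting a fully explicit witness can instead compose the realizers underlying \ref{ZFepsilonProperty:RlzinImpliesIn}, \ref{ZFepsilonProperty:SimeqReflective} and \ref{ZFepsilonProperty:ExtensionalityA} with the fixed-point realizer used for Induction, at the cost of more bookkeeping but no new ideas.
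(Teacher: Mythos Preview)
Your argument is correct. The paper records this as an observation without proof, so there is no official argument to compare against; your direct computation for the first clause and your derivation of $\forall x\forall y\,(x\rlzin y\rightarrow \neg(x\simeq y))$ inside $\ZFepsilon$ (via $\in$-Induction for $\neg(z\in z)$) together with adequacy is exactly the sort of unpacking the observation invites.
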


\subsection{Ordinals}

We now wish to discuss ordinals in $\ZFepsilon$. In order to do this, we will first specify precisely what we mean by this concept. For the purpose of this section, we will be very careful to differentiate between the $\rlzin$ version and the $\in$ version of each definition.

\begin{definition}
    ($\ZFepsilon$) We say that a set $a$ is a $\rlzin$\emph{-ordinal} if it is a $\rlzin$-transitive set of $\rlzin$-transitive sets. That is,
    \[
    \forall x \rlzin a \, \forall y \rlzin x \, (y \rlzin a) \quad \land \quad \forall z \rlzin a \, \forall x \rlzin z \, \forall y \rlzin x \, (y \rlzin z).
    \]
\end{definition}

\noindent Note that over $\ZFepsilon$, $\rlzin$-ordinals are not always the same as the standard definition of ordinals as transitive sets well-ordered by the $\rlzin$-relation. For example, if $\fullname{2}$ has size greater than 2 then there are two ordinals, $a$ and $b$, such that $a \notrlzin b$, $b \notrlzin a$ but $(a \cup \{a\}) \cup (b \cup \{b\})$ is an $\rlzin$-ordinal on which the $\rlzin$-relation does not linearly order the set. On the other hand, it is easy to see in \tf{ZF} that $\alpha$ is an ordinal if and only if it is a transitive set of transitive sets.

\begin{proposition} $(\ZFepsilon)$ \,
    \begin{thmlist}
        \item If $a$ is a $\rlzin$-transitive set, then it is a $\in$-transitive set.
        \item If $a$ is a $\rlzin$-ordinal, then it is a $\in$-ordinal.
    \end{thmlist}
\end{proposition}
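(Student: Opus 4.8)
The plan is to reduce everything to the basic dictionary between $\rlzin$ and $\in$ established earlier: Properties \ref{ZFepsilonProperty:RlzinImpliesIn} and \ref{ZFepsilonProperty:SubsetDefinition}, the two $\rlzin$-Extensionality axioms, and \Cref{theorem:EquivalentSetsProveSameThings}. The organising remark is that, by \ref{ZFepsilonProperty:SubsetDefinition}, a set $a$ is $\in$-transitive exactly when $\forall x \in a\,(x \subseteq a)$, and that $\simeq$-equivalent sets may be substituted for one another inside formulas in the language $\{\in,\simeq\}$.

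For \textbf{(i)}, I would first prove the auxiliary claim that $\forall z \rlzin a\,(z \subseteq a)$. Fix $z \rlzin a$; by the second $\rlzin$-Extensionality axiom it suffices to check $\forall w \rlzin z\,(w \in a)$, and for such a $w$ the $\rlzin$-transitivity of $a$ gives $w \rlzin a$, hence $w \in a$ by \ref{ZFepsilonProperty:RlzinImpliesIn}. Now let $x \in a$. By the first $\rlzin$-Extensionality axiom there is $u \rlzin a$ with $x \simeq u$, the auxiliary claim gives $u \subseteq a$, and \Cref{theorem:EquivalentSetsProveSameThings} applied to the formula $v \subseteq a$ transfers this to $x \subseteq a$. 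Since $x \in a$ was arbitrary, $a$ is $\in$-transitive.

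For \textbf{(ii)}, part (i) already gives that $a$ is $\in$-transitive, so it remains to show every $\in$-element of $a$ is $\in$-transitive. Given $x \in a$, choose $u \rlzin a$ with $x \simeq u$ as above. Since $a$ is a $\rlzin$-transitive set of $\rlzin$-transitive sets, $u$ is $\rlzin$-transitive, hence $\in$-transitive by part (i); as $\in$-transitivity is a formula in the language $\{\in,\simeq\}$, \Cref{theorem:EquivalentSetsProveSameThings} transports it from $u$ to $x$.

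Both parts are pure bookkeeping, and the only subtlety — the nearest thing to an obstacle — is never to conflate $\rlzin$ and $\in$: the step from $w \rlzin z \rlzin a$ to $w \rlzin a$ must go through $\rlzin$-transitivity rather than its $\in$-analogue, the sole passage from $\rlzin$-membership to $\in$-membership is \ref{ZFepsilonProperty:RlzinImpliesIn}, and moving a property across $\simeq$ requires \Cref{theorem:EquivalentSetsProveSameThings} — so one must check that the formulas involved live in $\{\in,\simeq\}$, which $\subseteq$ does not, though it is eliminable via \ref{ZFepsilonProperty:SubsetDefinition}.
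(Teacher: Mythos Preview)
Your proposal is correct and rests on the same underlying dictionary as the paper's proof, but the organisation is genuinely different. The paper argues by direct element-chasing: for (i) it takes $c \in b \in a$, unpacks $b$ and $c$ to $\rlzin$-witnesses $x \rlzin a$ and $y \rlzin x$, applies $\rlzin$-transitivity to get $y \rlzin a$, and then repacks to conclude $c \in a$; for (ii) it repeats this pattern one level deeper with a chain $d \in c \in b \in a$ and witnesses $z \rlzin y \rlzin x \rlzin a$. Your route instead isolates the auxiliary fact $\forall z \rlzin a\,(z \subseteq a)$ and then uses \Cref{theorem:EquivalentSetsProveSameThings} as a black box to transport both ``$\subseteq a$'' and ``is $\in$-transitive'' across $\simeq$; in particular your (ii) reuses (i) wholesale rather than re-running the element-chase. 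What your approach buys is modularity and a cleaner invocation of the transfer principle; what the paper's buys is that it never needs to verify that the transported formulas live in $\{\in,\simeq\}$ --- a point you correctly flag and discharge via \ref{ZFepsilonProperty:SubsetDefinition}.
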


\begin{proof} Suppose that $a$ is a $\rlzin$-transitive set and take $c \in b \in a$. Then there exists some $x \rlzin a$ such that $x \simeq b$ and there exists some $y \rlzin x$ such that $y \simeq c$. Since $a$ is assumed to be $\rlzin$-transitive, $y \rlzin a$. Therefore $x \in a$ by definition of $\in$.

Next, suppose that $a$ is a $\rlzin$-ordinal. We have already shown that $a$ is $\in$-transitive, so it suffices to prove that every $b \in a$ is $\in$-transitive. So let $d \in c \in b \in a$. Then we can find $z \rlzin y \rlzin x \rlzin a$ such that $x \simeq b$, $y \simeq c$ and $z \simeq d$. Since $a$ is a $\rlzin$-ordinal, $z \rlzin x$ and therefore $d \in x$. Finally, $d \in x$ and $x \simeq b$ gives us $d \in b$, as required.
\end{proof}

\begin{proposition}
    There exists a realizer $\theta \in \mathcal{R}$ such that whenever $\delta$ is an ordinal in \tf{V} then $\theta \Vdash ``\fullname{\delta} \text{ is a } \rlzin\text{-ordinal''}$.
\end{proposition}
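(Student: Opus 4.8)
The plan is to realize the two conjuncts of the formula ``$\fullname{\delta}$ is a $\rlzin$-ordinal'' separately, in both cases with the identity term $\identity$, and then to combine them via \Cref{theorem:RealizingConjunction}. The observation that makes everything routine is that for any set $e \in \tf{V}$ one has $\ff{dom}(\fullname{e}) = \{\fullname{c} \divline c \in e\}$ and, since $\fullname{e} = \{(\fullname{c}, \pi) \divline c \in e,\, \pi \in \Pi\}$, the falsity value $\falsity{\fullname{c} \notrlzin \fullname{e}}$ equals all of $\Pi$ for every $c \in e$. Hence the hypothesis ``$\falsity{\varphi(b)} \subseteq \falsity{b \notrlzin \fullname{e}}$'' of \Cref{theorem:RealizingBoundedQuantifier} holds for \emph{any} formula $\varphi$, so for any term $t$,
\[
t \Vdash \forall x \rlzin \fullname{e} \, \varphi(x) \quad \Longleftrightarrow \quad \forall c \in e \;\; t \Vdash \varphi(\fullname{c}).
\]

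First I would prove the auxiliary claim that for every $\in$-transitive set $e$ of $\tf{V}$,
\[
\identity \Vdash \forall x \rlzin \fullname{e} \, \forall y \rlzin x \, (y \rlzin \fullname{e}).
\]
Applying the displayed equivalence twice — first to the quantifier bounded by $\fullname{e}$, then to the one bounded by $\fullname{c}$ for $c \in e$ — this reduces to showing $\identity \Vdash \fullname{d} \rlzin \fullname{e}$ for every pair $d \in c \in e$. Since $e$ is transitive we have $d \in e$, and then $\identity \Vdash \fullname{d} \rlzin \fullname{e}$ is exactly \Cref{observation:reishPreservesElementhood}.

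With the auxiliary claim available, the first conjunct $\forall x \rlzin \fullname{\delta}\, \forall y \rlzin x\, (y \rlzin \fullname{\delta})$ is realized by $\identity$: take $e = \delta$, which is transitive because $\delta$ is an ordinal. For the second conjunct $\forall z \rlzin \fullname{\delta}\, \forall x \rlzin z\, \forall y \rlzin x\, (y \rlzin z)$, I would apply the displayed equivalence once more to see that $\identity$ realizes it iff for every $z \in \delta$ one has $\identity \Vdash \forall x \rlzin \fullname{z}\, \forall y \rlzin x\, (y \rlzin \fullname{z})$; but each $z \in \delta$ is itself transitive, so this is again the auxiliary claim, now with $e = z$. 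Finally \Cref{theorem:RealizingConjunction} yields that $\theta \coloneqq \lambda u \lambdaapp \fapp{\app{u}{\identity}}{\identity}$ realizes the conjunction, and $\theta$ contains no continuation constant, so $\theta \in \mathcal{R}$. As $\theta$ does not mention $\delta$, the same realizer works for all ordinals, giving the uniformity demanded by the statement.

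The only delicate part is the bookkeeping with the nested bounded quantifiers: one must check that \Cref{theorem:RealizingBoundedQuantifier} is legitimately applicable at each level of nesting, which it is precisely because every falsity value $\falsity{\fullname{c} \notrlzin \fullname{e}}$ that occurs is the whole of $\Pi$. Consequently no use of $\cc$ is needed and $\identity$ suffices at every step; in particular no transfinite induction is required, since the formula only tests $\rlzin$-membership to depth two and the transitivity of $\delta$ and of its elements is simply a fact about $\tf{V}$.
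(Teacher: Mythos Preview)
Your proposal is correct and follows essentially the same route as the paper: both arguments rest on the single observation that $\falsity{\fullname{c} \notrlzin \fullname{e}} = \Pi$ whenever $c \in e$, which makes the bounded quantifiers over $\fullname{e}$ trivial to handle and reduces everything to $\identity \Vdash \fullname{d} \rlzin \fullname{e}$ for $d \in c \in e$ with $e$ transitive. The paper does this by directly unwinding the falsity value of $\forall x^{\fullname{\delta}} \forall y\,(y \notrlzin \fullname{\delta} \rightarrow y \notrlzin x)$, whereas you package the same computation via \Cref{theorem:RealizingBoundedQuantifier}; the content is identical.

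One small caveat worth recording: the literal claim that $\identity$ realizes the \emph{official} form $\forall x \rlzin \fullname{e}\,\forall y \rlzin x\,(y \rlzin \fullname{e})$ leans on the ``same term'' reading of \Cref{theorem:RealizingBoundedQuantifier}, which is stronger than what \Cref{theorem:RealizingBoundedUniversals} actually provides (the passage from $\forall x^a$ to $\forall x \rlzin a$ costs the fixed combinator $\lambda u \lambdaapp \lambda v \lambdaapp \app{v}{u}$). This does not affect correctness---composing with these $\cc$-free transformations still yields a uniform realizer in $\mathcal{R}$ independent of $\delta$---but strictly speaking your final $\theta$ should absorb those combinators rather than be exactly $\lambda u \lambdaapp \fapp{\app{u}{\identity}}{\identity}$. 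The paper is equally informal on this point, proving the $\forall x^{\fullname{\delta}}$ form and implicitly invoking adequacy to reach the official formulation.
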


\begin{proof}
    Let $\delta$ be an ordinal in \tf{V}. We shall show that $\fullname{\delta}$ is a $\rlzin$-transitive set. The fact that it consists of $\rlzin$-transitive sets will follow by a similar argument. To do this, we shall show that 
    \[
    \identity \Vdash \forall x^{\fullname{\delta}} \forall y (y \notrlzin \fullname{\delta} \rightarrow y \notrlzin x).
    \]
    To do this, fix $\beta \in \delta$, $c \in \rlzstr$, $t \Vdash c \notrlzin \fullname{\delta}$ and $\pi \in \falsity{c \notrlzin \fullname{\beta}}$. Now $\falsity{c \notrlzin \fullname{\beta}} = \{ \sigma \divline (c, \sigma) \in \fullname{\beta} \}$. So, since this set is non-empty, it must be the case that $\falsity{c \notrlzin \fullname{\beta}} = \Pi$ and $c = \fullname{\gamma}$ for some $\gamma \in \beta$. Therefore, $\falsity{c \notrlzin \fullname{\delta}} = \falsity{\fullname{\gamma} \notrlzin \fullname{\delta}} = \Pi$ and therefore $t \star \pi \in \Perp$, from which the result follows.
\end{proof}

\subsection{Ordinal Representations} \label{section:OrdinalRepresentations}

We now present an alternative method to represent certain ordinals in the realizability model which will give a much more usable interpretation. This interpretation will give a representation for every ordinal less than the size of the realizability model. This method is explored in much more detail in \cite{FontanellaGeoffroy2020} where it was shown that the objects in the realizability model named by these ordinals behave like their ground model counterparts.

Let $\mathcal{A} = (\Lambda_{(A, B)}, \Pi_{(A, B)}, \prec, \Perp)$ be a realizability algebra where $|A| = \kappa$ is infinite and $|B| \leq \kappa$ and let $(\nu_\alpha \divline \alpha \in \kappa)$ be an enumeration of $\Lambda$. Without loss of generality, we can assume that for every $n \in \omega$, $\nu_n = \underline{n}$ with just the small caveat that if $\kappa = \omega$ then this will in fact be an enumeration of order type $\omega + \omega$ rather than $|\Lambda|$.

\begin{definition}
    For $\alpha \in \kappa$, let $\hat{\alpha} \coloneqq \{ ( \hat{\beta}, \nu_\beta \stackapp \pi) \divline \pi \in \Pi, \beta \in \alpha\}$.
\end{definition}

\begin{remark} \label{remark:ChurchNumerals}
    The precise ordering for the term structure does not matter, what is important is that we have a way to ``decode'' which element of $\hat{\alpha}$ we have picked. Namely, if $\falsity{x \notrlzin \hat{\alpha}} \neq \emptyset$ then it must be of the form $\{ \nu_\beta \stackapp \pi \divline \pi \in \Pi \}$ for some $\beta \in \alpha$ and therefore our falsity value tells us which element of $\hat{\alpha}$ our $x$ is meant to correspond to.
\end{remark}

\begin{proposition} \label{theorem:HatNameElements}
    If $\beta < \alpha \leq \kappa$ then $\lambda u \lambdaapp \app{u}{\nu_\beta} \Vdash \hat{\beta} \rlzin \hat{\alpha}$.
\end{proposition}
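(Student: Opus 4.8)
The plan is a direct unfolding of the abbreviations. Recall that $\hat{\beta} \rlzin \hat{\alpha}$ abbreviates $\hat{\beta} \notrlzin \hat{\alpha} \rightarrow \perp$, so by the clause defining $\falsity{\cdot}$ on implications together with $\falsity{\perp} = \Pi$, every element of $\falsity{\hat{\beta} \notrlzin \hat{\alpha} \rightarrow \perp}$ has the form $s \stackapp \sigma$ with $s \Vdash \hat{\beta} \notrlzin \hat{\alpha}$ and $\sigma \in \Pi$. Hence it suffices to show $(\lambda u \lambdaapp \app{u}{\nu_\beta}) \star s \stackapp \sigma \in \Perp$ for all such $s$ and $\sigma$. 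Here $(\lambda u \lambdaapp \app{u}{\nu_\beta}) \star s \stackapp \sigma \succ \app{s}{\nu_\beta} \star \sigma \succ s \star \nu_\beta \stackapp \sigma$ (a \emph{grab} step followed by a \emph{push} step), so it is enough to check $s \star \nu_\beta \stackapp \sigma \in \Perp$.

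For this I would unwind the hypothesis $s \Vdash \hat{\beta} \notrlzin \hat{\alpha}$, which says precisely that $s \star \pi \in \Perp$ whenever $(\hat{\beta}, \pi) \in \hat{\alpha}$. Since $\beta < \alpha$ we have $\beta \in \alpha$, so the defining formula $\hat{\alpha} = \{ (\hat{\gamma}, \nu_\gamma \stackapp \tau) \divline \tau \in \Pi,\ \gamma \in \alpha \}$ gives $(\hat{\beta}, \nu_\beta \stackapp \sigma) \in \hat{\alpha}$ directly. Therefore $\nu_\beta \stackapp \sigma \in \falsity{\hat{\beta} \notrlzin \hat{\alpha}}$, so $s \star \nu_\beta \stackapp \sigma \in \Perp$, which finishes the argument.

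There is no genuine obstacle here: the whole proof is a short computation once the abbreviations are expanded, and the only point requiring care is tracking the two-step evaluation of $\lambda u \lambdaapp \app{u}{\nu_\beta}$. I would also note that this direction uses only that the pair $(\hat{\beta}, \nu_\beta \stackapp \sigma)$ is literally a member of $\hat{\alpha}$; the ``decoding'' observation of \Cref{remark:ChurchNumerals}, that $\falsity{x \notrlzin \hat{\alpha}}$ pins down a unique index, is not needed until one wants to control universal quantification over $\hat{\alpha}$ (for instance when showing that $\hat{\alpha}$ is transitive or linearly ordered).
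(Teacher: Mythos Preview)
Your proof is correct and follows essentially the same argument as the paper: take a realizer $s$ of $\hat{\beta}\notrlzin\hat{\alpha}$ and a stack $\sigma$, reduce $(\lambda u\lambdaapp\app{u}{\nu_\beta})\star s\stackapp\sigma$ to $s\star\nu_\beta\stackapp\sigma$, and observe that $(\hat{\beta},\nu_\beta\stackapp\sigma)\in\hat{\alpha}$ because $\beta\in\alpha$. Your additional remark about the decoding observation being unnecessary here is accurate but not part of the paper's proof.
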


\begin{proof}
    Suppose that $t \Vdash \hat{\beta} \notrlzin \hat{\alpha}$ and $\pi \in \Pi$. Then, by definition, we have that $\nu_\beta \stackapp \pi \in \falsity{\hat{\beta} \notrlzin \hat{\alpha}}$ from which it follows that $\lambda u \lambdaapp \app{u}{\nu_\beta} \star t \stackapp \pi \succ \app{t}{\nu_\beta} \star \pi \succ t \star \nu_\beta \stackapp \pi \in \Perp$.
\end{proof}

\noindent Recall \Cref{definition:RestrictedQuantifier} where we defined the restricted quantifier $\forall x^a$ to have the falsity value
\[
\falsity{\forall x^a \varphi(x)} = \{ \pi \divline \exists b \in \ff{dom}(a) \, \pi \in \falsity{\varphi(b)} \}
\]
and then proved that this behaves as a restricted quantifier in certain cases. Here we want to modify the definition of the restricted quantifier to take into account the $\nu_\alpha$. In order to ease notation we shall use the same notation for both these notions and note that it will always be clear from the context which of these interpretations we are using.

\begin{definition}
    For $\alpha \leq \kappa$, we define the restricted quantifier $\forall x^{\hat{\alpha}}$ to have the following meaning:
    \[
    \falsity{\forall x^{\hat{\alpha}} \varphi(x)} = \bigcup_{\beta \in \alpha} \{ \nu_\beta \stackapp \pi \divline \pi \in \falsity{\varphi(\hat{\beta})} \}.
    \]
\end{definition}

As before, we can see that this behaves as the bounded quantifier.

\begin{proposition} \label{theorem:RealizingBoundedUniversalsForHats} For every $\alpha \leq \kappa$,
    \begin{thmlist}
        \item \label{item:RealizingBoundedUniversalsForHats1} $\lambda u \lambdaapp \lambda v \lambdaapp \lambda w \lambdaapp \inapp{v}{\app{u}{w}} \Vdash \forall x^{\hat{\alpha}} \, \varphi(x) \rightarrow \forall x (\neg \varphi(x) \rightarrow x \notrlzin \hat{\alpha})$;
        \item \label{item:RealizingBoundedUniversalsForHats2} $\lambda u \lambdaapp \app{\cc \,}{u} \Vdash \forall x (\neg \varphi(x) \rightarrow x \notrlzin \hat{\alpha}) \rightarrow \forall x^{\hat{\alpha}} \, \varphi(x)$.
    \end{thmlist}
\end{proposition}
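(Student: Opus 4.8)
The plan is to handle each implication $r\Vdash\varphi\rightarrow\psi$ in the usual way: unfold $\falsity{\varphi\rightarrow\psi}$, fix a realizer $t$ of the premise together with a tail stack $\sigma\in\falsity{\psi}$, and reduce the process $r\star t\stackapp\sigma$ until it is visibly in $\Perp$. The argument mirrors \Cref{theorem:RealizingBoundedUniversals}; the one new feature is that the elements of $\hat\alpha$ (hence of $\falsity{\hat\beta\notrlzin\hat\alpha}$) and of $\falsity{\forall x^{\hat\alpha}\varphi(x)}$ all carry the tag $\nu_\beta$ in front, so the realizers must shuffle these tags explicitly, and in exchange the ``which element of $\hat\alpha$ am I looking at'' question answers itself: a nonempty $\falsity{x\notrlzin\hat\alpha}$ is exactly $\{\nu_\beta\stackapp\pi\divline\pi\in\Pi\}$ for a unique $\beta\in\alpha$. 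This is also why, in contrast to \Cref{item:RealizingBoundedUniversals2}, no side hypothesis on the $\falsity{\varphi(\hat\beta)}$ is required.

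For (i), fix $t\Vdash\forall x^{\hat\alpha}\varphi(x)$. A generic element of $\falsity{\forall x(\neg\varphi(x)\rightarrow x\notrlzin\hat\alpha)}$ has the form $s\stackapp\sigma$ with $s\Vdash\neg\varphi(c)$ and $\sigma\in\falsity{c\notrlzin\hat\alpha}$; nonemptiness of the latter forces $c=\hat\beta$ for some $\beta\in\alpha$ and $\sigma=\nu_\beta\stackapp\pi$ with $\pi\in\Pi$. Three \emph{grab} steps reduce $\lambda u\lambdaapp\lambda v\lambdaapp\lambda w\lambdaapp\inapp{v}{\app{u}{w}}\star t\stackapp s\stackapp\nu_\beta\stackapp\pi$ to $\inapp{s}{\app{t}{\nu_\beta}}\star\pi$, and a \emph{push} to $s\star\app{t}{\nu_\beta}\stackapp\pi$. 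The heart of this case is the claim $\app{t}{\nu_\beta}\Vdash\varphi(\hat\beta)$: for any $\pi'\in\falsity{\varphi(\hat\beta)}$ one has $\app{t}{\nu_\beta}\star\pi'\succ t\star\nu_\beta\stackapp\pi'$, and $\nu_\beta\stackapp\pi'\in\falsity{\forall x^{\hat\alpha}\varphi(x)}$ by the definition of the restricted quantifier, so the process is in $\Perp$. Since $s\Vdash\varphi(\hat\beta)\rightarrow\perp$ and $\pi\in\Pi=\falsity{\perp}$, we get $s\star\app{t}{\nu_\beta}\stackapp\pi\in\Perp$.

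For (ii), fix $t\Vdash\forall x(\neg\varphi(x)\rightarrow x\notrlzin\hat\alpha)$. A generic element of $\falsity{\forall x^{\hat\alpha}\varphi(x)}$ has the form $\nu_\beta\stackapp\pi$ with $\beta\in\alpha$ and $\pi\in\falsity{\varphi(\hat\beta)}$. Two observations drive the proof. First, this same stack $\nu_\beta\stackapp\pi$ already belongs to $\falsity{\hat\beta\notrlzin\hat\alpha}=\{\nu_\beta\stackapp\rho\divline\rho\in\Pi\}$ by the very definition of $\hat\alpha$ — this is the point at which the $\nu_\beta$-tagging does the work that the hypothesis $\falsity{\varphi(b)}\subseteq\falsity{b\notrlzin a}$ did in \Cref{item:RealizingBoundedUniversals2}. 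Second, $\saverlz{\pi}\Vdash\neg\varphi(\hat\beta)$ by \Cref{theorem:SaveCommandandNegation}, since $\pi\in\falsity{\varphi(\hat\beta)}$. Hence $\saverlz{\pi}\stackapp\nu_\beta\stackapp\pi\in\falsity{\neg\varphi(\hat\beta)\rightarrow\hat\beta\notrlzin\hat\alpha}\subseteq\falsity{\forall x(\neg\varphi(x)\rightarrow x\notrlzin\hat\alpha)}$, so $t\star\saverlz{\pi}\stackapp\nu_\beta\stackapp\pi\in\Perp$. One then evaluates the proposed realizer on $t\stackapp\nu_\beta\stackapp\pi$ — a \emph{grab} feeding $t$ into $\cc$, followed by a \emph{save} — and checks that it reduces to this process.

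I expect that last reduction to be the main obstacle: one has to be careful that the \emph{save} performed by $\cc$ captures the tail $\pi$, i.e.\ the genuine stack after the tag $\nu_\beta$ has been stripped off and set aside to be pushed back, rather than the tagged stack $\nu_\beta\stackapp\pi$; it is $\pi$, not $\nu_\beta\stackapp\pi$, that lies in $\falsity{\varphi(\hat\beta)}$ and thus makes $\saverlz{\pi}$ realize $\neg\varphi(\hat\beta)$ via \Cref{theorem:SaveCommandandNegation}. So the only real content beyond \Cref{theorem:RealizingBoundedUniversals} is getting this term-level bookkeeping around $\cc$ to align correctly; everything else follows the same pattern, together with \Cref{theorem:realizinguniversals} to handle the outer universal quantifier.
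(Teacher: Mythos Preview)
Your treatment of (i) is correct and matches the paper's proof essentially verbatim.

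For (ii), the obstacle you flag is real, and your argument does not close as written. Evaluating the stated realizer gives
\[
\lambda u \lambdaapp \app{\cc\,}{u} \star t \stackapp \nu_\beta \stackapp \pi \;\succ\; \app{\cc\,}{t} \star \nu_\beta \stackapp \pi \;\succ\; \cc \star t \stackapp \nu_\beta \stackapp \pi \;\succ\; t \star \saverlz{\nu_\beta \stackapp \pi} \stackapp \nu_\beta \stackapp \pi,
\]
so $\cc$ saves the \emph{tagged} stack $\nu_\beta \stackapp \pi$, not the tail $\pi$. Nothing in $\lambda u\lambdaapp\app{\cc\,}{u}$ strips off $\nu_\beta$ before the \emph{save} and pushes it back afterwards; that manoeuvre would require something like $\lambda u\lambdaapp\lambda w\lambdaapp\cc(\lambda k\lambdaapp\app{\app{u}{k}}{w})$. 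Hence your target process $t \star \saverlz{\pi} \stackapp \nu_\beta \stackapp \pi$ is never reached, and the key ingredient $\saverlz{\pi}\Vdash\neg\varphi(\hat\beta)$, while true, does not help for the realizer in the statement.

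The paper does not try to obtain $\saverlz{\pi}$ at all. It works directly with the continuation the reduction actually produces and asserts that $\nu_\beta\stackapp\pi\in\falsity{\varphi(\hat\beta)}$, whence $\saverlz{\nu_\beta\stackapp\pi}\Vdash\neg\varphi(\hat\beta)$ by \Cref{theorem:SaveCommandandNegation}; then $\saverlz{\nu_\beta\stackapp\pi}\stackapp\nu_\beta\stackapp\pi\in\falsity{\neg\varphi(\hat\beta)\rightarrow\hat\beta\notrlzin\hat\alpha}$ and one concludes. Note, however, that under the definition of the restricted quantifier as written it is only $\pi$, not $\nu_\beta\stackapp\pi$, that lies in $\falsity{\varphi(\hat\beta)}$ --- exactly the point you raise. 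So your diagnosis of where the difficulty sits is accurate; the paper's proof sidesteps it by (apparently) conflating $\pi$ with $\nu_\beta\stackapp\pi$ at that step, rather than by the bookkeeping you anticipated.
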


\begin{proof}
    $(i)$. Suppose that $t \Vdash \forall x^{\hat{\alpha}} \, \varphi(x)$, $s \Vdash \neg \varphi(b)$ for some $b \in \rlzstr$ and $\pi \in \falsity{b \notrlzin \hat{\alpha}}$. Since $\falsity{b \notrlzin \hat{\alpha}} = \{ \sigma \divline (b, \sigma) \in \hat{\alpha}\}$ and this is non-empty by assumption, we must have that $b = \hat{\beta}$ for some $\beta \in \alpha$ and $\pi = \nu_\beta \stackapp \pi'$ for some $\pi' \in \Pi$. We need to prove that $\lambda u \lambdaapp \lambda v \lambdaapp \lambda w \lambdaapp \inapp{v}{\app{u}{w}} \star t \stackapp s \stackapp \nu_\beta \stackapp \pi' \in \Perp$ or, in other words, that $s \star (\app{t}{\nu_\beta}) \stackapp \pi' \in \Perp$. By the hypothesis on $t$, since $\beta \in \alpha$, $t \star \nu_\beta \stackapp \sigma \in \Perp$ for any $\sigma \in \falsity{\varphi(\hat{\beta})}$. Therefore, this gives us that $\app{t}{\nu_\beta} \Vdash \varphi(\hat{\beta})$. Finally, since $s \Vdash \varphi(\hat{\beta}) \rightarrow \perp$, $s \star \app{t}{\nu_\beta} \stackapp \pi \in \Perp$ as desired.

    $(ii)$. Suppose that $t \Vdash \forall x (\neg \varphi(x) \rightarrow x \notrlzin \hat{\alpha})$ and $\sigma \in \falsity{\forall x^{\hat{\alpha}} \, \varphi(x)}$. First, fix $\beta \in \alpha$ and $\pi \in \Pi$ such that $\sigma = \nu_\beta \stackapp \pi \in \falsity{\varphi(\hat{\beta})}$. Now 
    \[
    \falsity{\forall x (\neg \varphi(x) \rightarrow x \notrlzin \hat{\alpha})} = \bigcup_{c \in \rlzstr} \falsity{\neg \varphi(c) \rightarrow c \notrlzin \hat{\alpha}} = \bigcup_{\gamma \in \alpha} \{ s \stackapp \nu_\gamma \stackapp \sigma \divline s \Vdash \neg \varphi(\hat{\gamma}), \nu_\gamma \stackapp \sigma \in \falsity{\hat{\gamma} \notrlzin \hat{\alpha}} \}.
    \]
    Therefore, for any $s \stackapp \nu_\gamma \stackapp \sigma$ in the above set, $t \star s \stackapp \nu_\gamma \stackapp \sigma \in \Perp$. Next, since $\nu_\beta \stackapp \pi \in \falsity{\varphi(\hat{\beta})}$, $\saverlz{\nu_\beta \stackapp \pi}\Vdash \neg \varphi(\hat{\beta})$ by \Cref{theorem:SaveCommandandNegation}. Thus, $\saverlz{\nu_\beta \stackapp \pi} \stackapp \nu_\beta \stackapp \pi \in \falsity{\forall x (\neg \varphi(x) \rightarrow x \notrlzin \hat{\alpha})}$ and $t \star \saverlz{\nu_\beta \stackapp \pi} \stackapp \nu_\beta \stackapp \pi \in \Perp$. Finally, the result follows by evaluating the realizer in the proposition:
    \[
    \lambda u \lambdaapp \app{\cc \,}{u} \star t \stackapp \nu_\beta \stackapp \pi \succ \app{\cc \,}{t} \star \nu_\beta \stackapp \pi \succ \cc \star t \stackapp \nu_\beta \stackapp \pi \succ t \star \saverlz{\nu_\beta \stackapp \pi} \stackapp \nu_\beta \stackapp \pi \in \Perp.
    \]
\end{proof}

\begin{proposition}
    There exists a realizer $\theta \in \mathcal{R}$ such that for every $\alpha \leq \kappa$, 
    \[
    \theta \Vdash ``\hat{\alpha} \text{ is a } \rlzin\text{-ordinal''}.
    \]
\end{proposition}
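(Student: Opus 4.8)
The plan is to unwind the definition and realize, with $\alpha$-independent realizers, the two conjuncts of the statement ``$\hat\alpha$ is a $\rlzin$-ordinal'' separately, then glue them together with \Cref{theorem:RealizingConjunction}. Write $(T_1)$ for the $\rlzin$-transitivity clause $\forall x\rlzin\hat\alpha\,\forall y\rlzin x\,(y\rlzin\hat\alpha)$ and $(T_2)$ for the clause $\forall z\rlzin\hat\alpha\,\forall x\rlzin z\,\forall y\rlzin x\,(y\rlzin z)$ asserting that every $\rlzin$-element of $\hat\alpha$ is itself $\rlzin$-transitive. For each clause I would first realize the ``positive'' surrogate $\forall x^{\hat\alpha}\,\forall y\,(y\notrlzin\hat\alpha\rightarrow y\notrlzin x)$ using the hat-restricted quantifier, and then pass to the official form via \Cref{item:RealizingBoundedUniversalsForHats1} together with the adequacy lemma \Cref{theorem:adequacy}: the surrogate premise $\forall y(y\notrlzin\hat\alpha\rightarrow y\notrlzin x)$ is classically equivalent to $\forall y\rlzin x\,(y\rlzin\hat\alpha)$, so a realizer of one of the two versions of $(T_1)$ transfers uniformly to a realizer of the other.

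For the surrogate, the only work is bookkeeping of which stacks lie in which falsity values, using the decoding property recorded in \Cref{remark:ChurchNumerals}. Fix $\beta\in\alpha$. A nonempty $\falsity{c\notrlzin\hat\beta}$ forces $c=\hat\gamma$ for some $\gamma\in\beta$, and its stacks have the shape $\nu_\gamma\stackapp\pi$; since $\gamma\in\beta\in\alpha$ gives $\gamma\in\alpha$, we get $\falsity{\hat\gamma\notrlzin\hat\alpha}=\{\nu_\gamma\stackapp\rho\divline\rho\in\Pi\}$, so any $s\Vdash\hat\gamma\notrlzin\hat\alpha$ satisfies $s\star\nu_\gamma\stackapp\pi\in\Perp$. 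Consequently $\lambda n\lambdaapp\lambda s\lambdaapp\lambda g\lambdaapp\app{s}{g}$ realizes $\forall x^{\hat\alpha}\,\forall y\,(y\notrlzin\hat\alpha\rightarrow y\notrlzin x)$ for every $\alpha$: unfolding the relevant falsity value, this term receives $\nu_\beta$, then a realizer $s$ of $\hat\gamma\notrlzin\hat\alpha$, then $\nu_\gamma$ on the stack, and reduces to $s\star\nu_\gamma\stackapp\pi\in\Perp$. Applying \Cref{item:RealizingBoundedUniversalsForHats1} and then \Cref{theorem:adequacy} produces a single realizer $t_1$ with $t_1\Vdash(T_1)$ for $\hat\alpha$; and since nothing in this argument depended on $\alpha$, the same $t_1$ witnesses that $\hat\gamma$ is $\rlzin$-transitive for every ordinal $\gamma$.

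It remains to handle $(T_2)$, which is now immediate: for each $\beta\in\alpha$ the assertion that $\hat\beta$ is $\rlzin$-transitive is exactly $(T_1)$ with $\hat\beta$ in place of $\hat\alpha$, hence realized by the uniform $t_1$ above. Therefore $\lambda n\lambdaapp t_1$ realizes $\forall z^{\hat\alpha}\,\chi(z)$, where $\chi(z)$ is the formula expressing that $z$ is $\rlzin$-transitive, and \Cref{item:RealizingBoundedUniversalsForHats1} converts this to a uniform realizer $t_2$ with $t_2\Vdash(T_2)$. Finally $\theta:=\lambda u\lambdaapp\fapp{\app{u}{t_1}}{t_2}$ realizes $(T_1)\land(T_2)$, that is, ``$\hat\alpha$ is a $\rlzin$-ordinal'', uniformly in $\alpha$; note $\theta\in\mathcal R$ since all the constituent terms contain no continuation constants. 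The main (and only) obstacle is the careful identification of the shapes of stacks in the falsity values $\falsity{\hat\gamma\notrlzin\hat\alpha}$ and $\falsity{c\notrlzin\hat\beta}$ — the ``decoding'' that makes $\hat\alpha$ behave better than $\fullname{\alpha}$ — together with checking at each step that the $\nu_\beta$ supplied by the restricted quantifier is simply discarded rather than inspected, so that every realizer built is genuinely independent of $\alpha$.
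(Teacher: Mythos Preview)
Your proof is correct and follows essentially the same approach as the paper: realize the surrogate formula $\forall x^{\hat\alpha}\,\forall y\,(y\notrlzin\hat\alpha\rightarrow y\notrlzin x)$ by tracking the shape of stacks in the relevant falsity values, then appeal to the restricted-quantifier translation to obtain the official $\rlzin$-transitivity clause. Your explicit realizer $\lambda n\lambdaapp\lambda s\lambdaapp\lambda g\lambdaapp\app{s}{g}$ for the surrogate is in fact slightly more direct than the paper's, and you spell out more carefully than the paper both the passage from the surrogate to the official form (via \Cref{item:RealizingBoundedUniversalsForHats1} and adequacy) and the treatment of the second conjunct $(T_2)$ via the uniformity of $t_1$ over all $\hat\beta$, where the paper simply remarks that the argument is similar.
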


\begin{proof}
    As in the case for $\fullname{\alpha}$ we shall show that $\hat{\alpha}$ is a $\rlzin$-transitive set. The fact that it consists of $\rlzin$-transitive sets will follow by a similar argument. To do this, we shall show that
    \[
    \lambda v \lambdaapp \lambda w \lambdaapp \lambda k \lambdaapp \inapp{w}{\lambda u \lambdaapp \app{u}{k}} \Vdash \forall x^{\hat{\alpha}} \forall y (y \notrlzin \hat{\alpha} \rightarrow y \notrlzin x).
    \]
    First, observe that $\falsity{\forall x^{\hat{\alpha}} \forall y (y \notrlzin \hat{\alpha} \rightarrow y \notrlzin x)} = \bigcup_{\beta \in \alpha} \bigcup_{\gamma \in \beta} \{ \nu_\beta \stackapp t \stackapp \nu_\gamma \stackapp \pi \divline t \Vdash \hat{\gamma} \notrlzin \hat{\alpha}, \, \pi \in \Pi\}$. So, fix $\gamma < \beta < \alpha$, $\pi \in \Pi$ and $t \Vdash \hat{\gamma} \notrlzin \hat{\alpha}$. Since $\gamma \in \alpha$, by \Cref{theorem:HatNameElements}, $\lambda u \lambdaapp \app{u}{\nu_\gamma} \Vdash \hat{\gamma} \rlzin \hat{\alpha}$ and therefore 
    \[
    \lambda v \lambdaapp \lambda w \lambdaapp \lambda k \lambdaapp \inapp{w}{\lambda u \lambdaapp \app{u}{k}} \star \nu_\beta \stackapp t \stackapp \nu_\gamma \stackapp \pi \succ \inapp{t}{\lambda u \lambdaapp \app{u}{\nu_\gamma}} \star \pi \succ t \star (\lambda u \lambdaapp \app{u}{\nu_\gamma}) \stackapp \pi \in \Perp.
    \]
\end{proof}

\noindent A key difference between $\rlzin$-ordinals and $\in$-ordinals is the principle of \emph{trichotomy}; for any $\in$-ordinal $\alpha$, if $\beta, \gamma \in \alpha$ then either $\beta \in \gamma$, $\gamma \in \beta$ or $\beta \simeq \gamma$. We end this section with a discussion of $\rlzin$-ordinals which satisfy $\rlzin$\emph{-trichotomy}. Such ordinals are particularly useful because they have a much richer structure within the non-extensional language. For example, we shall see that such ordinals contain unique representatives of their $\in$-elements and satisfy the \emph{Least Ordinal Principle}; if $X$ is a non-empty subset of $\hat{\alpha}$ then $X$ has a $\rlzin$-least element. To ease notation it is useful to add in the symbol $\subseteq_{\rlzin}$ for non-extensional subsets.

\begin{definition}
    $(\ZFepsilon)$ $a \subseteq_{\rlzin} b$ is an abbreviation for the formula $\forall x (x \rlzin a \rightarrow x \rlzin b)$.
\end{definition}

\begin{definition}
    $(\ZFepsilon)$ We say that a $\rlzin$-ordinal $a$ is $\rlzin$\emph{-Trichotomous} if
    \[
    \forall x \rlzin a \forall y \rlzin a (x \rlzin y \lor x = y \lor y \rlzin x).
    \]
\end{definition}

\begin{proposition} \label{theorem:HatOrdinalsHaveUniqueElements}
    $\ZFepsilon \vdash \forall a (a \text{ is a } \rlzin\text{-Trichotomous ordinal } \rightarrow \forall x \rlzin a \forall y \rlzin a ( x \simeq y \rightarrow x = y)).$
\end{proposition}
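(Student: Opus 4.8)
The plan is to reason directly inside $\ZFepsilon$, which is classical, so I may argue by cases on a disjunction. Fix a $\rlzin$-Trichotomous ordinal $a$ and elements $x \rlzin a$, $y \rlzin a$ with $x \simeq y$; the goal is $x = y$. The point of $\rlzin$-Trichotomy is precisely that it reduces the situation to the three possibilities $x \rlzin y$, $x = y$, or $y \rlzin x$, and I will show the outer two are incompatible with $x \simeq y$.

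The one external ingredient I would isolate first is that $\ZFepsilon \vdash \forall v\,\neg(v \in v)$. This is just $\in$-Foundation, which is derivable in $\ZFepsilon$; concretely it follows from the $\in$-Induction scheme established in \Cref{section:ZFepsilonToZF}, applied to the formula $\neg(v \in v)$, since if $\forall w \in v\,\neg(w \in w)$ and $v \in v$ then instantiating at $w := v$ gives a contradiction. With this in hand the main argument is short. By $\rlzin$-Trichotomy for $a$ we have $x \rlzin y \lor x = y \lor y \rlzin x$. If $x = y$, we are done. If $x \rlzin y$, then \Cref{ZFepsilonProperty:RlzinImpliesIn} gives $x \in y$, while $x \simeq y$ together with \Cref{ZFepsilonProperty:SimeqReflective} gives $y \simeq x$; now \Cref{ZFepsilonProperty:ExtensionalityA}, applied to $y \simeq x$ and $x \in y$, yields $y \in y$, contradicting the previous paragraph. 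The case $y \rlzin x$ is symmetric: \Cref{ZFepsilonProperty:RlzinImpliesIn} gives $y \in x$, and \Cref{ZFepsilonProperty:ExtensionalityA} applied to $x \simeq y$ and $y \in x$ yields $x \in x$, again a contradiction. Hence $x = y$, and the proof finishes after the routine universal generalizations over $x$, $y$ and $a$ (using that classically $\forall x \rlzin a$ behaves as quantification over the $x$ with $x \rlzin a$).

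I do not anticipate any real obstacle beyond bookkeeping of the three membership-like relations: the only points to be careful with are the passage from $\rlzin$ to $\in$ (\Cref{ZFepsilonProperty:RlzinImpliesIn}) and the substitution of $\simeq$-equals into $\in$ on the left (\Cref{ZFepsilonProperty:ExtensionalityA}), together with the symmetry of $\simeq$. It is worth recording that the hypothesis on $a$ is used only through $\rlzin$-Trichotomy; the fact that $a$ is a $\rlzin$-ordinal plays no further role.
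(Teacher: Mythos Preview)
Your proof is correct and follows essentially the same approach as the paper: use trichotomy to reduce to $x \rlzin y$ or $y \rlzin x$, pass to $\in$ via \Cref{ZFepsilonProperty:RlzinImpliesIn}, and then use $x \simeq y$ together with \Cref{ZFepsilonProperty:ExtensionalityA} to derive the contradiction $v \in v$. The only cosmetic difference is that the paper phrases it as the contrapositive (assuming $x \neq y$ and deducing $x \not\simeq y$), whereas you argue directly.
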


\begin{proof}
    Suppose that $a$ is a $\rlzin$-Trichotomous ordinal and fix $x, y \rlzin a$ such that $x \neq y$. Then, by trichotomy, we must have that either $x \rlzin y$ or $y \rlzin x$ from which it follows that either $x \in y$ or $y \in x$. But, from either of these cases it follows that $x \not\simeq y$, otherwise we would obtain the contradictory statement $x \in x$.
\end{proof}

\begin{proposition}[Least Ordinal Principle] \label{LeastOrdinalPrinciple}
    \begin{multline*}
    \ZFepsilon \vdash \forall a \forall X (a \text{ is a } \rlzin\text{-Trichotomous ordinal } \land X \subseteq_{\rlzin} a \land \exists z (z \rlzin X) \\
    \longrightarrow \quad \exists z \rlzin X \forall y \rlzin X (z \rlzin y \lor z = y)).
    \end{multline*}
\end{proposition}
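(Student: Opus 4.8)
The plan is to argue entirely inside $\ZFepsilon$, which is classical and (via \Cref{theorem:adequacy}) closed under natural deduction, and to prove the statement by $\rlzin$-Induction. Fix a $\rlzin$-Trichotomous ordinal $a$, a set $X$ with $X \subseteq_{\rlzin} a$, and (using $\exists z (z \rlzin X)$) an element $z_0 \rlzin X$; the goal is to produce a $\rlzin$-least element of $X$. The single auxiliary fact I extract first is: \emph{if $x \rlzin X$ and $x$ is not $\rlzin$-least in $X$, then some $y \rlzin X$ satisfies $y \rlzin x$.} Indeed, ``$x$ is not $\rlzin$-least'' is $\neg \forall y \rlzin X (x \rlzin y \lor x = y)$, so classically there is $y \rlzin X$ with $\neg(x \rlzin y \lor x = y)$; since $X \subseteq_{\rlzin} a$ we have $x, y \rlzin a$, so $\rlzin$-Trichotomy for $a$ gives $x \rlzin y \lor x = y \lor y \rlzin x$, and as the first two disjuncts fail we conclude $y \rlzin x$. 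Note this uses only trichotomy of the members of $a$, not $\rlzin$-transitivity.

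Next I would run $\rlzin$-Induction on $x$ for the formula
\[
\varphi(x) \;\equiv\; \Big( x \rlzin X \rightarrow \exists z \rlzin X \, \forall y \rlzin X (z \rlzin y \lor z = y) \Big)
\]
(with $a$ and $X$ as parameters). For the inductive step I assume $\forall y \rlzin x \, \varphi(y)$ together with $x \rlzin X$. By excluded middle, either $\forall y \rlzin X (x \rlzin y \lor x = y)$ holds, in which case $z := x$ witnesses the conclusion of $\varphi(x)$; or it fails, in which case the auxiliary fact supplies $y \rlzin X$ with $y \rlzin x$, and applying the inductive hypothesis $\varphi(y)$ (legitimate since $y \rlzin x$) together with $y \rlzin X$ again yields $\exists z \rlzin X \, \forall y \rlzin X (z \rlzin y \lor z = y)$. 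Hence $\varphi(x)$, so by $\rlzin$-Induction $\forall x \, \varphi(x)$; instantiating at $z_0$ and discharging $z_0 \rlzin X$ gives the desired $\rlzin$-least element. Equivalently, one may assume for contradiction that $X$ has no $\rlzin$-least element and prove $\forall x (x \notrlzin X)$ by $\rlzin$-Induction, contradicting $z_0 \rlzin X$; the content is identical.

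The argument is short, and the only places needing care are bookkeeping ones. First, one must justify the classical passages between relativized-quantifier forms — that $\neg \forall y \rlzin X \, \chi(y)$ yields $\exists y \rlzin X \, \neg \chi(y)$, and that a derivation of absurdity from $x \rlzin X$ yields $x \notrlzin X$ (since $x \rlzin X$ abbreviates $\neg(x \notrlzin X)$) — all of which are licensed because $\ZFepsilon$ is classical, so we may freely replace abbreviations by their ``simplest'' logically equivalent forms. Second, one must ensure that $\rlzin$-Trichotomy is genuinely applicable, i.e.\ that the two elements being compared really are $\rlzin a$; this is exactly what the hypothesis $X \subseteq_{\rlzin} a$ buys. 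No realizers are constructed here, since we reason inside the theory $\ZFepsilon$ rather than directly in the realizability model.
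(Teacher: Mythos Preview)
Your proof is correct and complete. It differs from the paper's route, though: the paper appeals to $\in$-Foundation (derived in $\ZFepsilon$ from $\in$-Induction, which in turn is derived from $\rlzin$-Induction) to pick $z \rlzin X$ with $z \cap X \simeq \emptyset$ in the extensional sense, and then uses the implication $y \rlzin X \Rightarrow y \in X \Rightarrow y \not\in z \Rightarrow y \notrlzin z$ together with trichotomy to conclude that $z$ is $\rlzin$-least. Your argument instead stays entirely within the non-extensional relation, applying $\rlzin$-Induction directly to the predicate $\varphi(x) \equiv (x \rlzin X \rightarrow \text{``}X\text{ has a }\rlzin\text{-least element''})$ and using trichotomy only to extract a strictly smaller $y \rlzin x$ when $x$ is not least. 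The trade-off: the paper's proof is shorter to state once $\in$-Foundation is available, but it silently relies on passing from an $\in$-minimal $z \in X$ to some $z' \rlzin X$ with $z' \simeq z$ (which is still $\in$-minimal). Your approach avoids that detour through the extensional layer and uses only an axiom of $\ZFepsilon$, at the cost of a slightly longer induction bookkeeping.
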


\begin{proof}
    Suppose that $a$ is a $\rlzin$-Trichotomous ordinal and fix $X \subseteq_{\rlzin} a$ such that $\exists z (z \rlzin X)$. Using $\in$-Foundation (which follows classically from $\in$-Induction) we can fix $z \rlzin X$ such that $z \cap X \simeq \emptyset$, that is $\forall y \neg (y \in z \land y \in X)$. To see that this $z$ witnesses the claim, by trichotomy it suffices to show that $\forall y \rlzin X (y \notrlzin z)$. But, if $y \rlzin X$ then $y \in X$ and therefore $y \not\in z$ by $\in$-minimality of $z$. Thus, $y \notrlzin z$. 
\end{proof}

\noindent One would like to show that for any $\alpha \in \kappa$, $\hat{\alpha}$ is a trichotomous $\rlzin$-ordinal. However, in order to do that we need to add a special instruction, $\chi$, which allows one to compare the indices of ordinals. This is done in section 3 of \cite{FontanellaGeoffroy2020}.

\begin{definition} \label{definition:PropertyChi}
    Let $\chi \in \Lambda$ be a special instruction. We extend the order $\prec$ to be the smallest pre-order on $\Lambda \star \Pi$ such that for any $\alpha, \beta \in \kappa$, $t, s, r \in \Lambda$ and $\pi \in \Pi$,
    \[
    \chi \star \nu_\alpha \stackapp \nu_\beta \stackapp t \stackapp s \stackapp r \stackapp \pi \succ \begin{cases}
        t \star \pi & \text{if }\ \alpha < \beta, \\
        s \star \pi & \text{if }\ \alpha = \beta, \\
        r \star \pi & \text{if }\ \beta < \alpha.
    \end{cases}
    \]
\end{definition}

\begin{proposition}
    Suppose that $\chi \in \Lambda$ and $\prec$ has been expanded to satisfy \Cref{definition:PropertyChi}. Then there exists a realizer $\theta \in \mathcal{R}$ such that for every $\alpha \leq \kappa$,
    \[
    \theta \Vdash ``\hat{\alpha} \text{ is a } \rlzin\text{-Trichotomous ordinal''}.
    \]
\end{proposition}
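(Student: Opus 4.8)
The formula ``$\hat{\alpha}$ is a $\rlzin$-Trichotomous ordinal'' is the conjunction of ``$\hat{\alpha}$ is a $\rlzin$-ordinal'' and the trichotomy clause
\[
\forall x \rlzin \hat{\alpha}\,\forall y \rlzin \hat{\alpha}\,(x \rlzin y \lor x = y \lor y \rlzin x).
\]
The first conjunct is realized uniformly in $\alpha$ by the preceding proposition (call its realizer $\rlzfont{O}$), so by \Cref{theorem:RealizingConjunction} it suffices to produce a single $\Theta \in \mathcal{R}$ which, for every $\alpha \leq \kappa$, realizes the trichotomy clause; the desired $\theta$ is then $\lambda u \lambdaapp \fapp{\app{u}{\rlzfont{O}}}{\Theta}$.

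Unfolding the abbreviation $\forall x \rlzin a\,\varphi(x) \equiv \forall x(\neg\varphi(x) \rightarrow x \notrlzin a)$ twice and writing $\psi(x,y) \equiv x \rlzin y \lor x = y \lor y \rlzin x$, the clause is $\forall x\big(\neg\Xi(x) \rightarrow x \notrlzin \hat{\alpha}\big)$ with $\Xi(x) \equiv \forall y(\neg\psi(x,y) \rightarrow y \notrlzin \hat{\alpha})$. Now $\falsity{c \notrlzin \hat{\alpha}}$ is nonempty only when $c = \hat{\gamma}$ for some $\gamma \in \alpha$, in which case it equals $\{\nu_\gamma \stackapp \pi \divline \pi \in \Pi\}$ (cf.\ \Cref{remark:ChurchNumerals}); unwinding the definitions, an element of $\falsity{\forall x(\neg\Xi(x) \rightarrow x \notrlzin \hat{\alpha})}$ has the shape $t \stackapp \nu_\gamma \stackapp \pi$ with $t \Vdash \neg\Xi(\hat{\gamma})$, and a falsifier of $\Xi(\hat{\gamma})$ has the shape $s \stackapp \nu_\delta \stackapp \rho$ with $s \Vdash \neg\psi(\hat{\gamma},\hat{\delta})$ and $\delta \in \alpha$. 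So the two ordinal indices reach us encoded as the terms $\nu_\gamma$, $\nu_\delta$ on the stack, and the plan is to have $\Theta$ grab them, hand them to $\chi$, and in each of the three cases of \Cref{definition:PropertyChi} produce a realizer of the appropriate disjunct of $\psi(\hat{\gamma},\hat{\delta})$: in the case $\gamma < \delta$ a realizer of $\hat{\gamma} \rlzin \hat{\delta}$, namely $\lambda u \lambdaapp \app{u}{\nu_\gamma}$ by \Cref{theorem:HatNameElements}; in the case $\gamma = \delta$ a realizer of $\hat{\gamma} = \hat{\delta}$, which is then literally $\hat{\gamma} = \hat{\gamma}$, namely $\identity$ (\Cref{theorem:NonExtensionalEquality1} together with \Cref{theorem:realizinguniversals}); in the case $\delta < \gamma$ a realizer of $\hat{\delta} \rlzin \hat{\gamma}$, namely $\lambda u \lambdaapp \app{u}{\nu_\delta}$. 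Reading $\varphi_1 \lor \varphi_2 \lor \varphi_3$ as $(\varphi_1 \lor \varphi_2) \lor \varphi_3$ and recalling $A \lor B \equiv (A \rightarrow \perp) \rightarrow ((B \rightarrow \perp) \rightarrow \perp)$, the term is
\[
\Theta \;\coloneqq\; \lambda t \lambdaapp \lambda g \lambdaapp \app{t}{\big(\lambda s \lambdaapp \lambda d \lambdaapp \app{s}{\big(\lambda r \lambdaapp \lambda q \lambdaapp \chi\, g\, d\, (\app{r}{\rlzfont{L}_1})\, (\app{r}{\rlzfont{L}_2})\, (\app{q}{\rlzfont{L}_3})\big)}\big)},
\]
where $\rlzfont{L}_1 \coloneqq \lambda p \lambdaapp \lambda q' \lambdaapp \app{p}{(\lambda u \lambdaapp \app{u}{g})}$ and $\rlzfont{L}_2 \coloneqq \lambda p \lambdaapp \lambda q' \lambdaapp \app{q'}{\identity}$ inject a realizer of $\varphi_1$, resp.\ $\varphi_2$, into $\varphi_1 \lor \varphi_2$, and $\rlzfont{L}_3 \coloneqq \lambda u \lambdaapp \app{u}{d}$ directly realizes $\varphi_3$; the variables $g, d$ get bound to $\nu_\gamma, \nu_\delta$ at run time, and since no continuation constant occurs, $\Theta \in \mathcal{R}$.

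The verification is then a direct chase of the evaluation. Feeding $\Theta$ a falsifier $t_0 \stackapp \nu_\gamma \stackapp \pi$ of the clause, the first two grabs bind $g$ to $\nu_\gamma$ and reduce the goal to having $\lambda s \lambdaapp \lambda d \lambdaapp (\dots)$ realize $\Xi(\hat{\gamma})$; feeding that a falsifier $s_0 \stackapp \nu_\delta \stackapp \rho$ of $\Xi(\hat{\gamma})$, the next two grabs bind $d$ to $\nu_\delta$ and reduce to realizing $\psi(\hat{\gamma},\hat{\delta})$; absorbing a falsifier $r_0 \stackapp q_0 \stackapp \zeta$ of $\psi(\hat{\gamma},\hat{\delta})$, the body pushes $\nu_\gamma, \nu_\delta$ and the three continuations onto the stack so that $\chi$ fires by \Cref{definition:PropertyChi}, and in each branch one checks, using \Cref{theorem:HatNameElements} (resp.\ \Cref{theorem:NonExtensionalEquality1}), that the relevant $\rlzfont{L}_i$ realizes the selected disjunct, so that the head $r_0$ or $q_0$ meets a stack in its falsity value and the whole process lands in $\Perp$. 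The one genuine difficulty — and the reason $\chi$ and the encodings $\nu_\beta$ inside $\hat{\alpha}$ are needed at all — is precisely that the bounded quantifiers do not hand over the ordinal indices: one must recover $\gamma$ and $\delta$ from the falsity values $\{\nu_\gamma \stackapp \pi\}$, $\{\nu_\delta \stackapp \sigma\}$ and pipe them to the comparison instruction before it is even possible to decide which disjunct of $\psi$ to prove; everything else is bookkeeping with the definition of disjunction.
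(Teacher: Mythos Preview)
Your proof is correct and shares the essential idea with the paper: extract the indices $\nu_\gamma,\nu_\delta$ from the falsity values of $x\notrlzin\hat{\alpha}$, feed them to $\chi$, and in each branch exhibit the appropriate witness via \Cref{theorem:HatNameElements} (or the equality case). The difference is one of packaging. You realize the trichotomy clause in its literal form --- with the bounded quantifier abbreviation and the nested $\lor$-encoding --- which forces you to build the injections $\rlzfont{L}_1,\rlzfont{L}_2,\rlzfont{L}_3$ and unwind three layers of $\neg$-hypotheses. The paper instead first passes (by adequacy and by \Cref{theorem:RealizingBoundedUniversalsForHats}) to the classically equivalent but syntactically much leaner target
\[
\forall x^{\hat{\alpha}}\,\forall y^{\hat{\alpha}}\big(x\notrlzin y \rightarrow (x\neq y \rightarrow (y\notrlzin x \rightarrow \perp))\big),
\]
whose falsifiers are already of the exact shape $\nu_\beta \stackapp \nu_\gamma \stackapp t \stackapp s \stackapp r \stackapp \pi$ with $t\Vdash\hat{\beta}\notrlzin\hat{\gamma}$, $s\Vdash\hat{\beta}\neq\hat{\gamma}$, $r\Vdash\hat{\gamma}\notrlzin\hat{\beta}$; the realizer then collapses to the one-liner $\lambda b\,\lambda c\,\lambda u\,\lambda v\,\lambda w\lambdaapp\chi\,b\,c\,(u\,b)\,v\,(w\,c)$. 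Your route buys a self-contained argument that does not appeal to adequacy or to the restricted-quantifier equivalence; the paper's route buys a dramatically shorter realizer and a cleaner case analysis, because replacing $\neg(x\rlzin y)$ by the primitive $x\notrlzin y$ and replacing $\forall x\rlzin\hat{\alpha}$ by $\forall x^{\hat{\alpha}}$ strips away all the disjunction bookkeeping.
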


\begin{proof}
    It will suffice to show that
    \[
    \lambda b \lambdaapp \lambda c \lambdaapp \lambda u \lambdaapp \lambda v \lambdaapp \lambda w \lambdaapp \twoapp{\twoapp{\twoapp{\fapp{\app{\chi}{b}}{c}}{\app{u}{b}}}{v}}{\app{w}{c}} \Vdash \forall x^{\hat{\alpha}} \, \forall y^{\hat{\alpha}} ( x \notrlzin y \rightarrow (x \neq y \rightarrow (y \notrlzin x \rightarrow \perp))).
    \]
    Fix $\beta, \gamma \in \kappa$ and set $\eta = \lambda u \lambdaapp \lambda v \lambdaapp \lambda w \lambdaapp \twoapp{\twoapp{\twoapp{\fapp{\app{\chi}{\nu_\beta}}{\nu_\gamma}}{\app{u}{\nu_\beta}}}{v}}{\app{w}{\nu_\gamma}}$. Fix $\beta, \gamma \in \kappa$, $t \Vdash \hat{\beta} \rlzin \hat{\gamma}$, $s \Vdash \hat{\beta} \neq \hat{\gamma}$, $r \Vdash \hat{\gamma} \notrlzin \hat{\beta}$ and $\pi \in \Pi$. If $\beta < \gamma$ then $\eta \star t \stackapp s \stackapp r \stackapp \pi \succ \app{t}{\nu_\beta} \star \pi \succ t \star \nu_\beta \stackapp \pi \in \Perp$ since $(\hat{\beta}, \nu_\beta \stackapp \pi) \in \hat{\gamma}$. Similarly, if $\gamma < \beta$ then $\eta \star t \stackapp s \stackapp r \stackapp \pi \succ r \star \nu_\gamma \stackapp \pi \in \Perp$. Finally, if $\beta = \gamma$ then $\falsity{\hat{\beta} \neq \hat{\gamma}} = \Pi$ and therefore, $\eta \star t \stackapp s \stackapp r \stackapp \pi \succ s \star \pi \in \Perp$.
\end{proof}

\begin{remark} \label{remark:OmegaTrichotomous}
    If one restricts $\chi$ to only comparing indices of Church numbers then $\chi$ can in fact be constructed as a $\lambda$-term without the need for special instructions (see \Cref{section:ComparingNumerals} for the details). From this it follows that $\hat{\omega}$ is always a $\rlzin$-Trichotomous ordinal. 

    It is further shown in \cite{FontanellaGeoffroy2020} that any realizability model $\rlzmodel$ satisfies ``$\hat{n}$ is the $n$\textsuperscript{th} ordinal'' and ``$\hat{\omega}$ is the first limit ordinal''.
\end{remark}

\pagebreak[4]
\section{Implementing Pairing} \label{section:Pairing}

The next problem we want to address is how to discuss functions in the realizability model. There are two main issues associated with this problem; what do we mean by a function, and given a function in the ground model is it possible to ``lift'' it to a function in the realizability model. A very typical example of the question of lifting is when given a function $1_E \colon a \rightarrow 2$ defined by $1_E(x) = 1$ if $x \in E$ and $1_E(x) = 0$ otherwise. The lifting of such functions to $\rlzmodel$, in particular the trivial Boolean algebra operations on $\{0, 1\}$, are used to prove statements about the Boolean algebra $\fullname{2}$. In order to address these problems, we first need a method to implement unordered and ordered pairs.

Traditionally, the ordered pair of two sets $a$ and $b$, $( a, b )$, is defined to be $\{ \{a, b \}, \{ a \} \}$. However, this turns out to be a difficult definition to encode because of the difficulty in using disjunctions in the realizability model. Instead we shall first give an encoding for unordered pairs which uses the two distinct $\lambda_c$-terms $\underline{0}$ and $\underline{1}$ to distinguish the elements of the pairs. Then we shall implement Wiener's definition of ordered pairs, which is $\{ \{ \{ a \}, \emptyset \}, \{ \{ b \} \} \}$. This will turn out to work because it will assign different structures to $a$ from $b$, allowing us to identify which one is which. Note that the specific coding mechanism used is not important because internally within our realizability models we could easily move between different implementations of ordered pairs.

\subsection{Unordered Pairs}

\noindent We begin with Krivine's implementation of singletons in the realizability model from \cite{Krivine2012}. After this we will give an implementation of unordered pairs. Recall that in the proof of Pairing in $\rlzmodel$ the name for the pair of the sets $a$ and $b$ was $\{a, b\} \times \Pi$, however it will turn out to be beneficial to take a different implementation in order to be able to distinguish $a \notrlzin \{a, b\}$ from $b \notrlzin \{a, b\}$.

\begin{definition}
    For $a \in \rlzstr$, $\sing(a) \coloneqq \{ a \} \times \Pi$.
\end{definition}

\begin{theorem} \label{theorem:SingularFunction}
    The following are realizable in $\rlzmodel$:
    \begin{thmlist}
        \item \label{theorem:SingularFunctionItem1} $\forall x \forall y (\sing(x) = \sing(y) \inclusion x = y)$;
        \item \label{theorem:SingularFunctionItem2} $\forall x (\sing(x) \not\simeq \fullname{0})$;
        \item \label{theorem:SingularFunctionItem3} $\forall x \forall y (x \in \sing(y) \rightarrow x \simeq y)$;
        \item \label{theorem:SingularFunctionItem4} $\forall x \forall y (x \simeq y \rightarrow \sing(x) \simeq \sing(y))$;
        \item \label{theorem:SingularFunctionItem5} $\forall x \forall y (\sing(x) \simeq \sing(y) \rightarrow x \simeq y)$.
    \end{thmlist}
\end{theorem}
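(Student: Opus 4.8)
The common thread in all five parts is that $\sing(a) = \{a\}\times\Pi$ has the minimal possible structure: $\ff{dom}(\sing(a)) = \{a\}$ and $(a,\pi) \in \sing(a)$ for every $\pi \in \Pi$. The plan is to begin by unwinding the definitions of $\falsity{\cdot}$ with this in mind, recording that for all $a, b \in \rlzstr$
\begin{align*}
\falsity{a \not\in \sing(b)} &= \{ t \stackapp t' \stackapp \pi \divline t \Vdash a \subseteq b,\ t' \Vdash b \subseteq a,\ \pi \in \Pi \}, \\
\falsity{\sing(a) \subseteq \sing(b)} &= \{ t \stackapp \pi \divline t \Vdash a \not\in \sing(b),\ \pi \in \Pi \},
\end{align*}
and similarly that $\falsity{c \subseteq \sing(a)}$ and $\falsity{\sing(a) \not\in c}$ collapse to single-index unions; I would also note the plain fact that $\sing(a) = \sing(b)$ as elements of $\rlzstr$ if and only if $a = b$ (immediate when $\Pi \neq \emptyset$; and when $\Pi = \emptyset$ the model $\rlzstr$ is a single point, so everything is trivial). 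All five realizers below are then obtained by feeding suitably packaged stacks into the hypotheses.

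For (i): since $\sing(x) = \sing(y) \Leftrightarrow x = y$, if $x \neq y$ the falsity value of $\sing(x) = \sing(y) \inclusion x = y$ is $\emptyset$, while if $x = y$ it equals $\falsity{x = y}$, which $\identity$ realizes because $\falsity{x \neq y} = \Pi$ forces $t \star \pi \in \Perp$ for every $t \Vdash x \neq y$ and every $\pi$. For (ii): $\ff{dom}(\fullname{0}) = \emptyset$, so every term realizes $x \not\in \fullname{0}$ and hence $\falsity{\sing(x) \subseteq \fullname{0}} = \{ t \stackapp \pi \divline t \in \Lambda,\ \pi \in \Pi \}$ is nonempty; unwinding $\sing(x) \not\simeq \fullname{0}$ to $(\sing(x) \subseteq \fullname{0}) \rightarrow ((\fullname{0} \subseteq \sing(x)) \rightarrow \perp)$, the term $\lambda u \lambdaapp \lambda v \lambdaapp \app{u}{v}$ works, since $\lambda u \lambdaapp \lambda v \lambdaapp \app{u}{v} \star s \stackapp t \stackapp \pi \succ s \star t \stackapp \pi \in \Perp$ whenever $s \Vdash \sing(x) \subseteq \fullname{0}$. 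For (iii): unwinding $x \in \sing(y) \rightarrow x \simeq y$ to $(x \not\in \sing(y) \rightarrow \perp) \rightarrow x \simeq y$, given $p \Vdash x \not\in \sing(y) \rightarrow \perp$ and $r \Vdash (x \subseteq y) \rightarrow ((y \subseteq x) \rightarrow \perp)$, the term $\lambda a \lambdaapp \lambda b \lambdaapp \fapp{\app{r}{a}}{b}$ realizes $x \not\in \sing(y)$ by the first displayed identity, so $\lambda u \lambdaapp \lambda v \lambdaapp \app{u}{(\lambda a \lambdaapp \lambda b \lambdaapp \fapp{\app{v}{a}}{b})}$ realizes (iii).

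Parts (iv) and (v) are the substantive ones, and the main obstacle is that $\simeq$ abbreviates a double-negated conjunction, so one cannot simply read off realizers of $x \subseteq y$ and $y \subseteq x$ from a realizer of $x \simeq y$. The fix I would use is the standard ``build and feed back the continuation'' maneuver. For (iv), given $r \Vdash x \simeq y$ and a stack $s \stackapp \pi$ with $s \Vdash (\sing(x) \subseteq \sing(y)) \rightarrow ((\sing(y) \subseteq \sing(x)) \rightarrow \perp)$, I would form $M \coloneqq \lambda p \lambdaapp \lambda q \lambdaapp \fapp{\app{s}{(\lambda w \lambdaapp \fapp{\app{w}{p}}{q})}}{(\lambda w \lambdaapp \fapp{\app{w}{q}}{p})}$; using the displayed identities one checks that $\lambda w \lambdaapp \fapp{\app{w}{p}}{q} \Vdash \sing(x) \subseteq \sing(y)$ whenever $p \Vdash x \subseteq y$ and $q \Vdash y \subseteq x$ (and symmetrically $\lambda w \lambdaapp \fapp{\app{w}{q}}{p} \Vdash \sing(y) \subseteq \sing(x)$), hence $M \Vdash (x \subseteq y) \rightarrow ((y \subseteq x) \rightarrow \perp)$ and $r \star M \stackapp \pi \in \Perp$; the uniform realizer is then $\lambda u \lambdaapp \lambda v \lambdaapp \app{u}{(\lambda p \lambdaapp \lambda q \lambdaapp \fapp{\app{v}{(\lambda w \lambdaapp \fapp{\app{w}{p}}{q})}}{(\lambda w \lambdaapp \fapp{\app{w}{q}}{p})})}$. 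Part (v) is the mirror image: given $r \Vdash \sing(x) \simeq \sing(y)$ and $s \Vdash (x \subseteq y) \rightarrow ((y \subseteq x) \rightarrow \perp)$, the term $\lambda a \lambdaapp \lambda b \lambdaapp \fapp{\app{s}{a}}{b}$ realizes $x \not\in \sing(y)$, so $\lambda w \lambdaapp \lambda z \lambdaapp \app{w}{(\lambda a \lambdaapp \lambda b \lambdaapp \fapp{\app{s}{a}}{b})}$ realizes $(\sing(x) \subseteq \sing(y)) \rightarrow ((\sing(y) \subseteq \sing(x)) \rightarrow \perp)$, which fed to $r$ closes the process; the realizer is $\lambda u \lambdaapp \lambda v \lambdaapp \app{u}{(\lambda w \lambdaapp \lambda z \lambdaapp \app{w}{(\lambda a \lambdaapp \lambda b \lambdaapp \fapp{\app{v}{a}}{b})})}$. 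As an alternative for (iv) and (v) one could instead invoke \Cref{theorem:adequacy} to extract realizers of the individual inclusions from a realizer of the $\simeq$-statement (classically $\neg\neg(A \land B)$ yields $A$ and $B$), at the cost of an appeal to $\cc$, and then recombine with \Cref{theorem:RealizingConjunction}; the explicit terms above keep everything inside pure $\lambda$-calculus.
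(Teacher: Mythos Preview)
Your proposal is correct. The five computations all check out against the definitions you display at the outset; in particular your key observation that $\falsity{a \not\in \sing(b)}$ and $\falsity{\sing(a)\subseteq\sing(b)}$ collapse because $\ff{dom}(\sing(\cdot))$ is a singleton is exactly what drives everything.

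Your route differs from the paper's in parts (iii)--(v), and the difference is instructive. The paper works with the \emph{curried} reading of $\simeq$ in hypothesis position (the convention $a\simeq b\rightarrow\varphi$ abbreviates $(a\subseteq b)\rightarrow((b\subseteq a)\rightarrow\varphi)$) and proves one-sided inclusion statements: for (iii) the contrapositive $a\not\simeq b\rightarrow a\not\in\sing(b)$ is realized by $\identity$; for (iv) the paper exhibits a realizer of $a\subseteq b\rightarrow(b\subseteq a\rightarrow\sing(a)\subseteq\sing(b))$ and leaves the symmetric half and the conjunction implicit; and for (v) it proves the finer fact $\sing(a)\subseteq\sing(b)\rightarrow a\subseteq b$, which genuinely requires $\cc$ (via $\saverlz{\pi}$) because one must manufacture a realizer of $a\not\in\sing(b)$ out of a stack in $\falsity{a\subseteq b}$ alone. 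You instead keep the conjunction reading of $\simeq$ throughout and produce single explicit realizers for the full $\simeq$-statements, and this lets you avoid $\cc$ entirely in (v): since your hypothesis already packages both $\sing(x)\subseteq\sing(y)$ and its converse, you never need to extract just one direction. The trade-off is that the paper obtains the stronger componentwise implication $\sing(a)\subseteq\sing(b)\rightarrow a\subseteq b$, while your argument gives only the two-sided version; conversely, your realizers are pure $\lambda$-terms and the argument is more self-contained. Both are legitimate, and the paper's own remark that the two readings of $a\simeq b\rightarrow\varphi$ are interderivable justifies your choice of encoding.
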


\begin{proof} \,
    \begin{enumerate}[label=\textit{\roman*}.]
    \item Fix $a, b \in \rlzstr$, we shall prove that $\identity \Vdash \sing(a) = \sing(b) \inclusion a = b$. If $\sing(a) \neq \sing(b)$ then $\falsity{\sing(a) = \sing(b) \inclusion a = b} = \emptyset$ and the result trivially follows. So suppose that $\sing(a) = \sing(b)$. Then we obviously have that $a = b$ and thus $\falsity{\sing(a) = \sing(b) \inclusion a = b} = \falsity{a = b} = \{ t \stackapp \pi \divline \pi \in \Pi, \, \forall \sigma \in \Pi (t \star \sigma \in \Perp)\}$. Therefore, $\identity$ realizes the desired statement. 

    \item Fix $a \in \rlzstr$, it will suffice to prove that $\lambda u \lambdaapp \app{u}{\identity} \Vdash \sing(a) \subseteq \fullname{0} \rightarrow \perp$. So suppose that $t \Vdash \sing(a) \subseteq \fullname{0}$ and $\pi \in \Pi$. Then, by the proof of Extensionality from \Cref{section:RealzingZFepsilon}, we have that $t \Vdash \forall z (z \not\in \fullname{0} \rightarrow z \notrlzin \sing(a))$. By \Cref{theorem:realizinguniversals}, this in particular gives $t \Vdash a \not\in \fullname{0} \rightarrow a \notrlzin \sing(a)$. Now $\falsity{a \notrlzin \sing(a)} = \Pi$ while $\falsity{a \not\in \fullname{0}} = \emptyset$, because $\fullname{0} = \emptyset$. Thus $t \star \identity \stackapp \pi \in \Perp$, and the result follows. 
    
    \item Fix $a, b \in \rlzstr$. It will suffice to prove that
    \[
    \identity \Vdash a \not\simeq b \rightarrow a \not\in \sing(b).
    \]
    So suppose that $t \Vdash a \not\simeq b$ and $\pi \in \falsity{a \not\in \sing(b)}$. Then $\pi = s \stackapp s' \stackapp \sigma$ where $(b, \sigma) \in \sing(b)$, $s \Vdash a \subseteq b$ and $s' \Vdash b \subseteq a$. But this gives us that $t \star s \stackapp s' \stackapp \sigma \in \Perp$ and thus $\identity \star t \stackapp \pi$ is also in $\Perp$. 

    \item Fix $a, b \in \rlzstr$, it will suffice to prove that
    \[
    \lambda u \lambdaapp \lambda v \lambdaapp \lambda w \lambdaapp \fapp{\app{w}{u}}{v} \Vdash a \subseteq b \rightarrow ( b \subseteq a \rightarrow (\sing(a) \subseteq \sing(b))).
    \]
    To do this, suppose that $t \Vdash a \subseteq b$, $s \Vdash b \subseteq a$, $(c, \pi) \in \sing(a)$ and $r \Vdash c \not\in \sing(b)$. Since $\sing(a) = \{a\} \times \Pi$, we must have that $c = a$ and therefore $r \Vdash a \not\in \sing(b)$. However, since $(b, \pi) \in \sing(b)$, $t \stackapp s \stackapp \pi \in \falsity{a \not\in \sing(b)}$. Thus, $r \star t \stackapp s \stackapp \pi \in \Perp$, completing the proof.

     \item Fix $a, b \in \rlzstr$, it will suffice to show that
    \[
    \lambda u \lambdaapp \app{\cc \,}{u} \Vdash \sing(a) \subseteq \sing(b) \rightarrow a \subseteq b.
    \]
    We also note that a very similar argument could be used to show that $\sing(a) \subseteq \sing(b) \rightarrow b \subseteq a$ is also realizable.

    So, fix $t \Vdash \sing(a) \subseteq \sing(b)$ and $\pi \in \falsity{a \subseteq b}$. Then $t \Vdash \forall z (z \not\in \sing(b) \rightarrow z \notrlzin \sing(a))$ so, in particular, $t \Vdash a \not\in \sing(b) \rightarrow a \notrlzin \sing(a)$. Now, by definition, $\falsity{a \not\in \sing(b)} =$ $\falsity{a \subseteq b \rightarrow (b \subseteq a \rightarrow b \notrlzin \sing(b))}$. But, by \Cref{theorem:SaveCommandandNegation}, $\saverlz{\pi} \Vdash a \subseteq b \rightarrow (b \subseteq a \rightarrow b \notrlzin \sing(b))$ and therefore $\app{\cc \,}{t} \star \pi \succ \cc \star t \stackapp \pi \succ t \star \saverlz{\pi} \stackapp \pi \in \Perp$.
  \end{enumerate} 
\end{proof}

\begin{definition}
    Let $a, b \in \rlzstr$. The \emph{unordered pair} of $a$ and $b$, $\up(a, b)$, is
    \[
    \up(a, b) \coloneqq \{ (a, \underline{0} \stackapp \pi) \divline \pi \in \Pi \} \cup \{ (b, \underline{1} \stackapp \pi) \divline \pi \in \Pi \}.
    \]
\end{definition}

\noindent We next show that $\up$ satisfies many of the basic properties one would expect an unordered pair to have in the realizability model. Namely; it is well-defined with respect to non-extensional equality, extensionally the ordered pair of $a$ and $b$ is the same as that of $b$ and $a$, and if $z$ is in the unordered pair of $a$ and $b$ then $z$ is either extensionally equal to $a$ or to $b$.

\begin{proposition} \label{theorem:UnorderedPairs}
    The following are realizable in $\rlzmodel$:
    \begin{thmlist}
        \item \label{theorem:UnorderedPairs1} $\forall x_1 \forall x_2 \forall y_1 \forall y_2 (\up(x_1, y_1) = \up(x_2, y_2) \inclusion (x_1 = x_2 \land y_1 = y_2))$;
        \item \label{theorem:UnorderedPairs2} $\forall x \forall y \forall z (z \rlzin \up(x,y) \rightarrow z \rlzin \up(y,x))$;
        \item \label{theorem:UnorderedPairs3} $\forall x \forall y (\up(x, y) \simeq \up(y, x))$;
        \item \label{theorem:UnorderedPairs4} $\forall x \forall y \forall z (z \in \up(x,y) \rightarrow (z \not\simeq y \rightarrow z \simeq x))$.
    \end{thmlist}
\end{proposition}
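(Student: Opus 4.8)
The plan is to realize (i)--(iv) in turn. The only genuinely computational point is a uniform realizer for (ii); parts (iii) and (iv) can then be obtained inside $\ZFepsilon$ (which is realized, by \Cref{theorem:rlzmodelModelsZFepsilon}, and whose realizable formulas are closed under classical deduction, by \Cref{theorem:adequacy}), or by the same style of argument. The recurring observation is the shape of $\falsity{c \notrlzin \up(a,b)} = \{ \pi \in \Pi \divline (c,\pi) \in \up(a,b) \}$: this set is empty unless $c$ is equal (in $\tf V$) to $a$ or to $b$, in which case it is $\{\underline{0} \stackapp \pi \divline \pi \in \Pi\}$, $\{\underline{1} \stackapp \pi \divline \pi \in \Pi\}$, or their union, so the tag $\underline{0}$ or $\underline{1}$ on top of the stack records which of $a,b$ we are looking at.

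For (i), note that the set $\up(x_1,y_1)$ determines the pair $(x_1,y_1)$: since $\underline{0} \neq \underline{1}$ and the push operation is injective, a pair $(x_1,\underline{0}\stackapp\pi_0)$ of $\up(x_1,y_1)$ can coincide with a pair of $\up(x_2,y_2)$ only by matching its $\underline{0}$-tagged part, so $\up(x_1,y_1)=\up(x_2,y_2)$ forces $x_1=x_2$, and symmetrically $y_1=y_2$ (the case $\Pi=\emptyset$ is degenerate, every formula built with $=$ being realized trivially). Hence, exactly as in \Cref{theorem:SingularFunctionItem1}: if the two unordered pairs are unequal then $\falsity{\up(x_1,y_1)=\up(x_2,y_2)\inclusion(\dots)}=\emptyset$ and every term realizes it; and if they are equal then this falsity value equals $\falsity{x_1 = x_2 \land y_1 = y_2}$ with both conjuncts true, so $\identity \Vdash x_1 = x_2$ and $\identity \Vdash y_1 = y_2$, and $\lambda u \lambdaapp \fapp{\app{u}{\identity}}{\identity}$ realizes the conjunction by \Cref{theorem:RealizingConjunction}. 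The same term works uniformly.

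For (ii), writing $z \rlzin w$ as $z \notrlzin w \rightarrow \perp$, one must realize $(z \notrlzin \up(x,y) \rightarrow \perp) \rightarrow (z \notrlzin \up(y,x) \rightarrow \perp)$ uniformly in $x,y,z$. Given $s \Vdash z \notrlzin \up(x,y) \rightarrow \perp$, $r \Vdash z \notrlzin \up(y,x)$ and a stack $\pi$, it suffices to feed $s$ a realizer $Q$ of $z \notrlzin \up(x,y)$ manufactured from $r$. Comparing the falsity values: whenever $\underline{i} \stackapp \sigma \in \falsity{z \notrlzin \up(x,y)}$ the corresponding element of $\falsity{z \notrlzin \up(y,x)}$ carries the other of $\underline{0},\underline{1}$ (the two arguments, hence the two tags, are swapped), so $Q$ must strip the tag off the stack and re-invoke $r$ with the opposite tag on top. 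This flip is carried out using $\underline{0},\underline{1}$ themselves as a partial boolean, via $\underline{0} \star a \stackapp b \stackapp \sigma \succ b \star \sigma$ and $\underline{1} \star a \stackapp b \stackapp \sigma \succ a \star b \stackapp \sigma$: take
\[
t \;=\; \lambda u \lambdaapp \lambda v \lambdaapp \app{u}{\bigl( \lambda w \lambdaapp \app{\app{w}{(\lambda x \lambdaapp \app{v}{\underline{0}})}}{(\app{v}{\underline{1}})} \bigr)},
\]
so that after two grabs $s \star Q \stackapp \pi$ appears with $Q = \lambda w \lambdaapp \app{\app{w}{(\lambda x \lambdaapp \app{r}{\underline{0}})}}{(\app{r}{\underline{1}})}$, and $Q \star \underline{0} \stackapp \sigma \succ r \star \underline{1} \stackapp \sigma$ while $Q \star \underline{1} \stackapp \sigma \succ r \star \underline{0} \stackapp \sigma$. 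Checking the cases $z = x \neq y$, $z = y \neq x$ and $z = x = y$ (and the vacuous case $z \notin \{x,y\}$) against the two falsity values gives $Q \Vdash z \notrlzin \up(x,y)$, hence $s \star Q \stackapp \pi \in \Perp$ and $t \star s \stackapp r \stackapp \pi \in \Perp$; since $t$ has no continuation constant, $t \in \mathcal{R}$.

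For (iii) and (iv) I would argue inside $\ZFepsilon$. For (iii): by the second clause of $\rlzin$-Extensionality, $\up(x,y) \subseteq \up(y,x) \leftrightarrow \forall v \rlzin \up(x,y)\,(v \in \up(y,x))$; given $v \rlzin \up(x,y)$, (ii) yields $v \rlzin \up(y,x)$ and then $v \in \up(y,x)$ by \Cref{ZFepsilonProperty:RlzinImpliesIn}, and instantiating (ii) with $x,y$ interchanged gives the reverse inclusion, so $\up(x,y) \simeq \up(y,x)$. For (iv): it is enough to realize the auxiliary statement $\forall c\,(c \rlzin \up(x,y) \rightarrow (c = x \lor c = y))$; then $z \in \up(x,y)$ produces, via $\rlzin$-Extensionality, some $c \rlzin \up(x,y)$ with $z \simeq c$, so $c = x$ or $c = y$, and Leibniz substitutivity (\Cref{theorem:NonExtensionalEquality4}) turns $z \simeq c$ into $z \simeq x$ or $z \simeq y$; with the hypothesis $z \not\simeq y$ this gives $z \simeq x$ classically. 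The auxiliary realizer is again built by the tag-reading device: one feeds the hypothesis $c \rlzin \up(x,y)$ a realizer of $c \notrlzin \up(x,y)$ which, on receiving tag $\underline{0}$, applies the continuation for $c = x \rightarrow \perp$ and, on receiving tag $\underline{1}$, applies the continuation for $c = y \rightarrow \perp$, using that on the relevant branch the matching equality holds in $\tf V$ and is therefore realized by $\identity$. Alternatively, (iii) and (iv) admit direct realizers in the style of \Cref{theorem:SingularFunction}, using $\app{\theta}{\theta} \Vdash \forall x\,(x \subseteq x)$ from \Cref{theorem:RealizerofSubseteq} and the same $\underline{0}/\underline{1}$-selector trick. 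The main obstacle is pinning down the single realizer in (ii) that handles $z = x$, $z = y$ and $z = x = y$ at once: the naive attempts (pass the tag along unchanged, or hard-code one tag) each fail one of these cases, and the fix — turning $\underline{0},\underline{1}$ into branching functions so as to flip the tag on the stack — is the one delicate step, and it resurfaces in the auxiliary claim for (iv).
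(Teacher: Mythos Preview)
Your argument is correct and, for (i)--(iii), matches the paper's approach essentially verbatim: (i) is the same triviality-by-cases on whether the two $\up$'s coincide in $\tf V$, (iii) is deduced from (ii) inside $\ZFepsilon$, and the heart of (ii) is the same tag-flipping trick using $\underline{0},\underline{1}$ as selectors. Your treatment of (ii) is in fact more careful than the paper's written argument: you properly unfold $z\rlzin\up(y,x)$ as $(z\notrlzin\up(y,x))\to\perp$, pick up a realizer $r\Vdash z\notrlzin\up(y,x)$ from the stack, and then build $Q\Vdash z\notrlzin\up(x,y)$ from $r$ to feed to $s$; the paper's displayed realizer glosses over this intermediate layer. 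For (iv) the two routes diverge slightly: the paper constructs a single explicit realizer using $\cc$ to save the continuation for $z\simeq x$ and the tag $\underline{i}$ to decide whether to invoke that continuation or the hypothesis $z\not\simeq y$, whereas you factor through the auxiliary realizable fact $c\rlzin\up(x,y)\to(c=x\lor c=y)$ (same tag-reading device) and then finish by classical reasoning in $\ZFepsilon$ via $\rlzin$-Extensionality and \Cref{theorem:NonExtensionalEquality4}. Both are valid; the paper's is computationally self-contained, while yours cleanly separates the one genuinely computational step from the purely logical deduction.
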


\begin{proof} \,
    \begin{enumerate}[label=\textit{\roman*}.]
    \item For the first claim, fix $a_1, a_2, b_1$ and $b_2$ in $\rlzstr$. If $\up(a_1, b_1) \neq \up(a_2, b_2)$ then 
    \[
    \falsity{\up(a_1, b_1) = \up(a_2, b_2) \inclusion (a_1 = a_2 \land b_1 = b_2)} = \emptyset
    \]
    and the result trivially follows. So suppose that $\up(a_1, b_1) = \up(a_2, b_2)$. Then it is obviously the case that $a_1 = a_2$ and $b_1 = b_2$ and thus $\identity \Vdash a_1 = b_1$ and $\identity \Vdash a_2 = b_2$, from which the result follows.

    \item Fix $a, b$ and $c$ in $\rlzstr$. We shall show that a realizer for the second statement is $\lambda u \lambdaapp \lambda i \lambdaapp \bigfapp{\inapp{i}{\app{\lambda v \lambdaapp u}{\underline{0}}}}{\app{u}{\underline{1}}}$. So, suppose that $t \Vdash c \rlzin \up(a,b)$ and $(c, \underline{i} \stackapp \pi) \in \up(b,a)$. Then either $i = 0$ and $c = b$ or $i = 1$ and $c = a$. In the first case we have $t \star \underline{1} \stackapp \pi \in \Perp$ while in the second case $t \star \underline{0} \stackapp \pi \in \Perp$. The conclusion then follows from the verification that 
    
\noindent $\bigfapp{\inapp{\underline{0}}{\app{\lambda v \lambdaapp t}{\underline{0}}}}{\app{t}{\underline{1}}} \star \pi \succ t \star \underline{1} \stackapp \pi$ and $\bigfapp{\inapp{\underline{1}}{\app{\lambda v \lambdaapp t}{\underline{0}}}}{\app{t}{\underline{1}}} \star \pi \succ t \star \underline{0} \stackapp \pi$.

    \item The third claim follows from the second claim alongside the observation that,  in $\ZFepsilon$, $\forall a \forall x (x \rlzin a \rightarrow x \in a)$.

    \item For the final claim, fix $a, b$ and $c$ in $\rlzstr$, $t \Vdash c \not\in \up(a,b) \rightarrow \perp$, $s \Vdash c \simeq b \rightarrow \perp$ and $\pi \in \falsity{c \simeq a}$. We shall show that 
    \[
    \theta_1 \coloneqq \lambda e_t \lambdaapp \lambda e_s \lambdaapp \inapp{cc \,}{\app{\lambda e_k \lambdaapp e_t}{\theta_0}}
    \]
    is a realizer for the desired statement, where
    \[
    \theta_0 \coloneqq \lambda e_q \lambdaapp \lambda e_r \lambdaapp \lambda i \lambdaapp \fapp{\fapp{\fapp{\app{i}{\lambda w \lambdaapp e_s}}{e_k}}{e_q}}{e_r}
    \]
    will give a realizer for $c \not\in \up(a,b)$. To do this, first observe that $\saverlz{\pi} \Vdash c \simeq a \rightarrow \perp$. Now, any element of $\falsity{c \not\in \up(a,b)}$ is either of the form $q \stackapp r \stackapp \underline{0} \stackapp \sigma$, where $q \Vdash c \subseteq a$ and $r \Vdash a \subseteq c$ or of the form $q \stackapp r \stackapp \underline{1} \stackapp \sigma$, where $q \Vdash c \subseteq b$ and $r \Vdash b \subseteq c$. In the first case, we have that $\saverlz{\pi} \star q \stackapp r \stackapp \sigma \in \Perp$ while in the second $s \star q \stackapp r \stackapp \sigma \in \Perp$. Next,
    \begin{align*}
        \theta_0[e_s \coloneqq s, e_k \coloneqq \saverlz{\pi}] \star q \stackapp r \stackapp \underline{0} \stackapp \sigma & \succ \fapp{\fapp{\fapp{\app{\underline{0}}{\lambda w \lambdaapp s}}{\saverlz{\pi}}}{q}}{r} \star \sigma \\ & \succ \lambda u \lambdaapp \lambda v \lambdaapp v \star (\lambda w \lambdaapp s) \stackapp \saverlz{\pi} \stackapp q \stackapp r \stackapp \sigma \succ \saverlz{\pi} \star q \stackapp r \stackapp \sigma.
    \end{align*}
    Similarly, since $\underline{1} = \lambda u \lambdaapp \lambda v \lambdaapp u(v)$, $\theta_0[e_s \coloneqq s, e_k \coloneqq \saverlz{\pi}] \star q \stackapp r \stackapp \underline{1} \stackapp \sigma \succ s \star q \stackapp r \stackapp \sigma$. Thus, we have that $\theta_0[e_s \coloneqq s, e_k \coloneqq \saverlz{\pi}] \Vdash c \not\in \up(a,b)$ which means that $t \star \theta_0[e_s \coloneqq s, e_k \coloneqq \saverlz{\pi}] \stackapp \pi \in \Perp$. Finally, 
    \[
    \theta_1 \star t \stackapp s \stackapp \pi \succ \cc \star (\lambda e_k \lambdaapp \app{t}{\theta_0[e_s \coloneqq s]}) \stackapp \pi \succ \lambda e_k \lambdaapp \app{t}{\theta_0[e_s \coloneqq s]} \star \saverlz{\pi} \stackapp \pi \succ t \star \theta_0[e_s \coloneqq s, e_k \coloneqq k] \stackapp \pi \in \Perp.
    \]
    \end{enumerate}
\end{proof}

\noindent Finally, with a more involved proof, we can show that $\up$ is compatible with extensional equivalence.

\begin{proposition} \label{theorem:UnorderedPairRespectsSimeq}
    It is realizable in $\rlzmodel$ that
    \[
    \forall x_1 \forall x_2 \forall y_1 \forall y_2 \, ( x_1 \simeq x_2 \rightarrow ( y_1 \simeq y_2 \rightarrow \up(x_1, y_1) \simeq \up(x_2, y_2) )).
    \]
\end{proposition}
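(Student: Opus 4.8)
The plan is to unfold the convention $a\simeq b\rightarrow\varphi\ \equiv\ (a\subseteq b)\rightarrow((b\subseteq a)\rightarrow\varphi)$, so that the sentence becomes
\[
\forall x_1\forall x_2\forall y_1\forall y_2\Big((x_1\subseteq x_2)\rightarrow\big((x_2\subseteq x_1)\rightarrow\big((y_1\subseteq y_2)\rightarrow\big((y_2\subseteq y_1)\rightarrow\up(x_1,y_1)\simeq\up(x_2,y_2)\big)\big)\big)\Big),
\]
where the conclusion $\up(x_1,y_1)\simeq\up(x_2,y_2)$ is the conjunction $(\up(x_1,y_1)\subseteq\up(x_2,y_2))\land(\up(x_2,y_2)\subseteq\up(x_1,y_1))$. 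By \Cref{theorem:realizinguniversals} it suffices to fix $a_1,a_2,b_1,b_2\in\rlzstr$ and realizers $p\Vdash a_1\subseteq a_2$, $p'\Vdash a_2\subseteq a_1$, $q\Vdash b_1\subseteq b_2$, $q'\Vdash b_2\subseteq b_1$, and by \Cref{theorem:RealizingConjunction} together with the symmetry of these four hypotheses under $p\leftrightarrow p'$, $q\leftrightarrow q'$, it is enough to exhibit a single term $T$ built from $p,p',q,q'$ realizing $\up(a_1,b_1)\subseteq\up(a_2,b_2)$; the realizer of the whole sentence is then obtained by $\lambda$-abstracting $p,p',q,q'$ and pairing $T$ with its $p\leftrightarrow p',q\leftrightarrow q'$ twin via the conjunction realizer.

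Next I would compute $\falsity{\up(a_1,b_1)\subseteq\up(a_2,b_2)}$: its elements are the stacks $t\stackapp\underline{0}\stackapp\pi$ with $t\Vdash a_1\not\in\up(a_2,b_2)$ and the stacks $t\stackapp\underline{1}\stackapp\pi$ with $t\Vdash b_1\not\in\up(a_2,b_2)$, for $\pi\in\Pi$. In the first case, because $(a_2,\underline{0}\stackapp\pi)\in\up(a_2,b_2)$, $p\Vdash a_1\subseteq a_2$ and $p'\Vdash a_2\subseteq a_1$, we get $p\stackapp p'\stackapp\underline{0}\stackapp\pi\in\falsity{a_1\not\in\up(a_2,b_2)}$, hence $t\star p\stackapp p'\stackapp\underline{0}\stackapp\pi\in\Perp$; in the second case, because $(b_2,\underline{1}\stackapp\pi)\in\up(a_2,b_2)$, we get $q\stackapp q'\stackapp\underline{1}\stackapp\pi\in\falsity{b_1\not\in\up(a_2,b_2)}$, hence $t\star q\stackapp q'\stackapp\underline{1}\stackapp\pi\in\Perp$. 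So $T$ must, on being fed $t\stackapp\underline{i}\stackapp\pi$, reduce to $t\star A_i\stackapp B_i\stackapp\underline{i}\stackapp\pi$ where $(A_0,B_0)\simeq(p,p')$ and $(A_1,B_1)\simeq(q,q')$ — it must use the tag $\underline{i}$ both to select the correct pair of $\subseteq$-realizers and to re-emit itself as the tag. Using $\underline 0MN\to_\beta N$ and $\underline 1MN\to_\beta MN$ one checks that $i\,(\lambda v\lambdaapp r)\,r'$ reduces over $\succ$ to $r'$ when $i=\underline 0$ and to $r$ when $i=\underline 1$, and that $\bigl(i\,(\lambda v\lambdaapp r)\,r'\bigr)\star\rho$ reduces to $r'\star\rho$ (resp. $r\star\rho$), so these selectors realize whatever $r'$ (resp. $r$) realizes by closure of $\Perp$ under $\succ$. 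Hence I would take
\[
T\;\coloneqq\;\lambda u\lambdaapp\lambda i\lambdaapp u\,\bigl(i\,(\lambda v\lambdaapp q)\,p\bigr)\,\bigl(i\,(\lambda v\lambdaapp q')\,p'\bigr)\,i ,
\]
and verify $T\star t\stackapp\underline{i}\stackapp\pi\ \succ\ t\star A_i\stackapp B_i\stackapp\underline{i}\stackapp\pi\in\Perp$ in both cases, with the twin $\lambda u\lambdaapp\lambda i\lambdaapp u\,(i\,(\lambda v\lambdaapp q')\,p')\,(i\,(\lambda v\lambdaapp q)\,p)\,i$ realizing $\up(a_2,b_2)\subseteq\up(a_1,b_1)$ in the same way.

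The remaining work is routine: checking each $\beta$-reduction chain (of $T$ and of the numeral selectors), confirming the membership claims $A_i\Vdash a_1\subseteq a_2$, $B_i\Vdash a_2\subseteq a_1$, etc., and noting that the degenerate case $a_1=b_1$ is harmless — then $\up(a_1,b_1)$ has a one-element domain but still contributes both an $\underline 0$- and an $\underline 1$-tagged pair, and the same argument applies verbatim since $b_1\subseteq b_2$, $b_2\subseteq b_1$ are still among the hypotheses. The main obstacle is precisely the design of $T$: a single $\lambda$-term must branch on the tag $\underline i$ carried by the stack, feed the matching pair of subset-realizers into the $\not\in$-falsifier $t$, and simultaneously pass that tag back so the assembled stack genuinely lands in $\falsity{a_1\not\in\up(a_2,b_2)}$ or $\falsity{b_1\not\in\up(a_2,b_2)}$. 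This is the Church-numeral case-analysis idiom already used in \Cref{theorem:UnorderedPairs2,theorem:UnorderedPairs4}, but now carrying two pieces of data per branch.
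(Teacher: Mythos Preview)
Your proof is correct and follows essentially the same case-analysis on the tag $\underline{i}$ as the paper, but it is organised differently in one genuine respect. You unfold the two $\simeq$-hypotheses into four $\subseteq$-realizers $p,p',q,q'$ and then feed the appropriate pair directly into the $\not\in$-falsifier $t$, constructing by hand an element of $\falsity{a_1\not\in\up(a_2,b_2)}$ (resp.\ $\falsity{b_1\not\in\up(a_2,b_2)}$). The paper instead keeps the $\simeq$-realizers intact and invokes the Extensionality-derived fact $\identity\Vdash\forall x\forall y\forall z\,(x\simeq y\rightarrow(y\rlzin z\rightarrow x\in z))$ as a black box: from $a_1\simeq a_2$ and $a_2\rlzin\up(a_2,b_2)$ it obtains a realizer of $a_1\in\up(a_2,b_2)$, which it then plays against $r\Vdash a_1\not\in\up(a_2,b_2)$. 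Your route is slightly more elementary---it avoids appealing to the Extensionality realizer and makes the structure of $\falsity{\cdot\not\in\up(a_2,b_2)}$ fully explicit---while the paper's route is a bit more modular, re-using an already established lemma rather than re-deriving its content in situ. Both arguments hinge on the same Church-numeral branching idiom to select the correct data based on the tag and to re-emit that tag into the reconstructed stack.
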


\begin{proof}
    Fix $a_1, a_2, b_1$ and $b_2$ in $\rlzstr$ and let 
    \[
    \theta_2 \coloneqq \lambda e_t \lambdaapp \lambda e_s \lambdaapp \lambda e_r \lambdaapp \lambda i \lambdaapp \superfapp{\Bigfapp{\Biginapp{\underline{i}}{\lambda w \lambdaapp \twoapp{\app{\identity}{e_s}}{\app{\lambda u \lambdaapp u}{\underline{1}}}}} {\twoapp{\app{\identity}{e_t}}{\app{\lambda u \lambdaapp u}{\underline{0}}}}}{e_r}.
    \]
    It will suffice to prove that 
    \[
    \theta_2 \Vdash a_1 \simeq a_2 \rightarrow (b_1 \simeq b_2 \rightarrow \up(a_1, b_1) \subseteq \up(a_2, b_2)).
    \]
    First note that, by using the proof that Extensionality is realizable in $\rlzmodel$ from \Cref{section:RealzingZFepsilon}, we have that 
    \[
    \identity \Vdash \forall x \forall y \forall z \, (x \simeq y \rightarrow (y \rlzin z \rightarrow x \in z)).
    \]
    Also, $\falsity{\up(a_1, b_1) \subseteq \up(a_2, b_2)} = \bigcup_c \{ r \stackapp \underline{i} \stackapp \pi \divline (c, \underline{i} \stackapp \pi) \in \up(a_1, b_1), \, r \Vdash c \not\in \up(a_2, b_2) \}.$ \\
    
    \noindent Now, fix $t \Vdash a_1 \simeq a_2$, $s \Vdash b_1 \simeq b_2$, $(c, \underline{i} \stackapp \pi) \in \up(a_1, b_1)$ and $r \Vdash c \not\in \up(a_2, b_2)$. If $i = 0$ then we must have that $c = a_1$ and thus $r \Vdash a_1 \not\in \up(a_2, b_2)$. Now, $\falsity{a_2 \notrlzin \up(a_2, b_2)} = \{ \sigma \divline (a_2, \sigma) \in \up(a_2, b_2) \} = \{ \underline{0} \stackapp \sigma \divline \sigma \in \Pi \}$ so $\lambda u \lambdaapp u \underline{0} \Vdash a_2 \rlzin \up(a_2, b_2)$. Therefore, $\twoapp{\app{\identity}{t}}{\app{\lambda u \lambdaapp u}{\underline{0}}} \Vdash a_1 \in \up(a_2, b_2)$ and thus
    \[
    \twoapp{\app{\identity}{t}}{\app{\lambda u \lambdaapp u}{\underline{0}}} \star r \stackapp \pi \in \Perp.
    \]
    In the same way, if $i = 1$ then $r \Vdash b_1 \not\in \up(a_2, b_2)$ and $ \twoapp{\app{\identity}{s}}{\app{\lambda u \lambdaapp u}{\underline{1}}} \star r \stackapp \pi \in \Perp.$ Finally, the result follows from the computations
    \[
    \superfapp{\Bigfapp{\Biginapp{\underline{0}}{\lambda w \lambdaapp \twoapp{\app{\identity}{s}}{\app{\lambda u \lambdaapp u}{\underline{1}}}}} {\twoapp{\app{\identity}{t}}{\app{\lambda u \lambdaapp u}{\underline{0}}}}}{r} \star \pi \succ \twoapp{\app{\identity}{t}}{\app{\lambda u \lambdaapp u}{\underline{0}}} \star r \stackapp \pi \in \Perp
    \]
    and 
    \[
     \superfapp{\Bigfapp{\Biginapp{\underline{1}}{\lambda w \lambdaapp \twoapp{\app{\identity}{s}}{\app{\lambda u \lambdaapp u}{\underline{1}}}}} {\twoapp{\app{\identity}{t}}{\app{\lambda u \lambdaapp u}{\underline{0}}}}}{r} \star \pi \succ \twoapp{\app{\identity}{s}}{\app{\lambda u \lambdaapp u}{\underline{1}}} \star r \stackapp \pi \in \Perp
    \]
\end{proof}

\subsection{Ordered Pairs}

The previous analysis shows that $\up$ gives a reasonable interpretation of the unordered pair. The next objective is to define an ordered pair and repeat the analysis to show that it behaves as expected.

\begin{definition}
    Let $a, b \in \rlzstr$. The \emph{ordered pair} of $a$ and $b$, $\op(a, b)$, is
    \[
    \op(a, b) \coloneqq \up( \up(\sing(a), \fullname{0}), \sing(\sing(b))).
    \]
\end{definition}

The first thing we wish to deduce is that the ordered pair can distinguish between the first and the second element. Namely, if $\op(x_1, y_1) \simeq \op(x_2, y_2)$ then we should be able to realize that $x_1 \simeq x_2$ and $y_1 \simeq y_2$. In order to do this, we first need to prove a technical lemma. This lemma is the reason why we have chosen to using Wiener's definition of ordered pairs.

\begin{lemma} \label{theorem:UnorderedPairTechnical}
    For any $a, b, x \in \rlzstr$, the following are realizable in $\rlzmodel$:
    \begin{thmlist}
        \item \label{theorem:UnorderedPairTechnical1} $\up(\sing(x), \fullname{0}) \not\simeq \up(\sing(a), \fullname{0}) \rightarrow \up(\sing(x), \fullname{0}) \not\in \op(a, b)$;
        \item \label{theorem:UnorderedPairTechnical2} $\sing(\sing(x)) \not\simeq \sing(\sing(b)) \rightarrow \sing(\sing(x)) \not\in \op(a, b)$;
    \end{thmlist}
\end{lemma}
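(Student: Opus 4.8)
The plan is to reduce both clauses to a single auxiliary fact: for all $u,v\in\rlzstr$, the formula $\up(\sing(u),\fullname{0})\not\simeq\sing(\sing(v))$ is realizable in $\rlzmodel$ — this is precisely the place where Wiener's encoding earns its keep, since $\up(\sing(u),\fullname{0})$ contains an extensional copy of the empty set whereas $\sing(\sing(v))$ does not. To prove the auxiliary fact I would first note that, straight from the definition of $\up$, the only pairs in $\up(\sing(u),\fullname{0})$ with first coordinate $\fullname{0}$ are those of the form $(\fullname{0},\underline{1}\stackapp\pi)$, so $\falsity{\fullname{0}\notrlzin\up(\sing(u),\fullname{0})}=\{\underline{1}\stackapp\pi\divline\pi\in\Pi\}$ and hence $\lambda u\lambdaapp\app{u}{\underline{1}}\Vdash\fullname{0}\rlzin\up(\sing(u),\fullname{0})$, uniformly in $u$. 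Then I would argue inside the realized theory: assuming $\up(\sing(u),\fullname{0})\simeq\sing(\sing(v))$, successively apply \Cref{ZFepsilonProperty:RlzinImpliesIn} to get $\fullname{0}\in\up(\sing(u),\fullname{0})$, \Cref{ZFepsilonProperty:ExtensionalityB} to get $\fullname{0}\in\sing(\sing(v))$, \Cref{theorem:SingularFunctionItem3} to get $\fullname{0}\simeq\sing(v)$, and \Cref{ZFepsilonProperty:SimeqReflective} to get $\sing(v)\simeq\fullname{0}$, contradicting \Cref{theorem:SingularFunctionItem2}. Since every implication used is witnessed by a realizer independent of $u$ and $v$, the adequacy lemma (\Cref{theorem:adequacy}) together with closure under modus ponens (\Cref{theorem:ImplicationandApplication}) yields a single realizer of $\up(\sing(u),\fullname{0})\not\simeq\sing(\sing(v))$ valid for all $u,v$.

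For clause (i), recall that $\op(a,b)=\up(\up(\sing(a),\fullname{0}),\sing(\sing(b)))$, so instantiating the (uniformly realized, by \Cref{theorem:realizinguniversals}) statement \Cref{theorem:UnorderedPairs4} at $x:=\up(\sing(a),\fullname{0})$, $y:=\sing(\sing(b))$, $z:=\up(\sing(x),\fullname{0})$ makes
\[
\up(\sing(x),\fullname{0})\in\op(a,b)\rightarrow\big(\up(\sing(x),\fullname{0})\not\simeq\sing(\sing(b))\rightarrow\up(\sing(x),\fullname{0})\simeq\up(\sing(a),\fullname{0})\big)
\]
realizable. Feeding in the auxiliary fact (at $u:=x$, $v:=b$) collapses this to $\up(\sing(x),\fullname{0})\in\op(a,b)\rightarrow\up(\sing(x),\fullname{0})\simeq\up(\sing(a),\fullname{0})$; its contrapositive is realizable (e.g. by \Cref{theorem:negatingimplications}), and since $\neg(c\in d)$ and $c\not\in d$ are classically equivalent, this produces a realizer of $\up(\sing(x),\fullname{0})\not\simeq\up(\sing(a),\fullname{0})\rightarrow\up(\sing(x),\fullname{0})\not\in\op(a,b)$, which is clause (i).

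Clause (ii) is the mirror image. Using the auxiliary fact at $u:=a$, $v:=x$ together with symmetry of $\simeq$ (from \Cref{ZFepsilonProperty:SimeqReflective}) gives a realizer of $\sing(\sing(x))\not\simeq\up(\sing(a),\fullname{0})$, while instantiating \Cref{theorem:UnorderedPairs4} at $x:=\up(\sing(a),\fullname{0})$, $y:=\sing(\sing(b))$, $z:=\sing(\sing(x))$ makes $\sing(\sing(x))\in\op(a,b)\rightarrow(\sing(\sing(x))\not\simeq\sing(\sing(b))\rightarrow\sing(\sing(x))\simeq\up(\sing(a),\fullname{0}))$ realizable. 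Since the last consequent contradicts the realizer just produced, we conclude that $\sing(\sing(x))\not\simeq\sing(\sing(b))\rightarrow\neg(\sing(\sing(x))\in\op(a,b))$ is realizable, which is clause (ii) after replacing $\neg(\cdot\in\cdot)$ by $\cdot\not\in\cdot$.

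The work is concentrated in the auxiliary fact; the only genuinely delicate points there are checking that $\fullname{0}\rlzin\up(\sing(u),\fullname{0})$ admits a realizer not depending on $u$, and that the chain of implications can be glued into one parameter-free realizer. Once that is in hand, both clauses follow from \Cref{theorem:UnorderedPairs4} and routine classical manipulation, so I expect no further obstacle.
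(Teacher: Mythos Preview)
Your proof is correct and takes a genuinely different route from the paper's. The paper argues directly at the level of falsity values: it unwinds $\falsity{\cdot\not\in\op(a,b)}$ as stacks of the form $s\stackapp s'\stackapp\underline{i}\stackapp\pi$ (with $i\in\{0,1\}$ coming from the $\up$-encoding), builds an explicit branching realizer on $i$, and in the ``wrong'' branch derives a contradiction in-line from exactly the same observation you use (that $\fullname{0}\rlzin\up(\sing(u),\fullname{0})$, hence $\fullname{0}\in\sing(\sing(v))$ would force $\sing(v)\simeq\fullname{0}$, contradicting \Cref{theorem:SingularFunctionItem2}). You instead isolate that observation once and for all as an auxiliary lemma, and then let \Cref{theorem:UnorderedPairs4} absorb the case split; both clauses then drop out by contraposition and the classical equivalence $\neg(c\in d)\leftrightarrow c\not\in d$ via adequacy. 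The paper's version yields a more explicit realizer and avoids appealing to adequacy; your version is cleaner and more modular, makes the role of Wiener's encoding explicit, and recycles \Cref{theorem:UnorderedPairs4} rather than redoing the branching by hand. Both are valid and rest on the same key insight. (Two minor points: your computation of $\falsity{\fullname{0}\notrlzin\up(\sing(u),\fullname{0})}$ tacitly uses $\sing(u)\neq\fullname{0}$, i.e.\ $\Pi\neq\emptyset$, which is harmless; and watch the variable clash when instantiating \Cref{theorem:UnorderedPairs4}, since $x$ is both a bound variable there and a parameter of the lemma.)
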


\begin{proof}
    First, note that 
    \begin{multline*}
        \falsity{\up(\sing(x), \fullname{0}) \not\in \op(a, b)} = \bigcup_c \{ s \stackapp s' \stackapp \underline{i} \stackapp \pi \divline (c, \underline{i} \stackapp \pi) \in \op(a, b), \\
        s \Vdash \up(\sing(x), \fullname{0}) \subseteq c, \, s' \Vdash c \subseteq \up(\sing(x), \fullname{0}) \}.
    \end{multline*}
    Suppose that $t \Vdash \up(\sing(x), \fullname{0}) \not\simeq \up(\sing(a), \fullname{0})$, $(c, \underline{i} \stackapp \pi) \in \op(a, b)$ and $s, s'$ realize the relevant statements. Then $c$ is either $\up(\sing(a), \fullname{0})$ or $\sing(\sing(b))$. If $c = \up(\sing(a), \fullname{0})$ then $i = 0$ and $t \star s \stackapp s' \stackapp \pi \in \Perp$.

    So, suppose that $c = \sing(\sing(b))$ and thus $i = 1$. Then 
    \[
    s \Vdash \forall z ( z \not\in \sing(\sing(b)) \rightarrow z \notrlzin \up(\sing(x), \fullname{0})).
    \]
    In particular, $s \Vdash \fullname{0} \not\in \sing(\sing(b)) \rightarrow \fullname{0} \notrlzin \up(\sing(x), \fullname{0})$. Then, since $\{ \underline{1} \stackapp \sigma \divline \sigma \in \Pi \} = \falsity{ \fullname{0} \notrlzin \up(\sing(x), \fullname{0})}$, $\app{s}{\underline{1}} \Vdash \fullname{0} \not\in \sing(\sing(b)) \rightarrow \perp$. Next, by \Cref{theorem:SingularFunctionItem3}, we have that there exists some term $r_s$ (dependent only on $s$) such that $r_s \Vdash \sing(b) \simeq \fullname{0}$. However, by \Cref{theorem:SingularFunctionItem2} there is a realizer $r_0$ such that $r_0 \Vdash \sing(b) \not\simeq \fullname{0}$ because $\fullname{0} = \emptyset$. Thus, $r_s \star r_0 \stackapp \pi \in \Perp$.

    The conclusion for the first claim follows from creating a realizer $r_1$ such that $r_1 \star t \stackapp s \stackapp s' \stackapp \underline{0} \stackapp \pi \succ t \star s \stackapp s' \stackapp \pi$ while $r_1 \star t \stackapp s \stackapp s' \stackapp \underline{1} \stackapp \pi \succ r_s \star r_0 \stackapp \pi$, which is relatively straightforward to do. \\

    \noindent The second claim follows be a very similar argument to the one above. Namely, we take \linebreak[4] $t \Vdash \sing(\sing(x)) \not\simeq \sing(\sing(b))$, $(c, \underline{i} \stackapp \pi) \in \op(a, b)$, $s \Vdash \sing(\sing(b)) \subseteq c$ and $s' \Vdash c \subseteq \sing(\sing(b))$. If $c = \sing(\sing(b))$, then $i = 1$ and $t \star s \stackapp s' \stackapp \pi \in \Perp$.

    So suppose that $c = \up(\sing(a), \fullname{0})$. Then $s' \Vdash \up(\sing(a), \fullname{0}) \subseteq \sing(\sing(x))$ and the same argument as above gives us that there is a realizer $r_{s'}$ such that $r_{s'} \star r_0 \stackapp \pi \in \Perp$. The conclusion then follows from creating a realizer $r_2$ such that $r_2 \star t \stackapp s \stackapp s' \stackapp \underline{1} \stackapp \pi \succ t \star s \stackapp s' \stackapp \pi$ while $r_2 \star t \stackapp s \stackapp s' \stackapp \underline{0} \stackapp \pi \succ r_{s'} \star r_0 \stackapp \pi$.
\end{proof}

\noindent The following two propositions show that the function $\op$ is also compatible with extensional equality and thus gives a reasonable interpretation of ordered pairs.

\begin{proposition} \label{theorem:OrderedPairRespectsSimeq}
    It is realizable in $\rlzmodel$ that
    \[
    \forall x_1 \forall x_2 \forall y_1 \forall y_2 \, ( \op(x_1, y_1) \simeq \op(x_2, y_2) \rightarrow (x_1 \simeq x_2 \land y_1 \simeq y_2)).
    \]
\end{proposition}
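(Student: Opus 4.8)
The plan is to argue inside $\ZFepsilon$ --- equivalently, inside an arbitrary model of $\tf{T}^{\mathcal{A}, \tf{V}}$, which is legitimate by the remark in \Cref{sec: construction of realizability models}: that theory extends $\ZFepsilon$, contains every uniformly realizable statement, and is closed under classical deduction by \Cref{theorem:adequacy}. Each fact cited below is uniform in its parameters, hence available internally, and a uniform realizer for the proposition is obtained automatically from the internal proof.

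First I would record two facts that are immediate from the definition of $\up$: the realizers $\lambda u \lambdaapp \app{u}{\underline{0}}$ and $\lambda u \lambdaapp \app{u}{\underline{1}}$ witness $\forall x \forall y\,(x \rlzin \up(x,y))$ and $\forall x \forall y\,(y \rlzin \up(x,y))$, so by Property~\ref{ZFepsilonProperty:RlzinImpliesIn} we also get $x \in \up(x,y)$ and $y \in \up(x,y)$. Applying this to $\op(a,b) = \up(\up(\sing(a), \fullname{0}), \sing(\sing(b)))$ yields $\up(\sing(a), \fullname{0}) \in \op(a,b)$ and $\sing(\sing(b)) \in \op(a,b)$ for all $a, b \in \rlzstr$.

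Then I would fix $x_1, x_2, y_1, y_2$ and assume $\op(x_1, y_1) \simeq \op(x_2, y_2)$, so in particular $\op(x_1, y_1) \subseteq \op(x_2, y_2)$. Since $\up(\sing(x_1), \fullname{0}) \in \op(x_1, y_1)$, Property~\ref{ZFepsilonProperty:ExtensionalityB} gives $\up(\sing(x_1), \fullname{0}) \in \op(x_2, y_2)$; reading \Cref{theorem:UnorderedPairTechnical1} contrapositively (with a classical double-negation elimination, valid by \Cref{theorem:adequacy}) yields $\up(\sing(x_1), \fullname{0}) \simeq \up(\sing(x_2), \fullname{0})$. Combining this with $\sing(x_1) \in \up(\sing(x_1), \fullname{0})$ and Property~\ref{ZFepsilonProperty:ExtensionalityB} again gives $\sing(x_1) \in \up(\sing(x_2), \fullname{0})$, whence \Cref{theorem:UnorderedPairs4} together with $\sing(x_1) \not\simeq \fullname{0}$ (\Cref{theorem:SingularFunctionItem2}) gives $\sing(x_1) \simeq \sing(x_2)$, and \Cref{theorem:SingularFunctionItem5} then gives $x_1 \simeq x_2$. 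Symmetrically, $\sing(\sing(y_1)) \in \op(x_1, y_1) \subseteq \op(x_2, y_2)$, so the contrapositive of \Cref{theorem:UnorderedPairTechnical2} gives $\sing(\sing(y_1)) \simeq \sing(\sing(y_2))$, and two applications of \Cref{theorem:SingularFunctionItem5} yield $y_1 \simeq y_2$. Conjoining the two conclusions, say via \Cref{theorem:RealizingConjunction}, completes the argument.

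There is no deep obstacle remaining: the genuinely delicate point --- ruling out that $\up(\sing(x_1), \fullname{0})$, as an element of $\op(x_2, y_2) = \up(\up(\sing(x_2), \fullname{0}), \sing(\sing(y_2)))$, is extensionally equal to $\sing(\sing(y_2))$ instead of to $\up(\sing(x_2), \fullname{0})$ --- has already been absorbed into \Cref{theorem:UnorderedPairTechnical}, which is exactly where Wiener's coding pays off. What requires care is only instantiating the cited lemmas at the right arguments and consistently reading their implications in the classically equivalent direction one needs.
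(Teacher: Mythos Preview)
Your proposal is correct and follows essentially the same route as the paper: both arguments reduce to showing $\up(\sing(x_1),\fullname{0}) \in \op(x_2,y_2)$ and $\sing(\sing(y_1)) \in \op(x_2,y_2)$, invoke the contrapositive of \Cref{theorem:UnorderedPairTechnical}, and then peel off $\sing$ via \Cref{theorem:UnorderedPairs4}, \Cref{theorem:SingularFunctionItem2}, and \Cref{theorem:SingularFunctionItem5}. The only cosmetic difference is that the paper first computes an explicit realizer $\lambda u \lambdaapp \fapp{\app{t}{u}}{\underline{i}}$ for the membership facts before switching to internal reasoning, whereas you obtain those memberships directly from $x \rlzin \up(x,y)$ plus Properties~\ref{ZFepsilonProperty:RlzinImpliesIn} and~\ref{ZFepsilonProperty:ExtensionalityB}; both are legitimate and uniform.
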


\begin{proof}
    Fix $a_1, b_1, a_2$ and $b_2$ in $\rlzstr$. It will suffice to show that if $\rlzmodel \Vdash \op(a_1, b_1) \subseteq \op(a_2, b_2)$ (that it to say, some $t$ realizes this statement) then
    \[
    \rlzmodel \Vdash a_1 \simeq a_2 \qquad \text{and} \qquad \rlzmodel \Vdash b_1 \simeq b_2. 
    \]
    For the first of these statements, suppose that $t \Vdash \forall z (z \not\in \op(a_2, b_2) \rightarrow z \notrlzin \op(a_1, b_1))$. In particular, this means that 
    \[
    t \Vdash \up(\sing(a_1), \fullname{0}) \not\in \op(a_2, b_2) \rightarrow \up(\sing(a_1), \fullname{0}) \notrlzin \op(a_1, b_1).
    \]
    Now, $\falsity{\up(\sing(a_1), \fullname{0}) \notrlzin \op(a_1, b_1)} = \{\underline{0} \stackapp \pi \divline \pi \in \Pi\}$ which means that $t \star s \stackapp \underline{0} \stackapp \sigma \in \Perp$ whenever $\sigma \in \Pi$ and $s \Vdash \up(\sing(a_1), \fullname{0}) \not\in \op(a_2, b_2)$. From this, we can conclude that
    \[
    \lambda u \lambdaapp \fapp{\app{t}{u}}{\underline{0}} \Vdash \up(\sing(a_1), \fullname{0}) \in \op(a_2, b_2).
    \]
    We now work internally in any model, $\rlzmodel$, of the realizable theory $\tf{T}$. By the contrapositive of \Cref{theorem:UnorderedPairTechnical1},
    \[
    \rlzmodel \Vdash \up(\sing(a_1), \fullname{0}) \simeq \up(\sing(a_2), \fullname{0})
    \]
    Next, we obviously have that $\rlzmodel \Vdash \sing(a_1) \rlzin \up(\sing(a_1), \fullname{0})$ so, by \Cref{theorem:ZFepsilonStatements}, $\rlzmodel \Vdash \sing(a_1) \in \up(\sing(a_2), \fullname{0})$. However, by \Cref{theorem:SingularFunctionItem2}, $\rlzmodel \Vdash \sing(a_1) \not\simeq \fullname{0}$ so, by \Cref{theorem:UnorderedPairs4}, $\rlzmodel \Vdash \sing(a_1) \simeq \sing(a_2)$. Finally, by \Cref{theorem:SingularFunctionItem5}, $\rlzmodel \Vdash a_1 \simeq a_2$. \\

    \noindent The second of these statements follows from a very similar argument: With the same assumption on $t$ we have that $t \star s \stackapp \underline{1} \stackapp \sigma \in \Perp$ whenever $\sigma \in \Pi$ and $s \Vdash \sing(\sing(b_1)) \not\in \op(a_2, b_2)$ and therefore
    \[
    \lambda u \lambdaapp \fapp{\app{t}{u}}{\underline{1}} \Vdash \sing(\sing(b_1)) \in \op(a_2, b_2).
    \]
    Therefore, by the contrapositive of \Cref{theorem:UnorderedPairTechnical2}, $\rlzmodel \Vdash \sing(\sing(b_1)) \simeq \sing(\sing(b_2))$. Finally, two applications of \Cref{theorem:SingularFunctionItem5} gives us that $\rlzmodel \Vdash b_1 \simeq b_2$.
\end{proof}

\begin{proposition} \label{theorem:SimeqRespectsOrderedPair}
    It is realizable in $\rlzmodel$ that
    \[
    \forall x_1 \forall x_2 \forall y_1 \forall y_2 \, ( x_1 \simeq x_2 \rightarrow (y_1 \simeq y_2 \rightarrow \op(x_1, y_1) \simeq \op(x_2, y_2))).
    \]
\end{proposition}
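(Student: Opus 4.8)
The plan is to reduce this to the analogous statement for unordered pairs, \Cref{theorem:UnorderedPairRespectsSimeq}, together with the fact that $\sing$ respects extensional equivalence, \Cref{theorem:SingularFunctionItem4}. Since $\rlzmodel$ is closed under classical deduction by \Cref{theorem:adequacy} and satisfies $\ZFepsilon$ by \Cref{theorem:rlzmodelModelsZFepsilon}, it suffices to carry out the argument inside an arbitrary model of the realizable theory $\tf{T}$ and then appeal to adequacy for the existence of a realizer; alternatively one composes the realizers supplied by the cited results explicitly.

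So I would fix $a_1, a_2, b_1, b_2 \in \rlzstr$ and assume $a_1 \simeq a_2$ and $b_1 \simeq b_2$. First, \Cref{theorem:SingularFunctionItem4} applied to $a_1 \simeq a_2$ gives $\sing(a_1) \simeq \sing(a_2)$. Combining this with $\fullname{0} \simeq \fullname{0}$, which holds by reflexivity of $\simeq$ (\Cref{ZFepsilonProperty:SimeqIdentity}; a realizer for $\forall x(x \simeq x)$ is given by \Cref{theorem:RealizerofSimeq}), \Cref{theorem:UnorderedPairRespectsSimeq} yields $\up(\sing(a_1), \fullname{0}) \simeq \up(\sing(a_2), \fullname{0})$. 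Next, two applications of \Cref{theorem:SingularFunctionItem4} to $b_1 \simeq b_2$ give $\sing(\sing(b_1)) \simeq \sing(\sing(b_2))$. A final application of \Cref{theorem:UnorderedPairRespectsSimeq} to these two equivalences gives
\[
\up\big( \up(\sing(a_1), \fullname{0}), \sing(\sing(b_1)) \big) \simeq \up\big( \up(\sing(a_2), \fullname{0}), \sing(\sing(b_2)) \big),
\]
which by definition of $\op$ is precisely $\op(a_1, b_1) \simeq \op(a_2, b_2)$.

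For an explicit realizer one takes the (uniform) realizer $u_1$ of \Cref{theorem:SingularFunctionItem4}, the realizer $u_2$ of \Cref{theorem:UnorderedPairRespectsSimeq}, and a realizer $e_0$ of $\fullname{0} \simeq \fullname{0}$, and forms a $\lambda_c$-term which, given $t \Vdash a_1 \simeq a_2$ and $s \Vdash b_1 \simeq b_2$, feeds $\app{u_1}{t}$ and $e_0$ into $u_2$ to obtain a realizer of $\up(\sing(a_1),\fullname{0}) \simeq \up(\sing(a_2),\fullname{0})$, feeds $s$ through $u_1$ twice to obtain a realizer of $\sing(\sing(b_1)) \simeq \sing(\sing(b_2))$, and applies $u_2$ once more. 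Since each cited realizer is uniform in its parameters, the composite is a single realizer witnessing the proposition for all $a_1, a_2, b_1, b_2$.

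There is no genuine obstacle here: the mathematical content is entirely contained in \Cref{theorem:UnorderedPairRespectsSimeq} and \Cref{theorem:SingularFunctionItem4}, and what remains is the bookkeeping of chaining these together. The only point demanding a little care — if one wants an explicit term rather than an invocation of adequacy — is keeping track of which slot of each nested $\up$ and $\sing$ in the definition of $\op$ is being modified: the left entry $\fullname{0}$ of the inner pair is held fixed while the $\sing$ in the outer pair varies at both nesting levels.
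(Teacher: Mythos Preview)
Your proposal is correct and follows essentially the same approach as the paper: both reduce the claim to \Cref{theorem:SingularFunctionItem4} and \Cref{theorem:UnorderedPairRespectsSimeq} applied to the constituent pieces of $\op$. The only presentational difference is that the paper unfolds the outermost $\up$ by hand (casing on $\underline{i}$ in the falsity value of $\op(a_1,b_1)\subseteq\op(a_2,b_2)$) to write down an explicit realizer, whereas you invoke \Cref{theorem:UnorderedPairRespectsSimeq} once more at the top level and appeal to adequacy; the underlying argument is identical. (Minor slip: in your closing remark, $\fullname{0}$ is the \emph{right} entry of the inner pair $\up(\sing(a),\fullname{0})$, not the left.)
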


\begin{proof}
    The proof of this proposition will be very similar to proof of \Cref{theorem:UnorderedPairRespectsSimeq}. Fix $a_1, a_2, b_1$ and $b_2$ in $\rlzstr$. If will suffice to find a realizer $\theta$ such that
    \[
    \theta \Vdash a_1 \simeq a_2 \rightarrow (b_1 \simeq b_2 \rightarrow \op(a_1, b_1) \subseteq \op(a_2, b_2)).
    \]
    To do this, let $t \Vdash a_1 \simeq a_2$, $s \Vdash b_1 \simeq b_2$, $(c, \underline{i} \stackapp \pi) \in \op(a_1, b_1)$ and $r \Vdash c \not\in \op(a_2, b_2)$. If $i = 0$ then we must have that $c = \up(\sing(a_1), \fullname{0})$ and thus $r \Vdash \up(\sing(a_1), \fullname{0}) \not\in \op(a_2, b_2)$. Now, since $\falsity{\up(\sing(a_2), \fullname{0}) \notrlzin \op(a_2, b_2)} = \{ \underline{0} \stackapp \sigma \divline \sigma \in \Pi\}$, $\lambda u \lambdaapp \app{u}{\underline{0}} \Vdash \up(\sing(a_2), \fullname{0}) \rlzin \op(a_2, b_2)$. Next, using \Cref{theorem:SingularFunctionItem4} and \Cref{theorem:UnorderedPairRespectsSimeq} we have that 
    \[
    \rlzmodel \Vdash ( a_1 \simeq a_2 ) \rightarrow ( \sing(a_1) \simeq \sing(a_2) ) \rightarrow ( \up(\sing(a_1), \fullname{0}) \simeq \up(\sing(a_2), \fullname{0}) ),
    \]
    so we can find a realizer $v_t$ (dependent on $t$) such that
    $v_t \Vdash \up(\sing(a_1), \fullname{0})) \simeq \up(\sing(a_2, \fullname{0}))$.
    Hence, using the proof that Extensionality is realizable in $\rlzmodel$ from \Cref{section:RealzingZFepsilon}, we have that $\twoapp{\app{\identity}{v_t}}{\app{\lambda u \lambdaapp u}{\underline{0}}} \Vdash \up(\sing(a_1), \fullname{0}) \in \op(a_2, b_2)$ and thus
    \[
    \twoapp{\app{\identity}{v_t}}{\app{\lambda u \lambdaapp u}{\underline{0}}} \star r \stackapp \pi \in \Perp.
    \]
    In a similar way, if $i=1$ then $r \Vdash \sing(\sing(b_1)) \not\in \op(a_2, b_2)$ and we can find a realizer $v_s$ (dependent on $s$) such that $\twoapp{\app{\identity}{v_s}}{\app{\lambda u \lambdaapp u}{\underline{1}}} \Vdash \sing(\sing(b_1)) \in \op(a_2,b_2)$, giving $\twoapp{\app{\identity}{v_s}}{\app{\lambda u \lambdaapp u}{\underline{1}}} \star r \stackapp \pi \in \Perp$.

    One can see that the desired realizer is then
    
    \noindent $\lambda t \lambdaapp \lambda s \lambdaapp \lambda r \lambdaapp \lambda i \superfapp{\Bigfapp{\Biginapp{i}{\lambda w \lambdaapp \twoapp{\app{\identity}{v_s}}{\app{\lambda u \lambdaapp u}{\underline{1}}}}}{\twoapp{\app{\identity}{v_t}}{\app{\lambda u \lambdaapp u}{\underline{0}}}}}{r}$, which concludes the proof.
\end{proof}

\subsection{Cartesian Products}

\noindent To conclude this section, we see that this definition of ordered pairs allows us to define Cartesian products of recursive names.

\begin{definition}
    $c$ is said to be a \emph{Cartesian product} of $a$ and $b$ if:
    \begin{itemize}
        \item $\forall x \rlzin a \, \forall y \rlzin b \, (\op(x, y) \rlzin c)$;
        \item $\forall z (( \forall x \rlzin a \, \forall y \rlzin b \, \op(x, y) \not\simeq z) \rightarrow z \notrlzin c)$.
    \end{itemize}
\end{definition}

\noindent Observe that the second condition is equivalent to the claim that every element of $c$ is an ordered pair consisting of an element of $a$ and an element of $b$, that is $\forall z \rlzin c \, \exists x \rlzin a \, \exists y \rlzin b (z \simeq \op(x, y))$.

\begin{definition}
    Given sets $a$ and $b$ in the ground model define 
    
    \noindent $\ff{Cart}(a, b) \coloneqq \{ ( \op(\fullname{x}, \fullname{y}), \pi) \divline  x \in a, \, y \in b, \, \pi \in \Pi \}$.
\end{definition}

\begin{proposition}
    $\rlzmodel \Vdash \ff{Cart}(a, b) \text{ is the Cartesian product of } \fullname{a} \text{ and } \fullname{b}$.
\end{proposition}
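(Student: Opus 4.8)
The plan is to realize separately the two conjuncts appearing in the definition of ``$\ff{Cart}(a,b)$ is the Cartesian product of $\fullname{a}$ and $\fullname{b}$'' and then combine them using \Cref{theorem:RealizingConjunction}. The whole argument rests on a handful of trivial facts about falsity values: for $x\in a$ we have $\falsity{\fullname{x}\notrlzin\fullname{a}}=\Pi$, since $(\fullname{x},\pi)\in\fullname{a}$ for every $\pi\in\Pi$; likewise $\falsity{\op(\fullname{x},\fullname{y})\notrlzin\ff{Cart}(a,b)}=\Pi$ for $x\in a$, $y\in b$; and $\falsity{c\notrlzin\ff{Cart}(a,b)}=\emptyset$ for any name $c$ which is not of the form $\op(\fullname{x},\fullname{y})$ with $x\in a$, $y\in b$. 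In particular every bounded quantifier below can be unwound with \Cref{theorem:RealizingBoundedQuantifier}, whose side condition $\falsity{\varphi(b)}\subseteq\falsity{b\notrlzin a}$ is automatic here because the right-hand side is always $\Pi$.

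For the first conjunct $\forall x\rlzin\fullname{a}\,\forall y\rlzin\fullname{b}\,(\op(x,y)\rlzin\ff{Cart}(a,b))$, two applications of \Cref{theorem:RealizingBoundedQuantifier} reduce the task to producing a single realizer of $\op(\fullname{x},\fullname{y})\rlzin\ff{Cart}(a,b)$, uniformly over $x\in a$ and $y\in b$. Since this abbreviates $\op(\fullname{x},\fullname{y})\notrlzin\ff{Cart}(a,b)\rightarrow\perp$ and $\falsity{\op(\fullname{x},\fullname{y})\notrlzin\ff{Cart}(a,b)}=\Pi=\falsity{\perp}$, \Cref{theorem:falsitysubsets} shows $\identity$ realizes it, hence $\identity$ realizes the first conjunct.

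For the second conjunct $\forall z\,\big((\forall x\rlzin\fullname{a}\,\forall y\rlzin\fullname{b}\,\op(x,y)\not\simeq z)\rightarrow z\notrlzin\ff{Cart}(a,b)\big)$, by \Cref{theorem:realizinguniversals} it suffices to find one realizer $\theta$ that works for every name $c$ substituted for $z$. If $c$ is not of the form $\op(\fullname{x_0},\fullname{y_0})$ with $x_0\in a$, $y_0\in b$, then $\falsity{c\notrlzin\ff{Cart}(a,b)}=\emptyset$, so the implication has empty falsity value and any term realizes it. Otherwise write $c=\op(\fullname{x_0},\fullname{y_0})$ with $x_0\in a$, $y_0\in b$, and fix a realizer $r_{\simeq}\Vdash\forall z\,(z\simeq z)$ obtained from \Cref{theorem:RealizerofSubseteq} and \Cref{theorem:RealizerofSimeq}; then, using \Cref{theorem:ImplicationandApplication}, $\lambda w\lambdaapp\app{w}{r_{\simeq}}$ realizes $\neg(\op(\fullname{x_0},\fullname{y_0})\not\simeq\op(\fullname{x_0},\fullname{y_0}))$. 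Now, given a realizer $t$ of the hypothesis and $\pi\in\Pi$, I unfold the two restricted quantifiers, instantiate $x$ at $\fullname{x_0}$ and $y$ at $\fullname{y_0}$ via \Cref{theorem:realizinguniversals}, and pass the term $\lambda w\lambdaapp\app{w}{r_{\simeq}}$ through both instantiations of $t$; because the conclusions $\fullname{x_0}\notrlzin\fullname{a}$ and $\fullname{y_0}\notrlzin\fullname{b}$ each have falsity value $\Pi$ and may therefore be treated as $\perp$, a direct computation yields $t\star s\stackapp\pi\in\Perp$ where $s:=\lambda u\lambdaapp\app{u}{(\lambda w\lambdaapp\app{w}{r_{\simeq}})}$. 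Thus $\theta:=\lambda t\lambdaapp\app{t}{s}$ realizes the implication for this $c$; since it also works in the degenerate case, $\theta$ realizes the second conjunct, and combining $\identity$ and $\theta$ via \Cref{theorem:RealizingConjunction} produces the desired realizer.

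The main obstacle lies entirely in the second conjunct: the nested restricted quantifiers are abbreviations built from iterated negations, so the real care is in unwinding them in the correct order and checking that every falsity value met along the way — those of $\fullname{x_0}\notrlzin\fullname{a}$, $\fullname{y_0}\notrlzin\fullname{b}$ and $c\notrlzin\ff{Cart}(a,b)$ — is either empty or all of $\Pi$. This is exactly what collapses the computation and, pleasantly, makes any appeal to $\cc$ unnecessary.
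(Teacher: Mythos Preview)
Your argument is correct and follows the same strategy as the paper: realize each conjunct separately, with the second reducing to the observation that $r_{\simeq}\Vdash\op(\fullname{x_0},\fullname{y_0})\simeq\op(\fullname{x_0},\fullname{y_0})$ contradicts the instantiated hypothesis. The only difference is cosmetic: the paper works with the simplified restricted quantifier $\forall x^{\fullname{a}}$ of \Cref{definition:RestrictedQuantifier} (so the hypothesis instantiated at $\fullname{x_0},\fullname{y_0}$ is literally $\op(\fullname{x_0},\fullname{y_0})\not\simeq c$, and the realizer is just $\lambda u\lambdaapp\app{u}{\theta}$), whereas you unfold the full abbreviation $\forall x\rlzin\fullname{a}\,\varphi(x)\equiv\forall x(\neg\varphi(x)\rightarrow x\notrlzin\fullname{a})$, picking up one extra $\lambda$-layer per quantifier and arriving at $\lambda t\lambdaapp\app{t}{(\lambda u\lambdaapp\app{u}{(\lambda w\lambdaapp\app{w}{r_{\simeq}})})}$.
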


\begin{proof}
    We shall first prove that
    \[
    \identity \Vdash \forall w_0^{\fullname{a}} \, \forall w_1^{\fullname{b}} \, (\op(w_0, w_1) \notrlzin \ff{Cart}(a, b) \rightarrow \perp).
    \]
    To this end, fix $x \in a$, $y \in b$, $t \Vdash \op(\fullname{x}, \fullname{y}) \notrlzin \ff{Cart}(a, b)$ and $\pi \in \Pi$. Then, by definition of \ff{Cart}, $\falsity{\op(\fullname{x}, \fullname{y}) \notrlzin \ff{Cart}(a, b)} = \Pi$ and thus $\identity \star t \stackapp \pi \succ t \star \pi \in \Perp$.

    For the second claim, we shall prove that
    \[
    \lambda u \lambdaapp \app{u}{\theta} \Vdash \forall z (( \forall w_0^{\fullname{a}} \, \forall w_1^{\fullname{b}} \, \op(w_0, w_1) \not\simeq z) \rightarrow z \notrlzin \ff{Cart}(a, b)),
    \]
    where $\theta$ is a fixed realizer such that $\theta \Vdash \forall x (x \simeq x)$ (for an example of such a realizer see \Cref{theorem:RealizerofSimeq}). 
    
    For this, fix $(\op(\fullname{x}, \fullname{y}), \pi) \in \ff{Cart}(a, b)$ and $t \Vdash \forall w_0^{\fullname{a}} \, \forall w_1^{\fullname{b}} \, \op(w_0, w_1) \not\simeq \op(\fullname{x}, \fullname{y})$. Then, in particular, we have that $t \Vdash \op(\fullname{x}, \fullname{y}) \not\simeq \op(\fullname{x}, \fullname{y})$ and thus $t \star \theta \stackapp \pi \in \Perp$.
\end{proof}

\section{Functions} \label{section:Functions}

In its most naive form, a function is a collection of ordered pairs such that whenever the first co-ordinates are equal then so are the second co-ordinates. Due to there being two versions of ``equality'' in $\ZFepsilon$ this gives rise to multiple different interpretations of what a function should be.

\begin{definition}[$\ZFepsilon$]
    A $\rlzin$-function with domain $a$ is a set $f$ satisfying:
    \begin{itemize}
        \item ($\rlzin$-totality) $\forall x \rlzin a \, \exists y \, \op(x, y) \rlzin f$;
        \item $\forall x \rlzin a \, \forall y, y' (\op(x, y) \rlzin f \land \op(x, y') \rlzin f \rightarrow y = y')$.
    \end{itemize}
    We will write $\rlzin\ff{-Func}(f)$ to indicate that $f$ is a $\rlzin$-function.
\end{definition}

\begin{definition}[$\ZFepsilon$]
    A $\in$-function with domain $a$ is a set $f$ satisfying:
    \begin{itemize}
        \item ($\in$-totality) $\forall x \in a \, \exists y \, \op(x, y) \in f$;
        \item $\forall x \in a \, \forall y, y' (\op(x, y) \in f \land \op(x, y') \in f \rightarrow y \simeq y')$.
    \end{itemize}
    We will write $\in\ff{-Func}(f)$ to indicate that $f$ is a $\in$-function.
\end{definition}

\begin{remark}
    $b$ is said to be a co-domain of a $\rlzin$-function $f$ if whenever $\op(x, y) \rlzin f$ then $y \rlzin b$ (and similarly for $\in$-functions). When we wish to identify such a $b$ we will alternatively write $f$ as $f \colon a \rightarrow b$, and it will be clear from the context what type of function $f$ is.
\end{remark}

\begin{remark}
    The definition of functions given here is perhaps not the most accurate definition because we have not required that every element of $f$ is an ordered pair and the first component of such an ordered pair is an element of $a$. This could easily be added to the definition, however it would make proving that a given set in $\rlzstr$ is a function more tedious without adding any useful information. Instead we note that for any reasonable definition of function we can restrict the set using $\rlzin$-Separation to only look at relevant ordered pairs.

    Also, on occasion it will be simpler to phrase a function as a binary relation satisfying the relevant translations of the conditions, for example $\forall x \rlzin a \, \forall y, y' (f(x, y) \land f(x, y') \rightarrow y = y')$. Clearly, given a function defined in this way we can find a set representing this function in the obvious way.
\end{remark}

\noindent In most of the conservativity results between \tf{ZF} and $\ZFepsilon$ one shows that the $\epsilon$ definition of a concept directly relates to the $\in$ definition. However, this is not the case here. For example, take two distinct sets $x$ and $y$ which are extensionally equal (such as $\{ (a, \pi) \divline \pi \in \Pi \}$ and $\{ (a, \underline{0} \stackapp \pi) \divline \pi \in \Pi \}$). Then there is a $\rlzin$-function $f \colon \{ x, y \} \times \Pi \rightarrow \fullname{2}$ given by sending $x$ to $\fullname{0}$ and $y$ to $\fullname{1}$. However, this does not translate to a $\in$-function because such a function would need to send $x$ to both $\fullname{0}$ and $\fullname{1}$ which are not extensionally equal sets. Instead, for conservativity results, we need an intermediate notion of function.

\begin{definition}[$\ZFepsilon$]
    An \emph{extensional} function with domain $a$ is a set $f$ satisfying:
    \begin{itemize}
        \item ($\rlzin$-totality) $\forall x \rlzin a \, \exists y \, \op(x, y) \rlzin f$;
        \item $\forall x, x' \rlzin a \, \forall y, y' (x \simeq x' \land \op(x, y) \rlzin f \land \op(x', y') \rlzin f \rightarrow y \simeq y')$.
    \end{itemize}
    We will write $\ff{Ext-Func}(f)$ to indicate that $f$ is an  extensional function.
\end{definition}

\begin{remark}
In the definitions of $\rlzin$-functions and $\in$-functions we only needed to assume that for every $x$ in $a$ and $y, y'$ if $f(x, y)$ and $f(x, y')$ both hold then $y$ and $y'$ are equal. On the other hand, in the definition of an extensional function we have needed to strengthen this condition to the claim that if $x \simeq x'$ and both $f(x, y)$ and $f(x', y')$ hold then $y\simeq y'$. It is relatively straight forward to see that the additional $x'$ is not needed in either of the first two definitions, however its inclusion for extensional functions is needed to ensure that they translate cleanly to $\in$-functions.
\end{remark}

\begin{observation}
    Suppose that $f$ is a $\in$-function with domain $a$. Let $g$ be the binary relation defined by 
    \[
    \op(x, y) \rlzin g \Longleftrightarrow x \rlzin a \land \op(x, y) \in f.
    \]
    Then $g$ is an extensional function with domain $a$. Similarly, if $g$ is an extensional function with domain $a$ and $f$ is the binary relation defined by
    \[
    \op(x, y) \in f \Longleftrightarrow \exists c \rlzin a ( x \simeq c \land \op(c, y) \rlzin g,
    \]
    then $f$ is a $\in$-function with domain $a$.
\end{observation}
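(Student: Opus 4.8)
For the first direction, assume $f$ is an $\in$-function with domain $a$ and take $g$ as defined. For $\rlzin$-totality: if $x \rlzin a$ then $x \in a$ by \ref{ZFepsilonProperty:RlzinImpliesIn}, so $\in$-totality of $f$ yields $y$ with $\op(x,y) \in f$, hence $\op(x,y) \rlzin g$. For the uniqueness clause, suppose $x, x' \rlzin a$, $x \simeq x'$, $\op(x,y) \rlzin g$ and $\op(x',y') \rlzin g$; then $x, x' \in a$ and $\op(x,y), \op(x',y') \in f$. From $x \simeq x'$ and $y \simeq y$ (\ref{ZFepsilonProperty:SimeqIdentity}), \Cref{theorem:SimeqRespectsOrderedPair} gives $\op(x,y) \simeq \op(x',y)$, so $\op(x',y) \in f$ by \Cref{theorem:EquivalentSetsProveSameThings}, and applying the uniqueness clause of the $\in$-function $f$ at the point $x'$ to $\op(x',y) \in f$ and $\op(x',y') \in f$ gives $y \simeq y'$.

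For the second direction, assume $g$ is an extensional function with domain $a$ and take $f$ as defined. For $\in$-totality: given $x \in a$, the first clause of $\rlzin$-Extensionality provides $c \rlzin a$ with $x \simeq c$; $\rlzin$-totality of $g$ then provides $y$ with $\op(c,y) \rlzin g$, whence $\exists c \rlzin a\,(x \simeq c \land \op(c,y) \rlzin g)$ and so $\op(x,y) \in f$. For the uniqueness clause, suppose $x \in a$, $\op(x,y) \in f$ and $\op(x,y') \in f$; unwinding the definition of $f$ produces $c \rlzin a$ with $x \simeq c$, $\op(c,y) \rlzin g$, and $c' \rlzin a$ with $x \simeq c'$, $\op(c',y') \rlzin g$. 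Transitivity of $\simeq$ (\ref{ZFepsilonProperty:SimeqTransitive}) applied to $x \simeq c$ and $x \simeq c'$ gives $c \simeq c'$, and then the extensionality clause of the extensional function $g$, applied at the points $c$ and $c'$, delivers $y \simeq y'$.

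Before carrying this out one small point must be dispatched: in both directions $g$ and $f$ are presented as binary relations, so one first invokes the remark following the definition of $\rlzin$-functions to pass to genuine sets with domain $a$ representing those relations (explicitly, one can carve them out using $\rlzin$-Separation and $\rlzin$-Collection). After that the whole verification amounts to tracking which canonical ordered pairs $\op(\cdot,\cdot)$ are $\rlzin$- or $\in$-members, using only the elementary facts in \Cref{theorem:ZFepsilonStatements}, the compatibility of $\op$ with $\simeq$ from \Cref{theorem:SimeqRespectsOrderedPair}, and the interchangeability of $\simeq$-equal sets in $\{\in,\simeq\}$-formulas from \Cref{theorem:EquivalentSetsProveSameThings}.

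The argument is routine, and there is no real obstacle; the one point worth flagging is that the second direction genuinely needs the \emph{extensional} function condition rather than the weaker $\rlzin$-function condition, because the witnesses $c$ and $c'$ obtained from $\op(x,y)\in f$ and $\op(x,y')\in f$ are only $\simeq$-equal, not $\rlzin$-equal, so one is forced to compare $g$ at two distinct $\rlzin$-elements of $a$ — which is exactly the strengthening built into the definition of an extensional function and the subtlety anticipated by the remark preceding the observation. The only other care needed is the routine matter, noted above, of passing from the defining relations to actual sets with domain $a$.
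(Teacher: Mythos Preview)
Your proof is correct. The paper states this result as an \emph{observation} and supplies no proof at all, so there is nothing to compare against; your argument fills in precisely the routine verification the paper omits, using the expected ingredients (\Cref{theorem:ZFepsilonStatements}, \Cref{theorem:SimeqRespectsOrderedPair}, \Cref{theorem:EquivalentSetsProveSameThings}) and correctly isolating why the second direction needs the full extensional-function hypothesis rather than the weaker $\rlzin$-function one.
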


\noindent As an example of conservativity, we briefly discuss a definition of \emph{cardinals} in $\ZFepsilon$.

\begin{definition}
    A relation $f$ with domain $a$ is said to be a $\rlzin$-injection if 
    \[
    \forall x, x' \rlzin a \, \forall y \, \big( \op(x, y) \rlzin f \land \op(x', y) \rlzin f \rightarrow x \simeq x' \big).
    \]
    A relation $f$ with domain $a$ is said to be a $\rlzin$-surjection on $b$ if
    \[
    \forall y \rlzin b \, \exists x \rlzin a \, \op(x, y) \rlzin f.
    \]
\end{definition}

\begin{definition}
    An ordinal $a$ is said to be a $\rlzin$\emph{-cardinal} if for every $b \rlzin a$ there is no extensional function which is a $\rlzin$-surjection of $b$ onto $a$.
\end{definition}

\begin{proposition}
    If $a$ is a $\rlzin$-cardinal, then it is a $\in$-cardinal.
\end{proposition}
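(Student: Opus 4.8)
The plan is to argue by contraposition inside $\ZFepsilon$, transporting a putative $\in$-surjection onto $a$ across the extensional collapse in order to manufacture a forbidden extensional $\rlzin$-surjection. Throughout, ``$a$ is an $\in$-cardinal'' is read as the $\in$-relativisation of the definition: $a$ is an $\in$-ordinal such that for no $b \in a$ is there an $\in$-function which is an $\in$-surjection of $b$ onto $a$. The ordinal clause is free, since a $\rlzin$-cardinal is by definition a $\rlzin$-ordinal, hence an $\in$-ordinal by the proposition on ordinals above.

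So I would suppose $a$ is \emph{not} an $\in$-cardinal and fix $b \in a$ together with an $\in$-function $f$ with domain $b$ that is an $\in$-surjection of $b$ onto $a$. By $\rlzin$-Extensionality, $b \in a$ supplies some $b' \rlzin a$ with $b \simeq b'$, hence $b' \simeq b$ by \ref{ZFepsilonProperty:SimeqReflective}; and $b \simeq b'$ (i.e.\ $b \subseteq b'$ and $b' \subseteq b$) gives $\forall x(x \in b \leftrightarrow x \in b')$ by \ref{ZFepsilonProperty:SubsetDefinition}, so $f$ is equally an $\in$-function with domain $b'$. Applying the observation that an $\in$-function with domain $d$ induces an extensional function with domain $d$ via $\op(x,y) \rlzin g \Leftrightarrow x \rlzin d \land \op(x,y) \in f$, taken with $d := b'$, the relation $g$ defined by $\op(x,y) \rlzin g \Leftrightarrow x \rlzin b' \land \op(x,y) \in f$ is an extensional function with domain $b'$ (and is represented by a set, as noted after that definition).

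It then remains to verify that $g$ is a $\rlzin$-surjection of $b'$ onto $a$, i.e.\ $\forall y \rlzin a \, \exists x \rlzin b' \, \op(x,y) \rlzin g$. Fixing $y \rlzin a$, we get $y \in a$ by \ref{ZFepsilonProperty:RlzinImpliesIn}, so surjectivity of $f$ yields $x \in b$ with $\op(x,y) \in f$; since $x \in b$ and $b \subseteq b'$, Property \ref{ZFepsilonProperty:ExtensionalityB} gives $x \in b'$, whence by $\rlzin$-Extensionality there is $x' \rlzin b'$ with $x \simeq x'$. Now \Cref{theorem:SimeqRespectsOrderedPair} applied to $x \simeq x'$ and $y \simeq y$ (from \ref{ZFepsilonProperty:SimeqIdentity}) gives $\op(x,y) \simeq \op(x',y)$, so $\op(x',y) \in f$ by \ref{ZFepsilonProperty:ExtensionalityA}, and therefore $\op(x',y) \rlzin g$ with $x' \rlzin b'$, as required. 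This exhibits, for $b' \rlzin a$, an extensional function that is a $\rlzin$-surjection of $b'$ onto $a$, contradicting the assumption that $a$ is a $\rlzin$-cardinal; hence $a$ is an $\in$-cardinal. The only delicate point is this surjectivity transfer: one must keep straight that the surjection hypothesis on $f$ lives over the $\in$-domain $b$ while the constructed $g$ has $\rlzin$-domain $b'$, and push witnesses through the $\op$-respects-$\simeq$ lemma and extensionality in the right order; everything else is routine bookkeeping with \Cref{theorem:ZFepsilonStatements} and the translation between $\in$-functions and extensional functions.
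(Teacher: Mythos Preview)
Your argument is correct and follows the same contrapositive strategy as the paper: assume $a$ fails to be an $\in$-cardinal via some $b \in a$ and $\in$-surjection $f$, pick $b' \rlzin a$ with $b \simeq b'$, and manufacture an extensional $\rlzin$-surjection from $b'$ onto $a$.

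The only difference is in how the witnessing $g$ is built. You invoke the paper's own Observation directly, taking $\op(x,y) \rlzin g \Leftrightarrow x \rlzin b' \land \op(x,y) \in f$, so that extensionality of $g$ comes for free; you then pay for this in the surjectivity step with an explicit appeal to \Cref{theorem:SimeqRespectsOrderedPair} to push the witness $x \in b$ across to $x' \rlzin b'$. The paper instead defines $g$ by $\op(x,y) \rlzin g \Leftrightarrow \exists c \in a\,(\op(x,c) \in f \land c \simeq y)$, which bakes the $\simeq$-closure into the second coordinate, and then verifies totality, extensionality, and surjectivity from scratch. Your route is marginally more economical in that it reuses the Observation rather than reproving its content; the paper's route avoids naming the $\op$-respects-$\simeq$ lemma explicitly but still uses it implicitly (in the step ``since $f$ is a function and $t \simeq d$, $y \simeq c$''). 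Neither approach is materially simpler than the other.
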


\begin{proof}
    Let $a$ be an $\rlzin$-ordinal. We shall show that if $a$ is not a $\in$-cardinal then $a$ is not a $\rlzin$-cardinal. To this end, suppose that $b \in a$ and $f \colon b \rightarrow a$ was a $\in$-surjection. Fix $z \rlzin a$ such that $b \simeq z$ and define a binary relation $g$ by 
    \[
    \op(x, y) \rlzin g \Longleftrightarrow \exists c \in a \, \big( \op(x, c) \in f \land c \simeq y \big).
    \]
    We shall show that $g$ defines a $\rlzin$-surjection from $z$ onto $a$. There are three things to prove: that $g$ is $\rlzin$-total on $a$; $g$ is an extensional function; and $g$ is a $\rlzin$-surjection.

    For the first claim, take $x \rlzin z$. Then $x \in z$ and therefore $x \in b$, by \Cref{ZFepsilonProperty:ExtensionalityB}. Since $f$ is $\in$-total, we can fix $c \in a$ such that $\op(x, c) \in f$ and then fix $y \rlzin a$ such that $y \simeq c$. Hence, we have $\op(x,y) \rlzin g$.

    For the second claim, fix $x, x' \rlzin b$, $y, y' \rlzin a$ and suppose that $\op(x, y) \rlzin g$, $\op(x', y') \rlzin g$ and $x \simeq x'$. Fix $c, c' \in a$ such that $\op(x, c) \in f$, $\op(x',c') \in f$, $c \simeq y$ and $c' \simeq y'$. Since $f$ is a $\in$-function, $x, x' \in b$ and $x \simeq x'$, we must have that $c \simeq c'$. Therefore $y \simeq c \simeq c' \simeq y'$.

    For the final claim, fix $y \rlzin a$. Then $y \in a$ so, since $f$ is a $\in$-surjection, we can fix $t \in b$ such that $\op(t, y) \in f$ holds. Since $t \in b$ and $b \simeq z$, $t \in z$. Therefore, there exists some $d \rlzin z$ such that $t \simeq d$. Since $g$ is $\rlzin$-total, we can fix $y' \rlzin a$ such that $\op(d, y') \rlzin g$ and then, by definition, $c \in a$ such that $\op(d, c) \in f$ and $c \simeq y'$. Finally, since $f$ is a function and $t \simeq d$, $y \simeq c \simeq y'$, from which we can conclude that $\op(d, y) \rlzin g$ as required.
\end{proof}

\section{Lifting Functions} \label{section:LiftingFunctions}

Here we give a general framework to lift functions from the ground model to the realizability model, the idea being that they should extend the (natural interpretations of the) function from the ground model. For example, if $f \colon 2 \rightarrow 2$ is a ground model function then there should be some $\rlzin$-function $\lift{f} \colon \fullname{2} \rightarrow \fullname{2}$ such that $\rlzmodel \Vdash \lift{f}(\fullname{i}) = \fullname{f(i)}$ for $i \in \{0, 1\}$. 

\begin{definition}
    Suppose that $a, b \in \tf{V}$ and $f \colon a \rightarrow b$ is a function. We define the \emph{lift} of $f$, $\lift{f}$ as 
    \[
        \lift{f} \coloneqq \{ (\op(\fullname{c}, \fullname{f(c)}), \pi) \divline c \in a, \pi \in \Pi \}.
    \]    
\end{definition}

\noindent We begin by showing that $\lift{f}$ is a lift of the function $f$ in some precise sense:

\begin{proposition} For any $c$ in $a$,
    \[
    \identity \Vdash \forall z \big( \op(\fullname{c}, z) \rlzin \lift{f} \rightarrow \fullname{f(c)} = z \big) \text{ and } \identity \Vdash \forall z \big( \fullname{f(c)} = z \inclusion \op(\fullname{c}, z) \rlzin \lift{f} \big).
    \]
\end{proposition}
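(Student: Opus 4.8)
The whole argument reduces to a single falsity-value computation. Fix $c \in a$. By the definition of $\lift{f}$ and of $\falsity{\cdot}$ on atomic $\notrlzin$-formulas, for every $z \in \rlzstr$ we have $\falsity{\op(\fullname{c},z)\notrlzin\lift{f}} = \{ \pi \in \Pi : (\op(\fullname{c},z),\pi) \in \lift{f}\}$, and $(\op(\fullname{c},z),\pi) \in \lift{f}$ holds precisely when $\op(\fullname{c},z) = \op(\fullname{c'},\fullname{f(c')})$ for some $c' \in a$. Since $\sing$, $\up$ and $\fullname{\cdot}$ are injective operations on $\tf{V}$ (a routine verification, in which one uses $\Pi \neq \emptyset$), so is $\op$, and peeling off the layers of its definition shows that $\op(\fullname{c},z) = \op(\fullname{c'},\fullname{f(c')})$ forces $c = c'$ and $z = \fullname{f(c)}$. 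Hence
\[
\falsity{\op(\fullname{c},z)\notrlzin\lift{f}} = \begin{cases} \Pi & \text{if } z = \fullname{f(c)},\\ \emptyset & \text{otherwise,}\end{cases}
\]
and the right-hand side is exactly $\falsity{\fullname{f(c)}\neq z}$. I would isolate this equality (and, if convenient, the injectivity of $\op$) as a preliminary observation.

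For the first assertion, by \Cref{theorem:realizinguniversals} it suffices to fix $z \in \rlzstr$ and show that $\identity$ realizes $\op(\fullname{c},z)\rlzin\lift{f} \rightarrow \fullname{f(c)} = z$, i.e.\ that $\identity$ realizes $(\op(\fullname{c},z)\notrlzin\lift{f}\rightarrow\perp)\rightarrow(\fullname{f(c)}\neq z\rightarrow\perp)$. The computation shows $\op(\fullname{c},z)\notrlzin\lift{f}$ and $\fullname{f(c)}\neq z$ have the same falsity value, hence so do $\op(\fullname{c},z)\rlzin\lift{f}$ and $\fullname{f(c)} = z$; in particular the falsity value of the conclusion is contained in that of the premise, so \Cref{theorem:falsitysubsets} applies.

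For the second assertion, by \Cref{theorem:realizinguniversals} again fix $z \in \rlzstr$ and show that $\identity$ realizes $\fullname{f(c)} = z \inclusion \op(\fullname{c},z)\rlzin\lift{f}$. If $z \neq \fullname{f(c)}$ this formula has falsity value $\falsity{\top} = \emptyset$ and is realized by any term. If $z = \fullname{f(c)}$ its falsity value equals $\falsity{\op(\fullname{c},\fullname{f(c)})\rlzin\lift{f}} = \falsity{\op(\fullname{c},\fullname{f(c)})\notrlzin\lift{f}\rightarrow\perp}$; since the computation gives $\falsity{\op(\fullname{c},\fullname{f(c)})\notrlzin\lift{f}} = \Pi = \falsity{\perp}$, \Cref{theorem:falsitysubsets} yields $\identity \Vdash \op(\fullname{c},\fullname{f(c)})\notrlzin\lift{f}\rightarrow\perp$, which is precisely $\identity \Vdash \op(\fullname{c},\fullname{f(c)})\rlzin\lift{f}$. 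The argument is routine once the displayed computation is in place; the only mildly delicate points are the injectivity bookkeeping for $\op$ (with the harmless standing assumption $\Pi \neq \emptyset$) and unfolding the nested abbreviations $\rlzin$, $=$ and $\inclusion$ carefully, so that the falsity-value comparisons feeding \Cref{theorem:falsitysubsets} are set up as above.
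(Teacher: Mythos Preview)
Your proof is correct and follows essentially the same route as the paper's: both rest on the computation $\falsity{\op(\fullname{c},z)\notrlzin\lift{f}} = \falsity{\fullname{f(c)}\neq z}$ and then unwind the two assertions from it. You are slightly more careful than the paper in spelling out the injectivity of $\op$ (and the ambient assumption $\Pi\neq\emptyset$) needed for that computation, and you package the conclusion via \Cref{theorem:falsitysubsets} where the paper unfolds the evaluation by hand, but these are cosmetic differences.
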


\begin{proof}
    For the first claim, suppose that $t \Vdash \op(\fullname{c}, z) \rlzin \lift{f}$, $s \Vdash \fullname{f(c)} \neq z$ and $\pi \in \Pi$. We shall show that $s \Vdash \op(\fullname{c}, z) \notrlzin \lift{f}$ from which it shall follow that $\identity \star t \stackapp s \stackapp \pi \succ t \star s \stackapp \pi \in \Perp$. But this follows from the observation that $\falsity{\op(\fullname{c}, z) \notrlzin \lift{f}} = \Pi$ if $z = \fullname{f(c)}$ and equals $\emptyset$ otherwise, which means that $\falsity{\op(\fullname{c}, z) \notrlzin \lift{f}} = \falsity{\fullname{f(c)} \neq z}$.

    For the second claim, if $\fullname{f(c)} \neq z$ then $\falsity{\fullname{f(c)} = z \inclusion \op(\fullname{c}, z) \rlzin \lift{f}} = \emptyset$ and thus $\identity$ trivially realizes this. Otherwise, $\fullname{f(c)} = z$ and $\falsity{\fullname{f(c)} = \fullname{f(c)} \inclusion \op(\fullname{c}, \fullname{f(c)}) \rlzin \lift{f}} = \falsity{\op(\fullname{c}, \fullname{f(c)}) \rlzin \lift{f}} = \{t \stackapp \pi \divline \pi \in \Pi, \forall \sigma \in \Pi (t \star \sigma \in \Perp) \}$. Therefore, for any such $t$ and $\pi$, $\identity \star t \stackapp \pi \in \Perp$.
\end{proof}

\begin{proposition}
    $\lambda u \lambdaapp \lambda v \lambdaapp \app{v}{u} \Vdash \forall x^{\fullname{a}} \forall y \forall y' (y \neq y' \rightarrow \op(x, y) \rlzin \lift{f} \rightarrow \op(x, y') \notrlzin \lift{f}).$ Therefore, $\lift{f}$ is realized to be a $\rlzin$-function with domain $\fullname{a}$.
\end{proposition}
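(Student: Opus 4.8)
The plan is to verify the displayed assertion by a direct computation against an arbitrary stack in its falsity value, and then to read off $\rlzin$-function-hood with the help of the adequacy lemma. Unfolding the definitions, $\falsity{\forall x^{\fullname{a}} \forall y \forall y' (y \neq y' \rightarrow \op(x,y)\rlzin\lift{f} \rightarrow \op(x,y')\notrlzin\lift{f})}$ consists precisely of the stacks $t_0 \stackapp t_1 \stackapp \pi$ for which there are $c \in a$ and $d, d' \in \rlzstr$ with $t_0 \Vdash d \neq d'$, $t_1 \Vdash \op(\fullname{c}, d)\rlzin\lift{f}$ and $\pi \in \falsity{\op(\fullname{c}, d')\notrlzin\lift{f}}$. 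Since $\lambda u \lambdaapp \lambda v \lambdaapp \app{v}{u} \star t_0 \stackapp t_1 \stackapp \pi \succ t_1 \star t_0 \stackapp \pi$, and $t_1 \Vdash \op(\fullname{c},d)\notrlzin\lift{f} \rightarrow \perp$ with $\falsity{\perp} = \Pi$, it suffices to show $t_0 \Vdash \op(\fullname{c},d)\notrlzin\lift{f}$.

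The key observation is that, by injectivity of the set-operations $\op$ and $\fullname{\cdot}$ (an easy consequence of the definitions, using $\Pi \neq \emptyset$ and that $\underline{0} \not\equiv_\alpha \underline{1}$), the set $\falsity{\op(\fullname{c},e)\notrlzin\lift{f}} = \{\pi \mid (\op(\fullname{c},e),\pi)\in\lift{f}\}$ is non-empty only when $e = \fullname{f(c)}$, in which case it equals all of $\Pi$. Applying this with $e = d'$, the existence of our $\pi$ forces $d' = \fullname{f(c)}$. Now split on $d$: if $d \neq \fullname{f(c)}$ then $\falsity{\op(\fullname{c},d)\notrlzin\lift{f}} = \emptyset$ and $t_0 \Vdash \op(\fullname{c},d)\notrlzin\lift{f}$ holds vacuously; if instead $d = \fullname{f(c)}$ then $d$ and $d'$ are the same element of $\rlzstr$, so $\falsity{d \neq d'} = \Pi$, whence $t_0 \Vdash \perp$ and hence $t_0$ realizes every formula (as $\falsity{\perp} = \Pi \supseteq \falsity{\chi}$ for all $\chi$), in particular $\op(\fullname{c},d)\notrlzin\lift{f}$. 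In both cases $t_1 \star t_0 \stackapp \pi \in \Perp$ by the hypothesis on $t_1$, establishing the displayed claim.

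For the concluding sentence I would check the two clauses of the definition of a $\rlzin$-function with domain $\fullname{a}$. For $\rlzin$-totality, instantiating the preceding proposition's second realizer statement at $z := \fullname{f(c)}$ and using that $\falsity{b = b \inclusion \psi} = \falsity{\psi}$ gives $\identity \Vdash \op(\fullname{c}, \fullname{f(c)})\rlzin\lift{f}$ for every $c \in a$; hence by \Cref{theorem:realizingexistentials} the single term $\lambda u \lambdaapp \app{u}{\identity}$ realizes $\exists y\, \op(\fullname{c}, y)\rlzin\lift{f}$ for all $c \in a$, and since $\falsity{\fullname{c}\notrlzin\fullname{a}} = \Pi$ contains every stack, \Cref{theorem:RealizingBoundedQuantifier} yields $\lambda u \lambdaapp \app{u}{\identity} \Vdash \forall x \rlzin \fullname{a}\, \exists y\, \op(x,y)\rlzin\lift{f}$. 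For the uniqueness clause, \Cref{theorem:RealizingBoundedUniversals}(i) converts the displayed statement into a realizer of $\forall x \rlzin \fullname{a}\, \forall y \forall y'(y\neq y' \rightarrow \op(x,y)\rlzin\lift{f} \rightarrow \op(x,y')\notrlzin\lift{f})$; since its matrix is classically equivalent to $\op(x,y)\rlzin\lift{f} \land \op(x,y')\rlzin\lift{f} \rightarrow y = y'$ (both being propositionally equivalent to the disjunction of $\op(x,y)\notrlzin\lift{f}$, $\op(x,y')\notrlzin\lift{f}$ and $y = y'$), the adequacy lemma \Cref{theorem:adequacy} together with \Cref{theorem:ImplicationandApplication} yields a realizer of the $\rlzin$-function uniqueness axiom.

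The main obstacle here is not conceptual but bookkeeping: keeping the two equalities $=$ and $\simeq$ strictly apart, remembering that $\falsity{a \neq b}$ is $\Pi$ exactly when $a$ and $b$ are syntactically the same element of $\rlzstr$ (and $\emptyset$ otherwise), and getting the injectivity of $\op$ and $\fullname{\cdot}$ right — this injectivity is precisely what pins $d'$ down to $\fullname{f(c)}$ and thereby makes the compact realizer $\lambda u \lambdaapp \lambda v \lambdaapp \app{v}{u}$ do its job.
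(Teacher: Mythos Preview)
Your proof is correct and follows essentially the same argument as the paper: fix $c\in a$, $d,d'\in\rlzstr$, reduce $\lambda u\lambdaapp\lambda v\lambdaapp\app{v}{u}\star t_0\stackapp t_1\stackapp\pi\succ t_1\star t_0\stackapp\pi$, deduce $d'=\fullname{f(c)}$ from $(\op(\fullname{c},d'),\pi)\in\lift{f}$, and split on whether $d=\fullname{f(c)}$. Your write-up is in fact more careful than the paper's in two places: you make explicit the injectivity of $\op$ and $\fullname{\cdot}$ (which the paper silently uses when it says ``by definition of $\lift f$, we must have $d'=\fullname{f(c)}$''), and you actually spell out the ``Therefore'' clause by deriving $\rlzin$-totality and converting the displayed statement into the uniqueness clause via adequacy, whereas the paper leaves that sentence unjustified in the proof body.
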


\begin{proof}
    Fix $c \in a$, $d, d' \in \rlzstr$ and suppose that $t \Vdash d \neq d'$, $s \Vdash \op(\fullname{c}, d) \rlzin \lift{f}$ while $(\op(\fullname{c}, d'), \pi) \in \lift{f}$. By definition of $\lift{f}$, we must have that $d' = \fullname{f(c)}$. There are now two cases; $d \neq \fullname{f(c)}$ and $d = \fullname{f(c)}$.

    If $d \neq \fullname{f(c)}$ then $\falsity{\op(\fullname{c}, d) \notrlzin \lift{f}} = \emptyset$ and therefore $\falsity{\op(\fullname{c}, d) \rlzin \lift{f}} = \Lambda \star \Pi$. From this it follows that $\lambda u \lambdaapp \lambda v \lambdaapp \app{v}{u} \star t \stackapp s \stackapp \pi \succ s \star t \stackapp \pi \in \Perp$.

    On the other hand, if $d = \fullname{f(c)}$ then $\falsity{\op(\fullname{c}, d) \notrlzin \lift{f}} = \Pi = \falsity{d \neq d'}$. From this it follows that $t \stackapp \pi \in \falsity{\op(\fullname{c}, d) \rlzin \lift{f}}$ and therefore $\lambda u \lambdaapp \lambda v \lambdaapp \app{v}{u} \star t \stackapp s \stackapp \pi \succ s \star t \stackapp \pi \in \Perp$.
\end{proof}

\noindent Finally, we can see that $\fullname{b}$ is a co-domain of $\lift{f}$:

\begin{proposition} \label{theorem:LiftCodomain}
    $\identity \Vdash \forall x^{\fullname{a}} \forall y (y \notrlzin \fullname{b} \rightarrow \op(x, y) \notrlzin \lift{f})$.
\end{proposition}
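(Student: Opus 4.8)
The plan is to show directly that every element of the falsity value $\falsity{\forall x^{\fullname{a}}\,\forall y\,(y\notrlzin\fullname{b}\rightarrow\op(x,y)\notrlzin\lift{f})}$ is a stack of the form $t\stackapp\pi$ with $t\star\pi\in\Perp$; since $\identity\star t\stackapp\pi\succ t\star\pi$, this immediately gives $\identity\star t\stackapp\pi\in\Perp$, and hence $\identity$ realizes the formula.

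First I would unwind the quantifiers and the implication. Using the definition of $\falsity{\cdot}$ for $\forall$, the restricted quantifier (\Cref{definition:RestrictedQuantifier}, noting $\ff{dom}(\fullname{a})=\{\fullname{c}\divline c\in a\}$), and the definition of $\falsity{\cdot}$ for implications, an arbitrary $\sigma\in\falsity{\forall x^{\fullname{a}}\,\forall y\,(y\notrlzin\fullname{b}\rightarrow\op(x,y)\notrlzin\lift{f})}$ lies in $\falsity{d\notrlzin\fullname{b}\rightarrow\op(\fullname{c},d)\notrlzin\lift{f}}$ for some $c\in a$ and $d\in\rlzstr$; thus $\sigma=t\stackapp\pi$ where $t\Vdash d\notrlzin\fullname{b}$ and $\pi\in\falsity{\op(\fullname{c},d)\notrlzin\lift{f}}$. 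The key step is now to pin down $d$. Since $\falsity{\op(\fullname{c},d)\notrlzin\lift{f}}=\{\sigma'\in\Pi\divline(\op(\fullname{c},d),\sigma')\in\lift{f}\}$ is nonempty, the definition of $\lift{f}$ forces $\op(\fullname{c},d)=\op(\fullname{c'},\fullname{f(c')})$ for some $c'\in a$. As $\op$, being built from $\up$ and $\sing$, is injective on $\rlzstr$, and the reish operation is injective, this yields $c=c'$ and $d=\fullname{f(c)}$; in particular $\falsity{\op(\fullname{c},d)\notrlzin\lift{f}}=\Pi$. Since $f\colon a\to b$ we have $f(c)\in b$, so $\falsity{d\notrlzin\fullname{b}}=\falsity{\fullname{f(c)}\notrlzin\fullname{b}}=\{\sigma'\in\Pi\divline(\fullname{f(c)},\sigma')\in\fullname{b}\}=\Pi$. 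Therefore $t\Vdash d\notrlzin\fullname{b}$ gives $t\star\pi\in\Perp$, and hence $\identity\star t\stackapp\pi\succ t\star\pi\in\Perp$. As $\sigma=t\stackapp\pi$ was arbitrary, $\identity$ realizes the formula.

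The only point that needs a word of justification is the decoding step: that the sole way $\op(\fullname{c},d)$ can belong to $\ff{dom}(\lift{f})$ is with $d=\fullname{f(c)}$, which rests on injectivity of $\op$ and of the reish. I expect this to be the main (though minor) obstacle, since without it $\falsity{d\notrlzin\fullname{b}}$ could be empty and the conclusion $t\star\pi\in\Perp$ would be unavailable. Both injectivity facts are routine and of the same flavour as the ``obviously $a=b$'' steps in the proofs of \Cref{theorem:SingularFunctionItem1} and \Cref{theorem:UnorderedPairs1}: $\up(x,y)$ as a set determines $\{x\}$ and $\{y\}$ by reading off the first coordinates paired with stacks headed by $\underline{0}$ respectively $\underline{1}$, $\sing$ is visibly injective, so $\op$ is injective; and $\fullname{c}$ determines $c$ by $\rlzin$-Induction.
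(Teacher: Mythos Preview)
Your proof is correct and follows essentially the same approach as the paper: unwind the falsity value, observe that $(\op(\fullname{c},d),\pi)\in\lift{f}$ forces $d=\fullname{f(c')}$ for some $c'\in a$, whence $\falsity{d\notrlzin\fullname{b}}=\Pi$ and $\identity\star t\stackapp\pi\succ t\star\pi\in\Perp$. The paper compresses your injectivity discussion into a single ``by definition'' clause; your more explicit treatment is fine, though note that you do not actually need $c=c'$ (and hence not injectivity of the reish map) --- it suffices that $d=\fullname{f(c')}$ with $f(c')\in b$.
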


\begin{proof}
    Fix $c \in a$, $d \in \rlzstr$ and suppose that $t \Vdash d \notrlzin \fullname{b}$ while $(\op(\fullname{c}, d), \pi) \in \lift{f}$. Then, by definition, we must have that $d = f(\fullname{c})$ which means that $\identity \Vdash d \rlzin \fullname{b}$. Therefore, we have $\identity \star t \stackapp \pi \in \Perp$.
\end{proof}

\subsection{Lifting Functions with Multiple Arguments}

\noindent If one were to try and lift a function $f \colon a_0 \times a_1 \rightarrow b$ using the previous methodology this would give us a function $\lift{f} \colon \fullname{a_0 \times a_1} \rightarrow b$ in $\rlzmodel$ rather than a more useful function with domain $\fullname{a_0} \times \fullname{a_1}$. However, this can be achieved by a slight alteration to the definition of the lift. Since the proof follows by the same argument as in the earlier propositions, we will omit it.

\begin{proposition} \label{theorem:LiftingFunctionOnProduct}
    Suppose that $f \colon a_0 \times a_1 \rightarrow b$ is a function. Let
    \[
    \lift{f} \colon \{ (\op(\op(\fullname{x_0}, \fullname{x_1}), \fullname{f(x_0, x_1)}), \pi) \divline x_0 \in a_0, x_1 \in a_1, \pi \in \Pi \}.
    \]
    Then $\rlzmodel \Vdash \lift{f} \colon \ff{Cart}(a_0, a_1) \rightarrow \fullname{b} \text{ is a } \rlzin\text{-function}$. Moreover, for any $x_0 \in a_0$ and $x_1 \in a_1$, 
    \begin{align*}
    \identity & \Vdash \forall z \big( \op(\op(\fullname{x_0}, \fullname{x_1}), z) \rlzin \lift{f} \rightarrow \fullname{f(x_0, x_1)} = z \big) \text{ and } \\ \identity & \Vdash \forall z \big( \fullname{f(x_0, x_1)} = z \inclusion \op(\op(\fullname{x_0}, \fullname{x_1}), z) \rlzin \lift{f} \big).
    \end{align*}
\end{proposition}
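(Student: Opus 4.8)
The plan is to adapt, essentially verbatim, the three propositions immediately preceding this statement — the evaluation lemma for $\lift f$, the proposition that $\lift f$ is a $\rlzin$-function, and \Cref{theorem:LiftCodomain} — replacing the name $\fullname c$ of a single argument by the name $\op(\fullname{x_0},\fullname{x_1})$ of a pair of arguments, and replacing the domain $\fullname a$ by $\ff{Cart}(a_0,a_1)$. Everything rests on one falsity computation: for $x_0\in a_0$, $x_1\in a_1$ and any $z\in\rlzstr$ we have $\falsity{\op(\op(\fullname{x_0},\fullname{x_1}),z)\notrlzin\lift f}=\{\pi\in\Pi\divline(\op(\op(\fullname{x_0},\fullname{x_1}),z),\pi)\in\lift f\}$, and since $x\mapsto\fullname x$, $\sing$, and $\up$ (the last because $\underline 0\neq\underline 1$) are all injective, so is $\op$, whence $(\op(\op(\fullname{x_0},\fullname{x_1}),z),\pi)\in\lift f$ for some (equivalently, every) $\pi$ exactly when $z=\fullname{f(x_0,x_1)}$. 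Thus this falsity value is $\Pi$ if $z=\fullname{f(x_0,x_1)}$ and $\emptyset$ otherwise, i.e.\ it coincides with $\falsity{\fullname{f(x_0,x_1)}\neq z}$; and likewise $\falsity{\op(\fullname{x_0},\fullname{x_1})\notrlzin\ff{Cart}(a_0,a_1)}=\Pi$ for all $x_0\in a_0$, $x_1\in a_1$, directly from the definition of $\ff{Cart}$.

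First I would establish the two displayed evaluation claims by copying the proof of the lemma following the definition of the lift. For the first, given $t\Vdash\op(\op(\fullname{x_0},\fullname{x_1}),z)\rlzin\lift f$, $s\Vdash\fullname{f(x_0,x_1)}\neq z$ and $\pi\in\Pi$, the falsity identity gives $s\Vdash\op(\op(\fullname{x_0},\fullname{x_1}),z)\notrlzin\lift f$, so $\identity\star t\stackapp s\stackapp\pi\succ t\star s\stackapp\pi\in\Perp$. For the second, if $\fullname{f(x_0,x_1)}\neq z$ the falsity value of the $\inclusion$-formula is empty; otherwise $z=\fullname{f(x_0,x_1)}$ and that falsity value equals $\falsity{\op(\op(\fullname{x_0},\fullname{x_1}),\fullname{f(x_0,x_1)})\rlzin\lift f}=\{t\stackapp\pi\divline\pi\in\Pi,\ \forall\sigma\in\Pi\,(t\star\sigma\in\Perp)\}$ because that pair lies in $\lift f$, so $\identity$ realizes it.

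Next I would verify that $\rlzmodel\Vdash\lift f\colon\ff{Cart}(a_0,a_1)\to\fullname b$ is a $\rlzin$-function. Single-valuedness is the verbatim analogue of the second preceding proposition: $\lambda u \lambdaapp \lambda v \lambdaapp \app{v}{u}\Vdash\forall x^{\ff{Cart}(a_0,a_1)}\forall y\forall y'\,(y\neq y'\rightarrow\op(x,y)\rlzin\lift f\rightarrow\op(x,y')\notrlzin\lift f)$, using that $\op(\op(\fullname{x_0},\fullname{x_1}),z)\in\ff{dom}(\lift f)$ holds only for $z=\fullname{f(x_0,x_1)}$ together with the falsity identity; $\rlzin$-totality follows by feeding the second evaluation claim the witness $z\coloneqq\fullname{f(x_0,x_1)}$ together with $\identity\Vdash\fullname{f(x_0,x_1)}=\fullname{f(x_0,x_1)}$, which is uniform in $x_0,x_1$; and the codomain clause $\identity\Vdash\forall x^{\ff{Cart}(a_0,a_1)}\forall y\,(y\notrlzin\fullname b\rightarrow\op(x,y)\notrlzin\lift f)$ is the verbatim analogue of \Cref{theorem:LiftCodomain}, since $(\op(\op(\fullname{x_0},\fullname{x_1}),d),\pi)\in\lift f$ forces $d=\fullname{f(x_0,x_1)}$ with $f(x_0,x_1)\in b$, hence $\identity\Vdash d\rlzin\fullname b$. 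At each passage between a restricted quantifier $\forall x^{\ff{Cart}(a_0,a_1)}$ and the corresponding bounded quantifier one invokes \Cref{{theorem:RealizingBoundedUniversals},{theorem:RealizingBoundedQuantifier}}, whose hypothesis $\falsity{\varphi(\op(\fullname{x_0},\fullname{x_1}))}\subseteq\falsity{\op(\fullname{x_0},\fullname{x_1})\notrlzin\ff{Cart}(a_0,a_1)}=\Pi$ is automatic.

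The only place where care is genuinely needed — and hence the main obstacle, such as it is — is the bookkeeping about which names occur in $\ff{dom}(\lift f)$ and in $\ff{dom}(\ff{Cart}(a_0,a_1))$: one must check the injectivity of $\fullname{\cdot}$, $\sing$, and $\up$ (and so of $\op$) in order to recover, uniquely, the value $z=\fullname{f(x_0,x_1)}$ from $(\op(\op(\fullname{x_0},\fullname{x_1}),z),\pi)\in\lift f$, and to identify $\ff{dom}(\ff{Cart}(a_0,a_1))$ with $\{\op(\fullname{x_0},\fullname{x_1})\divline x_0\in a_0,\ x_1\in a_1\}$. Once this is settled, every remaining step is a line-by-line transcription of the single-argument arguments, which is why the paper omits it.
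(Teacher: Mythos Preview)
Your proposal is correct and matches the paper's approach exactly: the paper writes only ``Since the proof follows by the same argument as in the earlier propositions, we will omit it,'' and your plan is precisely that transcription, including the one nontrivial bookkeeping point (injectivity of $\fullname{\cdot}$, $\sing$, $\up$, hence $\op$) needed to identify the falsity values.
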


\noindent We recall here that, in $\rlzmodel$, $\ff{Cart}(a_0, a_1)$ is a name for the Cartesian product of $\fullname{a_0}$ and $\fullname{a_1}$. Therefore, unless there is room for confusion, we shall also refer to this set as the lift of the function $f$ and use the same notation because this is what we want to mean by lifting a function with domain $a_0 \times a_1$.

\begin{remark}
    The above result could be easily further generalised to allow for the lift of a function $f \colon a_0 \times \dots a_n \rightarrow b$ for any finite $n$ in the obvious way. 
\end{remark}

\begin{theorem} \label{theorem:fullname2BooleanAlgebra}
    $\fullname{2}$ is a Boolean algebra with binary operations $\Booleanor$ and $\Booleanand$ and unary operation $-$ given by the lift of the corresponding (trivial) operations on $\{0, 1\}$.   
\end{theorem}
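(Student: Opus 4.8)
The plan is to realize, for the structure $(\fullname2,\lift{\Booleanor},\lift{\Booleanand},\lift{-},\fullname0,\fullname1)$, each of the finitely many Boolean algebra axioms, all of which are universal sentences whose quantifiers range over $\fullname2$ and whose matrix is built from the three lifted operations, the constants $\fullname0,\fullname1$, equality, and propositional connectives. First I would produce the operations: applying \Cref{theorem:LiftingFunctionOnProduct} to the (trivial) ground-model operations $\Booleanor,\Booleanand\colon 2\times2\to2$ yields $\rlzin$-functions $\lift{\Booleanor},\lift{\Booleanand}\colon\ff{Cart}(2,2)\to\fullname2$, and the single-argument lifting proposition of \Cref{section:LiftingFunctions} applied to $-\colon2\to2$ yields $\lift{-}\colon\fullname2\to\fullname2$; recall that $\ff{Cart}(2,2)$ is realized to be the Cartesian product $\fullname2\times\fullname2$. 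The same propositions supply the \emph{value lemma}: for all $i,j\in\{0,1\}$ the term $\identity$ realizes both $\forall z(\op(\op(\fullname i,\fullname j),z)\rlzin\lift{\Booleanor}\rightarrow\fullname{i\Booleanor j}=z)$ and $\forall z(\fullname{i\Booleanor j}=z\inclusion\op(\op(\fullname i,\fullname j),z)\rlzin\lift{\Booleanor})$, and likewise for $\Booleanand$ and for $\lift{-}$ with $\fullname{-i}$ — crucially with a realizer ($\identity$) that does not depend on $i,j$.

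The second ingredient is the reduction of bounded quantification over $\fullname2$ to a finite check. Since $\ff{dom}(\fullname2)=\{\fullname0,\fullname1\}$ and $\falsity{\fullname i\notrlzin\fullname2}=\Pi$ contains every falsity value, the hypothesis of \Cref{theorem:RealizingBoundedQuantifier} is automatically satisfied, so for any formula $\varphi$ a term $t$ realizes $\forall x\rlzin\fullname2\,\varphi(x)$ iff the \emph{same} $t$ realizes both $\varphi(\fullname0)$ and $\varphi(\fullname1)$; iterating, realizing an $n$-fold bounded-universal axiom over $\fullname2$ amounts to producing one term that simultaneously realizes all $2^n$ instances obtained by substituting each variable with one of $\fullname0,\fullname1$. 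For each such instance I would eliminate the operation symbols: using \Cref{theorem:InclusionImplicationEquivalence} to pass between $\inclusion$ and $\rightarrow$, the value lemma to replace each occurrence of $\lift{\Booleanor}$ applied to $(\fullname i,\fullname j)$ (respectively $\lift{\Booleanand}$, $\lift{-}$) by the name $\fullname{i\Booleanor j}$ (respectively $\fullname{i\Booleanand j}$, $\fullname{-i}$), and the Leibniz substitution schema \Cref{theorem:NonExtensionalEquality4} to carry these replacements through the matrix. Since every Boolean algebra axiom is a genuine identity on the two-element algebra $\{0,1\}$, the two sides of the instance collapse to $\fullname m=\fullname m$ (or $\fullname m\simeq\fullname m$) for the common value $m\in\{0,1\}$ of the two ground-model computations, realized by $\identity$ (respectively by the uniform realizer of \Cref{theorem:NonExtensionalEquality1} or \Cref{theorem:RealizerofSimeq}). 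All the combinators used in this rewriting are fixed $\lambda_c$-terms independent of the chosen tuple, so the resulting realizer is uniform as required; composing these across the axioms gives $\rlzmodel\Vdash$``$\fullname2$ is a Boolean algebra''. Reading the statement in $\rlzmodel_\in$ (via \Cref{theorem:ZFepsilonToZF}) gives the theorem, and one checks in addition, using \Cref{observation:reishPreservesElementhood} together with \Cref{theorem:RealizingBoundedQuantifier}, that $\rlzmodel_\in$ sees $\fullname2=\{\fullname0,\fullname1\}$ as an honest two-element set.

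The main obstacle is the uniformity bookkeeping in the last step. Because $\fullname2$ is a reish name, its $\rlzin$-elements cannot be told apart by their falsity values — every such falsity value is $\Pi$ — so there is no way to case-split on the argument inside a realizer, and we are forced to exhibit a single term handling $\fullname0$ and $\fullname1$ (indeed all $2^n$ substitution instances) at once. The verification that this is possible rests precisely on the fact that the value lemma is witnessed by $\identity$ uniformly in the arguments and that the propositional plumbing (converting $\inclusion$ and $\rightarrow$, applying \Cref{theorem:NonExtensionalEquality4}, discharging reflexivity) is performed by fixed combinators; making this explicit for each axiom is routine but tedious, which is presumably why the detailed computation is left out. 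A minor secondary point to dispatch is that $\ff{Cart}(2,2)$ is the correct domain, i.e.\ that it is realized to be $\fullname2\times\fullname2$, which is exactly the content of the Cartesian-product proposition of \Cref{section:Pairing}.
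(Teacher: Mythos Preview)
Your main argument is correct and follows essentially the same route as the paper: reduce each Boolean-algebra identity to the finitely many instances obtained by substituting $\fullname{0}$ or $\fullname{1}$ for the bounded variables, and observe that each instance holds because the ground-model two-element algebra satisfies the identity and the lifted operations agree with the ground-model ones on these arguments (uniformly in the choice of arguments). The paper's presentation is terser because it invokes the function-image falsity-value convention of \Cref{section:FunctionImages} (writing $x\Booleanand(x\Booleanor y)$ directly as a term whose falsity value is computed by evaluating in the ground model), so that $\identity\Vdash\fullname{i}\Booleanand(\fullname{i}\Booleanor\fullname{j})=\fullname{i}$ is literally immediate, whereas you spell out the equivalent rewriting via the value lemma and Leibniz substitution. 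Both arrive at the same place.

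There is, however, a genuine error in your final paragraph: the claim that ``$\rlzmodel_\in$ sees $\fullname{2}=\{\fullname{0},\fullname{1}\}$ as an honest two-element set'' is false in general. The theorem asserts only that $\fullname{2}$ is \emph{a} Boolean algebra, not the two-element one; indeed \Cref{section:Reish2Size4} is devoted to constructing a realizability algebra in which $\fullname{2}$ has exactly four $\rlzin$-elements. Your confusion seems to stem from conflating $\ff{dom}(\fullname{2})=\{\fullname{0},\fullname{1}\}$ (a ground-model fact used to unwind the restricted quantifier) with the internal statement $\forall x\rlzin\fullname{2}\,(x=\fullname{0}\lor x=\fullname{1})$, which need not be realized. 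This does not damage the proof of the theorem itself---the Boolean-algebra axioms are realized precisely because their falsity values only mention the two names in $\ff{dom}(\fullname{2})$---but the aside should be removed, and the appeal to \Cref{theorem:ZFepsilonToZF} at the end is likewise unnecessary: the theorem is already a statement about the $\ZFepsilon$-structure, not something to be transported to $\rlzmodel_\in$.
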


\begin{proof}
    Formally, this is proven by showing that the above operations satisfy commutativity, associativity, distributivity, absorption and complementations. All of these are straightforward, and we briefly discuss one of the absorption cases as an example.
    \[
    \identity \Vdash \forall x^{\fullname{2}} \forall y^{\fullname{2}} \; (x \Booleanand (x \Booleanor y) = x).
    \]
    It suffices to prove that if $i, j \in \{0, 1\}$ then $\identity \Vdash \fullname{i} \Booleanand (\fullname{i} \Booleanor \fullname{j}) = \fullname{i}$. But, this is immediate from the fact that $\{0, 1\}$ is a Boolean algebra and the operations lift the operations on $\{0, 1\}$.
\end{proof}

\begin{remark}
    We would have liked to lift the function to an extensional function in the realizability model, however in general this does not seem possible to do. The issue is that there may not be a realizer $t$ such that, for any $a \neq b$, $t \Vdash \fullname{a} \not\simeq \fullname{b}$. 

    This becomes a problem when one wants to prove $\rlzmodel \Vdash \forall z (\op(\fullname{c}, z) \in \lift{f} \rightarrow \fullname{f(c)} \simeq z)$. If $\op(\fullname{c}, z) \in \lift{f}$ then we can find some $d \rlzin a$ for which $\op(\fullname{c}, z) \simeq \op(\fullname{d}, \fullname{f(d)})$. It will then be the case that, in $\rlzmodel$, $z \simeq \fullname{f(d)}$. The issue is that we cannot prove $\rlzmodel \Vdash \fullname{c} \simeq \fullname{d} \rightarrow \fullname{f(c)} \simeq \fullname{f(d)}$ and therefore it is unclear how one could conclude that $\rlzmodel \Vdash \fullname{f(c)} \simeq z$. 
\end{remark}

\subsection{Lifting Class Functions} \label{section:LiftingClassFunctions}

Formally, the implementations for singletons, unordered pairs and ordered pairs have only been defined externally for elements of $\rlzstr$. In this short section we outline how to internalise these functions so as to naturally view them as class functions over the structure. As mentioned in \Cref{section:AddingDefinedFunctions} this will then allow us to enrich the theory $\ZFepsilon$ by any $\mathcal{A}$-definable function in a way that allows us to internalise such functions with the theory. In order to do this, we need a general method to universally lift a function $f \colon \rlzstr \rightarrow \rlzstr$.

\begin{definition} \label{definition:ClassLift}
    Suppose that $f \colon \rlzstr \rightarrow \rlzstr$ is a function (as defined in the ground model). We define the \emph{universal lift} of $f$, $\lift{f}$, as 
    \[
    \lift{f} \coloneqq \{ (\op(a, f(a)), \pi) \divline a \in \rlzstr, \pi \in \Pi \}.
    \]
\end{definition}

\begin{theorem} \label{theorem:LiftingClassFunction}
    Suppose that $f \colon \rlzstr \rightarrow \rlzstr$. Then, in $\rlzmodel$, 
    \begin{thmlist}
        \item \label{theorem:ClassLiftIsLift} For every $a \in \rlzstr$, $\forall z \, (\op(a, z) \rlzin \lift{f}  \rightarrow z = f(a))$ and $\forall z \, (f(a) = z \imp \op(a, z) \rlzin \lift{f})$. That is, $\lift{f}$ agrees with $f$ on $\tf{N}$. 
        \item \label{theorem:LiftIsClassFunction} $\forall x \, \exists y \, \op(x, y) \rlzin \lift{f}$ and $\forall x \, \forall y \, \forall y' \, (\op(x, y) \rlzin \lift{f} \land \op(x, y') \rlzin \lift{f} \rightarrow y = y')$. That is, $\lift{f}$ is a class $\rlzin$-function.
    \end{thmlist}
\end{theorem}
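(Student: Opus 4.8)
The plan is to follow, almost verbatim, the template of the propositions in the previous subsection that treated $\lift f\colon \fullname a\to\fullname b$; the single genuinely new ingredient is that, since $\lift f$ is now indexed by \emph{all} of $\rlzstr$ rather than by reish-names, I must invoke injectivity of the pairing operation directly rather than injectivity of $\fullname{\cdot}$. First I would record that, assuming $\Pi\neq\emptyset$ (if $\Pi=\emptyset$ then $\rlzstr=\{\emptyset\}$ and the statement is vacuous), the \emph{set} operation $\op\colon\rlzstr\to\rlzstr$ is injective: $\sing$ is injective, and since $\underline 0$ and $\underline 1$ are not $\alpha$-equivalent, $\up(c,d)$ recovers $c$ and $d$ as the unique first coordinates of members whose pushed head is $\underline 0$, resp.\ $\underline 1$, so $\up$ and hence the composite $\op$ are injective. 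Reading this into \Cref{definition:ClassLift}, and extending $\falsity{c\notrlzin{-}}$ to the class $\lift f$ by $\falsity{c\notrlzin\lift f}\coloneqq\{\pi\in\Pi\divline(c,\pi)\in\lift f\}$ as in the previous subsection, we get for all $a,z\in\rlzstr$
\[
\falsity{\op(a,z)\notrlzin\lift f}=\{\pi\in\Pi\divline(\op(a,z),\pi)\in\lift f\}=\begin{cases}\Pi & \text{if } z=f(a),\\ \emptyset & \text{otherwise},\end{cases}
\]
the middle equality being exactly injectivity of $\op$ applied to the definition of $\lift f$; in short $\falsity{\op(a,z)\notrlzin\lift f}=\falsity{f(a)\neq z}$.

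For the claim that $\lift f$ agrees with $f$, I would note that $\op(a,z)\rlzin\lift f$ abbreviates $\op(a,z)\notrlzin\lift f\to\perp$ and $f(a)=z$ abbreviates $f(a)\neq z\to\perp$, so the displayed identity gives $\falsity{\op(a,z)\rlzin\lift f}=\falsity{f(a)=z}$ for every $z\in\rlzstr$. Hence \Cref{theorem:falsitysubsets} yields $\identity\Vdash\op(a,z)\rlzin\lift f\to z=f(a)$ (using that $\falsity{z\neq f(a)}$ and $\falsity{f(a)\neq z}$ are by definition the same set), and for the $\imp$-clause: if $f(a)\neq z$ then $\falsity{f(a)=z\imp\op(a,z)\rlzin\lift f}=\falsity{\top}=\emptyset$, while if $f(a)=z$ it equals $\falsity{\op(a,f(a))\rlzin\lift f}$, each member of which is a push-stack $t\stackapp\sigma$ with $t\star\sigma\in\Perp$ (because $\falsity{\op(a,f(a))\notrlzin\lift f}=\Pi$), so $\identity$ realizes it in either case. \Cref{theorem:realizinguniversals} then supplies the leading $\forall z$.

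For the claim that $\lift f$ is a class $\rlzin$-function, totality is immediate from the previous part: instantiating its $\imp$-clause at $z\coloneqq f(x)$ gives $\identity\Vdash\op(x,f(x))\rlzin\lift f$, so $\lambda u\lambdaapp\app u\identity\Vdash\exists y\,\op(x,y)\rlzin\lift f$ by \Cref{theorem:realizingexistentials}, uniformly in $x$, and \Cref{theorem:realizinguniversals} closes off $\forall x$. For functionality I would claim that $\lambda u\lambdaapp\lambda v\lambdaapp\lambda w\lambdaapp\inapp{u}{\app{v}{w}}$ realizes $\forall x\forall y\forall y'\,(\op(x,y)\rlzin\lift f\to(\op(x,y')\rlzin\lift f\to(y\neq y'\to\perp)))$: fixing $x,y,y'$, $s\Vdash\op(x,y)\rlzin\lift f$, $s'\Vdash\op(x,y')\rlzin\lift f$, $r\Vdash y\neq y'$ and $\pi\in\Pi$, the process reduces by $\lambda u\lambdaapp\lambda v\lambdaapp\lambda w\lambdaapp\inapp{u}{\app{v}{w}}\star s\stackapp s'\stackapp r\stackapp\pi\succ s\star\app{s'}{r}\stackapp\pi$, and it remains to check $\app{s'}{r}\stackapp\pi\in\falsity{\op(x,y)\rlzin\lift f}$. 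If $y\neq f(x)$ this is trivial since $\falsity{\op(x,y)\notrlzin\lift f}=\emptyset$, so every term realizes $\op(x,y)\notrlzin\lift f$; if $y=f(x)$, one needs $\app{s'}{r}\Vdash\op(x,y)\notrlzin\lift f$, i.e.\ $s'\star r\stackapp\sigma\in\Perp$ for all $\sigma\in\Pi$, which reduces to $r\Vdash\op(x,y')\notrlzin\lift f$ — and this holds either because $y'\neq f(x)$ (empty falsity value) or because $y'=f(x)=y$, in which case $\falsity{y\neq y'}=\Pi$ forces $r$ to be incompatible with every stack. Three applications of \Cref{theorem:realizinguniversals} then give the full $\forall x\forall y\forall y'$, and all of $\identity$, $\lambda u\lambdaapp\app u\identity$ and $\lambda u\lambdaapp\lambda v\lambdaapp\lambda w\lambdaapp\inapp{u}{\app{v}{w}}$ contain no continuation constant, hence lie in $\rlzset$.

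The only step I expect to be a real obstacle is pinning down this uniform term for functionality: the falsity value of $\op(x,y)\rlzin\lift f$ behaves completely differently according to whether $y$ is the genuine value $f(x)$ or a spurious name, and the naive candidate $\lambda u\lambdaapp\lambda v\lambdaapp\lambda w\lambdaapp\app u w$ only handles the spurious case. The point of feeding $r$ through $s'$ first — the subterm $\app{v}{w}$ — is that this promotes $r$ into a realizer of $\op(x,y)\notrlzin\lift f$ precisely when $y=f(x)$, while doing no harm when $y\neq f(x)$. Everything else is the routine unfolding of the abbreviations for $\rlzin$, $\to$, $\imp$ and $\neq$, already carried out for reish-names in the previous subsection.
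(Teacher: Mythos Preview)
Your proof is correct and follows the same template the paper intends: it simply says ``the proof is the same as for the lift of the set function $f\colon a\to b$'' and omits details, and your computation of $\falsity{\op(a,z)\notrlzin\lift f}$ together with the case analysis for functionality is exactly that argument carried out with $\rlzstr$ in place of $\fullname a$. The one cosmetic difference is that for functionality the paper (in the set-function case) orders the hypotheses as $y\neq y'\to\op(x,y)\rlzin\lift f\to\op(x,y')\notrlzin\lift f$ and uses the shorter realizer $\lambda u\lambdaapp\lambda v\lambdaapp\app{v}{u}$, which spares one layer of your case split since the final falsity value already pins down $y'=f(x)$; your three-variable realizer $\lambda u\lambdaapp\lambda v\lambdaapp\lambda w\lambdaapp\inapp{u}{\app{v}{w}}$ achieves the same end by routing $r$ through $s'$, and your explicit remark on ground-model injectivity of $\op$ makes precise a point the paper leaves implicit.
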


Since the proof to this theorem is the same as for the lift of the set function $f \colon a \rightarrow b$, we will omit it.

\begin{remark}
    Using the analogous construction to \Cref{theorem:LiftingFunctionOnProduct}, it is also possible to lift any function $f \colon \rlzstr^n \rightarrow \rlzstr$ for any $n \in \omega$.
\end{remark}

\begin{theorem}
    The following functions universally lift to the realizability model and therefore can be viewed as internal functions in $\rlzmodel$.
    \begin{itemize} \setlength \itemsep{2pt}
        \item $\sing \colon \rlzstr \rightarrow \rlzstr$, $a \mapsto \{a\} \times \Pi$;
        \item $\up \colon \rlzstr \times \rlzstr \rightarrow \rlzstr$, $( a, b ) \mapsto \{ (a, \underline{0} \stackapp \pi ) \divline \pi \in \Pi \} \cup \{(b, \underline{1} \stackapp \pi) \divline \pi \in \Pi \}$;
        \item $\op \colon \rlzstr \times \rlzstr \rightarrow \rlzstr$, $( a, b ) \mapsto \up(\up(\sing(a), \fullname{0}), \sing(\sing(b)))$;
        \item $a \intcup b \colon \rlzstr \times \rlzstr \rightarrow$, $( a, b ) \mapsto \{ (x, \underline{0} \stackapp \pi) \divline (x, \pi) \in a \} \cup \{ (x, \underline{1} \stackapp \pi) \divline (x, \pi) \in b \}$. This function gives a canonical way to define the non-extensional union of two sets;
        \item $w \mathcal{P} \colon \rlzstr \rightarrow \rlzstr$, $a \mapsto \mathcal{P}(\ff{dom}(a) \times \Pi) \times \Pi$. This function gives a canonical weak power set for every set $a$.
    \end{itemize}
\end{theorem}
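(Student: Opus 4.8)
The plan is to obtain the statement as an immediate consequence of \Cref{theorem:LiftingClassFunction} together with the $n$-ary generalisation noted in the remark following it. That theorem already shows that \emph{every} function $f\colon\rlzstr\to\rlzstr$ (respectively $f\colon\rlzstr^n\to\rlzstr$) defined in $\tf{V}$ has a universal lift which, in $\rlzmodel$, is realized to be a class $\rlzin$-function agreeing with $f$ on $\rlzstr$. So all that remains is to check that each of the five displayed assignments really is a (class) function of the stated arity with values in $\rlzstr$; that is, that it is (a) defined on all of $\rlzstr$, respectively $\rlzstr\times\rlzstr$, (b) single-valued, and (c) valued in $\rlzstr$. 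Conditions (a) and (b) are read off directly from the explicit defining formulas, so the only point needing argument is (c): closure of $\rlzstr$ under the operations.

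For (c) I would run through the hierarchy $\rlzstr_\alpha$, using that any $a\in\rlzstr$ satisfies $a\subseteq\rlzstr_\alpha\times\Pi$ for some $\alpha$, so that $\ff{dom}(a)\subseteq\rlzstr_\alpha$ is a set. If $a\in\rlzstr_\alpha$ then $\{a\}\times\Pi\subseteq\rlzstr_\alpha\times\Pi$, hence $\sing(a)\in\rlzstr_{\alpha+1}$. If $a\in\rlzstr_\alpha$, $b\in\rlzstr_\beta$ and $\gamma=\max(\alpha,\beta)$, then every pair in $\up(a,b)$ has first coordinate in $\rlzstr_\gamma$, so $\up(a,b)\in\rlzstr_{\gamma+1}$; since $\op$ is a finite composite of $\sing$, $\up$ and the constant $\fullname{0}$, it too lands in $\rlzstr$. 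If $a\subseteq\rlzstr_\alpha\times\Pi$ and $b\subseteq\rlzstr_\beta\times\Pi$ then $a\intcup b\subseteq\rlzstr_{\max(\alpha,\beta)}\times\Pi$, so $a\intcup b\in\rlzstr_{\max(\alpha,\beta)+1}$. Finally, if $\ff{dom}(a)\subseteq\rlzstr_\alpha$ then $\mathcal{P}(\ff{dom}(a)\times\Pi)\subseteq\rlzstr_{\alpha+1}$, whence $w\mathcal{P}(a)=\mathcal{P}(\ff{dom}(a)\times\Pi)\times\Pi\in\rlzstr_{\alpha+2}$. Each step uses only Pairing, Replacement and Power Set in $\tf{V}$.

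With (a)--(c) in hand, \Cref{theorem:LiftingClassFunction} and its $n$-ary analogue apply to each of $\sing$, $\up$, $\op$, $\intcup$ and $w\mathcal{P}$, giving universal lifts that $\rlzmodel$ realizes to be class $\rlzin$-functions returning, on arguments from $\rlzstr$, exactly the externally defined values. This is precisely what it means for these operations to be internal functions of the model, so one may add corresponding function symbols to $\ZFepsilon$ as in \Cref{section:AddingDefinedFunctions}. There is no substantial obstacle here; the only detail worth writing out is the closure argument (c), together with the elementary observation that $\ff{dom}(a)$ is a set — which is what legitimises defining $\intcup$ and $w\mathcal{P}$ on all of $\rlzstr$ in the first place.
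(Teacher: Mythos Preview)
Your proposal is correct and takes the same approach as the paper, which in fact gives no proof at all for this theorem and treats it as an immediate corollary of \Cref{theorem:LiftingClassFunction} and its $n$-ary generalisation. Your explicit verification that each operation lands in $\rlzstr$ via the rank hierarchy is more than the paper provides, but is exactly the routine check one would fill in if pressed.
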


\begin{remark}[A note on unions] \phantom{a}

    It is easy to see that, for any $a, b \in \rlzstr$, $\lambda u \lambdaapp \app{u}{\underline{0}} \Vdash x \notrlzin a \intcup b \rightarrow x \notrlzin a$, $\lambda u \lambdaapp \app{u}{\underline{1}} \Vdash x \notrlzin a \intcup b \rightarrow x \notrlzin b$ and $\lambda u \lambdaapp \lambda v \lambdaapp \lambda i \lambdaapp \fapp{\app{i}{\lambda w \lambdaapp v}}{u} \Vdash x \notrlzin a \rightarrow ( x \notrlzin b \rightarrow x \notrlzin a \intcup b)$. Thus $\rlzmodel \Vdash \forall x (x \rlzin a \intcup b \leftrightarrow (x \rlzin a \lor x \rlzin b))$ so $\intcup$ does give a name for the non-extensional union of two sets.

    It would have been preferable to define the name for the union as simply $a \cup b$, using the true union in the ground model. One can see that $\rlzmodel \Vdash x \notrlzin a \cup b \rightarrow (x \notrlzin a \lor x \notrlzin b)$ and therefore $a \cup b$ contains the union of the two sets, the difficulty is in the reverse direction. This is because we run into the standard issue that from $(x, \pi) \in a \cup b$ we cannot decide from $\pi$ whether $(x, \pi) \in a$ or $(x, \pi) \in b$. 
\end{remark}

\subsection{Images of Functions} \label{section:FunctionImages}

Due to our coding of ordered pairs, it is cumbersome to deal with the image of a lifted function. To see the issue consider trying to solve
\[
\forall x \lift{f}(x) \rlzin a.
\]
Formally, to realize this we would need to realize something of the form
\[
\forall x ( \forall y (\op(x, y) \rlzin \lift{f} \rightarrow y \notrlzin a) \rightarrow \perp). 
\]
Therefore, we introduce an easier way to phrase such statements through a new falsity value.

\begin{definition}
    Suppose that $f \colon a \rightarrow b$ is a function in the ground model and let $\lift{f}$ be the standard name for the lift of $f$. Then, for any formula $\varphi(u, v, \overrightarrow{w})$ and $\overrightarrow{w} \in \rlzstr$,
    \[
    \falsity{ \varphi(x, \lift{f}(x), \overrightarrow{w})} = 
    \begin{cases}
        \falsity{\varphi(x, \fullname{d}, \overrightarrow{w})} & \text{ if } x = \fullname{c} \text{ and } f(c) = d \text{ for some } d \in b, \\
        \emptyset & \text{ otherwise.}
    \end{cases}
    \]
\end{definition}

We note that, since $f$ is a function, the above $d \in b$ is uniquely defined. We now prove that this set provides a way to interpret the image of $\lift{f}$. For simplicity, we will suppress the parameter $\overrightarrow{w}$.

\begin{lemma}
    For every formula $\varphi$,
    \[
    \rlzmodel \Vdash \forall x^{\fullname a} \forall y \big( \op(x, y) \rlzin \lift{f} \rightarrow (\varphi(x, y) \rightarrow \varphi(x, \lift{f}(x))) \big).
    \]
\end{lemma}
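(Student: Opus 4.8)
The plan is to unfold both the restricted quantifier $\forall x^{\fullname a}$ and the special falsity value for $\varphi(x,\lift f(x))$, reduce everything to producing a single uniform realizer, and then exhibit one via a short $\cc$-computation. Since $\ff{dom}(\fullname a) = \{ \fullname c \divline c \in a \}$, \Cref{definition:RestrictedQuantifier} together with the clause for $\forall$ gives that the falsity value of the statement equals $\bigcup_{c \in a} \bigcup_{y \in \rlzstr} \falsity{\op(\fullname c, y) \rlzin \lift f \rightarrow (\varphi(\fullname c, y) \rightarrow \varphi(\fullname c, \lift f(\fullname c)))}$. As $f \colon a \to b$ we have $f(c) \in b$, so by the definition of $\falsity{\varphi(x,\lift f(x))}$ we get $\falsity{\varphi(\fullname c, \lift f(\fullname c))} = \falsity{\varphi(\fullname c, \fullname{f(c)})}$. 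Hence it suffices to find one realizer $\theta$ such that $\theta \star s \stackapp t \stackapp \rho \in \Perp$ whenever $c \in a$, $y \in \rlzstr$, $s \Vdash \op(\fullname c, y) \rlzin \lift f$, $t \Vdash \varphi(\fullname c, y)$ and $\rho \in \falsity{\varphi(\fullname c, \fullname{f(c)})}$.

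I would take $\theta \coloneqq \lambda u \lambdaapp \lambda v \lambdaapp \inapp{\cc}{\lambda k \lambdaapp \app{u}{(\app{k}{v})}}$; a short computation (grab, grab, push, save, grab, push) gives $\theta \star s \stackapp t \stackapp \rho \succ s \star (\app{\saverlz{\rho}}{t}) \stackapp \rho$, so it is enough to show $s \star (\app{\saverlz{\rho}}{t}) \stackapp \rho \in \Perp$, and I split on whether $y = \fullname{f(c)}$. If $y = \fullname{f(c)}$, then $\varphi(\fullname c, y)$ is the formula $\varphi(\fullname c, \fullname{f(c)})$, so $t \Vdash \varphi(\fullname c, \fullname{f(c)})$ and $\rho \in \falsity{\varphi(\fullname c, \fullname{f(c)})}$; by \Cref{theorem:SaveCommandandNegation}, $\saverlz{\rho} \Vdash \varphi(\fullname c, \fullname{f(c)}) \rightarrow \perp$, hence $\app{\saverlz{\rho}}{t} \Vdash \perp$ by \Cref{theorem:ImplicationandApplication}; since $c \in a$ forces $\falsity{\op(\fullname c, y) \notrlzin \lift f} = \Pi = \falsity{\perp}$, we obtain $\app{\saverlz{\rho}}{t} \Vdash \op(\fullname c, y) \notrlzin \lift f$, and unfolding $\op(\fullname c, y) \rlzin \lift f$ as $\op(\fullname c, y) \notrlzin \lift f \rightarrow \perp$ with $\rho \in \falsity{\perp}$ yields $s \star (\app{\saverlz{\rho}}{t}) \stackapp \rho \in \Perp$. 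If $y \neq \fullname{f(c)}$, then by injectivity of $\op$ and of the reish operation (exactly as used in the lift proposition at the start of \Cref{section:LiftingFunctions}) no pair $(\op(\fullname c, y), \tau)$ lies in $\lift f$, so $\falsity{\op(\fullname c, y) \notrlzin \lift f} = \emptyset$, which makes $\falsity{\op(\fullname c, y) \rlzin \lift f}$ the set of all stacks with a head; since $s$ realizes this formula, again $s \star (\app{\saverlz{\rho}}{t}) \stackapp \rho \in \Perp$. In both cases, because $\Perp$ is a final segment, $\theta \star s \stackapp t \stackapp \rho \in \Perp$, and $\theta$ is a realizer (it contains no continuation constant), so $\rlzmodel \Vdash$ the statement.

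A more conceptual alternative would be to realize, uniformly in $c$, the chain $\op(\fullname c, y) \rlzin \lift f \rightarrow \fullname{f(c)} = y \rightarrow y = \fullname{f(c)} \rightarrow (\varphi(\fullname c, y) \rightarrow \varphi(\fullname c, \fullname{f(c)}))$, using the lift proposition of \Cref{section:LiftingFunctions} for the first arrow, symmetry of $=$ for the middle, Leibniz equality (\Cref{theorem:NonExtensionalEquality4}) for the last, and \Cref{theorem:InclusionImplicationEquivalence} to convert the $\inclusion$-forms into $\rightarrow$-forms, then composing. The only genuine obstacle in either approach is that the two cases $y = \fullname{f(c)}$ and $y \neq \fullname{f(c)}$ demand genuinely different reductions — in the first the premise realizer $s$ is weak and one must run $t$ against $\rho$, while in the second $s$ already realizes a formula with a huge falsity value — so no naïve combinator realizes it; capturing $\rho$ with $\cc$ (equivalently, routing the equation through $\inclusion$) is what reconciles the two.
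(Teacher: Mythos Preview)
Your argument is correct. The realizer $\theta = \lambda u\lambdaapp\lambda v\lambdaapp\inapp{\cc}{\lambda k\lambdaapp\app{u}{(\app{k}{v})}}$ reduces as you claim, and in both cases ($y=\fullname{f(c)}$ and $y\neq\fullname{f(c)}$) the term $\app{\saverlz{\rho}}{t}$ lands in $\verity{\op(\fullname c,y)\notrlzin\lift f}$, so $s\star(\app{\saverlz{\rho}}{t})\stackapp\rho\in\Perp$ follows from unfolding $\rlzin$ as a negated $\notrlzin$. The appeal to ground-model injectivity of $\op$ and of the reish map is exactly what is needed to justify $\falsity{\op(\fullname c,y)\notrlzin\lift f}=\emptyset$ when $y\neq\fullname{f(c)}$.

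The paper takes a different, shorter route: rather than realizing the statement as written, it first passes (via the adequacy lemma) to the classically equivalent reformulation
\[
\forall x^{\fullname a}\forall y\big(\neg\varphi(x,\lift f(x))\rightarrow(\varphi(x,y)\rightarrow\op(x,y)\notrlzin\lift f)\big),
\]
and then observes that $\identity$ realizes this directly. The point is that by moving the atomic $\notrlzin$ into the conclusion (instead of leaving it under the double negation implicit in $\rlzin$), a stack in the falsity value already carries a pair $(\op(\fullname c,z),\pi)\in\lift f$, which forces $z=\fullname{f(c)}$; the case split you had to handle with $\cc$ simply never arises. Your approach has the merit of producing an explicit realizer for the original formulation without invoking adequacy, and your ``conceptual alternative'' via Leibniz equality and $\inclusion$ is in the same spirit; the paper's approach trades that explicitness for a cleaner, $\cc$-free argument with the identity as realizer.
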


\begin{proof}
    It will suffice to prove that
    \[
    \identity \Vdash \forall x^{\fullname{a}} \forall y ( \neg \varphi(x, \lift{f}(x)) \rightarrow (\varphi(x, y) \rightarrow \op(x, y) \notrlzin \lift{f})).
    \]
    To do this, fix $c \in a$, $z \in \rlzstr$, and $d \in B$ such that $f(c) = d$. Then, $\falsity{\neg \varphi(\fullname{c}, \lift{f}(\fullname{c})} = \falsity{\neg \varphi(\fullname{c}, \fullname{d})}$. So, if $t \Vdash \neg \varphi(\fullname{c}, \fullname{d})$, $s \Vdash \varphi(\fullname{c}, z)$ and $(\op(\fullname{c}, z), \pi) \in \lift{f}$ then we must have that $z = \fullname{d}$. Hence, $\identity \star t \stackapp s \stackapp \pi \succ t \star s \stackapp \pi \in \Perp$.
\end{proof}

\begin{lemma}
    For every formula $\varphi$,
    \[
    \rlzmodel \Vdash \forall x^{\fullname{a}} \big( \varphi(x, \lift{f}(x)) \rightarrow \forall y (\op(x, y) \rlzin \lift{f} \rightarrow \varphi(x, y)) \big).
    \]
\end{lemma}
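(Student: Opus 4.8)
The plan is to produce a single realizer and verify it. The term I would use is
\[
\theta \;\coloneqq\; \lambda u \lambdaapp \lambda v \lambdaapp \inapp{\cc}{\lambda k \lambdaapp \inapp{v}{\app{k}{u}}},
\]
which is exactly the term appearing in \Cref{theorem:InclusionImplicationEquivalence}$(ii)$; the proof will mirror that one. By the definition of the restricted quantifier $\forall x^{\fullname a}$ together with the falsity clauses for $\rightarrow$ and $\forall$, a generic element of the falsity value of the displayed formula has the form $t \stackapp s \stackapp \sigma$ where, for some $c \in a$ and some $z \in \rlzstr$, we have $t \Vdash \varphi(\fullname c, \lift f(\fullname c))$, $s \Vdash \op(\fullname c, z) \rlzin \lift f$ and $\sigma \in \falsity{\varphi(\fullname c, z)}$. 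Writing $d \coloneqq f(c)$, the hypothesis on $t$ means exactly $t \Vdash \varphi(\fullname c, \fullname d)$, since $\falsity{\varphi(\fullname c, \lift f(\fullname c))} = \falsity{\varphi(\fullname c, \fullname d)}$ by the definition of the falsity value $\falsity{\varphi(x, \lift f(x))}$. So it suffices to show $\theta \star t \stackapp s \stackapp \sigma \in \Perp$.

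First I would run the evaluation, just as for \Cref{theorem:InclusionImplicationEquivalence}$(ii)$:
\[
\theta \star t \stackapp s \stackapp \sigma \;\succ\; \cc \star (\lambda k \lambdaapp \inapp{s}{\app{k}{t}}) \stackapp \sigma \;\succ\; \lambda k \lambdaapp \inapp{s}{\app{k}{t}} \star \saverlz{\sigma} \stackapp \sigma \;\succ\; s \star (\app{\saverlz{\sigma}}{t}) \stackapp \sigma,
\]
so it is enough to get $s \star (\app{\saverlz{\sigma}}{t}) \stackapp \sigma \in \Perp$. Since $s \Vdash \op(\fullname c, z) \rlzin \lift f$, which unfolds to $(\op(\fullname c, z) \notrlzin \lift f) \rightarrow \perp$, and $\sigma \in \Pi = \falsity{\perp}$, this will follow as soon as $\app{\saverlz{\sigma}}{t} \Vdash \op(\fullname c, z) \notrlzin \lift f$. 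Now I would case on whether $z = \fullname d$. If $z \neq \fullname d$, then by the definition of $\lift f$ no pair $(\op(\fullname c, z), \rho)$ lies in $\lift f$ (this is the same observation used in the previous lemma), so $\falsity{\op(\fullname c, z) \notrlzin \lift f} = \emptyset$ and every term, in particular $\app{\saverlz{\sigma}}{t}$, realizes it. If $z = \fullname d$, then $\sigma \in \falsity{\varphi(\fullname c, z)} = \falsity{\varphi(\fullname c, \fullname d)}$ while $t \Vdash \varphi(\fullname c, \fullname d)$, hence $t \star \sigma \in \Perp$; therefore $\app{\saverlz{\sigma}}{t} \Vdash \perp$, since $\app{\saverlz{\sigma}}{t} \star \tau \succ \saverlz{\sigma} \star t \stackapp \tau \succ t \star \sigma \in \Perp$ for every $\tau \in \Pi$, and because $\falsity{\op(\fullname c, \fullname d) \notrlzin \lift f} = \Pi = \falsity{\perp}$ this gives $\app{\saverlz{\sigma}}{t} \Vdash \op(\fullname c, \fullname d) \notrlzin \lift f$. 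In both cases $\app{\saverlz{\sigma}}{t} \Vdash \op(\fullname c, z) \notrlzin \lift f$, which closes the argument.

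The only genuine subtlety — and the reason a pure $\lambda$-term such as $\lambda u \lambdaapp \lambda v \lambdaapp \app{v}{u}$ will not do — lies in the case $z = \fullname d$: there the hypothesis on $s$ only tells us that $s$ can consume a realizer of $\perp$, and the sole realizer of $\perp$ at our disposal has to be manufactured from $t$ and the stack $\sigma$ that we are currently computing against. Capturing $\sigma$ therefore requires an essential use of $\cc$, after which we can hand $s$ the term $\app{\saverlz{\sigma}}{t}$. The remaining steps — unwinding $\ff{dom}(\fullname a)$, the falsity clauses, and the definition of $\lift f$ — are exactly the bookkeeping already carried out for the preceding lemma, so I would not spell them out in further detail.
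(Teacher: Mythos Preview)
Your proof is correct, but the paper takes a slightly different route. Rather than attacking the displayed implication head-on, the paper passes to the contrapositive of the inner implication and shows
\[
\lambda g \lambdaapp \lambda h \lambdaapp \app{h}{g} \Vdash \forall x^{\fullname{a}} \big(\varphi(x, \lift{f}(x)) \rightarrow \forall y (\neg \varphi(x, y) \rightarrow \op(x, y) \notrlzin \lift{f})\big).
\]
The point is that any stack in $\falsity{\neg \varphi(\fullname c, z) \rightarrow \op(\fullname c, z) \notrlzin \lift f}$ must have its tail in $\falsity{\op(\fullname c, z) \notrlzin \lift f}$, which is empty unless $z = \fullname{f(c)}$; so the case distinction you perform simply disappears, and the realizer is the pure $\lambda$-term $\lambda g \lambdaapp \lambda h \lambdaapp \app{h}{g}$. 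One then recovers the original statement via \Cref{theorem:negatingimplications} (where the $\cc$ is hidden).

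So your remark that ``a pure $\lambda$-term will not do'' is right for the formulation you chose, but not for the statement itself: the paper isolates the classical step in the general lemma on contrapositives, whereas you bake it directly into the specific realizer. Both are perfectly valid; the paper's decomposition is a little cleaner (no case split, simpler term), while yours is more self-contained.
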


\begin{proof}
    It will suffice to prove that
    \[
    \lambda g \lambdaapp \lambda h \lambdaapp \app{h}{g} \Vdash \forall x^{\fullname{a}} (\varphi(x, \lift{f}(x)) \rightarrow \forall y (\neg \varphi(x, y) \rightarrow \op(x, y) \notrlzin \lift{f})).
    \]
    Again, fix $c \in a$ and $d \in b$ such that $f(c) = d$. Next, suppose that $t \Vdash \varphi(\fullname{c}, \fullname{d})$ and $s \stackapp \pi \in \falsity{ \forall y (\neg \varphi(\fullname{c}, y) \rightarrow \op(\fullname{c}, y) \notrlzin \lift{f})}$. Then, for some $z \in \rlzstr$, $s \Vdash \neg \varphi(\fullname{c}, z)$ and $(\op(\fullname{c}, z), \pi) \in \lift{f}$, from which it follows that $z = \fullname{d}$. Thus $\lambda g \lambdaapp \lambda h \lambdaapp \app{h}{g} \star t \stackapp s \stackapp \pi \succ s \star t \stackapp \pi \in \Perp$.
\end{proof}

As an example of this alternative interpretation we reprove \Cref{theorem:LiftCodomain} to show that $\fullname{b}$ is a codomain of $\lift{f}$.

\begin{proposition}
$\identity \Vdash \forall x^{\fullname{a}} (\lift{f}(x) \rlzin \fullname{b})$.
\end{proposition}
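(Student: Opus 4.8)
The plan is to unwind the definition of the image falsity value together with the restricted quantifier $\forall x^{\fullname a}$ from \Cref{definition:RestrictedQuantifier}, and then reduce everything to \Cref{observation:reishPreservesElementhood}. Recall that $\lift f(x) \rlzin \fullname b$ abbreviates $\lift f(x) \notrlzin \fullname b \rightarrow \perp$. First I would compute, using that $\ff{dom}(\fullname a) = \{ \fullname y \divline y \in a\}$ and applying the definition of $\falsity{\varphi(x, \lift f(x))}$ to $\varphi(u, v) \equiv v \rlzin \fullname b$ (note $f(y) \in b$ since $f \colon a \to b$),
\[
\falsity{\forall x^{\fullname a} (\lift f(x) \rlzin \fullname b)} = \bigcup_{c \in \ff{dom}(\fullname a)} \falsity{\lift f(c) \rlzin \fullname b} = \bigcup_{y \in a} \falsity{\fullname{f(y)} \rlzin \fullname b}.
\]

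Next I would observe that for every $y \in a$ the term $\identity$ realizes $\fullname{f(y)} \rlzin \fullname b$, and does so \emph{uniformly}. Indeed, since $f(y) \in b$ we have $(\fullname{f(y)}, \pi) \in \fullname b$ for all $\pi \in \Pi$, hence $\falsity{\fullname{f(y)} \notrlzin \fullname b} = \Pi$; so if $t \Vdash \fullname{f(y)} \notrlzin \fullname b$ then $t \star \pi \in \Perp$ for every $\pi$, and for any $t \stackapp \pi \in \falsity{\fullname{f(y)} \rlzin \fullname b}$ we get $\identity \star t \stackapp \pi \succ t \star \pi \in \Perp$. This is exactly the first half of \Cref{observation:reishPreservesElementhood}. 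Combining this with the computation above: given any $\pi$ in $\falsity{\forall x^{\fullname a} (\lift f(x) \rlzin \fullname b)}$, pick $y \in a$ with $\pi \in \falsity{\fullname{f(y)} \rlzin \fullname b}$; since $\identity \Vdash \fullname{f(y)} \rlzin \fullname b$ we conclude $\identity \star \pi \in \Perp$, so $\identity \Vdash \forall x^{\fullname a} (\lift f(x) \rlzin \fullname b)$.

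There is essentially no obstacle here — the statement is bookkeeping once the image falsity value is in place. The only point needing a little care is the uniformity: a single realizer must work for all $y \in a$ at once, which holds precisely because each $\falsity{\fullname{f(y)} \notrlzin \fullname b}$ equals the whole of $\Pi$, independently of $y$. (Alternatively one could derive the result by feeding the two preceding lemmas on $\lift f(x)$ into the original proof of \Cref{theorem:LiftCodomain}, but the direct computation is shorter and is the version I would present.)
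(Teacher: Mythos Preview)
Your proof is correct and follows essentially the same approach as the paper: both reduce to the observation that for each $y \in a$ one has $\falsity{\fullname{f(y)} \notrlzin \fullname b} = \Pi$ (since $f(y) \in b$), so any $t$ realizing this negation already satisfies $t \star \pi \in \Perp$, whence $\identity \star t \stackapp \pi \in \Perp$. Your presentation is slightly more explicit in first computing the union of falsity values, but the argument is the same.
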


\begin{proof}
Fix $c \in a$, $t \Vdash \lift{f}(c) \notrlzin \fullname{b}$ and $\pi \in \Pi$. Then $\falsity{\lift{f}(c) \notrlzin \fullname{b}} = \falsity{\fullname{d} \notrlzin \fullname{b}}$ where $d = f(c)$. But, since $d \in b$, this set is equal to $\Pi$, from which it follows that $\identity \star t \stackapp \pi \succ t \star \pi \in \Perp$.
\end{proof}

\section{Non-extensional Axiom of Choice and Dependant Choice} \label{section:NEACandDC}

In this section we discuss the non-extensional Axiom of Choice, denoted \tf{NEAC}, which is a version of the Axiom of Choice that holds in many realizability models. Assuming full AC in \tf{V}, Theorem 4.18 of \cite{Krivine2012} shows that \tf{NEAC} holds in any realizability model produced from a countable realizability algebra which contains the additional instruction ``\emph{quote}''. A more generalised argument is also given in Section 4.2 of \cite{FontanellaGeoffroy2020}, which adds a generalisation to quote as well as an additional instruction\footnote{which essentially allows one to realize that $\widehat{\alpha + 1}$ is the successor of $\hat{\alpha}$} to obtain \tf{NEAC} in realizability models of uncountable size.

\begin{definition}
    Suppose that $\mathcal{A} = (\Lambda, \Pi, \prec, \Perp)$ is a countable realizability algebra and let $t \mapsto \eta_t$ be an enumeration of $\Lambda$ in order type $\omega$. Then $\mathcal{A}$ contains the instruction \emph{quote} if there exists a special instruction $\rlzfont{q} \in \Lambda$ such that for any $t, s \in \Lambda$ and $\pi \in \Pi$,
    \[
    \rlzfont{q} \star t \stackapp s \stackapp \pi \succ t \star \underline{\eta_s} \stackapp \pi,
    \]
    where $\underline{\eta_s}$ is Church numeral associated to $\eta_s \in \omega$.
\end{definition}

So let us fix a countable realizability algebra $\mathcal{A}$ and suppose that $\Lambda$ contains the instruction \emph{quote}. \tf{NEAC} is the statement that every binary relation can be refined into a (non-extensional) $\rlzin$-function. More formally,

\begin{definition}
    \tf{NEAC} is the statement:
    \[
    \forall z \, \exists f \, ( f \subseteq_{\rlzin} z \land \rlzin\ff{-Func}(f) \land \forall x \forall y \exists y' (\op(x, y) \rlzin z \rightarrow \op(x, y') \rlzin f)).
    \]
\end{definition}

\begin{observation}
    The extensional version of \tf{NEAC} (where $f$ is an extensional function) implies the Axiom of Choice. To see this, suppose $A$ were a family of non-empty sets and let $z$ be the binary relation defined by $(i, x) \in z$ if and only if $i \in A \land x \in i$. Now, if we can refine $z$ to an extensional function, $f$, this will be a choice function for $A$. This is because if $i, j \in A$ and $i \simeq j$ then $f(i) \simeq f(j)$.

    On the other hand, if we can only refine $z$ to a non-extensional $\rlzin$-function this will not suffice because we may have $i \simeq j$ but $f(i) \not\simeq f(j)$. Therefore, we have not chosen a unique extensional element of each set in $A$.
\end{observation}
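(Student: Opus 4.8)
The plan is to derive the Axiom of Choice, in the shape ``every family of non-empty sets has a choice function'', inside $\ZFepsilon$ augmented with the extensional form of \tf{NEAC}; combined with \Cref{theorem:rlzmodelModelsZFepsilon} and \Cref{theorem:ZFepsilonToZF} this shows that the reduct of any realizability model validating extensional \tf{NEAC} is a model of \tf{ZFC}. So fix a set $A$ with $\forall i \in A\, \exists x\, (x \in i)$. By \Cref{ZFepsilonProperty:RlzinImpliesIn} every $i \rlzin A$ satisfies $i \in A$, hence is $\rlzin$-non-empty; this observation will be used repeatedly.

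First I would manufacture the membership relation $z$ as a genuine set of $\ZFepsilon$: using $\rlzin$-Union to form a set $u$ collecting all $\rlzin$-elements of $\rlzin$-elements of $A$, then the class function $\op$ of \Cref{theorem:LiftingClassFunction} together with $\rlzin$-Pairing, $\rlzin$-Collection and $\rlzin$-Separation to gather the pairs $\op(i,x)$ with $i \rlzin A$ and $x \rlzin i$, one obtains $z$ with $\op(i,x) \rlzin z \Leftrightarrow (i \rlzin A \land x \rlzin i)$. Since each $i \rlzin A$ is $\rlzin$-non-empty, the domain $\{\,i : \exists x\ \op(i,x) \rlzin z\,\}$ of $z$ coincides with $\{\,i : i \rlzin A\,\}$. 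Feeding $z$ to extensional \tf{NEAC} produces an extensional function $f$ with $f \subseteq_\rlzin z$ and the totality clause $\forall x \forall y \exists y'\,(\op(x,y) \rlzin z \rightarrow \op(x,y') \rlzin f)$. For each $i \rlzin A$, $\rlzin$-non-emptiness gives some $x \rlzin i$, hence $\op(i,x) \rlzin z$, hence some $y$ with $\op(i,y) \rlzin f$; and $f \subseteq_\rlzin z$ forces $\op(i,y) \rlzin z$, so $y \rlzin i$. Thus $f$ is an extensional function with domain (the $\rlzin$-elements of) $A$ which $\rlzin$-picks an element of each $i \rlzin A$.

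Next I would convert $f$ into an honest $\in$-function, invoking the (unnamed) observation in \Cref{section:Functions} relating extensional functions and $\in$-functions: set $\op(i,y) \in F \Leftrightarrow \exists c \rlzin A\,(i \simeq c \land \op(c,y) \rlzin f)$; that observation yields that $F$ is a $\in$-function with $\in$-domain $A$. To see $F$ is a choice function, fix $i \in A$; by $\rlzin$-Extensionality pick $c \rlzin A$ with $i \simeq c$, then $y$ with $\op(c,y) \rlzin f$; then $\op(i,y) \in F$, and from $y \rlzin c$ and $c \simeq i$ the elementary facts of \Cref{theorem:ZFepsilonStatements} give $y \in i$. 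Hence $F$ witnesses the Axiom of Choice for $A$.

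The one place where extensionality of $f$ is indispensable — and the reason the non-extensional \tf{NEAC} does not suffice — is precisely the hypothesis of the conversion observation: extensional-functionality of $f$ is what makes $F$ single-valued up to $\simeq$ (if $i \simeq i'$, unwinding $\op(i,y), \op(i',y') \in F$ produces $c, c' \rlzin A$ with $c \simeq c'$ and $\op(c,y), \op(c',y') \rlzin f$, and extensionality then delivers $y \simeq y'$), whereas a mere $\rlzin$-function would only control literally-equal first coordinates. I expect the substantive point to be this remark about why extensionality is needed; the rest is bookkeeping — verifying that $z$ genuinely is a $\ZFepsilon$-set, and that the domain-free statement of \tf{NEAC} hands us a function whose domain matches ``$A$'' so that the conversion step applies.
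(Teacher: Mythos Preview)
Your proposal is correct and follows the same approach as the paper: build the membership relation $z$, refine it via extensional \tf{NEAC} to an extensional function, and observe that extensionality is exactly what makes the result a choice function. The paper's argument is only a two-sentence sketch carried out informally at the $\in$-level (defining $z$ via $i \in A \land x \in i$), whereas you work carefully inside $\ZFepsilon$ with $\rlzin$ and then pass through the extensional-to-$\in$ conversion of \Cref{section:Functions}; this extra bookkeeping is sound and indeed spells out details the paper leaves implicit.
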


\begin{lemma} \label{theorem:FunctionApproximating}
    Suppose that $\mathcal{A}$ is a countable realizability algebra and $\Lambda$ contains a special instruction $\rlzfont{q}$ satisfying the instruction quote. Then for any formula $\varphi(u, x_1, \dots, x_n)$ in $Fml_{\rlzin}$ there exists a class function $g_\varphi \colon \omega \times \rlzstr^n \rightarrow \rlzstr$ such that 
    \[
    \rlzmodel \Vdash \forall x_1, \dots, x_n (\exists u \, \varphi(u, x_1, \dots, x_n) \rightarrow \exists n^{\hat{\omega}} \varphi(\lift{g}_\varphi(n, x_1, \dots, x_n), x_1, \dots, x_n)),
    \]
    where $\lift{g}_\varphi$ is the lift of $g_\varphi$ to a $\rlzin$-class function as defined in \Cref{definition:ClassLift}.
\end{lemma}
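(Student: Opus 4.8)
The plan is to use the instruction $\rlzfont{q}$ exactly in the spirit of Krivine's proof of \tf{NEAC}: $\rlzfont{q}$ lets a realizer read off the \emph{code} $\eta_r$ of a term $r$ appearing on the stack, and the whole point is to arrange the ground-model function $g_\varphi$ so that this code names a set for which $r$ is \emph{already} a realizer. In this way ``there exists a witness $u$'' is forced down to ``there exists a witness in the countable family $\{g_\varphi(m,\vec x):m\in\omega\}$'', which is indexed inside $\rlzmodel$ by $\hat\omega$.

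\textbf{Defining $g_\varphi$.} Working in \tf{V} (this is where $\tf{V}\models\tf{AC}$, together with Scott's trick to cut proper classes down to sets, is used), for $m\in\omega$ let $\eta^{-1}(m)$ be the unique closed term with code $m$, and set
\[
g_\varphi(m,\vec a)\coloneqq
\begin{cases}
\text{a chosen } b\in\rlzstr \text{ with } \eta^{-1}(m)\Vdash\varphi(b,\vec a), & \text{if some such } b \text{ exists},\\
\fullname{0}, & \text{otherwise.}
\end{cases}
\]
The feature of $g_\varphi$ I will use is immediate: if $r$ is any closed term and $r\Vdash\varphi(b,\vec a)$ for \emph{some} $b\in\rlzstr$, then, putting $m\coloneqq\eta_r$, we have $\eta^{-1}(m)=r$, so the first clause applies and $r\Vdash\varphi(g_\varphi(m,\vec a),\vec a)$. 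Throughout I use the evident extension of \Cref{definition:ClassLift} and of the image-falsity convention of \Cref{section:FunctionImages} to class functions of several arguments whose first argument ranges over $\hat\omega$, so that $\falsity{\varphi(\lift{g}_\varphi(\hat m,\vec a),\vec a)}=\falsity{\varphi(g_\varphi(m,\vec a),\vec a)}$ for $m\in\omega$, and $\nu_m=\underline{m}$ as arranged in \Cref{section:OrdinalRepresentations}.

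\textbf{The uniform realizer.} Unfolding the abbreviations, $\exists u\,\varphi(u,\vec a)$ is $(\forall u(\varphi(u,\vec a)\to\perp))\to\perp$, and $\exists n^{\hat\omega}\varphi(\lift{g}_\varphi(n,\vec a),\vec a)$ is $(\forall n^{\hat\omega}(\varphi(\lift{g}_\varphi(n,\vec a),\vec a)\to\perp))\to\perp$; hence an element of the falsity value of the implication in the statement has the form $t\stackapp s\stackapp\pi_0$ with $t\Vdash\exists u\,\varphi(u,\vec a)$, $s\Vdash\forall n^{\hat\omega}(\varphi(\lift{g}_\varphi(n,\vec a),\vec a)\to\perp)$ and $\pi_0\in\Pi$. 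I will take the realizer
\[
\Theta\coloneqq\lambda u\lambdaapp\lambda v\lambdaapp\app{u}{(\lambda w\lambdaapp\app{\app{\rlzfont{q}}{(\lambda m\lambdaapp\app{\app{v}{m}}{w})}}{w})},
\]
which is a genuine realizer and depends on neither $\vec a$ nor $\varphi$ (only $g_\varphi$ does), so that \Cref{theorem:realizinguniversals} will upgrade the fixed-$\vec a$ statement to the universal closure. Evaluating, $\Theta\star t\stackapp s\stackapp\pi_0\succ t\star S\stackapp\pi_0$ where $S\coloneqq\lambda w\lambdaapp\app{\app{\rlzfont{q}}{(\lambda m\lambdaapp\app{\app{s}{m}}{w})}}{w}$; since $t$ realizes $\exists u\,\varphi(u,\vec a)$, it is enough to show $S\Vdash\forall u(\varphi(u,\vec a)\to\perp)$. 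Given $b\in\rlzstr$, $r\Vdash\varphi(b,\vec a)$ and $\rho\in\Pi$,
\[
S\star r\stackapp\rho\;\succ\;\rlzfont{q}\star(\lambda m\lambdaapp\app{\app{s}{m}}{r})\stackapp r\stackapp\rho\;\succ\;(\lambda m\lambdaapp\app{\app{s}{m}}{r})\star\underline{\eta_r}\stackapp\rho\;\succ\;s\star\underline{\eta_r}\stackapp r\stackapp\rho .
\]
By the feature of $g_\varphi$, $r\Vdash\varphi(g_\varphi(\eta_r,\vec a),\vec a)$, hence $\underline{\eta_r}\stackapp r\stackapp\rho\in\falsity{\forall n^{\hat\omega}(\varphi(\lift{g}_\varphi(n,\vec a),\vec a)\to\perp)}$ (using $\nu_{\eta_r}=\underline{\eta_r}$ and the image-falsity convention), so $s\star\underline{\eta_r}\stackapp r\stackapp\rho\in\Perp$, and therefore $S\star r\stackapp\rho\in\Perp$ since $\Perp$ is closed under $\succ$.

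\textbf{Main obstacle.} The $\lambda_c$-bookkeeping above is routine; the real content is the definition of $g_\varphi$ together with the observation that feeding a realizer $r$ of $\varphi(b,\vec a)$ through $\rlzfont{q}$ returns an index $\eta_r$ whose $g_\varphi$-value is re-realized by the \emph{same} $r$. Two points require care: first, $g_\varphi$ must be an honest class function of \tf{V}, which is exactly why $\tf{V}\models\tf{AC}$ (and Scott's trick) is invoked; second, one must pin down the generalization of the image-falsity convention of \Cref{section:FunctionImages} to multi-argument class functions with the first argument over $\hat\omega$, so that the reduction $\falsity{\varphi(\lift{g}_\varphi(\hat m,\vec a),\vec a)}=\falsity{\varphi(g_\varphi(m,\vec a),\vec a)}$ holds on the nose.
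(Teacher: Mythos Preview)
Your proof is correct and takes a genuinely different route from the paper's. The paper proves the contrapositive
\[
\forall n^{\hat{\omega}}\,\neg\varphi(\lift{g}_\varphi(n,x),x)\rightarrow\forall u\,\neg\varphi(u,x)
\]
by contradiction, with the very short realizer $\lambda u\lambdaapp\fapp{\app{\rlzfont{q}}{u}}{u}$, which applies $\rlzfont{q}$ to the \emph{hypothesis} term $t$. Accordingly the paper's $g_\varphi$ is defined through the auxiliary sets $P_n=\{\pi:\nu_n\star\underline{n}\stackapp\pi\notin\Perp\}$: one picks $g_\varphi(n,a)$ so that $P_n\cap\falsity{\neg\varphi(g_\varphi(n,a),a)}\neq\emptyset$ whenever $P_n\cap\falsity{\forall u\,\neg\varphi(u,a)}\neq\emptyset$; the contradiction is then obtained by finding a stack simultaneously in and out of $\Perp$.

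By contrast you prove the direct implication, applying $\rlzfont{q}$ to the \emph{witness} term $r$ rather than to the hypothesis, and your $g_\varphi(m,\vec a)$ is simply any $b$ with $\eta^{-1}(m)\Vdash\varphi(b,\vec a)$. This makes the definition of $g_\varphi$ more transparent (it is literally ``a witness realized by the $m$-th term''), and avoids the detour through $P_n$ and the argument by contradiction, at the price of a longer realizer $\Theta$. Both approaches rely on exactly the same background (AC in \tf{V}, the image-falsity convention for $\lift{g}_\varphi$ with first argument in $\hat\omega$, and $\nu_m=\underline{m}$), and your remarks about Scott's trick and the needed generalisation of \Cref{section:FunctionImages} are apposite for either version.
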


\begin{proof}
    For simplicity, we will assume that $\varphi(u, x)$ has precisely two free variables. Fix an enumeration $(\nu_n \divline n \in \omega)$ of $\Lambda$ and let $t \mapsto \eta_t$ be the inverse of the enumeration (so $\nu_{\eta_t} = t$). 

    Given $n \in \omega$, set $P_n \coloneqq \{ \pi \in \Pi \divline \nu_n \star \underline{n} \stackapp \pi \not\in \Perp\}$. Now, using the Axiom of Choice, we can define a function $g_\varphi \colon \omega \times \rlzstr \rightarrow \rlzstr$ such that for any $a \in \rlzstr$ and $n \in \omega$, if $P_n \cap \falsity{\forall u \neg \varphi(u, a)} \neq \emptyset$ then $P_n \cap \falsity{\neg \varphi(g_\varphi(n, a), a)} \neq \emptyset$. Finally, let $\lift{g}_\varphi \coloneqq \{ (\op(\op(u,a), g_\varphi(u,a)), \pi) \divline u,a \in \rlzstr, \pi \in \Pi\}$ be the lift of $g_\varphi$ to a class $\rlzin$-function.  
    
    It will suffice to show that
    \[
    \lambda u \lambdaapp \fapp{\app{\rlzfont{q}}{u}}{u} \Vdash \forall x \, ( \forall n^{\hat{\omega}} \neg \varphi(\lift{g}_\varphi(n, x), x) \rightarrow \forall u \neg \varphi(u, x)). 
    \]
    Suppose, for sake of a contradiction, that this was not the case. Then we can fix $a \in \rlzstr$, \hbox{$t \Vdash \forall n^{\hat{\omega}} \neg \varphi(\lift{g}_\varphi(n, a), a)$} and $\pi \in \falsity{\forall u \neg \varphi(u, a)}$ such that $\lambda u \lambdaapp \fapp{\app{\rlzfont{q}}{u}}{u} \star t \stackapp \pi \in \Perp$. Next, observe that from this we obtain
    \[
    \fapp{\app{\rlzfont{q}}{u}}{u} \star t \stackapp \pi \succ \rlzfont{q} \star t \stackapp t \stackapp \pi \succ t \star \underline{\eta_t} \stackapp \pi \not\in \Perp.
    \]
    Therefore $\pi \in P_{\eta_t} \cap \falsity{\forall u \neg \varphi(u, a)}$. So, by the construction of $g_\varphi$, we can fix some $\sigma \in P_{\eta_t} \cap \falsity{\neg \varphi(g_\varphi(\eta_t, a), a)}$. But then we have the desired contradiction because $\sigma \in P_{\eta_t}$ implies that $t \star \underline{\eta_t} \stackapp \sigma \not\in \Perp$ while $\underline{\eta_t} \stackapp \sigma \in \falsity{\forall n^{\hat{\omega}} \neg \varphi(\lift{g}_\varphi(n, a), a)}$ gives us $t \star \underline{\eta_t} \stackapp \sigma \in \Perp$.
\end{proof}

\begin{lemma}
    Suppose that $\mathcal{A}$ is a countable realizability algebra and $\Lambda$ contains a special instruction $\rlzfont{q}$ satisfying the instruction quote. Then $\rlzmodel \Vdash \tf{NEAC}$.
\end{lemma}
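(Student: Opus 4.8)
The goal is to show $\rlzmodel \Vdash \tf{NEAC}$, i.e.\ that there is a realizer for
\[
\forall z \, \exists f \, \big( f \subseteq_{\rlzin} z \land \rlzin\ff{-Func}(f) \land \forall x \forall y \exists y' (\op(x, y) \rlzin z \rightarrow \op(x, y') \rlzin f) \big).
\]
The natural strategy is to apply \Cref{theorem:FunctionApproximating} to a carefully chosen formula $\varphi$, namely the formula expressing ``$\op(x,u) \rlzin z$'' where now $z$ (and $x$) play the role of the parameters. Given a name $z$, \Cref{theorem:FunctionApproximating} with $\varphi(u, x, z) \equiv (\op(x,u) \rlzin z)$ produces a ground-model class function $g_\varphi \colon \omega \times \rlzstr^2 \to \rlzstr$ whose lift $\lift g_\varphi$ satisfies
\[
\rlzmodel \Vdash \forall x \forall z \big( \exists u \, (\op(x,u)\rlzin z) \rightarrow \exists n^{\hat\omega}\, (\op(x, \lift g_\varphi(n,x,z)) \rlzin z) \big).
\]
So for each $x$ in the domain, if some $y$ with $\op(x,y)\rlzin z$ exists, then in fact $\op(x, \lift g_\varphi(n,x,z)) \rlzin z$ for some $n \rlzin \hat\omega$. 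This gives us, uniformly and computably in $x$, a way of selecting witnesses, but indexed by the natural number $n$.

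**Extracting the function $f$.** From $\lift g_\varphi$ and $\hat\omega$ I would now build the required $f$. The idea is to take, for each $x$, the $\rlzin$-least $n \rlzin \hat\omega$ witnessing the above, and let $f(x)$ be $\lift g_\varphi(n, x, z)$ for that $n$. Concretely, one should realize that
\[
f \coloneqq \{ \op(x, \lift g_\varphi(n,x,z)) : x \in \rlzstr, \ n \in \omega, \ \text{(suitable minimality side-condition)} \}
\]
(made precise as a name via $\rlzin$-Separation inside $\rlzmodel$ applied to a name for $\{\op(x,y) : \ldots\} \subseteq_{\rlzin} z$) works. The three conjuncts are then checked in $\rlzmodel$, working internally in the theory $\tf T^{\mathcal A, \tf V}$ wherever convenient: (i) $f \subseteq_{\rlzin} z$ is built in by construction, since every element of $f$ is of the form $\op(x, \lift g_\varphi(n,x,z))$ with $\op(x, \lift g_\varphi(n,x,z)) \rlzin z$; (ii) $\rlzin\ff{-Func}(f)$ — totality on the appropriate domain is exactly the consequence of \Cref{theorem:FunctionApproximating} displayed above, together with the Least Ordinal Principle (\Cref{LeastOrdinalPrinciple}) applied to $\hat\omega$, which is a $\rlzin$-Trichotomous ordinal by \Cref{remark:OmegaTrichotomous}, to make the choice of $n$ single-valued; the single-valuedness clause of $\rlzin\ff{-Func}$ then follows because $\lift g_\varphi$ is a $\rlzin$-class function (\Cref{theorem:LiftIsClassFunction}) and the minimal $n$ is unique by trichotomy; (iii) the refinement clause $\forall x \forall y \exists y' (\op(x,y)\rlzin z \rightarrow \op(x,y')\rlzin f)$ is again the consequence of \Cref{theorem:FunctionApproximating} together with existence of a $\rlzin$-least such $n$.

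**The main obstacle.** The genuinely delicate point is combining the output of \Cref{theorem:FunctionApproximating} — which only tells us \emph{some} $n \rlzin \hat\omega$ works, not which — with the need for $f$ to be single-valued. This is where $\hat\omega$ being a \emph{trichotomous} $\rlzin$-ordinal is essential: \Cref{LeastOrdinalPrinciple} lets us internally pick the $\rlzin$-least witnessing index, and \Cref{theorem:HatOrdinalsHaveUniqueElements} ensures that $\rlzin$-equal indices in $\hat\omega$ are literally equal, so the selection is well-defined as a $\rlzin$-function rather than merely an extensional one. I would therefore carry out the argument by: first invoking \Cref{theorem:FunctionApproximating}; then passing to an arbitrary model of $\tf T^{\mathcal A, \tf V}$ and arguing in $\ZFepsilon$ that the set $f$ defined by the minimality condition via $\rlzin$-Separation is a $\rlzin$-function refining $z$; and finally noting that since all these deductions are in classical logic from realized premises, adequacy (\Cref{theorem:adequacy}) yields a realizer for \tf{NEAC} itself. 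A secondary bookkeeping nuisance is packaging $\varphi(u,x,z) \equiv \op(x,u)\rlzin z$ correctly as an $Fml_{\rlzin}$ formula (using the defined function symbol $\op$, legitimate by \Cref{section:AddingDefinedFunctions}) so that \Cref{theorem:FunctionApproximating} applies verbatim, and keeping track that the lift $\lift g_\varphi$ is the class lift of a function of \emph{three} arguments $\omega \times \rlzstr^2 \to \rlzstr$, which is covered by the remark following \Cref{theorem:LiftingClassFunction}.
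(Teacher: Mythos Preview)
Your proposal is correct and follows essentially the same approach as the paper: apply \Cref{theorem:FunctionApproximating} to $\varphi(u,x,z)\equiv \op(x,u)\rlzin z$, then work internally using that $\hat\omega$ is $\rlzin$-Trichotomous (\Cref{remark:OmegaTrichotomous}) and the Least Ordinal Principle (\Cref{LeastOrdinalPrinciple}) to select the minimal witnessing index, defining $f$ accordingly and checking the three clauses with \Cref{theorem:HatOrdinalsHaveUniqueElements} for single-valuedness. The only cosmetic difference is the ordering of the parameters of $g_\varphi$ (the paper writes $\lift g(n,z,x)$), which is immaterial.
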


\begin{proof}
    First, by \Cref{theorem:FunctionApproximating}, let $g \colon \omega \times \rlzstr^3 \rightarrow \rlzstr$ be a $\rlzin$-function such that
    \[
    \rlzmodel \Vdash \forall z \forall x (\exists u  (\op(x, u) \rlzin z) \rightarrow \exists n^{\hat{\omega}} (\op(x, \lift{g}(n, z, x)) \rlzin z)).
    \]
    Now, working within any model of the realized theory, since $\hat{\omega}$ is a Trichotomous $\rlzin$-ordinal (\Cref{remark:OmegaTrichotomous}) it satisfies the Least Ordinal Principle (\Cref{LeastOrdinalPrinciple}); if $\exists u (\op(x, u) \rlzin z)$ then $\exists n \in \hat{\omega}$ such that $\op(x, \lift{g}(n, z, x)) \rlzin z$ and for all $m < n$, $\op(x, \lift{g}(m, z, x)) \rlzin z$. 
    
    Therefore, we can define a function $f$ in $\rlzmodel$ such that $\op(x, y) \rlzin f$ if and only if $\op(x, y) \rlzin z$ and $y = \lift{g}(n, z, x)$ where $n$ is the minimal such element of $\hat{\omega}$ (whenever it exists). Clearly, we have $f \subseteq_{\rlzin} z$ and for any $x, y$ there exists some $y'$ such that if $\op(x, y) \rlzin z$ then $\op(x, y') \rlzin z$.

    To show that $f$ is a $\rlzin$-function, suppose that $\op(x, y) \rlzin f$ and $\op(x, y') \rlzin f$. Then $y = \lift{g}(n, z, x)$ and $y' = \lift{g}(n', z, x)$ for some $n, n' \rlzin \hat{\omega}$ both of which are minimal. Since they are both minimal and $\hat{\omega}$ is extensionally an ordinal, in the extensional structure it must be the case that $n \simeq n'$ and therefore $n = n'$ by \Cref{theorem:HatOrdinalsHaveUniqueElements}.
\end{proof}

\begin{theorem}
    $\ZFepsilon \vdash \tf{NEAC} \rightarrow \tf{DC}_\in$. Hence, if $\mathcal{A}$ is a countable realizability algebra and $\Lambda$ contains a special instruction $\rlzfont{q}$ satisfying the instruction quote, then the extensional version of \tf{DC} is realized in $\rlzmodel$.
\end{theorem}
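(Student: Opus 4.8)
The plan is to replay the classical derivation of dependent choice from the axiom of choice, substituting $\tf{NEAC}$ for the single genuine use of choice — the passage from an entire relation to a total function — and then iterating the resulting (non-extensional) function along $\omega$ by a recursion carried out inside $\ZFepsilon$. So I would argue in $\ZFepsilon + \tf{NEAC}$. Fix an instance of $\tf{DC}_\in$: a set $a$ with $\exists u\,(u \in a)$ and a $\simeq$-invariant relation $R$ on $a$ that is $\in$-entire, i.e.\ $\forall x \in a\,\exists y \in a\,R(x,y)$; we must produce an $\in$-function $h$ with $\in$-domain $\omega$ and $\in$-range inside $a$ such that $R(h(n),h(n+1))$ for all $n \in \omega$. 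Using $\rlzin$-Collection and $\rlzin$-Separation, form the set $z$ with $\op(x,y) \rlzin z \leftrightarrow x \rlzin a \wedge y \rlzin a \wedge R(x,y)$. Since $\rlzin$ implies $\in$ (Property~\ref{ZFepsilonProperty:RlzinImpliesIn}) and $R$ respects $\simeq$, for every $x \rlzin a$ there is some $y \rlzin a$ with $\op(x,y) \rlzin z$. Applying $\tf{NEAC}$ to $z$ yields $f \subseteq_{\rlzin} z$ which is a $\rlzin$-function and which, by its totality clause, assigns to each $x \rlzin a$ a Leibniz-unique $y$ with $\op(x,y) \rlzin f$; for that $y$ we have $y \rlzin a$ and $R(x,y)$. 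Thus $f$ is a non-extensional ``successor'' operation on $a$.

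Next I would fix $a_0 \rlzin a$ and use the recursion principle available in $\ZFepsilon$ to build a $\rlzin$-function $h$ with $\rlzin$-domain $\omega$ such that $h$ begins at $a_0$ and $\op(h(n),h(n+1)) \rlzin f$ for every $n \rlzin \omega$; concretely $h$ is the union of the finite approximations, which are Leibniz-unique for each length because $f$ is a $\rlzin$-function, so that $h(n)$ is simply the result of applying the $f$-successor $n$ times to $a_0$. A routine induction gives $h(n) \rlzin a$, hence $R(h(n),h(n+1))$, for all $n \rlzin \omega$. Finally I would upgrade $h$ to an honest $\in$-function. Because $\op$ is compatible with $\simeq$ in both directions (Propositions~\ref{theorem:OrderedPairRespectsSimeq} and~\ref{theorem:SimeqRespectsOrderedPair}), $h$ is $\in$-total on $\omega$ with $\in$-range inside $a$; and because the recursion determines $h(n)$ purely from the numerical value of $n$, extensionally equal indices receive identical values, so $h$ is $\in$-functional. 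Hence $h \colon \omega \to a$ is an $\in$-function with $R(h(n),h(n+1))$ for all $n \in \omega$, which is $\tf{DC}_\in$.

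For the ``hence'', combine \Cref{theorem:rlzmodelModelsZFepsilon} ($\rlzmodel \Vdash \ZFepsilon$), the preceding lemma ($\rlzmodel \Vdash \tf{NEAC}$ when $\mathcal{A}$ is countable and contains $\rlzfont{q}$), and adequacy (\Cref{theorem:adequacy}, closure under classical deduction) to conclude $\rlzmodel \Vdash \tf{DC}_\in$. Since $\tf{DC}_\in$ is an $\mathcal{L}_\in$-sentence it lies in $\tf{T}_\in^{\mathcal{A},\tf{V}}$, hence holds in the reduced model $\rlzmodel_\in \models \tf{ZF}$ by the results of \Cref{section:ZFepsilonToZF}.

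The one delicate point — the only place the argument departs from a routine transcription of the classical proof — is the final upgrade. $\tf{NEAC}$ supplies only a non-extensional function, so the iteration naturally lives on the $\rlzin$-structure, and turning it into a genuine $\in$-function requires that $\simeq$-equal indices be sent to $\simeq$-equal values. Classically ``the recursion in DC uses no choice'' is trivial, but here one must be careful because the $\rlzin$-elements of $\omega$ need not be pairwise $\simeq$-distinct; the remedy is to phrase the recursion so that $h(n)$ depends on $n$ only through its numerical value. (Equivalently, as in the proof that $\rlzmodel \Vdash \tf{NEAC}$, one may run the recursion over a trichotomous copy of $\omega$, on which $\simeq$ already coincides with $=$ on elements by \Cref{theorem:HatOrdinalsHaveUniqueElements}.)
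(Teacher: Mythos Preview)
Your proposal is correct and follows essentially the same route as the paper: form $z$ from the entire relation, apply $\tf{NEAC}$ to extract a non-extensional successor function $f$, iterate $f$ from a fixed seed along $\omega$, and then argue that the resulting sequence is in fact an extensional function. The paper handles the ``delicate point'' you identify in exactly the way you flag as the alternative: it runs the recursion over $\hat{\omega}$ from the outset and invokes \Cref{theorem:HatOrdinalsHaveUniqueElements} (trichotomy implies $n \simeq n' \Rightarrow n = n'$ on $\hat{\omega}$), so your parenthetical remedy is in fact the paper's primary argument rather than your more informal ``depends only on the numerical value'' phrasing.
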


\begin{proof}
    We take $\tf{DC}_\in$ in the following form:
    \begin{multline*}
        \forall a \forall R (\forall x \in a \exists y \in a \, \op(x, y) \in R \\
        \rightarrow \exists g (\ff{Ext-Func}(g) \land g \colon \hat{\omega} \rightarrow a \land \forall n \in \hat{\omega} \, \op(g(n), g(n+1)) \in R)).
    \end{multline*}
    Fixing $a$ and $R$, suppose that $\forall x \in a \exists y \in a \, \op(x, y) \in R$ and set 
    \[
    z \coloneqq \bigcup_{x \in a} \{ \op(x, y) \divline y \in a \land \op(x, y) \in R \}.
    \]
    By \tf{NEAC} we can fix a non-extensional function $f \subseteq_{\rlzin} z$ which refines the relation $z$. Now, fix $x \rlzin a$. Then we can define a function $g \colon \hat{\omega} \rightarrow a$ recursively by setting $g(0) = x$ and $g(n+1)$ to be the unique $y$ such that $\op(g(n), y) \rlzin f$, where we observe that uniqueness follows from $f$ being a non-extensional function. To see that $g$ is the required extensional function, we first have that $g$ is clearly a total function on $\hat{\omega}$ since $f \subseteq_{\rlzin} z$. Now, fix $n, n' \rlzin \hat{\omega}$ and $y, y' \rlzin a$ and suppose that $n \simeq n'$, $\op(n, y) \rlzin g$ and $\op(n', y') \rlzin g$. Then, by \Cref{theorem:HatOrdinalsHaveUniqueElements}, we must have that $n = n'$ and therefore $y = y'$. In particular, this means that $y \simeq y'$, so $g$ is indeed extensional.
\end{proof}

\section{Non-trivial Realizability Models} \label{section:Reish2Size4}

Here we repeat the analysis from Section 4 of \cite{Krivine2018} to show that it is possible to produce a non-trivial realizability model such that the Boolean algebra $\fullname{2}$ has exactly $4$ elements. To produce this model we will use the first three Church numerals; $\underline{0} = \lambda u \lambdaapp \lambda v \lambdaapp v$, $\underline{1} = \lambda u \lambdaapp \lambda v \lambdaapp \app{u}{v}$ and $\underline{2} = \lambda u \lambdaapp \lambda v \lambdaapp \twoapp{\app{\underline{1}}{u}}{\app{u}{v}} = \lambda u \lambdaapp \lambda v \lambdaapp \inapp{u}{\app{u}{v}}$. We begin with the observation by Krivine that while $\fullname{2}$ may contain $\rlzin$-elements which are neither $\fullname{0}$ or $\fullname{1}$, all such sets are empty.

\begin{proposition} \label{theorem:Subsetsof2}
    $\identity \Vdash \forall x ^{\fullname{2}} \forall y (x \neq \fullname{1} \rightarrow y \notrlzin x)$.
\end{proposition}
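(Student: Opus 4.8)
The plan is to compute the falsity value $\falsity{\forall x^{\fullname{2}} \forall y (x \neq \fullname{1} \rightarrow y \notrlzin x)}$ directly and verify that $\identity$ sends every element of it into $\Perp$. The crucial observation is that $\ff{dom}(\fullname{2}) = \{\fullname{0}, \fullname{1}\}$, so by \Cref{definition:RestrictedQuantifier} this falsity value is the union
\[
\falsity{\forall y (\fullname{0} \neq \fullname{1} \rightarrow y \notrlzin \fullname{0})} \;\cup\; \falsity{\forall y (\fullname{1} \neq \fullname{1} \rightarrow y \notrlzin \fullname{1})},
\]
and I would treat the two pieces separately, unwinding in each case that $\falsity{\forall y(\fullname{i}\neq\fullname{1}\rightarrow y\notrlzin\fullname{i})} = \bigcup_{c\in\rlzstr}\{\, t\stackapp\pi \divline t\Vdash \fullname{i}\neq\fullname{1},\ \pi\in\falsity{c\notrlzin\fullname{i}}\,\}$.

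For the $\fullname{0}$-piece I would use that $\fullname{0} = \emptyset$, so $\falsity{c \notrlzin \fullname{0}} = \{\pi \divline (c,\pi)\in\fullname{0}\} = \emptyset$ for every $c \in \rlzstr$; hence $\falsity{\fullname{0}\neq\fullname{1}\rightarrow c\notrlzin\fullname{0}} = \emptyset$ and this piece is empty, so there is nothing to check. For the $\fullname{1}$-piece I would use that, since $\fullname{1} = \fullname{1}$, the definition of non-extensional equality gives $\falsity{\fullname{1}\neq\fullname{1}} = \falsity{\perp} = \Pi$. So any element of this piece is of the form $t \stackapp \pi$ with $t \Vdash \fullname{1}\neq\fullname{1}$ and $\pi \in \Pi$; from $\pi \in \Pi = \falsity{\fullname{1}\neq\fullname{1}}$ we get $t \star \pi \in \Perp$, and since $\identity \star t \stackapp \pi \succ t \star \pi$ and $\Perp$ is a pole, $\identity \star t \stackapp \pi \in \Perp$, as required.

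I do not expect any real obstacle here: the statement is essentially immediate once the definitions are unwound, the only care needed being the bookkeeping of the restricted quantifier over the two-element domain $\ff{dom}(\fullname{2})$ together with the convention $\falsity{a \neq a} = \Pi$. One could alternatively invoke \Cref{theorem:RealizingBoundedQuantifier} to reduce the claim to showing $\identity \Vdash \fullname{i} \neq \fullname{1} \rightarrow y \notrlzin \fullname{i}$ for $i \in \{0,1\}$ and for all $y \in \rlzstr$, which amounts to the same two computations, but the direct evaluation above is cleaner.
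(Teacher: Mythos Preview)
Your proof is correct and essentially identical to the paper's: both split into the two cases $x=\fullname{0}$ and $x=\fullname{1}$ via the restricted quantifier, observe that the $\fullname{0}$-piece has empty falsity value since $\fullname{0}=\emptyset$, and handle the $\fullname{1}$-piece using $\falsity{\fullname{1}\neq\fullname{1}}=\Pi$. The only cosmetic difference is that the paper cites the reduction lemma for bounded quantifiers whereas you unwind the definition of $\forall x^{\fullname{2}}$ directly---which you yourself note as an alternative.
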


\begin{proof}
    By \Cref{theorem:RealizingBoundedUniversals} it suffices to prove the claim for $x = \fullname{0}$ and $x = \fullname{1}$. Firstly, if $x = \fullname{0}$ then $\falsity{a \notrlzin \fullname{0}} = \emptyset$ for any $a \in \rlzstr$ and thus $\falsity{\forall y(\fullname{0} \neq \fullname{1} \rightarrow y \notrlzin \fullname{0})} = \emptyset$. Therefore, in this case any term realizes the statement. On the other hand, if $x = \fullname{1}$ then $\falsity{\fullname{1} \neq \fullname{1}} = \Pi$. Therefore, for any $a \in \rlzstr$, if $t \Vdash \fullname{1} \neq \fullname{1}$ and $(a, \pi) \in \fullname{1}$, $\identity \star t \stackapp \pi \in \Pi$. Thus $\identity$ realizes the statement.
\end{proof}

\begin{definition}
    Say that $(\bar{\Lambda}, \bar{\Pi})$ is \emph{generated} by $(T, X)$ if $\bar{\Lambda}$ and $\bar{\Pi}$ are the smallest classes satisfying:
    \begin{itemize} \setlength \itemsep{0pt}
        \item $\forall t \in T \; t \in \bar{\Lambda}$,
        \item $\forall \omega \in X \; \omega \in \bar{\Pi}$,
        \item Every variable is in $\bar{\Lambda}$,
        \item If $t, s \in \bar{\Lambda}$ then $\app{t}{s} \in \bar{\Lambda}$,
        \item If $u$ is a variable and $t \in \bar{\Lambda}$ then $\lambda u \lambdaapp t \in \bar{\Lambda}$,
        \item $\cc \in \bar{\Lambda}$,
        \item $\forall \pi \in \bar{\Pi} \; \saverlz{\pi} \in \bar{\Lambda}$,
        \item If $t \in \bar{\Lambda}$ and $\pi \in \bar{\Pi}$ then $t \stackapp \pi \in \bar{\Pi}$.
    \end{itemize}
    Namely, $(\bar{\Lambda}, \bar{\Pi})$ is generated by $(T,X)$ if the collection of special instructions is contained in $T$ and the collection of stack constants is contained in $X$.
\end{definition}

\medskip

\noindent We consider the realizability algebra $\mathcal{A} = (\Lambda, \Pi, \prec, \Perp)$ given as follows:

\begin{definition} \label{defn:RealizabilityAlgebraGimel2is4} \,
\begin{itemize}
    \item $(\Lambda, \Pi)$ is generated by $(\{\rlzfont{d}\}, \{\pi^0, \pi^1 \})$. That is, there is exactly one special instruction, $\rlzfont{d}$ and exactly two stack constants, $\pi^0$ and $\pi^1$.
    \item For $i \in \{0,1\}$ we let $(\Lambda^i, \Pi^i)$ be the sets generated by $(\{\rlzfont{d}\}, \{\pi^i\})$.
    \item For $i, j \in \{0,1\}$ we define $\Perp^i_j$ to be the least set $P \subseteq \Lambda^i \star \Pi^i$ such that:
    \begin{enumerate}
        \item $\rlzfont{d} \star \underline{j} \stackapp \pi \in P$ for every $\pi \in \Pi^i$,
        \item If $t \star \pi \in \Lambda^i \star \Pi^i$ and $s \star \sigma \in P$ then $t \star \pi \succ s \star \sigma \Rightarrow t \star \pi \in P$,
        \item If at least two out of the three processes $t \star \pi$, $s \star \pi$, $r \star \pi$ are in $P$ then $\rlzfont{d} \star \underline{2} \stackapp t \stackapp s \stackapp r \stackapp \pi \in P$.
    \end{enumerate}
    \item $\Perp$ is defined by $(\Lambda \star \Pi) \setminus \Perp = (\Lambda^0 \star \Pi^0) \setminus \Perp^0_0 \, \cup \, (\Lambda^1 \star \Pi^1) \setminus \Perp^1_1$. That is, a process $t \star \pi$ is in $\Perp$ if and only if either:
    \begin{enumerate}
        \item $t \star \pi \in \Perp^0_0 \cup \Perp^1_1$,
        \item $t \star \pi \in (\Lambda \star \Pi) \setminus (\Lambda^0 \star \Pi^0 \cup \Lambda^1 \star \Pi^1)$, i.e. both stack constants $\pi^0$ and $\pi^1$ appear in $t \star \pi$.
    \end{enumerate}
\end{itemize}
\end{definition}

\begin{remark} \label{remark:Gimel2Algebra}
    Since there is no additional requirement on $\rlzfont{d}$ in terms of how it interacts with the pre-order, 
    \[
    \rlzfont{d} \star \pi \succ s \star \sigma \quad \Longleftrightarrow \quad \rlzfont{d} \star \pi = s \star \sigma.
    \]
    Also, by the minimality requirement on the ordering, for any processes $t \star \pi$ and $s \star \sigma$, if $t \star \pi \succ s \star \sigma$ then there must exist a finite sequence 
    \[
    t \star \pi = t_0 \star \pi_0 \succ_1 \dots \succ_1 t_k \star \pi_k = s \star \sigma
    \]
    of distinct processes such that each reduction $t_n \star \pi_n \succ_1 t_{n+1} \star \pi_{n+1}$ is a one-step evaluation given by one of the defining constraints on $\succ_1$. In this case the only restrains on $\succ_1$ are the four basic requirements of push, grab, save, and restore. Moreover, by inspection, it is clear that only one of these can give a possible reduction of a given term and thus this finite sequence is uniquely determined. From this we can conclude that (again, for this realizability algebra): 
    \begin{itemize}
        \item If $t \star \pi \succ s \star \sigma$ then there is no infinite sequence of distinct processes $(t_n \star \pi_n \divline n \in \omega)$ such that $t \star \pi = t_0 \star \pi_0 \succ t_1 \star \pi_1 \succ \dots \succ t_n \star \pi_n \succ \dots \succ s \star \sigma$.
        \item If $t \star \pi \succ s \star \sigma$ and $t \star \pi \succ r \star \tau$ then either $s \star \sigma \succ r \star \tau$ or $r \star \tau \succ s \star \sigma$.
    \end{itemize}
\end{remark}

\noindent Due to the nature of the structure, there is an obvious isomorphism between $\Lambda^0 \star \Pi^0$ and $\Lambda^1 \star \Pi^1$, which is the one generated by sending $\pi^0$ to $\pi^1$. Namely, define $\Xi \coloneqq \Lambda^0 \cup \Pi^0 \rightarrow \Lambda^1 \cup \Pi^1$ recursively by the following rules:

\pagebreak[3]
\begin{itemize}
    \item $\Xi(\pi^0) = \pi^1$;
    \item If $u$ is a variable, then $\Xi(u) = u$;
    \item If $u$ is a variable and $t \in \Lambda^0$, then $\Xi(\lambda u \lambdaapp t) = \lambda u \lambdaapp \Xi(t)$;
    \item If $u$ is a variable and $t, s \in \Lambda^0$, then $\Xi(t[u \coloneqq s]) = \Xi(t)[u \coloneqq \Xi(s)]$;
    \item If $s, t \in \Lambda^0$, then $\Xi(\app{t}{s}) = \inapp{\Xi(t)}{\Xi(s)}$;
    \item $\Xi(\cc) = \cc$ and $\Xi(\rlzfont{d}) = \rlzfont{d}$;
    \item If $\pi \in \Pi^0$, then $\Xi(\saverlz{\pi}) = \saverlz{\Xi(\pi)}$;
    \item If $t \in \Lambda^0$, and $\pi \in \Pi^0$ then $\Xi(t \stackapp \pi) = \Xi(t) \stackapp \Xi(\pi)$.
\end{itemize}
This then extends to $\Xi \colon \Lambda^0 \star \Pi^0 \rightarrow \Lambda^1 \star \Pi^1$ by $\Xi(t \star \pi) = \Xi(t) \star \Xi(\pi)$. We now see that this map provides an isomorphism between the structures.

\begin{lemma} \label{theorem:XiPreservesOrdering}
For all $t \star \pi, s \star \sigma \in \Lambda^0 \star \Pi^0$, $s \star \sigma \succ t \star \pi$ if and only if $\Xi(s \star \sigma) \succ \Xi(t \star \pi)$.
\end{lemma}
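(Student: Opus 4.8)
The plan is to prove \Cref{theorem:XiPreservesOrdering} by induction on the length of the derivation witnessing $s \star \sigma \succ t \star \pi$, using the structural observations recorded in \Cref{remark:Gimel2Algebra}. Since $\Xi$ is a bijection $\Lambda^0 \cup \Pi^0 \to \Lambda^1 \cup \Pi^1$ (its inverse being the analogous map generated by $\pi^1 \mapsto \pi^0$), it suffices to prove one direction, say that $s \star \sigma \succ t \star \pi$ implies $\Xi(s \star \sigma) \succ \Xi(t \star \pi)$, and then apply the same statement to $\Xi^{-1}$ for the converse. Because $\succ$ is the reflexive-transitive closure of $\succ_1$, the real content is the one-step claim: if $s \star \sigma \succ_1 t \star \pi$ then $\Xi(s \star \sigma) \succ_1 \Xi(t \star \pi)$ (and the reflexive and transitive cases are then immediate by chaining).

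First I would verify the one-step claim by inspecting the four rules defining $\succ_1$ — push, grab, save, restore — one at a time. For push, $\app{t}{s} \star \pi \succ_1 t \star s \stackapp \pi$, and since $\Xi(\app{t}{s}) = \inapp{\Xi(t)}{\Xi(s)}$ and $\Xi(s \stackapp \pi) = \Xi(s) \stackapp \Xi(\pi)$, the push rule applies again on the image. For grab, $\lambda u \lambdaapp t \star s \stackapp \pi \succ_1 t[u \coloneqq s] \star \pi$; here I would use the substitution clause $\Xi(t[u \coloneqq s]) = \Xi(t)[u \coloneqq \Xi(s)]$ together with $\Xi(\lambda u \lambdaapp t) = \lambda u \lambdaapp \Xi(t)$ to see the grab rule applies on the image. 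For save, $\cc \star t \stackapp \pi \succ_1 t \star \saverlz{\pi} \stackapp \pi$, and $\Xi(\cc) = \cc$, $\Xi(\saverlz{\pi}) = \saverlz{\Xi(\pi)}$ handle it; for restore, $\saverlz{\tau} \star t \stackapp \pi \succ_1 t \star \tau$, again directly. The only subtlety is to note that the realizability algebra of \Cref{defn:RealizabilityAlgebraGimel2is4} adds no rules governing $\rlzfont{d}$ in the preorder (as noted in \Cref{remark:Gimel2Algebra}, $\rlzfont{d} \star \pi \succ s \star \sigma \Leftrightarrow \rlzfont{d} \star \pi = s \star \sigma$), so there are genuinely only these four one-step rules to check, and $\Xi$ commutes with each constructor appearing in them.

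From the one-step claim, the general statement follows: write $s \star \sigma \succ t \star \pi$ as a finite chain $s \star \sigma = t_0 \star \pi_0 \succ_1 \cdots \succ_1 t_k \star \pi_k = t \star \pi$ (which exists by the minimality of $\succ$, again per \Cref{remark:Gimel2Algebra}), apply the one-step claim to each link to obtain $\Xi(t_0 \star \pi_0) \succ_1 \cdots \succ_1 \Xi(t_k \star \pi_k)$, and conclude $\Xi(s \star \sigma) \succ \Xi(t \star \pi)$. Running the argument with $\Xi^{-1}$ in place of $\Xi$ gives the reverse implication, completing the equivalence.

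The main obstacle, such as it is, is purely bookkeeping: making sure that $\Xi$ is genuinely well-defined and bijective before using it — in particular that the substitution clause $\Xi(t[u \coloneqq s]) = \Xi(t)[u \coloneqq \Xi(s)]$ is consistent with the recursive clauses on $\lambda$-abstraction and application rather than an independent stipulation (it is forced, since substitution is itself defined by recursion on term structure and $\Xi$ commutes with every constructor). There is no deep difficulty; the proof is a routine structural induction, and the only thing one must be careful about is that no additional evaluation rule involving $\rlzfont{d}$ has been imposed on $\succ$ itself (as opposed to on $\Perp$), which is exactly what \Cref{remark:Gimel2Algebra} records.
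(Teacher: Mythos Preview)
Your proposal is correct and follows essentially the same approach as the paper: both reduce to checking that $\Xi$ commutes with the four basic one-step rules (push, grab, save, restore), with the paper's proof being slightly terser about the reduction from $\succ$ to $\succ_1$ and the use of $\Xi^{-1}$ for the converse. Your explicit remarks about the absence of additional rules for $\rlzfont{d}$ and about the substitution clause being forced are accurate elaborations that the paper leaves implicit.
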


\begin{proof}
    It suffices to prove the equivalence for the four basic constraints, which follow from the following observations:

    $\Xi \big( \app{t}{s} \star \pi \big) = \Xi(\app{t}{s}) \star \Xi(\pi) =  \inapp{\Xi(t)}{\Xi(s)} \star \Xi(\pi) \succ \Xi(t) \star \Xi(s) \stackapp \Xi(\pi) = \Xi(t \star s \stackapp \pi)$.

    $\Xi(\lambda u \lambdaapp t \star s \stackapp \pi) = \lambda u \lambdaapp \Xi(t) \star \Xi(s) \stackapp \Xi(\pi) \succ \Xi(t)[u \coloneqq \Xi(s)] \star \Xi(\pi) = \Xi(t[u \coloneqq s] \star \pi)$.

    $\Xi(\cc \star t \stackapp \pi) = \cc \star \Xi(t) \stackapp \Xi(\pi) \succ \Xi(t) \star \saverlz{\Xi(\pi)} \stackapp \Xi(\pi) = \Xi(t \star \saverlz{\pi} \stackapp \pi)$.

    $\Xi(\saverlz{\sigma} \star t \stackapp \pi) = \saverlz{\Xi(\sigma)} \star \Xi(t) \stackapp \Xi(\pi) \succ \Xi(t) \star \Xi(\sigma) = \Xi(t \star \sigma)$.    
\end{proof}

\begin{lemma} \label{theorem:PreservationOfPoles}
For $j \in \{0, 1\}$, $\Xi\pointwise\Perp^0_j = \Perp^1_j$.
\end{lemma}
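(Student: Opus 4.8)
The plan is to prove the two inclusions $\Xi\pointwise\Perp^0_j \subseteq \Perp^1_j$ and $\Perp^1_j \subseteq \Xi\pointwise\Perp^0_j$ separately, in each case exploiting the fact that $\Perp^i_j$ was defined as the \emph{least} subset of $\Lambda^i \star \Pi^i$ closed under clauses (1)--(3) of \Cref{defn:RealizabilityAlgebraGimel2is4}, together with \Cref{theorem:XiPreservesOrdering} and the observation (implicit in the construction of $\Xi$) that $\Xi$ is a bijection $\Lambda^0 \star \Pi^0 \to \Lambda^1 \star \Pi^1$ whose inverse is the analogously-defined map sending $\pi^1 \mapsto \pi^0$. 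First I would record the routine facts that $\Xi$ fixes $\rlzfont{d}$ (stipulated) and every pure $\lambda_c$-term, in particular the Church numerals $\underline{j}$ and $\underline{2}$, and that $\Xi$ is a homomorphism for application and for stack-push, i.e. $\Xi(\app{t}{s}) = \inapp{\Xi(t)}{\Xi(s)}$ and $\Xi(t \stackapp \pi) = \Xi(t) \stackapp \Xi(\pi)$; both are immediate from the recursive definition of $\Xi$.

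For the forward inclusion I would set $Q \coloneqq \{ p \in \Lambda^0 \star \Pi^0 \mid \Xi(p) \in \Perp^1_j \}$ and check that $Q$ is closed under the three clauses defining $\Perp^0_j$; minimality of $\Perp^0_j$ then yields $\Perp^0_j \subseteq Q$, which is exactly $\Xi\pointwise\Perp^0_j \subseteq \Perp^1_j$. Clause (1): for $\pi \in \Pi^0$ one has $\Xi(\rlzfont{d} \star \underline{j} \stackapp \pi) = \rlzfont{d} \star \underline{j} \stackapp \Xi(\pi)$ with $\Xi(\pi) \in \Pi^1$, and this lies in $\Perp^1_j$ by clause (1) for $\Perp^1_j$. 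Clause (2): if $t \star \pi \succ s \star \sigma$ with $s \star \sigma \in Q$, then \Cref{theorem:XiPreservesOrdering} gives $\Xi(t \star \pi) \succ \Xi(s \star \sigma) \in \Perp^1_j$, and since $\Xi(t\star\pi) \in \Lambda^1 \star \Pi^1$, clause (2) for $\Perp^1_j$ puts it in $\Perp^1_j$. Clause (3): if two of $t \star \pi$, $s \star \pi$, $r \star \pi$ lie in $Q$, then two of $\Xi(t) \star \Xi(\pi)$, $\Xi(s) \star \Xi(\pi)$, $\Xi(r) \star \Xi(\pi)$ lie in $\Perp^1_j$, so $\rlzfont{d} \star \underline{2} \stackapp \Xi(t) \stackapp \Xi(s) \stackapp \Xi(r) \stackapp \Xi(\pi) \in \Perp^1_j$ by clause (3) for $\Perp^1_j$; but this process is precisely $\Xi(\rlzfont{d} \star \underline{2} \stackapp t \stackapp s \stackapp r \stackapp \pi)$, so the original process lies in $Q$.

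The reverse inclusion follows by the identical argument with the roles of $0$ and $1$ swapped, applied to the inverse isomorphism $\Xi^{-1}$ (which preserves $\succ$ by \Cref{theorem:XiPreservesOrdering} read in the other direction): one obtains $\Xi^{-1}\pointwise\Perp^1_j \subseteq \Perp^0_j$, hence $\Perp^1_j = \Xi\pointwise\bigl(\Xi^{-1}\pointwise\Perp^1_j\bigr) \subseteq \Xi\pointwise\Perp^0_j$. There is no genuine obstacle in this lemma; the only two points that want a little care are (a) confirming that $\Xi$ really is a bijection with the stated inverse, so that "preimage under $\Xi$" agrees with "image under $\Xi^{-1}$" and the least-fixed-point argument runs symmetrically, and (b) verifying that $\Xi$ interacts correctly with the term formers appearing in clauses (1) and (3) --- namely that it fixes $\underline{j}$, $\underline{2}$ and $\rlzfont{d}$ and commutes with $\stackapp$ and application --- which is exactly the content of the preliminary observations above.
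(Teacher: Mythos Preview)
Your proof is correct and takes essentially the same approach as the paper: both exploit the minimality of $\Perp^i_j$ together with the fact that $\Xi$ preserves $\succ$ (\Cref{theorem:XiPreservesOrdering}), fixes $\rlzfont{d}$ and the numerals, and commutes with push. The only cosmetic difference is that the paper builds $\Perp^i_j$ as an increasing union of finite stages $T^i_n$ and inducts on the stage, whereas you argue directly from the least-fixed-point characterisation by showing that $\Xi^{-1}\pointwise\Perp^1_j$ is closed under the three clauses; these are the standard dual ways of exploiting an inductive definition and neither buys anything over the other here.
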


\begin{proof}
    Fix $j \in \{0, 1\}$. Let $B^i \coloneqq \{ \rlzfont{d} \star \underline{j} \stackapp \pi \divline \pi \in \Pi^i \}$, and for any collection of processes $X$ let 
    \[
    G_0(X) \coloneqq \{ t \star \pi \divline \exists s \star \sigma \in X \; (t \star \pi \succ s \star \sigma) \}
    \]
    and
    \[
    G_1(X) \coloneqq \{ \rlzfont{d} \star \underline{2} \stackapp t \stackapp s \stackapp r \stackapp \pi \divline \text{at least two of } t \star \pi, s \star \sigma \text{ and } r \star \pi \text{ are in } X \}.
    \]
    It is then clear by the construction of $\Perp^i_j$ that if we set $T^i_0 = B^i$ and $T^i_{n+1} = G_0(T^i_n) \cup G_1(T^i_n)$ then $\Perp^i_j =  \bigcup_{n \in \omega} T^i_n$. We shall inductively show that $\Xi\pointwise T^0_n = T^1_n$ and thus $\Xi\pointwise\Perp^0_j = \Perp^1_j$. For this it suffices to only prove that $\Xi\pointwise T^0_n \subseteq T^1_n$, with the reverse inclusion following by considering the inverse of $\Xi$.

    Take $\rlzfont{d} \star \underline{j} \stackapp \pi \in T^0_0$. Then, since $\underline{j}$ is a $\lambda_c$-term which contains no appearance of $\pi^0$ or $\pi^1$,  $\Xi(\rlzfont{d} \star \underline{j} \stackapp \pi) = \rlzfont{d} \star \underline{j} \stackapp \Xi(\pi) \in T^1_0$. 

    Next, suppose that $t \star \pi \in G_0(T^0_n)$ and fix $s \star \sigma \in T^0_n$ such that $t \star \pi \succ s \star \sigma$. Then, by the inductive hypothesis, $\Xi(s \star \sigma) \in T^1_n$ and, by \Cref{theorem:XiPreservesOrdering}, $\Xi(t \star \pi) \succ \Xi(s \star \sigma)$. Thus, $\Xi(t \star \pi) \in G_0(T^1_n).$

    Finally, suppose that $\rlzfont{d} \star \underline{2} \stackapp t \stackapp s \stackapp r \stackapp \pi \in G_1(T^0_n)$. Without loss of generality, we shall assume that $t \star \pi$ and $s \star \pi$ are in $T^0_n$. Then, by the inductive hypothesis, $\Xi(t \star \pi) = \Xi(t) \star \Xi(\pi)$ and $\Xi(s \star \sigma) = \Xi(s) \star \Xi(\sigma)$ are in $T^1_n$. Thus, $\Xi(\rlzfont{d} \star \underline{2} \stackapp t \stackapp s \stackapp r \stackapp \pi) = \rlzfont{d} \star \underline{2} \stackapp \Xi(t) \stackapp \Xi(s) \stackapp \Xi(r) \stackapp \Xi(\pi) \in G_1(T^1_n)$.
\end{proof}

\begin{lemma} \label{theorem:Gimel2AlgebraReductionClosure}
    If $t \star \pi \in \Perp^i_j$ and $t \star \pi \succ s \star \sigma$, then $s \star \sigma \in \Perp^i_j$. 
\end{lemma}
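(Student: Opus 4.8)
The plan is to induct on the stage at which $t \star \pi$ is put into $\Perp^i_j$. Recall from the proof of \Cref{theorem:PreservationOfPoles} the presentation $\Perp^i_j = \bigcup_{n \in \omega} T^i_n$, where $T^i_0 = B^i = \{ \rlzfont{d} \star \underline{j} \stackapp \rho \divline \rho \in \Pi^i \}$ and $T^i_{n+1} = G_0(T^i_n) \cup G_1(T^i_n)$, with $G_0(X)$ the set of processes that reduce into $X$ and $G_1(X) = \{ \rlzfont{d} \star \underline{2} \stackapp a \stackapp b \stackapp c \stackapp \rho \divline \text{at least two of } a \star \rho, b \star \rho, c \star \rho \text{ lie in } X \}$. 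Since $\succ$ is reflexive we have $X \subseteq G_0(X)$, so the $T^i_n$ form an increasing chain; hence it is enough to prove, by induction on $n$, that if $t \star \pi \in T^i_n$ and $t \star \pi \succ s \star \sigma$ then $s \star \sigma \in \Perp^i_j$. Along the way one notes that a one-step evaluation introduces no new stack bottoms, so $s \star \sigma$ again lies in $\Lambda^i \star \Pi^i$ and all the auxiliary sets remain meaningful.

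Two of the three cases are immediate. If $n = 0$, or if $t \star \pi \in G_1(T^i_n)$, then $t \star \pi$ has the special instruction $\rlzfont{d}$ at its head, and by \Cref{remark:Gimel2Algebra} the evaluation relation $\succ$ of this algebra consists only of push, grab, save and restore, none of which fires on a $\rlzfont{d}$-headed process; thus $t \star \pi$ is $\succ$-normal and $t \star \pi \succ s \star \sigma$ forces $s \star \sigma = t \star \pi$, which already lies in $\Perp^i_j$.

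The remaining case is $t \star \pi \in G_0(T^i_n)$: fix $u \star \tau \in T^i_n$ with $t \star \pi \succ u \star \tau$. Then $u \star \tau$ and $s \star \sigma$ are both reducts of $t \star \pi$, so by the linearity of $\succ$ recorded in \Cref{remark:Gimel2Algebra} — distinct reducts of a common process are $\succ$-comparable — either $s \star \sigma \succ u \star \tau$, whence $s \star \sigma \in G_0(T^i_n) \subseteq T^i_{n+1} \subseteq \Perp^i_j$ directly, or $u \star \tau \succ s \star \sigma$, whence the induction hypothesis applied to $u \star \tau \in T^i_n$ gives $s \star \sigma \in \Perp^i_j$. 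The whole argument is just bookkeeping over the two structural facts already extracted in \Cref{remark:Gimel2Algebra}, and I do not anticipate a real obstacle; the only mild subtlety is checking that the comparability clause is still valid when one of the two reductions involved is the trivial reflexive one, which it is since $t \star \pi \succ t \star \pi$.
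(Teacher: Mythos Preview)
Your argument is correct, but it takes a different route from the paper's. The paper argues by contradiction: assuming $t \star \pi \in \Perp^i_j$, $t \star \pi \succ s \star \sigma$ (without loss of generality in one step) and $s \star \sigma \notin \Perp^i_j$, it shows that $Q = \Perp^i_j \setminus \{t \star \pi\}$ still satisfies all three defining closure conditions, contradicting the minimality of $\Perp^i_j$. Your approach instead unfolds the minimal set as the increasing union $\bigcup_n T^i_n$ already exhibited in \Cref{theorem:PreservationOfPoles} and inducts on the stage, using the determinism/linearity of $\succ$ recorded in \Cref{remark:Gimel2Algebra} to compare the two reducts $s \star \sigma$ and $u \star \tau$. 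The paper's proof is slightly slicker in that it appeals directly to minimality and only needs the ``no reduction out of a $\rlzfont{d}$-headed process'' observation, whereas you use the stronger comparability clause; on the other hand your argument is more constructive and makes explicit where in the stage hierarchy $s \star \sigma$ lands. Either way the substantive content is the two structural facts in \Cref{remark:Gimel2Algebra}, and both proofs are essentially just bookkeeping around them.
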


\begin{proof}
    Suppose for a contradiction that $t \star \pi \succ s \star \sigma$ and $t \star \pi \in \Perp^i_j$ but $s \star \sigma \not\in \Perp^i_j$.
    Using \Cref{remark:Gimel2Algebra}, without loss of generality we can suppose that $t \star \pi \succ s \star \sigma$ by exactly one of the basic reduction steps. We shall show that $Q = \Perp^i_j \setminus \{ t \star \pi\}$ satisfies the three conditions in the definition of $\Perp^i_j$, contradicting the minimality requirement.
    \begin{enumerate}
        \item Take $\tau \in \Pi^i$. If $t \star \pi = \rlzfont{d} \star \underline{j} \stackapp \tau$ then $\rlzfont{d} \star \underline{j} \stackapp \tau \succ s \star \sigma$. Therefore, again by the remark, $s \star \sigma = \rlzfont{d} \star \underline{j} \stackapp \pi \in \Perp^i_j$, contradicting the assumption on $s \star \sigma$. Thus, $\rlzfont{d} \star \underline{j} \stackapp \tau \in Q$.
        \item Assume that $r \star \tau \in \Lambda^i \star \Pi^i$, $r' \star \tau' \in Q$ and $r \star \tau \succ r' \star \tau'$. Then $r \star \tau \in \Perp^i_j$. Now, if $r \star \tau = t \star \pi$ then $t \star \pi \succ r' \star \tau'$ and $t \star \pi \succ s \star \sigma$. Therefore, since the second one of these was assumed to be a one step reduction, we must have that $s \star \sigma \succ r' \star \tau'$. Therefore, by the second condition in the definition of $\Perp^i_j$, $s \star \sigma \in \Perp^i_j$, which is again a contradiction. Thus, $r \star \tau \in Q$.
        \item Suppose at least two of $r \star \tau$, $u \star \tau$ and $v \star \tau$ were in $Q$. Then $\rlzfont{d} \star \underline{2} \stackapp r \stackapp u \stackapp v \stackapp \tau \in \Perp^i_j$. Now, suppose that $\rlzfont{d} \star \underline{2} \stackapp r \stackapp u \stackapp v \stackapp \tau = t \stackapp \pi$. Then $\rlzfont{d} \star \underline{2} \stackapp r \stackapp u \stackapp v \stackapp \tau \succ s \star \sigma$ and thus $s \star \sigma = \rlzfont{d} \star \underline{2} \stackapp r \stackapp u \stackapp v \stackapp \tau \in \Perp^i_j$, which is a contradiction. Thus, $\rlzfont{d} \star \underline{2} \stackapp r \stackapp u \stackapp v \stackapp \tau \in Q$.
    \end{enumerate}
\end{proof}

\begin{lemma} \label{theorem:Gimel2AlgebraIntersectingPerp}
    For $i \in \{0, 1\}$, $\Perp^i_0 \cap \Perp^i_1 = \emptyset$.
\end{lemma}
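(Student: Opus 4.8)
The plan is to fix $i\in\{0,1\}$ and prove the stronger inclusion $\Perp^i_0\subseteq (\Lambda^i\star\Pi^i)\setminus\Perp^i_1$. Since $\Perp^i_0$ is by definition the \emph{least} subset of $\Lambda^i\star\Pi^i$ closed under the three clauses of \Cref{defn:RealizabilityAlgebraGimel2is4}, it suffices to verify that the complement $S:=(\Lambda^i\star\Pi^i)\setminus\Perp^i_1$ is itself closed under those three clauses. Clause~(2) for $S$ is immediate: if $p\succ q$ with $q\in S$ then $p\notin\Perp^i_1$, since otherwise \Cref{theorem:Gimel2AlgebraReductionClosure} would force $q\in\Perp^i_1$. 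So the real content lies in clauses~(1) and~(3), both of which concern processes whose head is $\rlzfont{d}$.

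The key technical step is therefore a characterization of when a process with head $\rlzfont{d}$ lies in $\Perp^i_j$: for every $j\in\{0,1\}$ and every $\tau\in\Pi^i$, one should have $\rlzfont{d}\star\tau\in\Perp^i_j$ precisely when either $\tau=\underline{j}\stackapp\rho$ for some $\rho$, or $\tau=\underline{2}\stackapp t\stackapp s\stackapp r\stackapp\rho$ with at least two of $t\star\rho,s\star\rho,r\star\rho$ in $\Perp^i_j$. The forward-constructing direction is just clauses~(1) and~(3). For the converse I would argue by minimality — equivalently, by induction along the stratification $\Perp^i_j=\bigcup_{n}T^i_n$ exhibited in the proof of \Cref{theorem:PreservationOfPoles} — showing that the set of $p\in\Perp^i_j$ which are either not of the form $\rlzfont{d}\star\tau$, or are of that form and satisfy the stated dichotomy, is again closed under all three clauses. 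Here one uses that $\rlzfont{d}\star\tau$ is $\succ$-irreducible by \Cref{remark:Gimel2Algebra}, so clause~(2) can only re-add such a process that is already present, and that stacks decompose uniquely, so that the ``leading Church numeral'' of $\tau$ is well defined and $\underline{0},\underline{1},\underline{2}$ being pairwise distinct makes the two alternatives mutually exclusive and exhaustive.

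Granting this characterization, the two remaining clauses for $S$ follow quickly. For clause~(1): $\rlzfont{d}\star\underline{0}\stackapp\rho\notin\Perp^i_1$, because $\underline{0}$ is neither $\underline{1}$ nor $\underline{2}$, so neither alternative (with $j=1$) can hold. For clause~(3): if at least two of $t\star\pi,s\star\pi,r\star\pi$ lie in $S$, then at most one of them lies in $\Perp^i_1$, so the second alternative of the characterization fails for $\rlzfont{d}\star\underline{2}\stackapp t\stackapp s\stackapp r\stackapp\pi$ with $j=1$ (the first fails since $\underline{2}\neq\underline{1}$), hence $\rlzfont{d}\star\underline{2}\stackapp t\stackapp s\stackapp r\stackapp\pi\in S$. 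Thus $S$ is closed under all three clauses, so $\Perp^i_0\subseteq S$ and $\Perp^i_0\cap\Perp^i_1=\emptyset$. The main obstacle is exactly the characterization in the middle paragraph: because $\Perp^i_j$ is specified only through minimality one cannot naively read off how a given process entered it, and the argument must be organized either as a closure check on the candidate set or as an induction on the rank in the stratification of \Cref{theorem:PreservationOfPoles}; the rest is routine once that bookkeeping is in place.
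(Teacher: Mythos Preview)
Your proposal is correct and follows essentially the same strategy as the paper: show $\Perp^i_0\subseteq(\Lambda^i\star\Pi^i)\setminus\Perp^i_1$ by verifying that the complement is closed under the three defining clauses, handling clause~(2) via \Cref{theorem:Gimel2AlgebraReductionClosure} and clauses~(1) and~(3) by minimality arguments exploiting the $\succ$-irreducibility of $\rlzfont{d}$-headed processes. The only difference is packaging: where the paper performs two separate ``remove a single process and re-check closure'' arguments (one for $\rlzfont{d}\star\underline{0}\stackapp\pi$, one for $\rlzfont{d}\star\underline{2}\stackapp t\stackapp s\stackapp r\stackapp\pi$), you extract a single characterization of which $\rlzfont{d}$-headed processes lie in $\Perp^i_j$ and apply it twice---a slightly cleaner formulation of the same idea.
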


\begin{proof}
    We first show that for any $\pi \in \Pi^i$, $\Perp^i_1 \setminus \{\rlzfont{d} \star \underline{0} \stackapp \pi\}$ satisfies the three conditions for the definition of $\Perp^i_1$ and therefore, by the minimality requirement, $\rlzfont{d} \star \underline{0} \stackapp \pi \not\in \Perp^i_1$.
    \begin{itemize}
        \item For any $\sigma \in \Pi^i$, $\rlzfont{d} \star \underline{1} \stackapp \sigma \in \Perp^i_1$ and $\rlzfont{d} \star \underline{1} \stackapp \sigma \neq \rlzfont{d} \star \underline{0} \stackapp \pi$, so $\rlzfont{d} \star \underline{1} \stackapp \sigma$ is in the set.
        \item Suppose $s \star \sigma \in \Lambda^i \star \Pi^i$, $r \star \tau \in \Perp^i_1 \setminus \{\rlzfont{d} \star \underline{0} \stackapp \pi \}$ and $s \star \sigma \succ r \star \tau$. Then $s \star \sigma \in \Perp^i_1$ so, if $s \star \sigma = \rlzfont{d} \star \underline{0} \stackapp \pi$, then $\rlzfont{d} \star \underline{0} \stackapp \pi \succ r \star \tau$ and therefore $d \star \underline{0} \stackapp \pi = r \star \tau \in \Perp^i_1 \setminus \{ d \star \underline{0} \stackapp \pi\}$, which is a contradiction. Thus $s \star \sigma \in \Perp^i_1 \setminus \{ \rlzfont{d} \star \underline{0} \stackapp \pi \}$.
        \item Suppose at least two of $t \star \sigma$, $s \star \sigma$ and $r \star \sigma$ were in $\Perp^i_1 \setminus \{ \rlzfont{d} \star \underline{0} \stackapp \pi \}$. Then $\rlzfont{d} \star \underline{2} \stackapp t \stackapp s \stackapp r \stackapp \pi \in \Perp^i_1$ and $\rlzfont{d} \star \underline{2} \stackapp t \stackapp r \stackapp s \stackapp \pi \neq \rlzfont{d} \star \underline{0} \stackapp \pi$.
    \end{itemize}
    We shall now prove that $\Perp^i_0 \subseteq (\Lambda^i \star \Pi^i) \setminus \Perp^i_1$. To do this, it suffices to prove that $\Lambda^i \star \Pi^i \setminus \Perp^i_1$ satisfies the three defining properties for $\Perp^i_0$.
    \begin{itemize}
        \item By the first claim, we have that for any $\pi \in \Pi$, $\rlzfont{d} \star \underline{0} \stackapp \pi \in (\Lambda^i \star \Pi^i) \setminus \Perp^i_1$.
        \item Suppose that $t \star \pi \in \Lambda^i \star \Pi^i$, $s \star \sigma \in (\Lambda^i \star \Pi^i) \setminus \Perp^i_1$ and $t \star \pi \succ s \star \sigma$. If $t \star \pi$ were in $\Perp^i_1$ then, by \Cref{theorem:Gimel2AlgebraReductionClosure}, $s \star \sigma$ would also be in $\Perp^i_1$, which is a contradiction.
        \item Suppose that $t \star \pi$, $s \star \pi$, $r \star \pi$ were all in $\Lambda^i \star \Pi^i$ but $t \star \pi, s \star \pi \not\in \Perp^i_1$. We shall show that $\rlzfont{d} \star \underline{2} \stackapp t \stackapp s \stackapp r \stackapp \pi \not\in \Perp^i_1$. As per usual, this will be done by showing that $\Perp^i_1 \setminus \{ \rlzfont{d} \star \underline{2} \stackapp t \stackapp s \stackapp r \stackapp \pi \}$ satisfies the three defining properties for $\Perp^i_1$:
        \begin{itemize}
            \item For any $\sigma \in \Pi^i$, $\rlzfont{d} \star \underline{1} \stackapp \sigma \in \Perp^i_1 \setminus \{ \rlzfont{d} \star \underline{2} \stackapp t \stackapp s \stackapp r \stackapp \pi \}$.
            \item Suppose $u \star \sigma \in \Lambda^i \star \Pi^i$, $u \star \sigma \succ v \star \tau$ and $v \star \tau \in \Perp^i_1 \setminus \{ \rlzfont{d} \star \underline{2} \stackapp t \stackapp s \stackapp r \stackapp \pi \}$. Then $u \star \sigma \in \Perp^i_1$. Now, if $u \star \sigma = \rlzfont{d} \star \underline{2} \stackapp t \stackapp s \stackapp r \stackapp \pi$ then $v \star \tau = \rlzfont{d} \star \underline{2} \stackapp t \stackapp s \stackapp r \stackapp \pi \in \Perp^i_1 \setminus \{\rlzfont{d} \star \underline{2} \stackapp t \stackapp s \stackapp r \stackapp \pi \}$, which is a contradiction.
            \item Suppose at least two of $u_1 \star \sigma$, $u_2 \star \sigma$ and $u_3 \star \sigma$ were in $\Perp^i_1 \setminus \{ \rlzfont{d} \star \underline{2} \stackapp t \stackapp s \stackapp r \stackapp \pi \}$. Then $\rlzfont{d} \star \underline{2} \stackapp u_1 \stackapp u_2 \stackapp u_3 \stackapp \sigma \in \Perp^i_1$. Now, if $\rlzfont{d} \star \underline{2} \stackapp u_1 \stackapp u_2 \stackapp u_3 \stackapp \sigma = \rlzfont{d} \star \underline{2} \stackapp t \stackapp s \stackapp r \stackapp \pi$ then we must have that $u_1 = t$, $u_2 = s$, $u_3 = r$, and $\sigma = \pi$. Therefore, at least one of $t \star \pi$ and $s \star \pi$ is in $\Perp^i_1$, which contradicts our initial assumption.
        \end{itemize}
    \end{itemize}
\end{proof}

\begin{theorem}
    $\mathcal{A}$ is a coherent realizability algebra.
\end{theorem}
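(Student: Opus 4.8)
The plan is to prove coherence directly from the definition: given a realizer $t \in \mathcal{R}$, I will exhibit a stack $\pi$ with $t \star \pi \notin \Perp$, taking $\pi$ to be one of the two stack constants $\pi^0,\pi^1$ and using the symmetry map $\Xi$ together with \Cref{theorem:PreservationOfPoles} and \Cref{theorem:Gimel2AlgebraIntersectingPerp}.

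The first step is a structural observation. A realizer contains no continuation constant; since a stack constant can enter a term only inside a continuation constant $\saverlz{\sigma}$, a closed realizer $t$ contains no stack constant at all. Hence $t \in \Lambda^0 \cap \Lambda^1$, and a straightforward induction on the structure of $t$ (using the clauses defining $\Xi$, which fix variables, $\cc$ and $\rlzfont{d}$ and commute with application and $\lambda$-abstraction) gives $\Xi(t) = t$. Consequently $t \star \pi^0 \in \Lambda^0 \star \Pi^0$ and $\Xi(t \star \pi^0) = \Xi(t) \star \Xi(\pi^0) = t \star \pi^1 \in \Lambda^1 \star \Pi^1$.

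Now I split into two cases according to the definition of $\Perp$, whose complement is $\big((\Lambda^0 \star \Pi^0) \setminus \Perp^0_0\big) \cup \big((\Lambda^1 \star \Pi^1) \setminus \Perp^1_1\big)$. If $t \star \pi^0 \notin \Perp^0_0$, then $t \star \pi^0 \in (\Lambda^0 \star \Pi^0) \setminus \Perp^0_0$, so $t \star \pi^0 \notin \Perp$ and we are done with the stack $\pi^0$. Otherwise $t \star \pi^0 \in \Perp^0_0$, and applying $\Xi$ together with \Cref{theorem:PreservationOfPoles} yields $t \star \pi^1 = \Xi(t \star \pi^0) \in \Xi\pointwise\Perp^0_0 = \Perp^1_0$. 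By \Cref{theorem:Gimel2AlgebraIntersectingPerp} with $i = 1$ we have $\Perp^1_0 \cap \Perp^1_1 = \emptyset$, so $t \star \pi^1 \notin \Perp^1_1$; since also $t \star \pi^1 \in \Lambda^1 \star \Pi^1$, this puts $t \star \pi^1 \in (\Lambda^1 \star \Pi^1) \setminus \Perp^1_1$ and hence $t \star \pi^1 \notin \Perp$. In either case $t$ is refuted by a stack, so $\mathcal{A}$ is coherent.

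I do not expect a genuine obstacle, since the two non-trivial facts about the poles $\Perp^i_j$ — their closure under reduction, their disjointness, and the $\Xi$-equivariance between the two copies — are exactly \Cref{theorem:Gimel2AlgebraReductionClosure}, \Cref{theorem:Gimel2AlgebraIntersectingPerp} and \Cref{theorem:PreservationOfPoles}. The only point needing care is the first observation, namely that a realizer cannot mention a stack constant: this is what forces $\Xi(t) = t$, and hence lets the symmetry between the two ``halves'' of the algebra carry the whole argument (one should also check, trivially, that $\Xi$ is literally applicable to $t$ and that $\Xi$ of a process is the process of the $\Xi$-images, both immediate from the definition of $\Xi$).
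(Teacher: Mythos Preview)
Your proof is correct and follows essentially the same route as the paper: both arguments hinge on the observation that a realizer contains no stack constant (hence $\Xi(t)=t$ and $t\in\Lambda^0\cap\Lambda^1$), then combine \Cref{theorem:PreservationOfPoles} with \Cref{theorem:Gimel2AlgebraIntersectingPerp} to conclude. The only cosmetic difference is that the paper phrases it as a proof by contradiction rather than your explicit case split; note also that \Cref{theorem:Gimel2AlgebraReductionClosure}, which you mention in your closing commentary, is not actually needed here.
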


\begin{proof}
    Let $t \in \mathcal{R}$ be a realizer and suppose that $t \star \pi \in \Perp$ for every $\pi \in \Pi$. As $t$ is a realizer, note that $t$ cannot contain any instance of $\saverlz{\pi^0}$ or $\saverlz{\pi^1}$ and therefore $t \star \pi^i \in \Lambda^i \star \Pi^i$ for $i \in \{0, 1\}$. In particular, $t \in \Lambda^0 \cap \Lambda^1$ and therefore $t \star \pi^0 \in \Perp^0_0$ and $t \star \pi^1 \in \Perp^1_1$. Moreover, the isomorphism $\Xi$ must fix $t$ and therefore $\Xi(t \star \pi^0) = t \star \pi^1$. Thus, by \Cref{theorem:PreservationOfPoles}, $t \star \pi^1 \in \Perp^1_0$ which means that $t \star \pi^1 \in \Perp^1_0 \cap \Perp^1_1$, contradicting \Cref{theorem:Gimel2AlgebraIntersectingPerp}.
\end{proof}

\begin{lemma}
    $\rlzfont{d}(\underline{2}) \Vdash \text{``the Boolean algebra } \fullname{2} \text{ has at most four } \rlzin\text{{-}elements''}$.
\end{lemma}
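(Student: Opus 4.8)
Spelling out ``$\fullname{2}$ has at most four $\rlzin$-elements'' with the $\rightarrow,\perp,\forall$-only encoding turns it into a pigeonhole sentence of the form
\[
\forall x_1\cdots\forall x_5\Big(\textstyle\bigwedge_{i=1}^{5}x_i\rlzin\fullname{2}\ \rightarrow\ \bigvee_{1\le i<j\le 5}x_i = x_j\Big),
\]
so the plan is to show that $\app{\rlzfont{d}}{\underline{2}}$ belongs to its truth value. The first step is to isolate the one fact about the algebra $\mathcal{A}$ of \Cref{defn:RealizabilityAlgebraGimel2is4} that makes $\underline{2}$ the relevant argument: for all terms $t,s,r$ and every stack $\pi$, if at least two of the processes $t\star\pi$, $s\star\pi$, $r\star\pi$ lie in $\Perp$, then $\app{\rlzfont{d}}{\underline{2}}\star t\stackapp s\stackapp r\stackapp\pi\in\Perp$. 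This is proved by reducing $\app{\rlzfont{d}}{\underline{2}}\star t\stackapp s\stackapp r\stackapp\pi\succ\rlzfont{d}\star\underline{2}\stackapp t\stackapp s\stackapp r\stackapp\pi$ and invoking clause (3) in the definition of $\Perp^0_0$ and of $\Perp^1_1$; one splits on which ``world'' the process lives in (in $\Lambda^0\star\Pi^0$, in $\Lambda^1\star\Pi^1$, in both, or in neither, the last case being immediate from the second clause defining $\Perp$), using that if the big process lies in $\Lambda^i\star\Pi^i$ and $t\star\pi\in\Perp$ then already $t\star\pi\in\Perp^i_i$.

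Next I would compute the falsity value of $x\rlzin\fullname{2}$. Since $\ff{dom}(\fullname{2})=\{\fullname{0},\fullname{1}\}$ with $\fullname{0}=\emptyset$, we have $\falsity{x\notrlzin\fullname{2}}=\Pi$ if $x\in\{\fullname{0},\fullname{1}\}$ and $\falsity{x\notrlzin\fullname{2}}=\emptyset$ otherwise, so the shape of a realizer of $x\rlzin\fullname{2}$ depends only on whether $x$ is one of the two canonical names; together with \Cref{theorem:Subsetsof2} this is what will pin each of the five putative elements $x_i$ down to finitely many cases. The conceptual picture driving the argument is that the two stack constants $\pi^0,\pi^1$ behave as two independent ``worlds'', and in each world $\fullname{2}$ carries just the two classical truth values witnessed by $\Perp^i_0$ and $\Perp^i_1$; hence each $x_i$ determines a two-bit fingerprint, five fingerprints among four values must collide, and a collision makes the corresponding disjunct $x_i = x_j$ available. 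One then feeds $\rlzfont{d}\star\underline{2}$ three processes built from the realizers supplied by the falsifier of the disjunction, arranged so that exactly two of them land in $\Perp$, and closes via the two-out-of-three property above. The isomorphism $\Xi$ of \Cref{theorem:XiPreservesOrdering} together with \Cref{theorem:PreservationOfPoles} reduces the world-by-world analysis to a single world.

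The main obstacle is the bookkeeping. The sentence hides a five-fold $\rlzin$-conjunction and a ten-fold disjunction of non-extensional equalities inside nested implications to $\perp$, so writing out the falsity values and the chain of $\succ$-reductions explicitly is delicate, and the case distinction ``$x_i$ is or is not one of $\fullname{0},\fullname{1}$'' has to be kept coherent with the per-world analysis of $\Perp^i_0$ versus $\Perp^i_1$; in particular one must check that when two of the $x_i$ already coincide as names the relevant disjunct is discharged trivially, and that the ``majority'' step genuinely sees two processes in $\Perp$ and not merely one. The computational idea --- that $\rlzfont{d}$ applied to $\underline{2}$ is a $2$-out-of-$3$ selector implementing the pigeonhole --- is transparent; converting it into a verified realizer of the whole statement is the laborious part.
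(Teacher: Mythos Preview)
Your proposal has a structural gap that makes it unworkable as stated, and the paper's proof avoids it by a completely different reformulation.

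The difficulty is this: the lemma asserts that the specific term $\app{\rlzfont{d}}{\underline{2}}$ is the realizer. That term reduces as $\app{\rlzfont{d}}{\underline{2}}\star\sigma\succ\rlzfont{d}\star\underline{2}\stackapp\sigma$, and the only clause in the construction of $\Perp$ that ever puts a process with head $\rlzfont{d}$ and first stack element $\underline{2}$ into the pole is clause~(3), which applies exactly when the stack has the shape $\underline{2}\stackapp t\stackapp s\stackapp r\stackapp\pi$ and at least two of $t\star\pi,\ s\star\pi,\ r\star\pi$ are already in the pole. So $\app{\rlzfont{d}}{\underline{2}}$ can only realize a formula whose falsity value consists of stacks of the form $t\stackapp s\stackapp r\stackapp\pi$ with the ``two out of three'' property. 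Your pigeonhole formulation produces stacks with five premise-realizers followed by ten inequality-realizers followed by a tail; the majority clause would then be comparing $t_1\star\pi'$, $t_2\star\pi'$, $t_3\star\pi'$ with $\pi'=t_4\stackapp t_5\stackapp s_{12}\stackapp\cdots\stackapp\pi$, and there is no reason two of those should lie in $\Perp$. Your phrase ``feeds $\rlzfont{d}\star\underline{2}$ three processes built from the realizers'' describes a more elaborate term, not $\app{\rlzfont{d}}{\underline{2}}$ itself; but the lemma is about $\app{\rlzfont{d}}{\underline{2}}$.

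The paper sidesteps this entirely by not encoding ``at most four elements'' via pigeonhole at all. Instead it exploits the Boolean-algebra structure on $\fullname{2}$: a Boolean algebra has at most four elements iff every nontrivial element is an atom, and this is expressed as
\[
\forall x^{\fullname{2}}\,\forall y^{\fullname{2}}\,(x\neq\fullname{0}\rightarrow(y\neq\fullname{1}\rightarrow(x\neq y\rightarrow x\Booleanand y\neq x))).
\]
Now the falsity value consists of stacks $t\stackapp s\stackapp r\stackapp\pi$ with $t\Vdash\fullname{m}\neq\fullname{0}$, $s\Vdash\fullname{n}\neq\fullname{1}$, $r\Vdash\fullname{m}\neq\fullname{n}$, and $\pi\in\falsity{\fullname{m}\Booleanand\fullname{n}\neq\fullname{m}}$, for $m,n\in\{0,1\}$. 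The last condition forces $m\Booleanand n=m$, i.e.\ $m\leq n$, leaving only the cases $(m,n)\in\{(0,0),(0,1),(1,1)\}$; in each case one checks directly that at least two of $t,s,r$ realize $\perp$, so two of $t\star\pi,\ s\star\pi,\ r\star\pi$ lie in $\Perp$ and clause~(3) fires. No per-world analysis, no $\Xi$, no fingerprints are used; those tools belong to the coherence argument and to the proof that $\fullname{2}$ has \emph{at least} four elements, not to this lemma.
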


\begin{proof}
    We shall show that 
    \[
    \rlzfont{d}(\underline{2}) \Vdash \forall x^{\fullname{2}} \, \forall y^{\fullname{2}} \, (x \neq \fullname{0} \land y \neq \fullname{1} \land x \neq y \rightarrow x \Booleanand y \neq x).
    \]
	To see why this suffices, first observe that if $\mathbb{B}$ is a Boolean algebra and there exists an $a$ such that $\mathbb{0} < a < \mathbb{1}$ and both $a$ and $-a$ are atoms\footnote{$a$ is said to be an atom is whenever $b < a$ then $b = \mathbb{0}$}, then $|\mathbb{B}| = 4$.    
So suppose that $|\mathbb{B}| > 4$ and fix $x$ such that $\mathbb{0} < x < \mathbb{1}$ and $x$ is not an atom. This means that we can fix $y \in \mathbb{B}$ such that $\mathbb{0} < y < x$. Then $x, y$ witness the failure of the above statement.      

To prove the claim, let $m, n \in \{0, 1\}$, $t \Vdash \fullname{m} \neq \fullname{0}$, $s \Vdash \fullname{n} \neq \fullname{1}$, $r \Vdash \fullname{m} \neq \fullname{n}$ and $\pi \in \falsity{\fullname{m} \Booleanand \fullname{n} \neq \fullname{m}}$. Since $\falsity{\fullname{m} \Booleanand \fullname{n} \neq \fullname{m}} \neq \emptyset$, by definition $\fullname{m} \Booleanand \fullname{n} = \fullname{m}$, from which it follows that $m \leq n$. There are then three possible cases for $(m, n)$, $(0, 0)$, $(0, 1)$ and $(1, 1)$, and in all of these cases at least two of the terms $t, s, r$ realize $\perp$ and thus the resulting term is in $\Perp^i_j$ for any $i, j \in \{0, 1\}$. Hence, by the construction of the realizability model, $\rlzfont{d} \star \underline{2} \stackapp t \stackapp s \stackapp r \stackapp \pi \in \Perp$. 
\end{proof}

\begin{theorem}
    The Boolean algebra $\fullname{2}$ has exactly four $\rlzin$-elements.
\end{theorem}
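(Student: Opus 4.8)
The theorem combines the upper bound just proved with a matching lower bound, together with the arithmetic of finite Boolean algebras.

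The upper half is immediate. By the preceding lemma, $\rlzfont{d}(\underline{2})$ realizes ``the Boolean algebra $\fullname 2$ has at most four $\rlzin$-elements'', and since $\mathcal A$ is coherent (established above) $\perp$ is not realized, so this bound is not vacuous. By \Cref{theorem:fullname2BooleanAlgebra}, $\fullname 2$ is realized to be a Boolean algebra under the lifted operations $\Booleanor,\Booleanand,-$; moreover $\identity$ realizes $\fullname 0 \rlzin \fullname 2$, $\fullname 1 \rlzin \fullname 2$ and $\fullname 0 \neq \fullname 1$ (since $\falsity{\fullname 0 \notrlzin \fullname 2}=\falsity{\fullname 1 \notrlzin \fullname 2}=\Pi$ and $\falsity{\fullname 0 \neq \fullname 1}=\emptyset$). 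Hence in every model of the realized theory $\fullname 2$ is a finite Boolean algebra of size $2$ or $4$, and it suffices to realize that $\fullname 2$ is \emph{not} the two-element Boolean algebra.

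For this lower half the plan is to mirror the lemma, now exploiting the \emph{two} stack bottoms $\pi^0,\pi^1$ rather than the ternary clause on $\underline 2$. The relevant feature is that $\rlzfont{d} \star \underline{0} \stackapp \pi \in \Perp^i_0$ only (and dually for $\underline{1}$), so by the disjointness $\Perp^i_0 \cap \Perp^i_1 = \emptyset$ of \Cref{theorem:Gimel2AlgebraIntersectingPerp} together with the definition of $\Perp$, the term $\rlzfont{d}(\underline{0})$ realizes exactly those formulas whose falsity value avoids the purely-$\pi^1$ stacks, and $\rlzfont{d}(\underline{1})$ those whose falsity value avoids the purely-$\pi^0$ stacks --- behaviours plainly different from those of $\fullname 0$ (falsity value $\emptyset$) and of $\fullname 1$. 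Using the isomorphism $\Xi$ and \Cref{theorem:XiPreservesOrdering,theorem:PreservationOfPoles} to pass between the two halves $\Lambda^i \star \Pi^i$, I would build from $\underline{0},\underline{1},\underline{2},\rlzfont{d}(\underline{0}),\rlzfont{d}(\underline{1})$ a realizer of the sentence expressing ``$\fullname 2$ has more than two $\rlzin$-elements'': given a purported realizer $t$ of its negation, one feeds $t$ a suitable stack and, splitting on whether the tail comes from $\pi^0$ or from $\pi^1$, drives the resulting process into $\Perp^i_i$. Combining ``at most four'' with ``more than two'' and the fact that a finite Boolean algebra has a power of two many elements then yields exactly four, as required.

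The main obstacle is precisely this lower half, in two respects. First, the formalisation must be chosen with care: the naive existential $\exists x \rlzin \fullname 2\,(x \neq \fullname 0 \land x \neq \fullname 1)$ is not realizable --- no realizer realizes $a \rlzin \fullname 2$ for a term $a \notin \{\fullname 0,\fullname 1\}$, by the same $\Xi$-and-disjointness argument that underlies coherence --- so one must instead realize an appropriate double negation. Second, once the right sentence is fixed, verifying membership in $\Perp$ requires the full bookkeeping with the two stack bottoms and the inductive description of each $\Perp^i_j$, which is the delicate combinatorial core of Krivine's construction. Everything else --- the Boolean-algebra arithmetic and the upper bound --- is routine given the results already available.
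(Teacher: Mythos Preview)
Your upper bound is fine, but the lower half contains a real gap and a genuine misconception.

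You assert that the ``naive existential'' $\exists x \rlzin \fullname 2\,(x \neq \fullname 0 \land x \neq \fullname 1)$ is not realizable, reasoning that no ground-model name $a \notin \{\fullname 0,\fullname 1\}$ can have $a \rlzin \fullname 2$ realized. The second observation is correct, but the inference is a non sequitur: in this framework $\exists x\,\varphi(x)$ \emph{abbreviates} $\neg\forall x\,\neg\varphi(x)$, so it already is the double negation you are reaching for, and realizing it does \emph{not} require exhibiting a ground-model witness. This is precisely the failure of the existence property discussed at the end of \Cref{section:realizabilityintro}. The paper in fact realizes this very existential.

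What you are missing is the mechanism that produces the witness \emph{internally}. The paper introduces the auxiliary name
\[
\gamma_0 \;=\; (\{\fullname 0\}\times\Pi^0)\ \cup\ (\{\fullname 1\}\times\Pi^1)\ \subseteq\ \fullname 2,
\]
and then checks three easy things: $\identity \Vdash \neg\forall x^{\fullname 2}(x \notrlzin \gamma_0)$ (so $\gamma_0$ is $\rlzin$-nonempty), $\rlzfont{d}(\underline 0)\Vdash \fullname 0 \notrlzin \gamma_0$, and $\rlzfont{d}(\underline 1)\Vdash \fullname 1 \notrlzin \gamma_0$ (using that $\falsity{\fullname i \notrlzin \gamma_0}=\Pi^i$ and that $\rlzfont d\star\underline i\stackapp\pi\in\Perp^i_i$ forces membership in $\Perp$). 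Working in any model of the realized theory, $\gamma_0$ then has a $\rlzin$-element, that element lies in $\fullname 2$ since $\gamma_0\subseteq\fullname 2$, and it is neither $\fullname 0$ nor $\fullname 1$; hence $\exists x(x\rlzin\fullname 2\land x\neq\fullname 0\land x\neq\fullname 1)$ is realized. Your proposal to ``split on whether the tail comes from $\pi^0$ or $\pi^1$'' is the right intuition, but a realizer cannot inspect stack bottoms; the auxiliary name $\gamma_0$ is exactly the device that packages this case split into something the calculus can handle.
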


\begin{proof}
    We begin by defining two elements of $\rlzstr$:
    \[
    \gamma_0 = (\{\fullname{0}\} \times \Pi^0) \cup ( \{\fullname{1}\} \times \Pi^1) \qquad \gamma_1 = (\{\fullname{1}\} \times \Pi^0) \cup (\{\fullname{0}\} \times \Pi^1).
    \]
    By construction, we have that $\gamma_0, \gamma_1 \subseteq \fullname{2}$. Now:
    \begin{itemize}
        \item  $\identity \Vdash \neg \forall x^{\fullname{2}} (x \notrlzin \gamma_0)$. To see this, let $t \Vdash \forall x^{\fullname{2}} (x \notrlzin \gamma_0)$ and $\pi \in \Pi$. If $\pi \in \Pi \setminus (\Pi^0 \cup \Pi^1)$ then $t \star \pi \in \Perp$ by definition. So suppose that $\pi \in \Pi^0 \cup \Pi^1$. Then, since $\falsity{\forall x^{\fullname{2}} (x \notrlzin \gamma_0)} = \bigcup_{i \in 2} \falsity{\fullname{i} \notrlzin \gamma_0} = \Pi^0 \cup \Pi^1$, we must also have that $t \star \pi \in \Perp$ as required.
        \item $\rlzfont{d}(\underline{0}) \Vdash \fullname{0} \notrlzin \gamma_0$ and $\rlzfont{d}(\underline{1}) \Vdash \fullname{1} \notrlzin \gamma_0$,
    \end{itemize}
    Therefore $\gamma_0$ is not $\rlzin$-empty and every $\rlzin$-element of $\gamma_0$ is not equal to $\fullname{0}$ or $\fullname{1}$. Hence, we must have that $\rlzmodel \Vdash \exists x (x \rlzin \fullname{2} \land x \neq \fullname{0} \land x \neq \fullname{1})$ from which we can conclude that $\fullname{2}$ has size precisely $4$ in $\rlzmodel$.
\end{proof}

\begin{remark}
    In fact, as proven in \cite{Krivine2018}, one can realize that $\gamma_0$ and $\gamma_1$ are singletons and therefore their $\rlzin$-elements are precisely the two atoms of $\fullname{2}$.
\end{remark}

\section{Forcing as a Realizability Algebra} \label{section:ForcingAsRealizability}

It is claimed that realizability is a generalisation of the method of forcing because any forcing Boolean algebra can be naturally seen as a realizability algebra. In this section we explicitly describe this translation, the presentation of which comes from \cite{FontanellaGeoffroy2020}, and expand on the previous work in order to formally state the relationship between forcing and realizability models. We will also assume a lot of background material about Boolean algebras and Boolean valued models, and we refer the reader to Chapters 7 and 14 of \cite{Jech} for the necessary background. It is worthwhile to clearly state here that while such a translation exists, the resulting realizability model ``\emph{loses its computation content}''. By this we mean there is essentially a unique realizer for all realized statements in the model so from a realizer for a proof we can not extract any computational data.

Let $\mathbb{B} = (\mathbb{B}, \mathbb{1}, \mathbb{0}, \Booleanand, \Booleanor, \neg)$ be a complete Boolean algebra.  We define a realizability algebra \hbox{$\mathcal{A}_{\mathbb{B}} = (\Lambda_{(A, B)}, \Pi_{(A, B)}, \prec, \Perp)$} as follows: Firstly, set $A = \emptyset$, that it to say there are no special instructions. Next, let $(\omega_p \divline p \in \mathbb{B})$ be the set of stack bottoms under some fixed enumeration. To define the pre-order and the pole, we begin by inductively defining a function $\tau \colon \Lambda^{\ff{open}} \cup \Pi \rightarrow \mathbb{B}$;
\begin{itemize}
    \item for every stack bottom $\omega_p$, we let $\tau(\omega_p) \coloneqq p$;
    \item for every variable $x$, $\tau(x) \coloneqq \tau(\cc) \coloneqq \mathbb{1}$;
    \item for every term $t$ and stack $\pi$, we let $\tau(t \stackapp \pi) = \tau(t) \Booleanand \tau(\pi)$;
    \item for all $\lambda_c$-terms $t, s$, we let $\tau(\app{t}{s}) \coloneqq \tau(t) \Booleanand \tau(s)$;
    \item for every variable $u$ and every term $t$, we let $\tau(\lambda u \stackapp t) \coloneqq \tau(t)$;
    \item for every stack $\pi$, we let $\tau(\saverlz{\pi}) \coloneqq \tau(\pi)$.
\end{itemize}
This is then naturally extended to the set of all processes by the assignment
\[
\tau(t \star \pi) \coloneqq \tau(t) \Booleanand \tau(\pi).
\]
Then, we define the pre-order by $t \star \pi \succ s \star \sigma$ if and only if $\tau(t \star \pi) \leq \tau(s \star \sigma)$ and we let $\Perp$ be the set of all processes $t \star \pi$ such that $\tau(t \star \pi) = \mathbb{0}$.

\begin{notation}
    There is obviously a large amount of overlapping notation between Boolean-valued models and realizability models. In order to hopefully make the notation clearer for this section, while keeping the notation for realizability harmonious with this rest of this work, we will do the following:
    \begin{itemize}
        \item $\falsity{\varphi}$ and $\verity{\varphi}$ will refer to their usual meanings in the realizability model.
        \item The Boolean value of a formula $\varphi$ will be denoted by $\Booleanforce{\varphi}$.
        \item $\Vdash_{\mathcal{A}}$ refers to the realizability relation over the structure $\mathcal{A}_\mathbb{B}$.
        \item $\Vdash_{\mathbb{B}}$ refers to forcing with the Boolean algebra. Namely, $p \Vdash_\mathbb{B} \varphi$ if and only if $p \leq \Booleanforce{\varphi}$.
        \item $\rlzstr^\mathcal{A}$ will refer to the domain of the realizability model while $\tf{V}^\mathbb{B}$ will refer to the domain of the Boolean-valued model.        
    \end{itemize}
\end{notation}

We continue by observing that if two terms are mapped to the same element of the Boolean algebra then they realize the same formulas. From this it follows that the realizability model ``\emph{loses its computational content}''. This is because there is essentially only one realizer which gives us one single program for all proofs.

\begin{proposition} \label{theorem:SameBooleanValues}
    For any $t, s \in \Lambda$ and formula $\varphi \in Fml_{\rlzin}$, if $\tau(t) = \tau(s)$ then $t \in \verity{\varphi} \Longleftrightarrow s \in \verity{\psi}$. 
    
    In particular, if $t$ is a realizer then $t \Vdash \varphi \Longleftrightarrow \identity \Vdash \varphi$.
\end{proposition}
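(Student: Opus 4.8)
The plan is to prove by induction on the complexity of $\varphi \in Fml_{\rlzin}$ that $\falsity{\varphi}$ is invariant under the replacement of any term appearing in it by a $\tau$-equivalent one, and simultaneously that membership in $\verity{\varphi}$ depends only on the $\tau$-value of the term. Since the relation $\succ$ is defined purely through $\tau$ (we have $t \star \pi \succ s \star \sigma$ iff $\tau(t \star \pi) \leq \tau(s \star \sigma)$) and $\Perp$ is the set of processes with $\tau$-value $\mathbb{0}$, the key auxiliary fact is that $\tau$ is a congruence: if $\tau(t) = \tau(t')$ then $\tau(t \star \pi) = \tau(t' \star \pi)$, and more generally $\tau$ of any composite term is determined by the $\tau$-values of its immediate subterms. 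This is immediate from the defining clauses of $\tau$.

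First I would record the congruence observation: for a process, $t \star \pi \in \Perp$ iff $\tau(t) \Booleanand \tau(\pi) = \mathbb{0}$, so for fixed $\pi$ whether $t \star \pi \in \Perp$ depends on $t$ only through $\tau(t)$. Next I would carry out the induction on $\varphi$, but the cleaner statement to induct on is: for all $a, b, \dots \in \rlzstr^{\mathcal{A}}$, the set $\falsity{\varphi}$ is unchanged if we replace each term-parameter appearing inside it (the realizers $t$ witnessing subformulas, and the continuation constants) by any $\tau$-equivalent term — equivalently, $\falsity{\varphi}$ is a union of $\tau$-classes of stacks in the obvious coordinate. The atomic cases $\falsity{\top}, \falsity{\perp}, \falsity{a \notrlzin b}$ are trivial since they do not mention arbitrary terms; for $\falsity{a \not\in b}$ and $\falsity{a \subseteq b}$ one uses the inner induction on rank together with the inductive hypothesis that $t \Vdash a \subseteq c$ and $t \Vdash c \not\in b$ are $\tau$-invariant conditions on $t$. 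The implication case $\falsity{\psi \to \theta} = \{ t \stackapp \pi \mid t \Vdash \psi,\ \pi \in \falsity{\theta}\}$ uses the hypothesis on $\psi$ (so the set of admissible heads $t$ is a union of $\tau$-classes) and on $\theta$ (so the set of tails is too); the universal case is a union and so immediate. Having this, if $\tau(t) = \tau(s)$ then for every $\pi \in \falsity{\varphi}$ we get $\tau(t \star \pi) = \tau(t) \Booleanand \tau(\pi) = \tau(s) \Booleanand \tau(\pi) = \tau(s \star \pi)$, so $t \star \pi \in \Perp \Leftrightarrow s \star \pi \in \Perp$, whence $t \in \verity{\varphi} \Leftrightarrow s \in \verity{\varphi}$.

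For the ``in particular'' clause: if $t$ is a realizer, it contains no continuation constants, and every term appearing in the defining clauses of $\tau$ other than stack bottoms is sent to a value built from $\mathbb{1}$'s by $\Booleanand$, hence $\tau(t) = \mathbb{1} = \tau(\identity)$. Applying the main statement with $s = \identity$ gives $t \Vdash \varphi \Leftrightarrow \identity \Vdash \varphi$.

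The main obstacle I anticipate is purely bookkeeping: stating the induction hypothesis in a form strong enough to push through the simultaneous rank-induction for $\falsity{a \not\in b}$ and $\falsity{a \subseteq b}$, since there the ``terms'' being varied are themselves realizers of subformulas whose falsity values are only available at lower rank. One must be careful that the inductive hypothesis asserts $\tau$-invariance not only of $\falsity{\varphi}$ as a set but of the predicate ``$t \Vdash \varphi$'' on terms $t$ — these are equivalent given the congruence lemma, but the equivalence itself must be threaded through the induction rather than proved once at the end. No genuinely hard computation is involved; everything reduces to the fact that $\tau$ factors through the immediate subterm structure and that $\succ$ and $\Perp$ see terms only via $\tau$.
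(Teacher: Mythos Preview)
Your proposal is correct but massively overcomplicated compared to the paper's argument. The paper's proof is four lines: suppose $t \Vdash \varphi$ and $\pi \in \falsity{\varphi}$; then $\mathbb{0} = \tau(t \star \pi) = \tau(t) \Booleanand \tau(\pi) = \tau(s) \Booleanand \tau(\pi) = \tau(s \star \pi)$, so $s \star \pi \in \Perp$; since $\pi$ was arbitrary, $s \Vdash \varphi$. That is it. No induction on formula complexity, no rank induction on atomic formulas, no analysis of the internal structure of $\falsity{\varphi}$ whatsoever.

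The point you are missing is that the definition $t \in \verity{\varphi} \Leftrightarrow \forall \pi \in \falsity{\varphi}\,(t \star \pi \in \Perp)$ treats $\falsity{\varphi}$ as an opaque subset of $\Pi$. The condition ``$t \star \pi \in \Perp$ for every $\pi$ in this fixed set'' depends on $t$ only through $\tau(t)$, by your own congruence observation. You state this observation correctly at the start, and your final paragraph (``Having this, if $\tau(t) = \tau(s)$\ldots'') is exactly the paper's argument --- but the word ``Having this'' is misleading, because you do not need the intervening induction at all. The induction you propose establishes something stronger (that $\falsity{\varphi}$ itself is a union of $\tau$-classes of stacks, and that the predicate $t \Vdash \psi$ appearing inside falsity values is $\tau$-invariant), which may be of independent interest but is irrelevant here. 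The ``main obstacle'' you anticipate --- threading the rank induction for $\not\in$ and $\subseteq$ --- simply never arises.

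For the ``in particular'' clause your argument matches the paper's: realizers contain no continuation constants, so $\tau(t) = \mathbb{1} = \tau(\identity)$ by a straightforward induction on term formation.
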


\begin{proof}
    Suppose that $t \Vdash \varphi$ and $\pi \in \falsity{\varphi}$. By the definition of $\Perp$ this means that $\mathbb{0} = \tau(t \star \pi) = \tau(t) \Booleanand \tau(\pi) = \tau(s) \Booleanand \tau(\pi) = \tau(s \star \pi)$. Thus $s \star \pi \in \Perp$ so, since $\pi$ was arbitrary, $s \Vdash \varphi$.

    For the second statement it suffices to observe that if $t$ is a realizer, then $\tau(t) = \mathbb{1}$ and thus all realizers realize the same formulas. Formally, this is proven by induction on the construction of all - possibly open - $\lambda_c$-terms by showing that if $t$ does not contain any continuation constants then $\tau(t) = \mathbb{1}$. But this is obvious from the definition of $\tau$ since $\tau(\cc) = \mathbb{1}$ as does $\tau(u)$ for any variable $u$ and if $t$ is a $\lambda_c$-term such that $\tau(t) = \mathbb{1}$ then $\tau(\lambda u \lambdaapp t) = \tau(t) = \mathbb{1}$.
\end{proof}

From this it follows that in this case the realizability model is trivial, namely the Boolean algebra $\fullname{2}$ has only 2 elements. We shall prove a converse to this result in \Cref{section:Fullname2Trivial}.

\begin{proposition}
    $\identity \Vdash \forall x^{\fullname{2}} (x \neq \fullname{0} \rightarrow ( x \neq \fullname{1} \rightarrow \perp))$.
\end{proposition}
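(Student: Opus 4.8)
The plan is to reduce the bounded‑quantifier statement to two atomic instances and then compute directly with the valuation $\tau$ of the algebra $\mathcal{A}_{\mathbb{B}}$. Since $2=\{0,1\}$ we have $\ff{dom}(\fullname{2})=\{\fullname{0},\fullname{1}\}$, so by \Cref{theorem:RealizingBoundedQuantifier} it suffices to verify that $\identity \Vdash \fullname{i}\neq\fullname{0}\rightarrow(\fullname{i}\neq\fullname{1}\rightarrow\perp)$ for $i=0$ and for $i=1$. The only structural fact about $\mathcal{A}_{\mathbb{B}}$ I will need is: if a term $s$ satisfies $s\Vdash a\neq a$ for some name $a$, then $\tau(s)=\mathbb{0}$. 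Indeed $\falsity{a\neq a}=\falsity{\perp}=\Pi$, so in particular $s\star\omega_{\mathbb{1}}\in\Perp$, which unwinds to $\tau(s)\Booleanand\tau(\omega_{\mathbb{1}})=\tau(s)\Booleanand\mathbb{1}=\tau(s)=\mathbb{0}$. Combined with $\tau(t\star\pi)=\tau(t)\Booleanand\tau(\pi)$ and $\Perp=\{p:\tau(p)=\mathbb{0}\}$, this means any process one of whose constituent terms has $\tau$‑value $\mathbb{0}$ lies in $\Perp$.

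For $i=0$ the argument is short: every $t\stackapp\pi\in\falsity{\fullname{0}\neq\fullname{0}\rightarrow(\fullname{0}\neq\fullname{1}\rightarrow\perp)}$ has $t\Vdash\fullname{0}\neq\fullname{0}$, hence $\tau(t)=\mathbb{0}$ by the observation, hence $\tau(\identity\star t\stackapp\pi)=\tau(\identity)\Booleanand\tau(t)\Booleanand\tau(\pi)=\mathbb{0}$ and $\identity\star t\stackapp\pi\in\Perp$. (One could equally well invoke \Cref{theorem:falsitysubsets} here, since $\falsity{\fullname{0}\neq\fullname{0}}=\Pi\supseteq\falsity{\fullname{0}\neq\fullname{1}\rightarrow\perp}$.) For $i=1$ I will argue as follows: because $\Pi\neq\emptyset$, the names $\fullname{1}$ and $\fullname{0}=\emptyset$ are distinct, so $\falsity{\fullname{1}\neq\fullname{0}}=\emptyset$ and every element of $\falsity{\fullname{1}\neq\fullname{0}\rightarrow(\fullname{1}\neq\fullname{1}\rightarrow\perp)}$ is of the form $t\stackapp s\stackapp\sigma$ with $t\in\Lambda$ arbitrary, $s\Vdash\fullname{1}\neq\fullname{1}$ and $\sigma\in\Pi$. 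By the observation $\tau(s)=\mathbb{0}$, so $\tau(\identity\star t\stackapp s\stackapp\sigma)=\tau(\identity)\Booleanand\tau(t)\Booleanand\tau(s)\Booleanand\tau(\sigma)=\mathbb{0}$, giving $\identity\star t\stackapp s\stackapp\sigma\in\Perp$.

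The one mildly subtle point — and the step worth flagging as the potential obstacle — is precisely this $i=1$ case. Unlike the $i=0$ case it cannot be disposed of by \Cref{theorem:falsitysubsets}, because $\verity{\fullname{1}\neq\fullname{1}}=\verity{\perp}$ is nonempty in $\mathcal{A}_{\mathbb{B}}$ (for instance $\saverlz{\omega_{\mathbb{0}}}\identity$ realizes $\perp$), so $\falsity{\fullname{1}\neq\fullname{1}\rightarrow\perp}$ is nonempty and is certainly not contained in $\falsity{\fullname{1}\neq\fullname{0}}=\emptyset$. What rescues the argument is that in this particular algebra every realizer of $\perp$ has $\tau$‑value $\mathbb{0}$ and therefore collapses to $\Perp$ any process in which it occurs; this is the same computational degeneracy of $\mathcal{A}_{\mathbb{B}}$ already recorded around \Cref{theorem:SameBooleanValues}, here applied to the premise $\fullname{1}\neq\fullname{1}$ rather than to a realizer of the whole formula.
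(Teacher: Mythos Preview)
Your proof is correct and follows the same overall decomposition as the paper: reduce to the two instances $x=\fullname{0}$ and $x=\fullname{1}$, handle $i=0$ trivially, and use the special structure of $\mathcal{A}_{\mathbb{B}}$ for $i=1$. The only difference is in how the $i=1$ case is dispatched. The paper first exhibits a different realizer, namely $\underline{0}=\lambda u\lambdaapp\lambda v\lambdaapp v$, by the reduction $\underline{0}\star t\stackapp s\stackapp\pi\succ s\star\pi\in\Perp$, and then invokes \Cref{theorem:SameBooleanValues} (all realizers have $\tau$-value $\mathbb{1}$, hence realize the same formulas) to transfer from $\underline{0}$ to $\identity$. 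You instead bypass the reduction entirely and compute $\tau(\identity\star t\stackapp s\stackapp\sigma)=\mathbb{0}$ directly from $\tau(s)=\mathbb{0}$. Your route is slightly more elementary in that it never needs to name an auxiliary realizer or invoke the transfer proposition; the paper's route has the virtue of making explicit which closed term would work in an arbitrary algebra before specializing. Both are short and rest on the same degeneracy of $\mathcal{A}_{\mathbb{B}}$ that you correctly flag.
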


\begin{proof}
    As per usual, it suffices to prove that 
    \[
    \identity \Vdash x \neq \fullname{0} \rightarrow (x \neq \fullname{1} \rightarrow \perp)
    \]
    for $x = \fullname{0}$ and $x = \fullname{1}$. Fix $t \Vdash x \neq \fullname{0}$, $s \Vdash x \neq \fullname{1}$ and $\pi \in \Pi$. If $x=  \fullname{0}$ then $\falsity{\fullname{0} \neq \fullname{0}} = \Pi$ and therefore $\identity \star t \stackapp s \stackapp \pi \succ t \star s \stackapp \pi \in \Perp$. On the other hand, if $x = \fullname{1}$ then $\lambda u \lambdaapp \lambda v \lambdaapp v \star t \stackapp s \stackapp \pi \succ s \star \pi \in \Perp$. Thus, $\underline{0} \Vdash \fullname{1} \neq \fullname{0} \rightarrow (\fullname{1} \neq \fullname{1} \rightarrow \perp)$. Since $\underline{0}$ is a realizer, by \Cref{theorem:SameBooleanValues} we also have that $\identity$ realizes the same sentence, completing the proof.   
\end{proof}

In remains to prove that the Boolean-valued model and the realizability model prove the same sentences. To do this, recall that the Boolean-valued model $\tf{V}^\mathbb{B}$ is defined recursively as 
\begin{itemize}
    \item $\tf{V}_0^\mathbb{B} = \emptyset$;
    \item $\tf{V}_{\alpha+1}^{\mathbb{B}}$ is the set of all functions $b$ whose domain is a subset of $\tf{V}_\alpha^{\mathbb{B}}$ and whose range is a subset of $\mathbb{B}$;
    \item If $\lambda$ is a limit ordinal then $\tf{V}_\lambda^\mathbb{B} = \bigcup_{\alpha \in \lambda} \tf{V}_\alpha^{\mathbb{B}}$;
    \item $\tf{V}^\mathbb{B} = \bigcup_{\alpha \in \tf{Ord}} \tf{V}_\alpha^{\mathbb{B}}$.
\end{itemize}
On the other hand, the defining property for the domain of the realizability model was that $\rlzstr_{\alpha+1}^{\mathcal{A}} = \mathcal{P}(\rlzstr_\alpha^\mathcal{A} \times \Pi)$. It is easy to extend $\tau$ to map any name in the realizability model to one in the Boolean-valued model and similarly we can define a new function $\sigma$ which does the reverse. We note here that this interpretation is very similar to how one goes between the Boolean-valued approach and the partial order approach for forcing and is where we require the Boolean algebra to be complete.

\begin{definition}
    We recursively define $\tau(a) \in \tf{V}^\mathbb{B}$, for $a \in \rlzstr^\mathcal{A}$, to be the function $\tau(a) \colon \tau \pointwise \ff{dom}(a) \rightarrow \mathbb{B}$ given by
    \[
    \tau(a)(\tau(x)) \coloneqq \Booleansum \{ \tau(\pi) \divline ( x, \pi ) \in a \}.
    \]
    Similarly, we recursively define $\sigma(b) \in \rlzstr^\mathcal{A}$, for $b \in \tf{V}^\mathbb{B}$, as
    \[
    \sigma(b) \coloneqq \{ ( \sigma(x), \omega_p ) \divline b(x) = p \}.
    \]
\end{definition}

\begin{proposition}
    For any $a \in \rlzstr^\mathcal{A}$ and $b \in \tf{V}^\mathbb{B}$, 
    \[
    \identity \Vdash_\mathcal{A} a \simeq \sigma(\tau(a)) \qquad \text{and} \qquad \mathbb{1} \Vdash_\mathbb{B} b = \tau(\sigma(b)).
    \]
\end{proposition}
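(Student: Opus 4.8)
The plan is to establish the two identities separately, each by induction on rank in $\tf{V}$; the Boolean-valued one is essentially formal, while the realizability one carries the content.

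\textbf{The identity $\mathbb{1} \Vdash_{\mathbb{B}} b = \tau(\sigma(b))$.} I would in fact show $\tau(\sigma(b)) = b$ on the nose. Assuming this for every name of rank less than $b$, the map $\sigma$ is injective on $\ff{dom}(b)$, since $\sigma(x) = \sigma(x')$ forces $x = \tau(\sigma(x)) = \tau(\sigma(x')) = x'$. Therefore $\ff{dom}(\tau(\sigma(b))) = \{\tau(\sigma(x)) \divline x \in \ff{dom}(b)\} = \{x \divline x \in \ff{dom}(b)\} = \ff{dom}(b)$, and for $x \in \ff{dom}(b)$ the defining join for $\tau(\sigma(b))(\tau(\sigma(x))) = \Booleansum\{\, b(x') \divline \tau(\sigma(x')) = \tau(\sigma(x))\,\}$ collapses, by injectivity, to the single value $b(x)$. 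Hence $\tau(\sigma(b)) = b$, and in particular $\mathbb{1} \Vdash_{\mathbb{B}} b = \tau(\sigma(b))$ because $\Booleanforce{b = b} = \mathbb{1}$.

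\textbf{The identity $\identity \Vdash_{\mathcal{A}} a \simeq \sigma(\tau(a))$.} First I reduce to a pair of $\subseteq$-statements: since $a \simeq \sigma(\tau(a))$ abbreviates $(a \subseteq \sigma(\tau(a))) \land (\sigma(\tau(a)) \subseteq a)$, and conjunction introduction and elimination are realized (\Cref{theorem:adequacy}), while in $\mathcal{A}_{\mathbb{B}}$ all realizers realize the same formulas (\Cref{theorem:SameBooleanValues}), it suffices to produce any realizer of $a \subseteq \sigma(\tau(a))$ and of $\sigma(\tau(a)) \subseteq a$; I claim $\identity$ works for both, and the induction hypothesis gives the same for every $x \in \ff{dom}(a)$ in place of $a$, in particular $\identity \Vdash_{\mathcal{A}} x \subseteq \sigma(\tau(x))$ and $\identity \Vdash_{\mathcal{A}} \sigma(\tau(x)) \subseteq x$. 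Now a falsifier of $a \subseteq \sigma(\tau(a))$ has the form $t \stackapp \pi$ with $(c,\pi) \in a$, $c \in \ff{dom}(a)$ and $t \Vdash c \not\in \sigma(\tau(a))$, and I must show $t \star \pi \in \Perp$, i.e. $\tau(t) \Booleanand \tau(\pi) = \mathbb{0}$. Put $d \coloneqq \sigma(\tau(c)) \in \ff{dom}(\sigma(\tau(a)))$ and $q \coloneqq \tau(a)(\tau(c)) = \Booleansum\{\tau(\rho) \divline (x,\rho) \in a,\ \tau(x) = \tau(c)\}$, so that $(d, \omega_q) \in \sigma(\tau(a))$; by the induction hypothesis $\identity \Vdash c \subseteq d$ and $\identity \Vdash d \subseteq c$, hence $\identity \stackapp \identity \stackapp \omega_q \in \falsity{c \not\in \sigma(\tau(a))}$, and therefore $\mathbb{0} = \tau\big(t \star \identity \stackapp \identity \stackapp \omega_q\big) = \tau(t) \Booleanand q$. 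Since $(c,\pi) \in a$ gives $\tau(\pi) \le q$, we conclude $\tau(t) \Booleanand \tau(\pi) \le \tau(t) \Booleanand q = \mathbb{0}$, so $t \star \pi \in \Perp$ and $\identity \star t \stackapp \pi \succ t \star \pi \in \Perp$. For the reverse inclusion a falsifier of $\sigma(\tau(a)) \subseteq a$ has the form $t \stackapp \omega_q$ with $d = \sigma(\tau(c))$, $q = \tau(a)(\tau(c))$ for some $c \in \ff{dom}(a)$, and $t \Vdash d \not\in a$; the induction hypothesis now places $\identity \stackapp \identity \stackapp \rho$ into $\falsity{d \not\in a}$ for every $(x,\rho) \in a$ with $\tau(x) = \tau(c)$, so $\tau(t) \Booleanand \tau(\rho) = \mathbb{0}$ for all such $\rho$, and the infinite distributive law in the complete algebra $\mathbb{B}$ gives $\tau(t) \Booleanand q = \Booleansum\{\tau(t) \Booleanand \tau(\rho) \divline (x,\rho) \in a,\ \tau(x) = \tau(c)\} = \mathbb{0}$, whence $t \star \omega_q \in \Perp$.

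The main obstacle is entirely bookkeeping around the collapse induced by $\tau$: one has to read $\tau(a)$ as the function $q \mapsto \Booleansum\{\tau(\pi) \divline (x,\pi) \in a,\ \tau(x) = q\}$ on the image of $\ff{dom}(a)$ under $\tau$ (so that it is well defined when $\tau$ identifies distinct names), track precisely which pairs of $a$ feed a given join, and use completeness of $\mathbb{B}$ for the distributive step $\tau(t) \Booleanand \Booleansum_i q_i = \Booleansum_i (\tau(t) \Booleanand q_i)$. None of these steps is hard, but they must be carried out uniformly in the rank induction.
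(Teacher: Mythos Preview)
Your proof is correct and follows the same inductive scheme as the paper for the realizability half: split $\simeq$ into two $\subseteq$-statements via \Cref{theorem:SameBooleanValues}, use the induction hypothesis to place $\identity \stackapp \identity \stackapp \rho$ into the relevant $\falsity{\cdot \not\in \cdot}$, and reduce to a Boolean computation. You are in fact more careful than the paper on one point: you explicitly read $\tau(a)(q)$ as the join over \emph{all} $(x,\rho)\in a$ with $\tau(x)=q$, which is what is needed for $\tau(a)$ to be a well-defined function, and correspondingly in the reverse inclusion you quantify over all such $x$ rather than a single representative $c$.

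For the Boolean-valued half you take a different, cleaner route than the paper. The paper argues semantically that $\Booleanforce{b \subseteq \tau(\sigma(b))} = \Booleanforce{\tau(\sigma(b)) \subseteq b} = \mathbb{1}$ using the inductive hypothesis only in the form $\Booleanforce{c = \tau(\sigma(c))} = \mathbb{1}$. You instead prove the stronger statement $\tau(\sigma(b)) = b$ literally, by observing that the induction hypothesis makes $\sigma$ injective on $\ff{dom}(b)$, so both the domain and the values of $\tau(\sigma(b))$ coincide with those of $b$. This is shorter and avoids any manipulation of Boolean values; the price is that it relies on the specific shape of $\sigma(b)$ (each $x$ contributes exactly one pair $(\sigma(x),\omega_{b(x)})$), which the paper's semantic argument does not.
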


\begin{proof}
    We prove the first statement by induction on the rank of $a$. So suppose that for all $c \in \ff{dom}(a)$, $\identity \Vdash_{\mathcal{A}} c \simeq \sigma(\tau(c))$. Using \Cref{theorem:SameBooleanValues}, it will suffice to show that $\identity \Vdash a \subseteq \sigma(\tau(a))$ and $\identity \Vdash \sigma(\tau(a)) \subseteq a$.

    For the first of these, fix $c \in \rlzstr$ and suppose that $t \Vdash c \not\in \sigma(\tau(a))$ while $(c, \pi) \in a$. By the induction hypothesis $\identity \Vdash_{\mathcal{A}} c \simeq \sigma(\tau(c))$. Now, by definition of the translations, we have $(\sigma(\tau(c)), \omega_p) \in \sigma(\tau(a))$ where $p = \tau(a)(\tau(c)) = \Booleansum \{(\tau(\pi') \divline (c, \pi') \in a\} \geq \tau(\pi)$. Therefore, $t \star \identity \stackapp \omega_p \in \Perp$. However, $\tau(t) \Booleanand \tau(\pi)  \leq \tau(t) \Booleanand p = \tau(t) \Booleanand \tau(\identity) \Booleanand \tau(\omega_p) = 0$. Thus $t \star \pi \in \Perp$ so $\identity \Vdash a \subseteq \sigma(\tau(a))$.

    For the reverse direction, fix $c \in a$ and suppose that $t \Vdash \sigma(\tau(c)) \not\in a$ while $(\sigma(\tau(c)), \omega_p) \in \sigma(\tau(a))$. Now, by the induction hypothesis, $\identity \Vdash \sigma(\tau(c)) \simeq c$. Therefore, for any $\pi$ such that $(c, \pi) \in a$, $t \star \identity \stackapp \pi \in \Perp$, from which it follows that $\tau(t) \Booleanand \tau(\pi) = 0$. So, since $p$ is the supremum of $\{ \tau(\pi') \divline (c, \pi') \in a \}$ it must be the case that $\tau(t) \Booleanand p$ is also equal to $0$. Thus $\identity \star t \stackapp \omega_p \in \Perp$, as required. \\

    \noindent We again prove that second statement by induction on the rank of $b$. So suppose that for all $c \in \ff{dom}(b)$, $\mathbb{1} \Vdash_{\mathbb{B}} c = \tau(\sigma(c))$. As before we consider the cases $b \subseteq \tau(\sigma(b))$ and $\tau(\sigma(b)) \subseteq b$ separately. To prove this, observe that if $c \in \ff{dom}(b)$ then $(\sigma(c), \omega_{b(c)}) \in \sigma(b)$ and therefore $\tau(\sigma(c)) \in \ff{dom}(\tau(\sigma(b)))$ and $\tau(\sigma(b))(\tau(\sigma(c))) = b(c)$.

    For the first case, we shall prove that $\Booleanforce{b \subseteq \tau(\sigma(b))} \coloneqq \Booleanproduct \{ b(c) \rightarrow \Booleanforce{c \in \tau(\sigma(b))} \divline c \in \ff{dom}(b) \} = \mathbb{1}$. To do this, it suffices to prove that for every $c \in \ff{dom}(b)$, $\Booleanforce{c \in \tau(\sigma(b))} \geq b(c)$. But this follows from the inductive hypothesis since 
    \begin{align*}
    \Booleanforce{c \in \tau(\sigma(b))} & \coloneqq \Booleansum \{ \Booleanforce{c =d} \Booleanand \tau(\sigma(b))(d) \divline d \in \ff{dom}(\tau(\sigma(b))) \} \\
    & \geq \Booleanforce{c = \tau(\sigma(c))} \Booleanand \tau(\sigma(b))(\tau(\sigma(c))) \\
    & = b(c).
    \end{align*}

    For the reverse direction, we have that 
    \[
    \Booleanforce{\tau(\sigma(b)) \subseteq b} = \Booleanproduct \{ \tau(\sigma(b))(\tau(\sigma(c))) \rightarrow \Booleanforce{\tau(\sigma(c)) \in b} \divline c \in \ff{dom}(b) \}.
    \]
    Then, as before, using the inductive hypothesis we have that $\Booleanforce{\tau(\sigma(c)) \in b} \geq \Booleanforce{\tau(\sigma(c)) = c} \Booleanand b(c) = b(c) = \tau(\sigma(b))(\tau(\sigma(c)))$. From this it follows that $\Booleanforce{\tau(\sigma(b)) \subseteq b} = \mathbb{1}$, as required.  
\end{proof}

\begin{theorem} \label{theorem:BooleanVsRealize}
For any formula $\varphi \in Fml_\in$, $t \in \Lambda$ and $a_0, \dots, a_n \in \rlzstr^{\mathcal{A}}$ 
    \[
    t \Vdash_{\mathcal{A}} \varphi(a_0, \dots, a_n) \Longleftrightarrow \tau(t) \Vdash_{\mathbb{B}} \varphi(\tau(a_0), \dots, \tau(a_n)).
    \]
\end{theorem}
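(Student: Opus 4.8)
The plan is to isolate a single Boolean identity and deduce the theorem from it by a purely formal computation. For a closed formula $\varphi$ with parameters from $\rlzstr^{\mathcal{A}}$, put $b_{\varphi} \coloneqq \Booleansum\{\tau(\pi) \divline \pi \in \falsity{\varphi}\} \in \mathbb{B}$. I would first prove, by induction on $\varphi \in Fml_{\in}$, that $b_{\varphi(\vec a)} = \neg\,\Booleanforce{\varphi(\tau(\vec a))}$ for all $\vec a \subseteq \rlzstr^{\mathcal{A}}$. Granting this, the theorem follows at once: since a process $s \star \sigma$ of $\mathcal{A}_{\mathbb{B}}$ lies in $\Perp$ precisely when $\tau(s) \Booleanand \tau(\sigma) = \mathbb{0}$, we have $t \Vdash_{\mathcal{A}} \varphi(\vec a)$ iff $\tau(t) \Booleanand \tau(\pi) = \mathbb{0}$ for every $\pi \in \falsity{\varphi(\vec a)}$, iff $\tau(t) \le \neg\, b_{\varphi(\vec a)} = \Booleanforce{\varphi(\tau(\vec a))}$, iff $\tau(t) \Vdash_{\mathbb{B}} \varphi(\tau(\vec a))$. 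The same identity also yields, for any $\varphi$ already handled, that $t \Vdash_{\mathcal{A}} \varphi \Leftrightarrow \tau(t) \le \Booleanforce{\varphi}$, whence (since every $p \in \mathbb{B}$ equals $\tau(\saverlz{\omega_{p}})$) $\Booleansum\{\tau(t) \divline t \Vdash_{\mathcal{A}} \varphi\} = \Booleanforce{\varphi}$; I would use this repeatedly in the inductive step.

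For the inductive proof of the identity, the propositional and quantifier cases are routine. The cases $\top$ and $\perp$ are immediate from $\falsity{\top} = \emptyset$, $\falsity{\perp} = \Pi$ and $\Booleansum\{\tau(\pi) \divline \pi \in \Pi\} = \mathbb{1}$ (as $\omega_{\mathbb{1}} \in \Pi$). For $\psi \rightarrow \theta$, from $\falsity{\psi \rightarrow \theta} = \{ s \stackapp \pi \divline s \Vdash \psi,\ \pi \in \falsity{\theta}\}$ and the distributivity of $\Booleanand$ over arbitrary $\Booleansum$ in $\mathbb{B}$ one gets $b_{\psi \rightarrow \theta} = \big(\Booleansum\{\tau(s) \divline s \Vdash \psi\}\big) \Booleanand b_{\theta} = \Booleanforce{\psi} \Booleanand \neg\Booleanforce{\theta} = \neg\Booleanforce{\psi \rightarrow \theta}$ using the induction hypothesis. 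For $\forall x\,\psi(x)$, $\falsity{\forall x\,\psi} = \bigcup_{a \in \rlzstr^{\mathcal{A}}}\falsity{\psi(a)}$, so $b_{\forall x\,\psi} = \Booleansum_{a}\neg\Booleanforce{\psi(\tau(a))} = \neg\Booleanproduct_{a \in \rlzstr^{\mathcal{A}}}\Booleanforce{\psi(\tau(a))}$, and this coincides with $\neg\Booleanproduct_{b \in \tf{V}^{\mathbb{B}}}\Booleanforce{\psi(b)} = \neg\Booleanforce{\forall x\,\psi}$ because the preceding proposition gives $\mathbb{1} \Vdash_{\mathbb{B}} b = \tau(\sigma(b))$ with $\sigma(b) \in \rlzstr^{\mathcal{A}}$ for every $b \in \tf{V}^{\mathbb{B}}$, so the two families of Boolean values agree.

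The heart of the argument is the atomic case. An atom of $Fml_{\in}$ abbreviates a formula built from the directly-defined falsity values $\falsity{a \not\in b}$ and $\falsity{a \subseteq b}$, which are specified by simultaneous recursion on rank and refer to $\Vdash$ on strictly smaller atoms; I would therefore establish, by a nested induction on $\ff{rk}(a) + \ff{rk}(b)$, the two identities $b_{a \subseteq b} = \neg\Booleanforce{\tau(a) \subseteq \tau(b)}$ and $b_{a \not\in b} = \Booleanforce{\tau(a) \in \tau(b)}$. For the first, $\falsity{a \subseteq b} = \bigcup_{c \in \ff{dom}(a)}\{ t \stackapp \pi \divline (c,\pi) \in a,\ t \Vdash c \not\in b \}$; distributing and using the nested hypothesis ($\Booleansum\{\tau(t) \divline t \Vdash c \not\in b\} = \neg\Booleanforce{\tau(c) \in \tau(b)}$) together with $\Booleansum\{\tau(\pi) \divline (c,\pi) \in a\} = \tau(a)(\tau(c))$ gives $b_{a \subseteq b} = \Booleansum_{d \in \ff{dom}(\tau(a))}\tau(a)(d) \Booleanand \neg\Booleanforce{d \in \tau(b)} = \neg\Booleanforce{\tau(a) \subseteq \tau(b)}$, exactly the recursive clause for $\subseteq$ in $\tf{V}^{\mathbb{B}}$. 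For the second, $\falsity{a \not\in b} = \bigcup_{c \in \ff{dom}(b)}\{ t \stackapp t' \stackapp \pi \divline (c,\pi) \in b,\ t \Vdash a \subseteq c,\ t' \Vdash c \subseteq a \}$, and a symmetric computation yields $b_{a \not\in b} = \Booleansum_{d \in \ff{dom}(\tau(b))}\Booleanforce{\tau(a) \subseteq d} \Booleanand \Booleanforce{d \subseteq \tau(a)} \Booleanand \tau(b)(d) = \Booleanforce{\tau(a) \in \tau(b)}$, matching the recursive clause for $\in$. The $Fml_{\in}$ atoms $a \in b$ and $a \simeq b$ then fall out of the $\rightarrow$- and $\wedge$-clauses already proved.

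I expect the atomic step to be the main obstacle. The care required there is twofold: one must thread the nested rank induction correctly through the mutual recursion defining $\falsity{a \not\in b}$ and $\falsity{a \subseteq b}$; and, because $\tau$ need not be injective on $\ff{dom}(a)$, one must check that after summing over $c \in \ff{dom}(a)$ the quantities $\Booleansum\{\tau(\pi) \divline (c,\pi) \in a\}$ genuinely reassemble into the Boolean name $\tau(a)$ regarded as a function on $\ff{dom}(\tau(a))$, so that the resulting expression really matches the recursive definitions of $\Booleanforce{d \in \tau(b)}$ and $\Booleanforce{\tau(a) \subseteq \tau(b)}$ in the Boolean-valued model. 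Everything else is bookkeeping with suprema and infima in $\mathbb{B}$ together with the inductive hypotheses, \Cref{theorem:SameBooleanValues}, and the preceding proposition relating $\tau$ and $\sigma$.
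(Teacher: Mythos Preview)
Your proposal is correct and takes a genuinely different, more uniform route than the paper. The paper proves the biconditional directly by structural induction, at each step establishing both implications by hand: for the atomic clauses it writes out Boolean inequalities such as $\tau(t)\Booleanand\tau(a)(\tau(c))\Booleanand\Booleanforce{\tau(c)\not\in\tau(b)}=\mathbb{0}$ from the realizability hypothesis and conversely reads off $t\star s\stackapp\pi\in\Perp$ from the Boolean side; for implications it uses a one-direction auxiliary lemma (if $p\Vdash_{\mathbb{B}}\varphi$ then $p\Booleanand\tau(\pi)=\mathbb{0}$ for every $\pi\in\falsity{\varphi}$) rather than isolating your full identity $b_{\varphi}=\neg\Booleanforce{\varphi^{\tau}}$. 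Your reformulation buys conceptual cleanliness: once the identity is proved, the theorem is a two-line corollary, and each inductive clause becomes a single sup/inf computation in $\mathbb{B}$ instead of two directional arguments. The paper's approach is more hands-on but avoids introducing the auxiliary quantity $b_{\varphi}$ and keeps everything phrased directly in terms of the two relations $\Vdash_{\mathcal{A}}$ and $\Vdash_{\mathbb{B}}$.

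The substantive ingredients coincide in both arguments: the nested rank induction through the mutually recursive $\subseteq$/$\not\in$ clauses (the paper uses the lexicographic order on $(\ff{rk}(a),\ff{rk}(b))$, your $\ff{rk}(a)+\ff{rk}(b)$ works equally well), the use of $\saverlz{\omega_{p}}$ to witness that every $p\in\mathbb{B}$ is hit by some $\tau(t)$, and the appeal to $\mathbb{1}\Vdash_{\mathbb{B}} b=\tau(\sigma(b))$ in the quantifier step. The non-injectivity worry you flag is genuine but harmless, and the paper glosses over it too: grouping the sum over $c\in\ff{dom}(a)$ by $d=\tau(c)$ and using that $\tau(a)(d)$ is by definition the supremum of $\tau(\pi)$ over all $(c,\pi)\in a$ with $\tau(c)=d$ recovers exactly the recursive clause for $\Booleanforce{\tau(a)\subseteq\tau(b)}$.
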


In order to prove this theorem when the formula is of the form $\varphi \rightarrow \psi$ we will make use of the following lemma which identifies the stacks which falsify a given formula. For ease of notation we will omit variables.

\begin{lemma} \label{theorem:BooleanVsFalsity}
    Suppose that $p \Vdash_\mathbb{B} \varphi \Longrightarrow \forall t \in \Lambda ( \tau(t) = p \rightarrow t \Vdash_{\mathcal{A}} \varphi)$. Then 
    \[
    (\pi \in \falsity{\varphi} \Longrightarrow p \Booleanand \tau(\pi) = \mathbb{0}).
    \]
\end{lemma}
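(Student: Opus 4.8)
The plan is to reduce the statement to a single application of the definition of the pole $\Perp$ in $\mathcal{A}_{\mathbb{B}}$, by producing an explicit witness term whose value under $\tau$ is exactly $p$. Since $B = \mathbb{B}$ indexes the stack bottoms, the stack bottom $\omega_p$ is available, and hence so is the continuation constant $\saverlz{\omega_p} \in \Lambda$; the recursive clauses defining $\tau$ then give $\tau(\saverlz{\omega_p}) = \tau(\omega_p) = p$. This is the only term one needs to consider.

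First I would observe that the hypothesis of the lemma supplies the fact that every term of Boolean value $p$ realizes $\varphi$ --- in particular $\saverlz{\omega_p} \Vdash_{\mathcal{A}} \varphi$. (The implication in the hypothesis is used in the non-vacuous case $p \Vdash_{\mathbb{B}} \varphi$; in the application of the lemma inside \Cref{theorem:BooleanVsRealize} this is automatic because there $p$ is taken to be $\Booleanforce{\varphi}$, or symmetrically $\Booleanforce{\psi}$, so $p \Vdash_{\mathbb{B}} \varphi$ holds trivially and the consequent $\forall t\,(\tau(t) = p \rightarrow t \Vdash_{\mathcal{A}} \varphi)$ is precisely the relevant direction of the inductive hypothesis for that subformula.) Then, given $\pi \in \falsity{\varphi}$, the definition of $\Vdash_{\mathcal{A}}$ forces $\saverlz{\omega_p} \star \pi \in \Perp$; and since $\Perp$ consists exactly of the processes $t \star \sigma$ with $\tau(t) \Booleanand \tau(\sigma) = \mathbb{0}$, we conclude $p \Booleanand \tau(\pi) = \tau(\saverlz{\omega_p}) \Booleanand \tau(\pi) = \mathbb{0}$, which is the claim.

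There is essentially no hard step: the argument is a short unfolding of the definitions of $\tau$ and of $\Perp$. The only point requiring care is the bookkeeping around the hypothesis --- one must check that when the lemma is invoked (in the implication clause $\varphi \rightarrow \psi$ of \Cref{theorem:BooleanVsRealize}) the antecedent $p \Vdash_{\mathbb{B}} \varphi$ is genuinely met, so that the consequent is actually available from the induction rather than only vacuously true. Once the lemma is established it is paired with the dual observation that every stack bottom $\omega_r$ with $r \Booleanand \Booleanforce{\psi} = \mathbb{0}$ lies in $\falsity{\psi}$; together these pin down $\falsity{\varphi \rightarrow \psi}$ up to $\tau$-value and close the implication case of the main theorem.
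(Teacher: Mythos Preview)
Your proof is correct and follows essentially the same route as the paper's: pick the witness term $\saverlz{\omega_p}$, note $\tau(\saverlz{\omega_p}) = p$, apply the hypothesis to get $\saverlz{\omega_p} \Vdash_{\mathcal{A}} \varphi$, and read off $p \Booleanand \tau(\pi) = \mathbb{0}$ from the definition of $\Perp$. Your added remarks about when the antecedent $p \Vdash_{\mathbb{B}} \varphi$ is genuinely satisfied in the application are accurate and a useful clarification, though the paper's own proof leaves this implicit.
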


\begin{proof}
    Suppose that the assumption of the lemma holds and fix $\pi \in \falsity{\varphi}$. Then, since $\tau(\saverlz{\omega_p}) = \tau(\omega_p) = p$, we must have that $\saverlz{\omega_p} \Vdash_{\mathcal{A}} \varphi$. Thus, $\saverlz{\omega_p} \star \pi \in \Perp$, that is, $\mathbb{0} = \tau(\saverlz{\omega_p}) \Booleanand \tau(\pi) = p \Booleanand \tau(\pi)$.
\end{proof}

\begin{proof}[Proof of \ref{theorem:BooleanVsRealize}]
    We shall prove this by induction on the complexity of formulas. For this it will suffice to consider in turn the following cases: $\top$, $\perp$, atomic formulas, implications, bounded universal quantification.

    Firstly, it is clear that for every $p \in \mathbb{B}$ and $t \in \Lambda$, $p \Vdash_\mathbb{B} \top$ and $t \Vdash_\mathcal{A} \top$ so the conclusion of the theorem is immediate here. Similarly, $t \Vdash_\mathcal{A} \perp$ if and only if for every $\pi \in \Pi$, $\tau(t) \Booleanand \tau(\pi) = \mathbb{0}$. In particular, since $\tau(\omega_\mathbb{1}) = \mathbb{1}$, $\tau(t) = \tau(t) \Booleanand \mathbb{1} = 0$. Conversely, $p \Vdash_\mathbb{B} \perp$ if and only if $p = \mathbb{0}$. \\

    \noindent We prove the two atomic cases by induction on rank. Suppose that $t \Vdash_{\mathcal{A}} a \subseteq b$, in order to prove that $\tau(t) \Vdash_{\mathbb{B}} \tau(a) \subseteq \tau(b)$ it suffices to prove that for any $c \in \ff{dom}(a)$, $\tau(t) \leq \tau(a)(\tau(c)) \rightarrow \Booleanforce{\tau(c) \in \tau(b)}$. By the laws of Boolean operations, this is equivalent to proving that for any $c \in \ff{dom}(a)$, 
    \[
    \mathbb{0} = \tau(t) - (- \tau(a)(\tau(c)) \Booleanor \Booleanforce{\tau(c) \in \tau(b)}) = \tau(t) \Booleanand \tau(a)(\tau(c)) \Booleanand \Booleanforce{\tau(c) \not\in \tau(b)}.
    \]
    To do this, let $p = \Booleanforce{\tau(c) \not\in \tau(b)}$. Then by the inductive hypothesis, $\saverlz{\omega_p} \Vdash_{\mathcal{A}} c \not\in b$. Therefore, by the assumption on $t$, if $(c, \pi) \in a$ then $t \star \saverlz{\omega_p} \stackapp \pi \in \Perp$. Thus, 
    \[
    \tau(t) \Booleanand \tau(a)(\tau(c)) \Booleanand \Booleanforce{\tau(c) \not\in \tau(b)} \leq \tau(t) \Booleanand \Booleansum \{ \tau(\pi) \divline (c, \pi) \in a \} \Booleanand p = \mathbb{0}.
    \]
    For the reverse implication, assume that $\tau(t) \Vdash_{\mathbb{B}} \tau(a) \subseteq \tau(b)$, $(c, \pi) \in a$ and $s \Vdash_{\mathcal{A}} c \not\in b$. Then, by the inductive hypothesis, we have $\tau(s) \Vdash_{\mathbb{B}} \tau(c) \not\in \tau(b)$ and 
    \[
    \tau(t) \Booleanand \tau(s) \Booleanand \tau(\pi) \leq \tau(t) \Booleanand \Booleanforce{\tau(c) \not\in \tau(b)} \Booleanand \tau(a)(\tau(c)) = \mathbb{0}.
    \]
    Thus, $t \star s \stackapp \pi \in \Perp$ so $t \Vdash_{\mathcal{A}} a \subseteq b$. \\

    \noindent Moving onto the second atomic case, suppose that $t \Vdash_{\mathcal{A}} a \not\in b$. Again, in order to prove that $\tau(t) \Vdash_{\mathbb{B}} \tau(a) \not\in \tau(b)$, it suffices to prove that 
    \[
    \tau(t) \leq - \Booleansum \{ \Booleanforce{\tau(a) = \tau(c)} \Booleanand \tau(b)(\tau(c)) \divline c \in \ff{dom}(b) \}.
    \]
    Using the De Morgan laws it can be seen that this is equivalent to showing that for every $c \in \ff{dom}(b)$, $\tau(t) \leq - \Booleanforce{\tau(a) = \tau(c)} \Booleanor - \tau(b)(\tau(c))$. By definition of $\Booleanforce{\tau(a) = \tau(c)}$, this reduces to showing that
    \[
    \tau(t) \leq - (\Booleanforce{\tau(a) \subseteq \tau(c)} \Booleanand \Booleanforce{\tau(c) \subseteq \tau(a)} \Booleanand \tau(b)(\tau(c))),
    \]
    which gives $\tau(t) \Booleanand \Booleanforce{\tau(a) \subseteq \tau(c)} \Booleanand \Booleanforce{\tau(c) \subseteq \tau(a)} \Booleanand \tau(b)(\tau(c)) = \mathbb{0}$. To prove this, set \hbox{$p = \Booleanforce{\tau(a) \subseteq \tau(c)}$} and $p' = \Booleanforce{\tau(c) \subseteq \tau(a)}$. Using the inductive hypothesis we have $\saverlz{\omega_p} \Vdash_{\mathcal{A}} a \subseteq c$ and $\saverlz{\omega_{p'}} \Vdash_{\mathcal{A}} c \subseteq a$. Therefore, by the assumption on $t$, if $(c, \pi) \in b$ then $t \star \saverlz{\omega_p} \stackapp \saverlz{\omega_{p'}} \stackapp \pi \in \Perp$. Thus,
    \[
    \tau(t) \Booleanand \Booleanforce{\tau(a) \subseteq \tau(c)} \Booleanand \Booleanforce{\tau(c) \subseteq \tau(a)} \Booleanand \tau(b)(\tau(c)) \leq \tau(t) \Booleanand p \Booleanand p' \Booleanand \Booleansum \{ \tau(\pi) \divline (c, \pi) \in b \} = \mathbb{0}.
    \]
    For the reverse implication, assume that $\tau(t) \Vdash_{\mathbb{B}} \tau(a) \not\in \tau(b)$, $(c, \pi) \in b$, $s \Vdash_{\mathcal{A}} a \subseteq c$ and $s' \Vdash_{\mathcal{A}} c \subseteq a$. Then, by the inductive hypothesis, we have $\tau(s) \Vdash_{\mathbb{B}} \tau(a) \subseteq \tau(c)$, $\tau(s') \Vdash_{\mathbb{B}} \tau(c) \subseteq \tau(a)$ and so
    \[
    \tau(t) \Booleanand \tau(s) \Booleanand \tau(s') \Booleanand \tau(\pi) \leq \tau(t) \Booleanand \Booleanforce{\tau(a) \subseteq \tau(c)} \Booleanand \Booleanforce{\tau(c) \subseteq \tau(a)} \Booleanand \tau(a)(\tau(c)) = \mathbb{0}.
    \]
    Thus, $t \star s \stackapp s' \stackapp \pi \in \Perp$, so $t \Vdash_{\mathcal{A}} a \not\in b$. \\

    \noindent \noindent Next, suppose that the claims holds for $\varphi$ and $\psi$. Suppose that $t \Vdash_\mathcal{A} \varphi \rightarrow \psi$ and let $p = \Booleanforce{\varphi}$. Then, by the inductive hypothesis, $\saverlz{p} \Vdash_\mathcal{A} \varphi$ so, by \Cref{theorem:ImplicationandApplication}, $\app{t}{\saverlz{p}} \Vdash_\mathcal{A} \psi$. Thus, by the inductive hypothesis again, $\tau(\app{t}{\saverlz{p}}) \Vdash_\mathbb{B} \psi$. Therefore, $\tau(t) \Booleanand \Booleanforce{\varphi} = \tau(t) \Booleanand \tau(\saverlz{p}) \leq \Booleanforce{\psi}$, which can easily be seen to give $\tau(t) \leq \Booleanforce{\varphi \rightarrow \psi}$.

    For the reverse direction, suppose that $\tau(t) \Vdash_\mathbb{B} \varphi \rightarrow \psi$, $s \Vdash_\mathcal{A} \varphi$ and $\pi \in \falsity{\psi}$. By the inductive hypothesis, $\tau(s) \leq \Booleanforce{\varphi}$ and so $\tau(t) \Booleanand \tau(s) \leq \Booleanforce{\psi}$. Then, by \Cref{theorem:BooleanVsFalsity}, since $\pi \in \falsity{\psi}$, $\tau(t) \Booleanand \tau(s) \Booleanand \tau(\pi) = \mathbb{0}$ which gives us that $t \star s \stackapp \pi \in \Perp$. Hence $t \Vdash_\mathcal{A} \varphi \rightarrow \psi$, as required. \\

    \noindent For the final case, suppose that the claim holds for $\varphi(x)$. Suppose that $t \Vdash_\mathcal{A} \forall x \varphi(x)$ and fix $b \in \tf{V}^\mathbb{B}$. Then $\sigma(b) \in \rlzstr^\mathcal{A}$ from which it follows that $t \Vdash_\mathcal{A} \varphi(\sigma(b))$. Therefore, by the inductive hypothesis, $\tau(t) \Vdash_\mathbb{B} \varphi(\tau(\sigma(b))$. Since $\mathbb{1} \Vdash_\mathbb{B} b = \tau(\sigma(b))$, this gives us that $\tau(t) \Vdash_\mathbb{B} \varphi(b)$ and thus $\tau(t) \leq \Booleanforce{\varphi(b)}$. Since $b$ was an arbitrary element of $\tf{V}^\mathbb{B}$ it follows that $\tau(t) \leq \Booleanforce{\forall x \varphi(x)}$, as required.  

    For the reverse direction, suppose that $\tau(t) \Vdash_{\mathbb{B}} \forall x \varphi(x)$ and fix $a \in \rlzstr^{\mathcal{A}}$. Then $\tau(a) \in \tf{V}^{\mathbb{B}}$ and thus $\tau(t) \leq \Booleanforce{\varphi(\tau(a))}$. Therefore, by the inductive hypothesis, $t \Vdash_{\mathcal{A}} \varphi(a)$. Thus, since $a \in \rlzstr^{\mathcal{A}}$ was arbitrary, $t \Vdash_{\mathcal{A}} \forall x \varphi(x)$. 
\end{proof}

We end by concluding that the Boolean-valued model and the realizability model are ``the same'' in some precise way.

\begin{corollary}
    For any sentence $\varphi \in Fml_\in$, $\rlzmodel \Vdash_\mathcal{A} \varphi$ if and only if $\varphi$ is valid \footnote{that is $\Booleanforce{\varphi} = \mathbb{1}$} in $\tf{V}^\mathbb{B}$.
\end{corollary}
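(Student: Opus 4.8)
The plan is to read the corollary off \Cref{theorem:BooleanVsRealize} together with the fact, established in the proof of \Cref{theorem:SameBooleanValues}, that every realizer $t \in \mathcal{R}$ satisfies $\tau(t) = \mathbb{1}$. First I would unwind the two notions being compared: by the definition of $\rlzmodel \Vdash \cdot$, the statement $\rlzmodel \Vdash_\mathcal{A} \varphi$ means that there is a realizer $t \in \mathcal{R}$ with $t \Vdash_\mathcal{A} \varphi$; and $\varphi$ being valid in $\tf{V}^\mathbb{B}$ means $\Booleanforce{\varphi} = \mathbb{1}$, which is the same as $\mathbb{1} \Vdash_\mathbb{B} \varphi$.

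For the forward direction, suppose $t \in \mathcal{R}$ realizes $\varphi$ over $\mathcal{A}_\mathbb{B}$. Since $\varphi$ is a sentence, the parameter lists in \Cref{theorem:BooleanVsRealize} are empty, so that theorem gives $\tau(t) \Vdash_\mathbb{B} \varphi$, i.e. $\tau(t) \leq \Booleanforce{\varphi}$. Because $t$ is a realizer, $\tau(t) = \mathbb{1}$, and hence $\Booleanforce{\varphi} = \mathbb{1}$, so $\varphi$ is valid in $\tf{V}^\mathbb{B}$. For the reverse direction, suppose $\Booleanforce{\varphi} = \mathbb{1}$, i.e. $\mathbb{1} \Vdash_\mathbb{B} \varphi$. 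The term $\identity = \lambda u \lambdaapp u$ is a realizer, so $\tau(\identity) = \mathbb{1}$ and therefore $\tau(\identity) \Vdash_\mathbb{B} \varphi$. Applying the reverse implication of \Cref{theorem:BooleanVsRealize} yields $\identity \Vdash_\mathcal{A} \varphi$, and since $\identity \in \mathcal{R}$ this witnesses $\rlzmodel \Vdash_\mathcal{A} \varphi$.

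There is essentially no genuine obstacle here: all of the work has been done in \Cref{theorem:BooleanVsRealize}, and the corollary is a one-line packaging of it. The only points that need care are bookkeeping: that the corollary is stated for \emph{sentences}, so that one may legitimately invoke \Cref{theorem:BooleanVsRealize} with no parameters, and that one uses the characterisation of realizers as precisely the $\lambda_c$-terms of $\tau$-value $\mathbb{1}$ (already proved while establishing \Cref{theorem:SameBooleanValues}) to pass between ``some realizer realizes $\varphi$'' and ``$\mathbb{1} \leq \Booleanforce{\varphi}$''.
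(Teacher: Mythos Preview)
Your proof is correct and follows essentially the same approach as the paper: unwind the definition of $\rlzmodel \Vdash_\mathcal{A} \varphi$, use that every realizer has $\tau$-value $\mathbb{1}$, and apply \Cref{theorem:BooleanVsRealize} in both directions. One small inaccuracy in your final remark: what is established in \Cref{theorem:SameBooleanValues} is only that every realizer has $\tau$-value $\mathbb{1}$, not the converse, but this is all you actually use.
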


\begin{proof}
    $\rlzmodel \Vdash_\mathcal{A} \varphi$ if and only if there is some realizer $t$ such that $t \Vdash_\mathcal{A} \varphi$. Since $\tau(t) = \mathbb{1}$ for any realizer, $\rlzmodel \Vdash_\mathcal{A} \varphi$ is equivalent to asserting that $\Booleanforce{\varphi} = \mathbb{1}$.
\end{proof}

\section{When \texorpdfstring{$\fullname{2}$}{Reish 2} is trivial} \label{section:Fullname2Trivial}

Here we discuss a partial converse to the previous section, which is the claim by Krivine that when $\fullname{2}$\footnote{or, with the original formulation, $\cjgimel{2}$} is trivial then the realizability model is in fact a forcing model. This proof has not appeared in print before but is contained in Krivine's slides on ``Some properties of realizability models'' given in Chamb\'{e}ry in June 2012 \cite{KrivineChambery}. We note here that this proof requires that the realizability algebra is \emph{countable}. Therefore, it is unknown if the more general claim, that when $\fullname{2}$ is trivial then the realizability model is a forcing model, is true when working without uncountable realizability algebras. For simplicity, we will also assume that there are no special instructions.

Suppose that $\mathcal{A}$ is a countable realizability algebra such that in the realizability model $\fullname{2}$ is trivial. This means that there exists a realizer $\rlzfont{r} \in \mathcal{R}$ such that
\[
\rlzfont{r} \Vdash \forall x^{\fullname{2}} \, (x \neq \fullname{0} \rightarrow (x \neq \fullname{1} \rightarrow \perp)).
\]
In particular, we have that $\rlzfont{r} \Vdash (\fullname{0} \neq \fullname{0} \rightarrow (\fullname{0} \neq \fullname{1} \rightarrow \perp)$ and \newline $\rlzfont{r} \Vdash (\fullname{1} \neq \fullname{0} \rightarrow (\fullname{1} \neq \fullname{1} \rightarrow \perp)$. Which is to say,
\[
\rlzfont{r} \in \verity{\top \rightarrow (\perp \rightarrow \perp)} \cap \verity{\perp \rightarrow (\top \rightarrow \perp).}
\]
Let $\rlzfont{r}' = \lambda u \lambdaapp \lambda v \lambdaapp \cc \Big( \lambda k \lambdaapp \big( \rlzfont{r} (k(u)) \big) (k(v)) \Big)$. We shall show that $r'$ satisfies the instruction \emph{fork}.

\begin{lemma} \label{claim:UniformRealizer}
    For any terms $t, s \in \Lambda$ and stacks $\pi \in \Pi$, if either $t \star \pi \in \Perp$ or $s \star \pi \in \Perp$ then $\rlzfont{r}' \star t \stackapp s \stackapp \pi \in \Perp$.
\end{lemma}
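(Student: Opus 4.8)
The plan is a direct computation followed by a two-case argument. First I would unwind the process $\rlzfont{r}' \star t \stackapp s \stackapp \pi$ using the four basic evaluation rules. Two applications of \emph{grab} substitute $t$ for $u$ and $s$ for $v$; a \emph{push} and a \emph{save} transform the resulting $\cc\big(\lambda k \lambdaapp (\rlzfont{r}\,(\app{k}{t}))(\app{k}{s})\big) \star \pi$ into $\big(\lambda k \lambdaapp (\rlzfont{r}\,(\app{k}{t}))(\app{k}{s})\big) \star \saverlz{\pi} \stackapp \pi$; a final \emph{grab} substitutes $\saverlz{\pi}$ for $k$, and two further \emph{push}es leave the process $\rlzfont{r} \star \app{\saverlz{\pi}}{t} \stackapp \app{\saverlz{\pi}}{s} \stackapp \pi$. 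Since $\Perp$ is closed under anti-reduction, it suffices to show this last process lies in $\Perp$.

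Next I would record two auxiliary facts. First, for any term $r$ and stack $\pi$, if $r \star \pi \in \Perp$ then $\app{\saverlz{\pi}}{r} \Vdash \perp$: for every $\sigma \in \Pi$ we have $\app{\saverlz{\pi}}{r} \star \sigma \succ \saverlz{\pi} \star r \stackapp \sigma \succ r \star \pi \in \Perp$, and $\falsity{\perp} = \Pi$. Second, unpacking the hypothesis that $\rlzfont{r} \in \verity{\top \rightarrow (\perp \rightarrow \perp)} \cap \verity{\perp \rightarrow (\top \rightarrow \perp)}$: since $\falsity{\top} = \emptyset$, every term realizes $\top$, so $\rlzfont{r} \in \verity{\top \rightarrow (\perp \rightarrow \perp)}$ means precisely that $\rlzfont{r} \star a \stackapp b \stackapp \sigma \in \Perp$ for all $a \in \Lambda$, all $b \Vdash \perp$ and all $\sigma \in \Pi$, while $\rlzfont{r} \in \verity{\perp \rightarrow (\top \rightarrow \perp)}$ means $\rlzfont{r} \star a \stackapp b \stackapp \sigma \in \Perp$ for all $a \Vdash \perp$, all $b \in \Lambda$ and all $\sigma \in \Pi$. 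These two inclusions are exactly the content of $\rlzfont{r}$ realizing $\fullname{i} \neq \fullname{0} \rightarrow (\fullname{i} \neq \fullname{1} \rightarrow \perp)$ for $i = 0, 1$, using that $\falsity{\fullname{i} \neq \fullname{j}} = \Pi$ when $i = j$ and $= \emptyset$ otherwise.

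Finally I would split into cases on the hypothesis. If $t \star \pi \in \Perp$, then $\app{\saverlz{\pi}}{t} \Vdash \perp$ by the first auxiliary fact, so applying the second inclusion with $a = \app{\saverlz{\pi}}{t}$, $b = \app{\saverlz{\pi}}{s}$ and $\sigma = \pi$ gives $\rlzfont{r} \star \app{\saverlz{\pi}}{t} \stackapp \app{\saverlz{\pi}}{s} \stackapp \pi \in \Perp$. If instead $s \star \pi \in \Perp$, then $\app{\saverlz{\pi}}{s} \Vdash \perp$, and the first inclusion with the same $a, b, \sigma$ again yields $\rlzfont{r} \star \app{\saverlz{\pi}}{t} \stackapp \app{\saverlz{\pi}}{s} \stackapp \pi \in \Perp$. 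In both cases anti-reduction closure of $\Perp$ together with the computation of the first paragraph gives $\rlzfont{r}' \star t \stackapp s \stackapp \pi \in \Perp$, as desired. There is no genuine obstacle here; the only points requiring care are the bookkeeping of the $\cc$/$\saverlz{\pi}$ interaction in the evaluation chain and the translation of the triviality-of-$\fullname{2}$ hypothesis into the two fork-like inclusions above.
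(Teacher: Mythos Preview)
Your proposal is correct and follows essentially the same approach as the paper's proof: reduce $\rlzfont{r}' \star t \stackapp s \stackapp \pi$ to a process with head $\rlzfont{r}$ and arguments $\app{\saverlz{\pi}}{t}$, $\app{\saverlz{\pi}}{s}$, observe that whichever of $t \star \pi$ or $s \star \pi$ lies in $\Perp$ makes the corresponding $\app{\saverlz{\pi}}{\cdot}$ realize $\perp$, and then invoke the two inclusions $\rlzfont{r} \in \verity{\top \rightarrow (\perp \rightarrow \perp)} \cap \verity{\perp \rightarrow (\top \rightarrow \perp)}$. Your version is slightly more explicit in unpacking these inclusions and carries the evaluation two \emph{push} steps further than the paper does, but there is no substantive difference.
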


\begin{proof}
    First, observe that if $t \star \pi \in \Perp$ then for any $\sigma \in \Pi$, 
    \[
    \app{\saverlz{\pi}}{t} \star \sigma \succ \saverlz{\pi} \star t \stackapp \sigma \succ t \star \pi \in \Perp,
    \]
    and therefore $\app{\saverlz{\pi}}{t} \Vdash \perp$. Now,
    \begin{equation*}
    \begin{split}
    \rlzfont{r}' \star t \stackapp s \stackapp \pi & \succ \cc \Big( \lambda k \lambdaapp \twoapp{\inapp{\rlzfont{r}}{\app{k}{t}}}{\app{k}{s}} \Big) \star \pi \succ \cc \star \Big( \lambda k \lambdaapp \twoapp{\inapp{\rlzfont{r}}{\app{k}{t}}}{\app{k}{s}} \Big) \stackapp \pi \\
    & \succ \lambda k \lambdaapp \twoapp{\inapp{\rlzfont{r}}{\app{k}{t}}}{\app{k}{s}} \star \saverlz{\pi} \stackapp \pi \succ \twoapp{\inapp{\rlzfont{r}}{\app{\saverlz{\pi}}{t}}}{\app{\saverlz{\pi}}{s}} \star \pi.
    \end{split}
    \end{equation*}
    Since either $\app{\saverlz{\pi}}{t}$ or $\app{\saverlz{\pi}}{s}$ realizes $\perp$ by our initial assumption, by the construction of $r$ we must have that $\twoapp{\inapp{\rlzfont{r}}{\app{\saverlz{\pi}}{t}}}{\app{\saverlz{\pi}}{s}} \Vdash \perp$. Thus $\twoapp{\inapp{\rlzfont{r}}{\app{\saverlz{\pi}}{t}}}{\app{\saverlz{\pi}}{s}} \star \pi \in \Perp$, yielding the result.
\end{proof}

\noindent What we shall see next is that there is a single realizer which uniformly realizes every realizable statement in $\rlzmodel$. In order to do this, we shall focus on subsets of $\Pi$ rather than formulas, and we briefly explain here why this generality is sufficient. Given $X \subseteq \Pi$, if we let $1_X \coloneqq \{ (0, \pi) \divline \pi \in X\}$ and $\theta_X \equiv 0 \notrlzin 1_X$, then $\falsity{\theta_X} = \{ \pi \divline (0, \pi) \in 1_X\} = X$. Moreover, for any formula $\varphi$, $\falsity{\theta_{\falsity{\varphi}}} = \falsity{\varphi}$ and therefore $\identity \Vdash \varphi \rightarrow \theta_{\falsity{\varphi}}$ and $\identity \Vdash \theta_{\falsity{\varphi}} \rightarrow \varphi$. From this we can see that there is a one-to-one correspondence between falsity values and subsets of $\Pi$.

Expanding our notation, given $X \subseteq \Pi$, we shall say that $t \Vdash X$ iff $t \Vdash \theta_X$, namely for every $\pi \in X$, $t \star \pi \in \Perp$. Analogous notation will also be used for the statement $A \rightarrow B$. 

\begin{theorem} \label{theorem:Gimel2TrivialOneRealizer}
    $\exists \Phi \in \mathcal{R} \; \forall t \in \mathcal{R} \; \forall X \subseteq \Pi \; (t \Vdash X \Rightarrow \Phi \Vdash X)$.
\end{theorem}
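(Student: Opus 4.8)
The plan is to produce $\Phi$ as a single realizer which, when fed any stack $\pi$, uses the \emph{fork} combinator $\rlzfont{r}'$ of \Cref{claim:UniformRealizer} to successively pit \emph{every} realizer against $\pi$; as soon as one realizer $t$ satisfies $t\star\pi\in\Perp$, the anti-reduction closure of the pole forces the whole computation into $\Perp$. Two facts drive this. First, $\rlzfont{r}'$ (which exists precisely because $\fullname{2}$ is trivial) converts ``$t\star\pi\in\Perp$ or $s\star\pi\in\Perp$'' into ``$\rlzfont{r}'\star t\stackapp s\stackapp\pi\in\Perp$''. Second, countability of $\mathcal{A}$ lets us fix an enumeration $\mathcal{R}=(\nu_n\mid n\in\omega)$ together with a realizer $\rlzfont{E}$ such that $\app{\rlzfont{E}}{\underline{n}}\star\pi\succ\nu_n\star\pi$ for all $n\in\omega$ and $\pi\in\Pi$; equivalently, $\rlzfont{E}$ ``decodes'' the Church numeral $\underline{n}$ into an applicative copy of the $n$-th realizer. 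Building such an $\rlzfont{E}$ from a recursive enumeration of $\mathcal{R}$ is the one genuinely non-routine ingredient and the only place countability is essential; I expect this to be the main obstacle to a fully rigorous write-up.

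Granting $\rlzfont{E}$, I would use the Turing fixed-point combinator $\rlzfont{Y}$ introduced in \Cref{section:RealzingZFepsilon} together with the successor term $\rlzfont{s}$ of \Cref{ChruchNumeralSuccessor} to set $\Psi\coloneqq\app{\rlzfont{Y}}{F}$ with
\[
F\coloneqq\lambda u\lambdaapp\lambda v\lambdaapp\twoapp{\app{\rlzfont{r}'}{(\app{\rlzfont{E}}{v})}}{\app{u}{(\app{\rlzfont{s}}{v})}},
\]
so that for every $n\in\omega$ and $\pi\in\Pi$,
\[
\Psi\star\underline{n}\stackapp\pi\ \succ\ F\star\Psi\stackapp\underline{n}\stackapp\pi\ \succ\ \rlzfont{r}'\star(\app{\rlzfont{E}}{\underline{n}})\stackapp(\app{\Psi}{\underline{n+1}})\stackapp\pi.
\]
Then I would define $\Phi\coloneqq\app{\Psi}{\underline{0}}$. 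Since $\rlzfont{r}'$, $\rlzfont{Y}$, $\rlzfont{s}$, $\rlzfont{E}$ and every $\nu_n$ contain no continuation constants, and realizerhood is preserved by application and abstraction, $\Phi\in\mathcal{R}$.

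For the verification I would fix $X\subseteq\Pi$ and a realizer $\nu_m$ with $\nu_m\Vdash X$, i.e.\ $\nu_m\star\pi\in\Perp$ for all $\pi\in X$, and prove $\Phi\Vdash X$. Fixing $\pi\in X$, one shows by downward induction on $n$ from $m$ to $0$ that $\app{\Psi}{\underline{n}}\star\pi\in\Perp$: in the base case $n=m$, from $\app{\rlzfont{E}}{\underline{m}}\star\pi\succ\nu_m\star\pi\in\Perp$ and the pole axiom we get $(\app{\rlzfont{E}}{\underline{m}})\star\pi\in\Perp$, so \Cref{claim:UniformRealizer} gives $\rlzfont{r}'\star(\app{\rlzfont{E}}{\underline{m}})\stackapp(\app{\Psi}{\underline{m+1}})\stackapp\pi\in\Perp$, and pole-closure along the reduction above yields $\app{\Psi}{\underline{m}}\star\pi\in\Perp$; in the step from $n+1$ to $n$, the hypothesis gives $(\app{\Psi}{\underline{n+1}})\star\pi\in\Perp$, so \Cref{claim:UniformRealizer} again yields $\rlzfont{r}'\star(\app{\rlzfont{E}}{\underline{n}})\stackapp(\app{\Psi}{\underline{n+1}})\stackapp\pi\in\Perp$, hence $\app{\Psi}{\underline{n}}\star\pi\in\Perp$. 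Taking $n=0$ gives $\Phi\star\pi\in\Perp$; as $\pi\in X$ was arbitrary, $\Phi\Vdash X$, and $\Phi$ depends neither on $X$ nor on $m$, which is exactly the claim. Everything past the construction of $\rlzfont{E}$ is routine bookkeeping with $\succ$ and the pole axiom; the numeral-stepping inside $\Psi$ only ever produces $\beta$-equal variants of $\underline{n}$, which a suitably chosen $\rlzfont{E}$ treats uniformly.
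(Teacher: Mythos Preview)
Your proof is correct and follows essentially the same approach as the paper's: the paper builds the universal enumerator $\rlzfont{e}$ (citing Barendregt, Chapter~8) just as you posit $\rlzfont{E}$, and then uses a direct self-application $p\coloneqq qq$ with $q\coloneqq\lambda x\lambdaapp\lambda y\lambdaapp\twoapp{\inapp{\rlzfont{r}'}{\app{\rlzfont{e}}{y}}}{\inapp{\app{x}{x}}{\app{\rlzfont{s}}{y}}}$ in place of your $\Psi=\app{\rlzfont{Y}}{F}$, which unfolds to the same recursion and yields the identical downward induction. Your explicit remark that the stepping produces $\beta$-equal variants $\app{\rlzfont{s}}{\underline{n}}$ rather than literal $\underline{n+1}$, and that $\rlzfont{E}$ must be chosen to handle these uniformly, is in fact more careful than the paper, which writes ``$\inapp{p}{\app{\rlzfont{s}}{\underline{m}}}\star\pi\succ p\star\underline{m+1}\stackapp\pi$'' without comment.
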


\begin{proof}

We will use the result from $\lambda$-calculus that there is a single $\lambda$-term which enumerates all other closed $\lambda$-terms. An involved proof of this, for the standard $\lambda$-calculus, can be found in Chapter 8 of \cite{Barendregt1985}, and this can be adapted to the $\lambda_c$-calculus. 

Therefore, we fix a realizer $\rlzfont{e}$ such that for any realizer $t$ there is some $n \in \mathbb{N}$ such that for any $\pi \in \Pi$, $\inapp{\rlzfont{e}}{\underline{n}} \star \pi \succ t \star \pi$. Next, let $\rlzfont{s} = \lambda n \lambdaapp \lambda u \lambdaapp \lambda v \lambdaapp \twoapp{\app{n}{u}}{\app{u}{v}}$ and recall that for every $n$ we have $\app{\rlzfont{s}}{\underline{n}} = \underline{n+1}$.
    

    Now, define $q \coloneqq \lambda x \lambdaapp \lambda y \lambdaapp \twoapp{\inapp{\rlzfont{r}'}{\app{\rlzfont{e}}{y}}}{\inapp{\app{x}{x}}{\app{\rlzfont{s}}{y}}}$ and let $p \coloneqq qq$. Then, for any $n \in \mathbb{N}$,
    \[
    p(\underline{n}) = q[x \coloneqq q, y \coloneqq \underline{n}] = \twoapp{\inapp{\rlzfont{r}'}{\app{\rlzfont{e}}{\underline{n}}}}{\inapp{p}{\app{\rlzfont{s}}{\underline{n}}}}.
    \]
    and thus, for any $\pi \in \Pi$,
    \[
    p \star \underline{n} \stackapp \pi \succ \rlzfont{r}' \stackapp (\app{\rlzfont{e}}{\underline{n}}) \stackapp (\inapp{p}{\app{\rlzfont{s}}{\underline{n}}}) \stackapp \pi.
    \]
    We shall show that $\app{p}{\underline{0}}$ is our desired realizer. \\

    \noindent Suppose that $t \in \mathcal{R}$, $X \subseteq \Pi$ and $t \Vdash X$. Fix $\pi \in X$ and $n \in \mathbb{N}$ such that $\app{\rlzfont{e}}{\underline{n}} \star \pi \succ t \star \pi$. Then, since $t \star \pi \in \Perp$, we must have that $\app{\rlzfont{e}}{\underline{n}} \star \pi \in \Perp$. So, by \Cref{claim:UniformRealizer}, $p \star \underline{n} \stackapp \pi \succ \rlzfont{r}' \stackapp (\app{\rlzfont{e}}{\underline{n}}) \stackapp (\inapp{p}{\app{\rlzfont{s}}{\underline{n}}}) \stackapp \pi \in \Perp$. 

    The theorem will follow once we show that if $p \star \underline{m+1} \stackapp \pi \in \Perp$ then so is $p \star \underline{m} \stackapp \pi$. But, if $p \star \underline{m+1} \stackapp \pi \in \Perp$ then, since $\inapp{p}{\app{\rlzfont{s}}{\underline{m}}} \star \pi \succ p \star \underline{m+1} \stackapp \pi$ the former is also in $\Perp$. Therefore, by \Cref{claim:UniformRealizer}, $p \star \underline{m} \stackapp \pi \succ \rlzfont{r}' \stackapp (\app{\rlzfont{e}}{\underline{n}}) \stackapp (\inapp{p}{\app{\rlzfont{s}}{\underline{m}}}) \stackapp \pi \in \Perp$.
\end{proof}

\noindent Now, let $\mathbb{B} = \mathcal{P}(\Pi)$ be the Boolean algebra of truth values and define a partial order on $\mathbb{B}$ by
\[
A \leq B \quad \Longleftrightarrow \quad \exists t \in \mathcal{R} (t \Vdash A \rightarrow B) \quad \Longleftrightarrow \quad \Phi \Vdash A \rightarrow B.
\]
Observe that $\theta_A \land \theta_B \equiv (0 \notrlzin 1_A) \land (0 \notrlzin 1_B) \equiv 0 \notrlzin 1_{A \cup B} = \theta_{A \cup B}$ and similarly $\theta_A \lor \theta_B\equiv \theta_{A \cap B}$. Therefore, $\Pi \leq A \cup B \leq A \leq A \cap B \leq \emptyset$ for every $A, B \in \mathcal{P}(\Pi)$. Thus the meet operator of $\mathbb{B}$ is given by intersections while the join operator is given by unions, and the partial order extends reverse inclusion.

\begin{theorem}
    $\mathbb{B}$ is a complete Boolean algebra.
\end{theorem}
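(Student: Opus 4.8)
The preceding discussion has already identified the underlying lattice: working modulo the equivalence $A \sim B :\Longleftrightarrow A \leq B \leq A$, the order on $\mathbb{B}$ extends reverse inclusion, with least element $\mathbb{0} = [\Pi]$, greatest element $\mathbb{1} = [\emptyset]$, and binary meet and join computed by $\cup$ and $\cap$ on representatives. Setting $\neg[A] := [\Pi \setminus A]$, one checks that $[A] \wedge \neg[A]$ and $[A] \vee \neg[A]$ are the classes of $A \cup (\Pi \setminus A) = \Pi$ and $A \cap (\Pi \setminus A) = \emptyset$, so $\mathbb{B}$ is complemented, and distributivity is inherited from $(\mathcal{P}(\Pi), \cup, \cap)$; hence $\mathbb{B}$ is a Boolean algebra and the content of the theorem is \emph{completeness}. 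The plan is to establish that \emph{arbitrary infima exist}, given explicitly by unions of representatives, and then to obtain arbitrary suprema for free from the standard Boolean-algebra identity $\bigvee_i a_i = \neg \bigwedge_i \neg a_i$.

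So fix a family $\{A_i : i \in I\} \subseteq \mathcal{P}(\Pi)$; I claim $\bigwedge_{i \in I}[A_i] = \big[\,\bigcup_{i \in I} A_i\,\big]$. That the right-hand side is a lower bound is immediate from \Cref{theorem:falsitysubsets}: since $\falsity{\theta_{\bigcup_i A_i}} = \bigcup_i A_i \supseteq A_j = \falsity{\theta_{A_j}}$, we get $\identity \Vdash \theta_{\bigcup_i A_i} \to \theta_{A_j}$, i.e.\ $\big[\bigcup_i A_i\big] \leq [A_j]$ for every $j$. For the greatest-lower-bound property, suppose $[C] \leq [A_i]$ for all $i \in I$. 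This is precisely the step where \Cref{theorem:Gimel2TrivialOneRealizer} is essential: by the very definition of the order, $[C] \leq [A_i]$ means $\Phi \Vdash \theta_C \to \theta_{A_i}$ with the \emph{same} universal realizer $\Phi$ for every $i$. Unwinding, $\falsity{\theta_C \to \theta_{A_i}} = \{\, s \stackapp \pi \divline s \Vdash \theta_C,\ \pi \in A_i \,\}$, so $\Phi \star s \stackapp \pi \in \Perp$ whenever $s \Vdash \theta_C$ and $\pi \in A_i$; as this holds for every $i$, it holds for every $\pi \in \bigcup_i A_i$, which is exactly the assertion $\Phi \Vdash \theta_C \to \theta_{\bigcup_i A_i}$, i.e.\ $[C] \leq \big[\bigcup_i A_i\big]$. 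Hence $\big[\bigcup_i A_i\big]$ is the infimum of the family.

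With all infima available, I would define $\bigvee_{i \in I}[A_i] := \neg \bigwedge_{i \in I} \neg[A_i] = \big[\,\Pi \setminus \bigcap_{i \in I} A_i\,\big]$ and verify it is the supremum by the usual abstract argument, using only that complementation is an order-reversing involution of the Boolean algebra $\mathbb{B}$: $\bigwedge_i \neg[A_i] \leq \neg[A_j]$ gives $[A_j] \leq \neg\bigwedge_i \neg[A_i]$ for each $j$, and if $[A_j] \leq [D]$ for all $j$ then $\neg[D] \leq \neg[A_j]$ for all $j$, so $\neg[D] \leq \bigwedge_i \neg[A_i]$ and $\neg\bigwedge_i \neg[A_i] \leq [D]$. (Note this last step is purely Boolean-algebraic, not a further realizability computation — a direct $\falsity{\cdot}$ comparison would not work here, since $\Pi \setminus \bigcap_i A_i$ need not contain $A_j$.) This yields that $\mathbb{B}$ is complete.

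The only step that is not bookkeeping is the greatest-lower-bound argument in the second paragraph, and it is worth flagging why: for a \emph{finite} family one could combine the finitely many realizers $t_i \Vdash \theta_C \to \theta_{A_i}$ by hand (e.g.\ via \Cref{theorem:RealizingConjunction} and \Cref{theorem:adequacy}, since $\theta_{A_1 \cup A_2}$ is interrealizable with $\theta_{A_1} \wedge \theta_{A_2}$), but for an infinite family no such finite combination is available. It is exactly the universal realizer $\Phi$ of \Cref{theorem:Gimel2TrivialOneRealizer} — whose existence rested on $\mathcal{A}$ being countable (and on $\fullname{2}$ being trivial) — that supplies a \emph{single} realizer witnessing $\theta_C \to \theta_{A_i}$ uniformly in $i$, and this is what makes the infimum of an arbitrary family land inside $\mathbb{B}$. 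I expect this to be the crux of the proof and the place where the countability hypothesis is genuinely used.
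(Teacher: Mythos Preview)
Your proof is correct and the core step—using the universal realizer $\Phi$ to establish that $\bigcup_i A_i$ is the greatest lower bound of the family—is exactly the paper's argument (the paper labels this $\sup$, but your labeling as infimum is the one consistent with the displayed chain $\Pi \leq A \cup B \leq A \leq A \cap B \leq \emptyset$). Your additional work (passing to equivalence classes, verifying complementation and distributivity, and deriving suprema via $\neg\bigwedge_i\neg$) is more careful than the paper, which simply asserts that establishing the one identity suffices.
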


\begin{proof}
    It will suffice to prove that if $\{ X_i \divline i \in I \} \subseteq \mathbb{B}$ then $\ff{sup}_{i \in I} X_i = \bigcup_{i \in I} X_i$. Let $A \leq X_i$ for every $i \in I$. Suppose that $t \Vdash A$ and $\pi \in \bigcup_{i \in I} X_i$, then $\pi \in X_j$ for some $j$. Since $A \leq X_j$, $\Phi \Vdash A \rightarrow X_j$ and thus $\Phi \star t \stackapp \pi \in \Perp$. Hence, $\Phi \Vdash A \rightarrow \bigcup_{i \in I} X_i$ and so $A \leq \bigcup_{i \in I} X_i$.

    Conversely, if $t \Vdash \bigcup_{i \in I} X_i$ then for any $j \in I$ and $\pi \in X_j$, $t \star \pi \in \Perp$. Thus $\identity \Vdash \bigcup_{i \in I} X_i \rightarrow X_j$, from which it follows that $\Phi \Vdash \bigcup_{i \in I} X_i \rightarrow X_j$ and $\bigcup_{i \in I} X_i \leq X_j$.
\end{proof}

\noindent Since $\mathbb{B}$ is a complete Boolean Algebra, we can produce the Boolean-valued  forcing model $\tf{V}^\mathbb{B}$. Recall that a formula $\psi$ is said to be \emph{valid} in this model if $\Booleanforce{\psi}_\mathbb{B} = \emptyset$ (the maximal element of the Boolean algebra).

\begin{lemma}
    For any formula $\psi \in Fml_{\in}$, $\falsity{\psi} = \Booleanforce{\psi}_\mathbb{B}$.
\end{lemma}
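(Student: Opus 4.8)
The plan is to argue by induction on the complexity of $\psi$, exactly mirroring the structure of \Cref{theorem:BooleanVsRealize} but now in the reverse direction: instead of matching a realizer with a forcing condition, we match a whole falsity value with a Boolean value. First I would make explicit the translation of names that the statement implicitly involves, since the induction through quantifiers forces parameters to appear. For $a\in\rlzstr^{\mathcal A}$ define $\bar a\in\tf{V}^{\mathbb B}$ by recursion on rank via $\ff{dom}(\bar a)=\{\bar b\divline b\in\ff{dom}(a)\}$ and $\bar a(\bar b)=\Pi\setminus\{\pi\in\Pi\divline(b,\pi)\in a\}=\neg\falsity{b\notrlzin a}$, and likewise an inverse translation from $\mathbb B$-names to $\rlzstr^{\mathcal A}$, analogous to the maps $\sigma,\tau$ of \Cref{section:ForcingAsRealizability}. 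The lemma is then read as $\falsity{\psi(\vec a)}=\Booleanforce{\psi(\vec{\bar a})}_{\mathbb B}$ for all $\vec a\in\rlzstr^{\mathcal A}$, where ``$=$'' means equality in $\mathbb B$; equivalently, by \Cref{theorem:Gimel2TrivialOneRealizer}, that the uniform realizer $\Phi$ realizes each of $\theta_{\falsity{\psi}}$ and $\theta_{\Booleanforce{\psi}_{\mathbb B}}$ from the other. Throughout I would use that in $\mathbb B=\mathcal P(\Pi)$ the meet is union, the join is intersection, the complement is set complement, $\mathbb{1}_{\mathbb B}=\emptyset$, $\mathbb{0}_{\mathbb B}=\Pi$, and $A\le B\Leftrightarrow\Phi\Vdash A\rightarrow B$.

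The base cases are immediate: $\falsity{\top}=\emptyset=\mathbb{1}_{\mathbb B}=\Booleanforce{\top}_{\mathbb B}$ and $\falsity{\perp}=\Pi=\mathbb{0}_{\mathbb B}=\Booleanforce{\perp}_{\mathbb B}$. For the implication step, assuming the inductive claim for $\varphi$ and $\psi$, one computes $\Booleanforce{\varphi\rightarrow\psi}_{\mathbb B}=\neg\Booleanforce{\varphi}_{\mathbb B}\Booleanor\Booleanforce{\psi}_{\mathbb B}$ in $\tf V^{\mathbb B}$ and checks that $\falsity{\varphi\rightarrow\psi}=\{t\stackapp\pi\divline t\Vdash\varphi,\ \pi\in\falsity{\psi}\}$ is $\le$-equivalent to it, passing both ways by means of \Cref{theorem:SaveCommandandNegation}, \Cref{theorem:ImplicationandApplication} and $\Phi$, just as in the implication case of \Cref{theorem:BooleanVsRealize}. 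For the quantifier step one uses $\Booleanforce{\forall x\,\varphi(x)}_{\mathbb B}=\Booleanproduct_{u\in\tf V^{\mathbb B}}\Booleanforce{\varphi(u)}_{\mathbb B}=\bigcup_{u\in\tf V^{\mathbb B}}\Booleanforce{\varphi(u)}_{\mathbb B}$ together with the round-trip identities $\mathbb{1}\Vdash_{\mathbb B}b=\overline{(\text{inverse translation of }b)}$ (the analogue of the corresponding proposition in \Cref{section:ForcingAsRealizability}) and \Cref{theorem:realizinguniversals}, so that this matches $\falsity{\forall x\,\varphi(x)}=\bigcup_{a\in\rlzstr}\falsity{\varphi(a)}$ modulo $=_{\mathbb B}$.

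The main obstacle is the atomic case, $a\in b$ and $a\subseteq b$ (hence $a\simeq b$), handled by simultaneous induction on $(\ff{rk}(a),\ff{rk}(b))$. The difficulty is that the realizability model's $\in$ is the \emph{simulated} extensional relation, so $\falsity{a\not\in b}=\bigcup_{c\in\ff{dom}(b)}\{t\stackapp t'\stackapp\pi\divline(c,\pi)\in b,\ t\Vdash a\subseteq c,\ t'\Vdash c\subseteq a\}$ carries the full non-extensional packaging and quantifies over $\rlzin$-elements, whereas $\Booleanforce{\bar a\in\bar b}_{\mathbb B}=\Booleansum_{\bar c\in\ff{dom}(\bar b)}\big(\Booleanforce{\bar a=\bar c}_{\mathbb B}\Booleanand\bar b(\bar c)\big)$ is the honest Boolean value. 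Reconciling these requires (i) the precise calibration of $\bar{\,\cdot\,}$ above, so that $\bar b(\bar c)=\neg\falsity{c\notrlzin b}$ lines up with ``$(c,\pi)\in b$'', and (ii) the triviality of $\fullname{2}$: it is exactly the uniform realizer $\Phi$ of \Cref{theorem:Gimel2TrivialOneRealizer} that collapses the families of realizers $t,t'$ occurring in $\falsity{a\not\in b}$ and lets ``there is a realizer witnessing this instance'' be replaced by a single uniform witness, which is what makes the suprema over $\ff{dom}(b)$ on the two sides coincide in $\mathbb B$. I expect the bookkeeping for $\subseteq$ — which feeds recursively back into $\in$ — to be the fiddliest part, carried out just as the $a\subseteq b$ and $a\not\in b$ computations in \Cref{theorem:BooleanVsRealize}, but with both of the conditions ``$\tau(t)\le p$'' and ``$t\star\pi\in\Perp$'' there now re-expressed inside $\mathbb B$.

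Once the lemma is established it immediately yields the intended conclusion of the section: for a sentence $\psi\in Fml_{\in}$ one has $\rlzmodel\Vdash\psi$ iff $\falsity{\psi}$ is the top of $\mathbb B$, i.e.\ iff $\Booleanforce{\psi}_{\mathbb B}=\mathbb{1}_{\mathbb B}$, i.e.\ iff $\psi$ is valid in $\tf V^{\mathbb B}$, so that the realizability model is (theory-equivalent to) the forcing model obtained from $\mathbb B$.
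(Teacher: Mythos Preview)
Your plan is sound and would succeed, but it takes a noticeably different route from the paper, and it is worth seeing what you would save by adopting the paper's shortcut.

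The paper never introduces an explicit name translation $a\mapsto\bar a$ between $\rlzstr^{\mathcal A}$ and a separately constructed $\tf V^{\mathbb B}$. Instead it argues \emph{internally}: it checks that the assignment $\psi\mapsto\falsity{\psi}$, valued in $\mathbb B$, already satisfies the clauses that define a Boolean-valued interpretation, so that $(\rlzstr,\falsity{\cdot\in\cdot},\falsity{\cdot\simeq\cdot})$ simply \emph{is} a $\mathbb B$-valued model. The decisive simplification is in the atomic case. Rather than your simultaneous rank induction on $\in$ and $\subseteq$, the paper just verifies the four equality axioms
\[
\falsity{x\simeq x}=\emptyset,\qquad \falsity{x\simeq y}=\falsity{y\simeq x},\qquad \falsity{x\simeq y}\cdot\falsity{y\simeq z}\le\falsity{x\simeq z},
\]
\[
\falsity{x\in y}\cdot\falsity{v\simeq x}\cdot\falsity{w\simeq y}\le\falsity{v\in w}.
\]
Each of these holds because its propositional counterpart (e.g.\ $x\simeq y\land y\simeq z\rightarrow x\simeq z$) is a theorem of $\ZFepsilon$, hence realized by adequacy, hence realized by the uniform $\Phi$ of \Cref{theorem:Gimel2TrivialOneRealizer}; and ``realized by $\Phi$'' is precisely the order $\le$ of $\mathbb B$. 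No unfolding of $\falsity{a\not\in b}$ or $\falsity{a\subseteq b}$, and no rank induction, is needed.

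Your external approach buys something the paper leaves implicit: an explicit identification of domains and round-trip lemmas for names, in exact parallel with \Cref{section:ForcingAsRealizability}. The price is all the bookkeeping you correctly flag as fiddly (and note that your claimed ``complement is set complement'' is only true up to $\mathbb B$-equivalence, so the representative $\Pi\setminus\falsity{b\notrlzin a}$ you choose for $\bar a(\bar b)$ will force you to pass to equivalence classes repeatedly during the rank induction). The paper's internal approach is shorter and more conceptual but less explicit about the correspondence with the standard $\tf V^{\mathbb B}$; for the stated goal (the corollary that $\rlzmodel$ is a forcing model) either suffices.
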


\begin{proof}
    Formally, this will be done by induction on the complexity of formulas, but we shall only sketch the details below. For atomic formulas it suffices to show that: $\falsity{x \simeq x} = \emptyset$; $\falsity{x \simeq y} = \falsity{y \simeq x}$; $\falsity{x \simeq y} \cdot \falsity{y \simeq z} \leq \falsity{x \simeq z}$ and $\falsity{x \in y} \cdot \falsity{v \simeq x} \cdot \falsity{w \simeq y} \leq \falsity{v \in w}$.

    Observe that the propositional logical counterparts of these statements are: $x \simeq x \rightarrow \top$; $x \simeq y \leftrightarrow y \simeq x$; $x \simeq y \land y \simeq z \rightarrow x \simeq z$ and $x \in y \land v \simeq x \land w \simeq y \rightarrow v \in w$. Since all \linebreak[2] of these are true statements, and the realizability model preserves logic deductions, all of these statements are realized. Thus, they are all realized by $\Phi$ from which it follows that $\falsity{\cdot}$ satisfies these four properties.

    The logical connectives $\land$ and $\lor$ follow from the fact that meets correspond to intersections while joins correspond to unions. Meanwhile, the proof for implication comes directly from the definition of the partial order. Finally the proofs for the quantifiers will follow from $\mathbb{B}$ being a complete Boolean algebra with $\ff{sup}_{i \in I}X_i = \bigcup_{i \in I} X_i$ and $\ff{inf}_{i \in I}X_i = \bigcap_{i \in I} X_i$.
\end{proof}

\begin{corollary}
    For any formula $\psi \in Fml_\in$, $\rlzmodel \Vdash \psi$ if and only if $\Booleanforce{\psi}_\mathbb{B} = \emptyset$. Thus, $\rlzmodel$ is a forcing model.
\end{corollary}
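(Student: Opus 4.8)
The plan is to derive the corollary by combining the uniform realizer $\Phi$ of \Cref{theorem:Gimel2TrivialOneRealizer} with the preceding lemma, which identifies $\falsity{\psi}$ with the Boolean value $\Booleanforce{\psi}_{\mathbb{B}}$. The first step is to eliminate the existential quantifier implicit in ``$\rlzmodel \Vdash \psi$''. Realizing $\psi$ depends only on $\falsity{\psi}$: writing $X = \falsity{\psi}$ and using the formula $\theta_X$ with $\falsity{\theta_X} = X$, a term $t$ satisfies $t \Vdash \psi$ iff $t \Vdash X$. Hence if $\rlzmodel \Vdash \psi$, some realizer realizes $X$, so \Cref{theorem:Gimel2TrivialOneRealizer} gives $\Phi \Vdash X$, i.e.\ $\Phi \Vdash \psi$; the converse is immediate since $\Phi$ is itself a realizer. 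Thus $\rlzmodel \Vdash \psi \Longleftrightarrow \Phi \Vdash \psi$, and the same argument shows that for every $X \subseteq \Pi$ the statement ``$X$ is realizable'' coincides with ``$\Phi \Vdash X$''.

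The second step is to read ``$\Phi \Vdash \psi$'' as an order relation in $\mathbb{B}$. By definition of the order on $\mathbb{B}$, $\emptyset \leq X$ means $\Phi \Vdash \theta_{\emptyset} \to \theta_X$. Since $\falsity{\theta_{\emptyset}} = \emptyset$, every term realizes $\theta_{\emptyset}$, so $\theta_X$ and $\theta_{\emptyset} \to \theta_X$ are equi-realizable: from $r \Vdash \theta_X$ one gets $\lambda u \lambdaapp r \Vdash \theta_{\emptyset} \to \theta_X$, and from $r \Vdash \theta_{\emptyset} \to \theta_X$ one gets $\app{r}{\identity} \Vdash \theta_X$ by \Cref{theorem:ImplicationandApplication}. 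Combined with the previous step this yields $\Phi \Vdash \theta_X \Longleftrightarrow \emptyset \leq X$ in $\mathbb{B}$. As $\emptyset$ is the top element $\mathbb{1}$ of $\mathbb{B}$, $\emptyset \leq X$ says exactly that $X$ is the maximum of $\mathbb{B}$, and by the preceding lemma $X = \falsity{\psi} = \Booleanforce{\psi}_{\mathbb{B}}$. Chaining these equivalences gives $\rlzmodel \Vdash \psi \Longleftrightarrow \Booleanforce{\psi}_{\mathbb{B}} = \mathbb{1}$, i.e.\ $\psi$ is valid in $\tf{V}^{\mathbb{B}}$, which is the claim that $\rlzmodel$ is a forcing model.

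I expect no genuine obstacle here: all the weight sits in \Cref{theorem:Gimel2TrivialOneRealizer} and the preceding lemma, and the corollary merely repackages them, exactly parallel to the corollary closing \Cref{section:ForcingAsRealizability}. Two points need mild care, both bookkeeping rather than ingenuity. First, parameters: the statement is for sentences, but to allow a formula $\psi(\overrightarrow{a})$ with $\overrightarrow{a} \in \rlzstr^{\mathcal{A}}$ one should first transport the parameters along $\tau$ and invoke the lemma in the form $\falsity{\psi(\overrightarrow{a})} = \Booleanforce{\psi(\tau(\overrightarrow{a}))}_{\mathbb{B}}$, as in \Cref{theorem:BooleanVsRealize}. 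Second, one must be consistent about whether ``$\Booleanforce{\psi}_{\mathbb{B}} = \mathbb{1}$'' is literal equality of subsets or equality in the Boolean algebra obtained from $\mathcal{P}(\Pi)$ by quotienting along $A \sim B \Leftrightarrow (A \leq B \leq A)$; taking the latter reading, so that validity means $\emptyset \leq \Booleanforce{\psi}_{\mathbb{B}}$, makes the chain of equivalences above close up verbatim.
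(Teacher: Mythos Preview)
Your argument is correct and follows essentially the same route as the paper's proof: reduce $\rlzmodel \Vdash \psi$ to $\Phi \Vdash \psi$ via \Cref{theorem:Gimel2TrivialOneRealizer}, rewrite this as $\emptyset \leq \falsity{\psi}$ using the definition of the order on $\mathbb{B}$, and then invoke the preceding lemma $\falsity{\psi} = \Booleanforce{\psi}_{\mathbb{B}}$; your care about reading equality in the quotient is exactly the point the paper glosses over. One small slip in your caveats: the map $\tau$ and \Cref{theorem:BooleanVsRealize} belong to \Cref{section:ForcingAsRealizability}, where $\mathbb{B}$ is given and $\mathcal{A}_{\mathbb{B}}$ is built from it, whereas here $\mathbb{B} = \mathcal{P}(\Pi)$ is built from the realizability algebra and there is no $\tau$ on names---parameters from $\rlzstr$ sit directly in $\falsity{\psi}$ and require no transport.
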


\begin{proof}
    Fix $\psi$. First recall that $\rlzmodel \Vdash \psi$ means that there exists a realizer $t \in \mathcal{R}$ such that $t \Vdash \psi$ which, by \Cref{theorem:Gimel2TrivialOneRealizer}, is equivalent to saying that $\Phi \Vdash \psi$. Then $\Phi \Vdash \psi \Longleftrightarrow \Phi \Vdash \top \rightarrow \psi \Longleftrightarrow \emptyset = \falsity{\top} \leq \falsity{\psi} \Longleftrightarrow \text{the meet of } \emptyset \text{ and } \falsity{\psi} \text{ equals } \emptyset \Longleftrightarrow \falsity{\psi} = \emptyset \Longleftrightarrow \Booleanforce{\psi}_\mathbb{B} = \emptyset$.
\end{proof}

\section{Comparing numerals} \label{section:ComparingNumerals}

In this section we explicitly give the realizer that was mentioned in \Cref{section:OrdinalRepresentations}. This is a term $\chi$ such that for any $n, m \in \omega$, $t, s, r \in \Lambda$ and $\pi \in \Pi$,
\[
\chi \star \underline{n} \stackapp \underline{m} \stackapp t \stackapp s \stackapp r \stackapp \pi \succ \begin{cases}
    t \star \pi & \text{if }\ n < m, \\
    s \star \pi & \text{if }\ n = m, \\
    r \star \pi & \text{if }\ m < n.
\end{cases}
\]

\begin{lemma}
    Let $X \coloneqq \lambda u \lambdaapp \underline{1}$, $Y \coloneqq \lambda u \lambdaapp \underline{0}$, $A \coloneqq \lambda u \lambdaapp \lambda v \lambdaapp \app{v}{u}$ and $B \coloneqq \lambda u \lambdaapp \lambda v \lambdaapp \inapp{u}{\app{A}{v}}$. Finally, let
    \[
    \theta \coloneqq \lambda i \lambdaapp \lambda j \lambdaapp \superfapp{\bigfapp{\twoapp{\app{j}{A}}{\app{A}{X}}}{\app{i}{B}}}{Y}.
    \]
    Then, for any $n, m \in \omega$ and $\pi \in \Pi$, if $n < m$ then $\theta \star \underline{n} \stackapp \underline{m} \stackapp \pi \succ \underline{0} \star \pi$ and if $n \geq m$ then $\theta \star \underline{n} \stackapp \underline{m} \stackapp \pi \succ \underline{1} \star \pi$.
\end{lemma}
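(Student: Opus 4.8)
The plan is to reduce the whole claim to a handful of one-step evaluation lemmas for the combinators $A$, $B$, $X$, $Y$ and then chain them. First I would record the behaviour of the building blocks on the abstract machine: for all closed terms $t,s$ and every stack $\pi$,
\[
A(t) \star s \stackapp \pi \;\succ\; s \star t \stackapp \pi, \qquad B(t) \star s \stackapp \pi \;\succ\; t \star A(s) \stackapp \pi,
\]
\[
X \star t \stackapp \pi \;\succ\; \underline{1} \star \pi, \qquad Y \star t \stackapp \pi \;\succ\; \underline{0} \star \pi,
\]
together with the standard fact that, writing $A^{k}(t)$ for the $k$-fold right-nested application $A(A(\cdots A(t)))$ and similarly $B^{k}$, one has $\underline{k} \star f \stackapp x \stackapp \pi \succ f^{k}(x) \star \pi$ (a short induction on $k$ from the recursive definition of the Church numerals). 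Each of these is a direct unwinding of the push and grab rules and uses no control operator.

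Next I would perform the ``outer'' reduction of $\theta$. Substituting $i:=\underline{n}$, $j:=\underline{m}$ into $\theta$ and applying the push rule repeatedly, $\theta \star \underline{n} \stackapp \underline{m} \stackapp \pi$ reduces to $\underline{m}(A) \star A(X) \stackapp \underline{n}(B) \stackapp Y \stackapp \pi$; the Church-numeral fact with $f=A$ turns this into $A^{m+1}(X) \star \underline{n}(B) \stackapp Y \stackapp \pi$; one use of the $A$-lemma gives $\underline{n}(B) \star A^{m}(X) \stackapp Y \stackapp \pi$; the Church-numeral fact with $f=B$ gives $B^{n}(A^{m}(X)) \star Y \stackapp \pi$; and finally iterating the $B$-lemma $n$ times, each step wrapping one further $A$ around the top of the stack (which I would isolate as an auxiliary claim $B^{n}(P) \star Q \stackapp \pi \succ P \star A^{n}(Q) \stackapp \pi$, proved by induction on $n$), brings us to the symmetric configuration $A^{m}(X) \star A^{n}(Y) \stackapp \pi$.

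The heart of the argument is then a ``ping-pong'' lemma: for all $m,n$ and all $\pi$,
\[
A^{m}(X) \star A^{n}(Y) \stackapp \pi \;\succ\; \underline{1}\star\pi \ \text{ if } n\ge m, \qquad A^{m}(X) \star A^{n}(Y) \stackapp \pi \;\succ\; \underline{0}\star\pi \ \text{ if } n<m,
\]
which I would prove by induction on $\min(m,n)$. In the step ($m,n\ge 1$), two applications of the $A$-lemma send $A^{m}(X)\star A^{n}(Y)\stackapp\pi$ first to $A^{n}(Y)\star A^{m-1}(X)\stackapp\pi$ and then to $A^{m-1}(X)\star A^{n-1}(Y)\stackapp\pi$, so the induction hypothesis applies (and $n\ge m \iff n-1\ge m-1$). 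For the base cases: if $m=0$ the head is literally $X$, so $X\star A^{n}(Y)\stackapp\pi\succ\underline{1}\star\pi$, consistent with $n\ge 0=m$; if $n=0$ and $m\ge 1$, one $A$-step puts $Y$ in head position and $Y\star A^{m-1}(X)\stackapp\pi\succ\underline{0}\star\pi$, consistent with $0=n<m$. Composing the ping-pong lemma with the outer reduction yields exactly the statement.

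I expect the only real difficulty to be bookkeeping rather than ideas: one must keep straight which of the two ``sides'' ($X$ or $Y$) bottoms out first and get the parity right, in particular checking that when $n=m$ it is the $X$-side head that terminates the computation (producing $\underline{1}$) and not the $Y$-side. Phrasing the ping-pong as an induction on $\min(m,n)$, rather than chasing the machine by hand for general $m,n$, is precisely what keeps this manageable; everything else is a mechanical application of the push and grab evaluation rules.
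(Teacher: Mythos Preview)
Your proposal is correct and follows essentially the same route as the paper: reduce $\theta \star \underline{n} \stackapp \underline{m} \stackapp \pi$ to the symmetric configuration $A^{m}(X) \star A^{n}(Y) \stackapp \pi$ and then run the ping-pong argument. Your presentation is in fact slightly cleaner, since factoring out the Church-numeral fact $\underline{k}\star f\stackapp x\stackapp\pi\succ f^{k}(x)\star\pi$ and the $A$/$B$ lemmas lets you treat the cases $m=0$ and $n=0$ uniformly, whereas the paper handles them by separate case splits.
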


\begin{proof}
    Fix $n, m \in \omega$ and $\pi \in \Pi$. Then $\theta \star \underline{n} \stackapp \underline{m} \stackapp \pi \succ \underline{m} \star A \stackapp \app{A}{X} \stackapp \app{\underline{n}}{B} \stackapp Y \stackapp \pi$. We shall first prove that 
    \[
    \underline{m} \star A \stackapp \app{A}{X} \stackapp \app{\underline{n}}{B} \stackapp Y \stackapp \pi \succ \underline{n} \star B \stackapp \app{A^m}{X} \stackapp Y \stackapp \pi
    \]
    where $x^0 y \coloneqq y$ and $x^{k+1} y \coloneqq \inapp{x}{x^k y}$. To do this, we need to consider the case for $m = 0$ separately;
    \[
    \underline{0} \star A \stackapp \app{A}{X} \stackapp \app{\underline{n}}{B} \stackapp Y \stackapp \pi \succ \app{A}{X} \star \app{\underline{n}}{B} \stackapp Y \stackapp \pi \succ A \star X \stackapp \app{\underline{n}}{B} \stackapp Y \stackapp \pi \succ \fapp{\app{\underline{n}}{B}}{X} \star Y \stackapp \pi \succ \underline{n} \star B \stackapp X \stackapp Y \stackapp \pi.
    \]
    Now, for $m > 0$,
    \begin{align*}
        \underline{m} \star A \stackapp \app{A}{X} \stackapp \app{\underline{n}}{B} \stackapp Y \stackapp \pi & \succ A^m(\app{A}{X}) \star \app{\underline{n}}{B} \stackapp Y \stackapp \pi \succ A \star \app{A^m}{X} \stackapp \app{\underline{n}}{B} \stackapp Y \stackapp \pi \\ 
        & \succ \twoapp{\app{\underline{n}}{B}}{\app{A^m}{X}} \star Y \stackapp \pi \succ \underline{n} \star B \stackapp \app{A^m}{X} \stackapp Y \stackapp \pi,    
    \end{align*}
    where we have used that $A^m(\app{A}{X}) = \app{A^{m+1}}{X} = \inapp{A}{\app{A^m}{X}}$.

    We shall next prove that $\underline{n} \star B \stackapp \app{A^m}{X} \stackapp Y \stackapp \pi \succ \app{A^m}{X} \star \app{A^n}{Y} \stackapp \pi$. In order to do this, we again need to consider the case $n = 0$ separately;
    \[
    \underline{0} \star B \stackapp \app{A^m}{X} \stackapp Y \stackapp \pi \succ \app{A^m}{X} \star Y \stackapp \pi.
    \]
    Now, for $n > 0$, 
    \begin{align*}
    \underline{n} \star B \stackapp \app{A^m}{X} \stackapp Y \stackapp \pi & \succ \inapp{B^n}{\app{A^m}{X}} \star Y \stackapp \pi \succ B \star \inapp{B^{n-1}}{\app{A^m}{X}} \stackapp Y \stackapp \pi \succ \inapp{B^{n-1}}{\app{A^m}{X}} \star \app{A}{Y} \stackapp \pi \\
    & \succ \dots \succ \app{A^m}{X} \star \app{A^n}{Y} \stackapp \pi.
    \end{align*}
    The proof shall be completed once we have shown that $\app{A^m}{X} \star \app{A^n}{Y} \stackapp \pi \succ \underline{0} \stackapp \pi$ if $n < m$ and $\app{A^m}{X} \star \app{A^n}{Y} \stackapp \pi \succ \underline{1} \stackapp \pi$ if $n \geq m$. 
    
    Firstly, if $m = 0$ then
    \[
    X \star \app{A^n}{Y} \stackapp \pi \equiv \lambda u \lambdaapp \underline{1} \star \app{A^n}{Y} \stackapp \pi \succ \underline{1} \star \pi.
    \]
    Next, if $n = 0$ and $m > 0$ then
    \[
    \app{A^m}{X} \star Y \stackapp \pi \succ A \star \app{A^{m-1}}{X} \stackapp Y \stackapp \pi \succ Y \star \app{A^{m-1}}{X} \stackapp \pi \equiv \lambda u \lambdaapp \underline{0} \star \app{A^{m-1}}{X} \stackapp \pi \succ \underline{0} \star \pi. 
    \]
    Finally, if $n$ and $m$ are both greater than $0$ then
    \begin{align*}
    \app{A^m}{X} \star \app{A^n}{Y} \stackapp \pi & \succ A \star \app{A^{m-1}}{X} \stackapp \app{A^n}{Y} \stackapp \pi \succ \app{A^n}{Y} \star \app{A^{m-1}}{X} \stackapp \pi \\ 
    & \succ A \star \app{A^{n-1}}{Y} \stackapp \app{A^{m-1}}{X} \stackapp \pi \succ \app{A^{m-1}}{X} \star \app{A^{n-1}}{Y} \stackapp \pi.
    \end{align*}
    Continuing in this way, for any $0 \leq k \leq \ff{min}(n, m)$ we have
    \[
    \app{A^m}{X} \star \app{A^n}{Y} \stackapp \pi \succ \app{A^{m-k}}{X} \star \app{A^{n-k}}{Y} \stackapp \pi.
    \]
    Therefore, if $n < m$ then 
    \[
    \app{A^m}{X} \star \app{A^n}{Y} \stackapp \pi \succ \app{A^{m-n}}{X} \star Y \stackapp \pi \succ A \star \app{A^{m-n-1}}{X} \star Y \stackapp \pi \succ Y \star \app{A^{m-n-1}}{X} \stackapp \pi \succ \underline{0} \stackapp \pi
    \]
    and if $n \geq m$ then
    \begin{align*}
    \app{A^m}{X} \star \app{A^n}{Y} \stackapp \pi & \succ \app{A^{m-(m-1)}}{X} \star \app{A^{n-(m-1)}}{Y} \stackapp \pi \succ A \star X \stackapp \app{A^{n-(m-1)}}{Y} \stackapp \pi \succ \app{A^{n-(m-1)}}{Y} \star X \stackapp \pi \\
    & \succ A \star \app{A^{n-(m-1)-1}}{Y} \stackapp X \stackapp \pi \succ X \star \app{A^{n-(m-1)-1}}{Y} \stackapp \pi \succ \underline{1} \star \pi,
    \end{align*}
    completing the proof.
\end{proof}

\begin{lemma}
    Fix $\theta$ as in the previous lemma. Let 
    \[
    \chi \coloneqq \lambda i \lambdaapp \lambda j \lambdaapp \lambda e_t \lambdaapp \lambda  e_s \lambdaapp \lambda e_r \lambdaapp \fapp{\twoapp{\fapp{\app{\theta}{i}}{j}}{\lambda u \lambdaapp \fapp{\twoapp{\fapp{\app{\theta}{j}}{i}}{\lambda v \lambdaapp e_s}}{e_r}}}{e_t}.
    \]
    Then for any $n, m \in \omega$, terms $t, s$ and $r$ and $\pi \in \Pi$,
    \begin{thmlist}
        \item if $n < m$ then $\chi \star \underline{n} \stackapp \underline{m} \stackapp t \stackapp s \stackapp r \stackapp \pi \succ t \stackapp \pi$;
        \item if $n = m$ then $\chi \star \underline{n} \stackapp \underline{m} \stackapp t \stackapp s \stackapp r \stackapp \pi \succ s \stackapp \pi$;
        \item if $n > m$ then $\chi \star \underline{n} \stackapp \underline{m} \stackapp t \stackapp s \stackapp r \stackapp \pi \succ r \stackapp \pi$.
    \end{thmlist}
\end{lemma}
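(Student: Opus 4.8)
The plan is to unfold the term $\chi$ on the process $\chi \star \underline{n} \stackapp \underline{m} \stackapp t \stackapp s \stackapp r \stackapp \pi$ using the basic evaluation rules (\emph{push} and \emph{grab}) and then invoke the previous lemma about $\theta$ at the appropriate places. First I would perform the five \emph{grab} steps to substitute $i \coloneqq \underline{n}$, $j \coloneqq \underline{m}$, $e_t \coloneqq t$, $e_s \coloneqq s$, $e_r \coloneqq r$, which after the leading \emph{push} reductions yields
\[
\chi \star \underline{n} \stackapp \underline{m} \stackapp t \stackapp s \stackapp r \stackapp \pi \succ \fapp{\twoapp{\fapp{\app{\theta}{\underline{n}}}{\underline{m}}}{\lambda u \lambdaapp \fapp{\twoapp{\fapp{\app{\theta}{\underline{m}}}{\underline{n}}}{\lambda v \lambdaapp s}}{r}}}{t} \star \pi \succ \fapp{\app{\theta}{\underline{n}}}{\underline{m}} \star \big(\lambda u \lambdaapp \fapp{\twoapp{\fapp{\app{\theta}{\underline{m}}}{\underline{n}}}{\lambda v \lambdaapp s}}{r}\big) \stackapp t \stackapp \pi.
\]
Now I would case on the comparison of $n$ and $m$ and apply the previous lemma, which tells us that $\fapp{\app{\theta}{\underline{n}}}{\underline{m}} \star \rho \stackapp \sigma \stackapp \pi$ reduces to $\underline{0} \star \rho \stackapp \sigma \stackapp \pi$ when $n < m$ and to $\underline{1} \star \rho \stackapp \sigma \stackapp \pi$ when $n \geq m$ (here $\fapp{\app{\theta}{\underline{n}}}{\underline{m}} \succ \theta \star \underline{n} \stackapp \underline{m} \stackapp \dots$ by two \emph{push} steps, and recall $\underline{0} = \lambda u \lambdaapp \lambda v \lambdaapp v$, $\underline{1} = \lambda u \lambdaapp \lambda v \lambdaapp \app{u}{v}$).

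For the case $n < m$: the lemma gives $\fapp{\app{\theta}{\underline{n}}}{\underline{m}} \star \kappa \stackapp t \stackapp \pi \succ \underline{0} \star \kappa \stackapp t \stackapp \pi \succ t \star \pi$, where $\kappa$ denotes the $\lambda u$-abstraction; this is exactly clause $(i)$. For the case $n \geq m$: the lemma gives $\fapp{\app{\theta}{\underline{n}}}{\underline{m}} \star \kappa \stackapp t \stackapp \pi \succ \underline{1} \star \kappa \stackapp t \stackapp \pi \succ \app{\kappa}{t} \star \pi \succ \kappa \star t \stackapp \pi \succ \fapp{\twoapp{\fapp{\app{\theta}{\underline{m}}}{\underline{n}}}{\lambda v \lambdaapp s}}{r} \star \pi \succ \fapp{\app{\theta}{\underline{m}}}{\underline{n}} \star (\lambda v \lambdaapp s) \stackapp r \stackapp \pi$. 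Now I split $n \geq m$ into $n = m$ and $n > m$ and apply the lemma a second time, this time to $\fapp{\app{\theta}{\underline{m}}}{\underline{n}}$: if $n = m$ then $m \geq n$ so we get $\underline{1} \star (\lambda v \lambdaapp s) \stackapp r \stackapp \pi \succ \fapp{(\lambda v \lambdaapp s)}{r} \star \pi \succ (\lambda v \lambdaapp s) \star r \stackapp \pi \succ s \star \pi$, which is clause $(ii)$; if $n > m$ then $m < n$ so we get $\underline{0} \star (\lambda v \lambdaapp s) \stackapp r \stackapp \pi \succ r \star \pi$, which is clause $(iii)$.

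There is no real obstacle here — the argument is a mechanical unfolding once the previous lemma is in hand; the only mild care needed is tracking which of the two auxiliary $\theta$-applications is being invoked in each branch and keeping the stack contents straight through the $\underline{0}$ versus $\underline{1}$ selections. I would present the three reduction chains (one for each clause) as displayed computations, reusing the abbreviation $\kappa$ for the inner $\lambda u$-abstraction to keep them readable, and cite the previous lemma at each place a $\theta$-reduction is used. In writing up I would double-check that the enumeration in \Cref{section:OrdinalRepresentations} sets $\nu_n = \underline{n}$ for $n \in \omega$ so that this $\chi$ indeed satisfies \Cref{definition:PropertyChi} restricted to Church numerals, as claimed in \Cref{remark:OmegaTrichotomous}.
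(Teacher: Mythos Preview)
Your proposal is correct and follows essentially the same approach as the paper's proof: unfold $\chi$ by substituting the five arguments, reduce to $\theta \star \underline{n} \stackapp \underline{m} \stackapp \kappa \stackapp t \stackapp \pi$, apply the previous lemma to split on $n < m$ versus $n \geq m$, and in the latter branch apply the lemma a second time (to $\theta \star \underline{m} \stackapp \underline{n} \stackapp \dots$) to separate $n = m$ from $n > m$. The reduction chains and case analysis match the paper exactly; your abbreviation $\kappa$ for the inner $\lambda u$-abstraction is a harmless presentational choice.
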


\begin{proof}
    Fix $n, m \in \omega$, $t, s, r \in \Lambda$ and $\pi \in \Pi$. Then
    \[
    \chi \star \underline{n} \stackapp \underline{m} \stackapp t \stackapp s \stackapp r \stackapp \pi \succ \theta \star \underline{n} \stackapp \underline{m} \stackapp (\lambda u \lambdaapp \fapp{\twoapp{\fapp{\app{\theta}{\underline{m}}}{\underline{n}}}{\lambda v \lambdaapp s}}{r}) \stackapp t \stackapp \pi.
    \]
    If $n < m$ then $\theta \star \underline{n} \stackapp \underline{m} \stackapp \sigma \succ \underline{0} \star \sigma$ for any $\sigma \in \Pi$. Therefore,
    \[
    \chi \star \underline{n} \stackapp \underline{m} \stackapp t \stackapp s \stackapp r \stackapp \pi \succ \underline{0} \star (\lambda u \lambdaapp \fapp{\twoapp{\fapp{\app{\theta}{\underline{m}}}{\underline{n}}}{\lambda v \lambdaapp s}}{r}) \stackapp t \stackapp \pi \succ t \star \pi.
    \]
    Next, if $n \geq m$ then $\theta \star \underline{n} \stackapp \underline{m} \stackapp \sigma \succ \underline{1} \star \sigma$ for any $\sigma \in \Pi$. Therefore,
    \begin{align*}
    \chi \star \underline{n} \stackapp \underline{m} \stackapp t \stackapp s \stackapp r \stackapp \pi & \succ \underline{1} \star (\lambda u \lambdaapp \fapp{\twoapp{\fapp{\app{\theta}{\underline{m}}}{\underline{n}}}{\lambda v \lambdaapp s}}{r}) \stackapp t \stackapp \pi \succ \lambda u \lambdaapp \fapp{\twoapp{\fapp{\app{\theta}{\underline{m}}}{\underline{n}}}{\lambda v \lambdaapp s}}{r} \star t \stackapp \pi \\
    & \succ \fapp{\twoapp{\fapp{\app{\theta}{\underline{m}}}{\underline{n}}}{\lambda v \lambdaapp s}}{r} \star \pi \succ \theta \star \underline{m} \stackapp \underline{n} \stackapp (\lambda v \lambdaapp s) \stackapp r \stackapp \pi.
    \end{align*}
    Now, if $n = m$ then $\theta \star \underline{m} \stackapp \underline{n} \stackapp \sigma \succ \underline{1} \star \sigma$ for any $\sigma \in \Pi$. Therefore,
    \[
    \chi \star \underline{n} \stackapp \underline{m} \stackapp t \stackapp s \stackapp r \stackapp \pi \succ \underline{1} \star (\lambda v \lambdaapp s) \stackapp r \stackapp \pi \succ \fapp{\lambda v \lambdaapp s}{r} \star \pi \succ \lambda v \lambdaapp s \star r \stackapp \pi \succ s \star \pi.
    \]
    Finally, if $n > m$ then 
    \[
    \chi \star \underline{n} \stackapp \underline{m} \stackapp t \stackapp s \stackapp r \stackapp \pi \succ \underline{0} \star (\lambda v \lambdaapp s) \stackapp r \stackapp \pi \succ r \star \pi.
    \]
\end{proof}

\bibliography{surveybib}

\begin{thebibliography}{FFKD87}

\bibitem[Bar84]{Barendregt1985}
Hendrik~Pieter Barendregt.
\newblock {\em The Lambda Calculus}, volume 103 of {\em Studies in logic and
  the foundations of mathematics}.
\newblock Elsevier, 1984.

\bibitem[Bel11]{Bell2011}
John~L. Bell.
\newblock {\em Set Theory}, volume~47 of {\em Oxford Logic Guides}.
\newblock Clarendon Press, 2011.

\bibitem[Dia75]{Diaconescu1975}
Radu Diaconescu.
\newblock Axiom of choice and complementation.
\newblock {\em Proceedings of the American Mathematical Society}, 51(1):176 --
  178, 1975.

\bibitem[FFKD87]{FelleisenFriedmanKohlbeckerDuba1987}
Matthias Felleisen, Daniel~P. Friedman, Eugene~E. Kohlbecker, and Bruce~F.
  Duba.
\newblock A syntactic theory of sequential control.
\newblock {\em Theoretical Computer Science}, 52:205 -- 237, 1987.

\bibitem[FG20]{FontanellaGeoffroy2020}
Laura Fontanella and Guillaume Geoffroy.
\newblock Preserving cardinals and weak forms of zorn’s lemma in
  realizability models.
\newblock {\em Mathematical Structures in Computer Science}, 30(9):976 -- 996,
  2020.

\bibitem[Fri73]{Friedman1973}
Harvey Friedman.
\newblock The consistency of classical set theory relative to a set theory with
  intuitionistic logic.
\newblock {\em The Journal of Symbolic Logic}, 38(2):315 -- 319, 1973.

\bibitem[Gra75]{Grayson1975}
R.~J. Grayson.
\newblock {\em A sheaf approach to models of set theory}.
\newblock PhD thesis, M.Sc. thesis, Oxford University, 1975.

\bibitem[Gra78]{Grayson1978}
R.~J. Grayson.
\newblock {\em Intuitionistic Set Theory}.
\newblock PhD thesis, Oxford University, 1978.

\bibitem[Gri89]{Griffin1989}
Timothy~G. Griffin.
\newblock A formulae-as-type notion of control.
\newblock {\em Proceedings of the 17th ACM SIGPLAN-SIGACT symposium on
  Principles of programming languages}, pages 47 -- 58, 1989.

\bibitem[Jec03]{Jech}
Thomas Jech.
\newblock {\em Set Theory}.
\newblock Springer Monographs in Mathematics. Springer Berlin, Heidelberg,
  2003.

\bibitem[Kar19]{Karagila2019}
Asaf Karagila.
\newblock Realizing realizability results with classical constructionsy.
\newblock {\em Bulletin of Symbolic Logic}, 25(4):429 -- 445, 2019.

\bibitem[Kle45]{Kleene1945}
Stephen~Cole Kleene.
\newblock On the interpretation of intuitionistic number theory.
\newblock {\em The Journal of Symbolic Logic}, 10(4):109 -- 124, 1945.

\bibitem[Kle52]{Kleene1952}
Stephen~Cole Kleene.
\newblock {\em Introduction to metamathematics}, volume 483.
\newblock De Bruijn, N. G. and de Groot, J. and Zaanen, A. C. New York: van
  Nostrand, 1952.

\bibitem[Kle73]{Kleene1971}
Stephen~Cole Kleene.
\newblock Realizability: a retrospective survey.
\newblock In A.R.D. Mathias and H.~Rogers, editors, {\em Cambridge Summer
  School in Mathematical Logic}, volume 337, pages 95 -- 112. Lecture Notes in
  Mathematics, 1973.

\bibitem[Kri]{KrivineChambery}
Jean-Louis Krivine.
\newblock Some properties of realizability models.
\newblock Accessed: {2023-02-02}.

\bibitem[Kri01]{Krivine2001}
Jean-Louis Krivine.
\newblock Typed lambda-calculus in classical zermelo-fraenkel set theory.
\newblock {\em Archive of Mathematical Logic}, 40(3):189 -- 205, 2001.

\bibitem[Kri09]{Krivine2009}
Jean-Louis Krivine.
\newblock Realizability in classical logic.
\newblock {\em Panoramas et synth\`{e}ses}, 27:197 -- 229, 2009.

\bibitem[Kri12]{Krivine2012}
Jean-Louis Krivine.
\newblock Realizability algebras ii: new models of zf+ dc.
\newblock {\em Logical Methods in Computer Science}, 8:1 -- 28, 2012.

\bibitem[Kri15]{Krivine2015}
Jean-Louis Krivine.
\newblock On the structure of classical realizability models of zf.
\newblock {\em Proceeding TYPES'2014 - LIPIcs}, 39:146 -- 161, 2015.

\bibitem[Kri18]{Krivine2018}
Jean-Louis Krivine.
\newblock Realizability algebras iii: some examples.
\newblock {\em Mathematical Structures in Computer Science}, 28(1):45 -- 76,
  2018.

\bibitem[Kri21]{Krivine2021}
Jean-Louis Krivine.
\newblock A program for the full axiom of choice.
\newblock {\em Logic methods in Computer Science}, 17(3):1 -- 22, 2021.

\bibitem[McC85]{McCartyPhD}
David~Charles McCarty.
\newblock {\em Realizability and recursive mathematics}.
\newblock PhD thesis, University of {E}dinburgh, 1985.

\bibitem[Men09]{MendelsonIntro}
Elliott Mendelson.
\newblock {\em Introduction to Mathematical Logic}.
\newblock CRC Press, 5 edition, 2009.

\bibitem[Myh73]{Myhill1971}
John Myhill.
\newblock Some properties of intuitionistic {Z}ermelo-{F}rankel set theory.
\newblock In A.R.D. Mathias and H.~Rogers, editors, {\em Cambridge Summer
  School in Mathematical Logic}, volume 337, pages 206 -- 231. Lecture Notes in
  Mathematics, 1973.

\bibitem[Rat06]{Rathjen2003}
Michael Rathjen.
\newblock Realizability for {C}onstructive {Z}ermelo-{F}raenkel {S}et {T}heory.
\newblock {\em Logic Colloquium 2003. Lecture notes in Logic}, 141:442 -- 471,
  2006.

\bibitem[Sco61]{Scott1961}
Dana Scott.
\newblock {\em More on the axiom of extensionality}, chapter Essays of the
  Foundations of Mathematics, pages 115 -- 139.
\newblock The Hebrew University of Jerusalem, Magnes Press, 1961.

\bibitem[VO02]{Oosten2002}
Jaap Van~Oosten.
\newblock Realizability: a historical essay.
\newblock {\em Mathematical Structures in Computer Science}, 12(3):239 -- 263,
  2002.

\end{thebibliography}
\bibliographystyle{alpha}

\end{document}